\newtheorem{theorem}{Theorem}[section]
\newtheorem{lemma}[theorem]{Lemma}
\newtheorem{proposition}[theorem]{Proposition}
\newtheorem{corollary}[theorem]{Corollary}
\theoremstyle{definition}
\newtheorem{definition}[theorem]{Definition}
\newtheorem{remark}[theorem]{Remark}
\newtheorem{example}[theorem]{Example}
\numberwithin{equation}{section}
\newcommand{\C}{\mathbb{C}}
\newcommand{\N}{\mathbb{N}}
\newcommand{\R}{\mathbb{R}}
\newcommand{\T}{\mathbb{T}}
\newcommand{\Z}{\mathbb{Z}}
\newcommand{\co}{\mskip0.5mu\colon\thinspace}
\newcommand{\inertia}[1]{{\Lambda}{#1}}
\newcommand{\inertianull}[1]{{\Lambda}_0{#1}}
\newcommand{\cartan}{\mathsf{T}}
\newcommand{\sphere}{\mathsf{S}}
\newcommand{\calC}{\mathcal{C}}
\newcommand{\calO}{\mathcal{O}}
\newcommand{\calI}{\mathcal{I}}
\newcommand{\calS}{\mathcal{S}}
\newcommand{\sfA}{\mathsf{A}}
\newcommand{\sfB}{\mathsf{B}}
\newcommand{\sfG}{\mathsf{G}}
\newcommand{\sfH}{\mathsf{H}}
\newcommand{\mfrak}{\mathfrak{m}}
\newcommand{\id}{\operatorname{id}}
\newcommand{\Sat}{\operatorname{Sat}}
\newcommand{\sttimes}{{\hspace{0.1em}{_s}\hspace{-.1em}\times_{\hspace{-.05em}t}\hspace{0.1em}}}
\newcommand{\fgtimes}[2]{{\hspace{0.1em}{_{#1}}\hspace{-.1em}\times_{\hspace{-.05em}{#2}}\hspace{0.1em}}}
\newcommand{\arr}[1]{\mathsfit{#1}}
\newcommand{\obj}[1]{\mathsfit{#1}}
\newcommand{\zartan}{T}
\newcommand{\stratan}{T^\textup{st}}
\newcommand{\centralizer}{\mathsf{Z}}
\newcommand{\normalizer}{\mathsf{N}}
\newcommand{\im}{\operatorname{im}}
\begin{document}

\title[Differentiable stratified groupoids]{Differentiable stratified groupoids and a de Rham theorem for inertia spaces}

\author{Carla Farsi}
\address{Carla Farsi, {\rm Department of Mathematics, University of Colorado at Boulder, Campus
Box 395, Boulder, CO 80309-0395, USA}}
\email{farsi@euclid.colorado.edu}

\author{Markus J. Pflaum}
\address{Markus J. Pflaum, {\rm Department of Mathematics, University of Colorado at Boulder, Campus
Box 395, Boulder, CO 80309-0395, USA and \newline
\indent Max-Planck-Institut f\"ur Mathematik in den Naturwissenschaften, Inselstr.~22, 04103 Leipzig, Germany}}
\email{markus.pflaum@colorado.edu }

\author{Christopher Seaton}
\address{Christopher Seaton, {\rm Department of Mathematics and Computer Science,
Rhodes College, 2000 N. Parkway, Memphis, TN 38112, USA}}
\email{seatonc@rhodes.edu}

\begin{abstract}
We introduce the notions of differentiable groupoids and differentiable stratified groupoids,
generalizations of Lie groupoids in which the  spaces of objects and arrows have the
structures of differentiable spaces, respectively differentiable stratified spaces,
compatible with the groupoid structure. After studying basic properties of these groupoids
including Morita equivalence, we prove a  de Rham theorem for proper
locally contractible differentiable stratified groupoids.
We then  focus on the study of the inertia groupoid associated to a proper Lie groupoid.
We show that the loop and the inertia space of a proper Lie groupoid
can be endowed with a natural Whitney (b)-regular stratification, which we call the
orbit Cartan type stratification. Endowed with this stratification,
the inertia groupoid of a proper Lie groupoid becomes a locally contractible differentiable
stratified groupoid.
\end{abstract}

\maketitle

\tableofcontents


\section{Introduction}
\label{sec:Intro}

The theory of Lie group actions on smooth manifolds is fundamental for several areas in mathematics and
has a long tradition. But examples of natural Lie group actions do not only comprise actions on smooth manifolds but
also on singular spaces such as orbifolds \cite{LermanTolman,Schmah,Vergne} or manifolds with boundary or corners
\cite{MargalefOuterelo, MelrosePDO}. Lie group actions on singular spaces arise also in singular symplectic
reduction \cite{SjamaarLerman} and the transverse cotangent bundle of a $G$-manifold \cite{DeConcProcesiVergne,ParadanVergneIndex}.
In these cases, the corresponding translation groupoid, while not a
Lie groupoid, has significantly more structure than merely that of a topological groupoid. In particular,
the spaces of objects and arrows inherit differentiable stratified space structures compatible with the
groupoid structure maps. One of the main goals of this paper is to define and study the category of such groupoids
and their singular structures.

In this paper, we introduce
\emph{differentiable stratified groupoids}, a class of groupoids within the category of differentiable
stratified spaces that share many of the properties of Lie groupoids \cite{CrainicMestreOrbispace}.
The definition is designed so that, under mild hypotheses, the restriction of a
differentiable stratified groupoid to a stratum of the orbit space is a Lie groupoid.
The definitions of a differentiable groupoid  and of a
differentiable stratified groupoid we propose are deliberately broad so as to be easily verified and
to include many interesting cases. Examples of such groupoids arise as restrictions or quotients
of Lie groupoid actions or in other contexts from a related Lie groupoid. Often more specific properties are inherited
from the context. To take advantage of these we define the subclasses of sliceable differentiable
(stratified) groupoids in Definitions \ref{def:LocTransDiffGroupoid} and \ref{def:LocTransStrat}
and introduce a local contractibility condition in Definition \ref{def:local-contractibility}.


One major motivating example is that of a compact Lie group acting differentiably on a
differentiable stratified space in such a way that the strata are permuted by the action.
We will show that the corresponding translation
groupoid is a differentiable stratified groupoid.
We study the properties of these groupoids and their object spaces including the
appropriate notion of Morita equivalence. Moreover, we prove a de Rham theorem which
relates the singular cohomology of the orbit space of a sliceable differentiable stratified groupoid
fulfilling a local contractibility condition to the cohomology of basic differential forms on the object
space; see Theorem \ref{thrm:deRhamDiffStrat}. To prove this theorem, we first localize
to groupoid charts and then prove a Poincar\'e lemma for basic forms in the context of sliceable
differentiable stratified groupoids.

A particularly important example which we consider is that of the \emph{inertia groupoid}
of a proper Lie groupoid $\sfG$. The inertia  groupoid is presented as the translation groupoid of
$\sfG$ acting on the so-called \emph{loop space} $\inertianull{\sfG}\subset \sfG$ which consist of all
arrows having the same source and target.
When $\sfG$ is an orbifold groupoid, the loop space $\inertianull{\sfG}$ is a smooth manifold so that
the inertia groupoid presents an orbifold, the \emph{inertia orbifold}. The inertia orbifold plays
an import role in the geometry and index theory of orbifolds; see \cite{AdemLeidaRuan,PflPosTanAITO}. However,
when $\sfG$ is not an orbifold groupoid, the loop space $\inertianull{\sfG}$ becomes singular,
and very little is known about its singularity structure in general. One of the goals of this paper is to deepen the
understanding of inertia groupoids by exhibiting  explicit stratifications for them.
Motivation for such a study comes from the abundance and diversity where
differentiable stratified groupoids appear, often in a somewhat hidden way. 
In addition to the mentioned orbifolds, loop spaces and their associated inertia groupoids
appear naturally in geometry e.g.\ as commuting varieties when the adjoint action of a
Lie group $G$ on itself is considered \cite{AdemCohenCommutElts,PanyushevYakimova,RichardsonCommutVar}.
The loop space corresponds then to the set of pairs of commuting elements in $G$. Similarly,
spaces of (conjugacy classes of) commuting $m$-tuples of elements of $G$ as studied e.g.\ in
\cite{AdemGomezEqKThry,GomezPettet,TorresSjerveCommutNTuple} provide related examples of
differentiable stratified groupoids.
Important for the understanding of moduli spaces of flat connections \cite{Goldman1984} and for representation
theory are sets of conjugacy classes of homomorphisms $\pi\to G$, where $\pi$ is a finitely generated discrete
group and $G$ a compact connected Lie group. Such spaces can also be naturally interpreted as differentiable stratified groupoids. 

When $\sfG$  is the translation groupoid $G \ltimes M$ associated to the smooth action of
a compact Lie group $G$ on a manifold $M$, the orbit space of the $G$-action on the corresponding loop space
 - here this orbit space is called the \emph{inertia space} - was first considered in \cite{BrylinskiCycHomEquivar}.
By virtue of its structure as a semialgebraic $G$-set, the loop space is a Whitney (b)-regular
stratified space, see Example \ref{ex-semialgebraic}, but the explicit structure of this stratification is not well understood.
In the previous paper \cite{FarsiPflaumSeaton}, we introduced an explicit Whitney (b)-regular stratification  of the loop space
of $G\ltimes M$ and its associated orbit space. Applying this stratification to local charts of the form $G\ltimes M$ for a proper
Lie groupoid $\sfG$ does in general not yield a well-defined global
stratification of the loop space of $\sfG$. Specifically, the local stratifications from that paper
may not coincide on intersections of charts.
Here, we will give a significant modification of the stratification in \cite{FarsiPflaumSeaton} that can be patched together
on charts to yield a well-defined stratification of the loop and inertia spaces of an arbitrary proper Lie groupoid. We call
this stratification the \emph{orbit Cartan type stratification}. For the translation groupoid of a compact group action
the orbit Cartan type stratification presented here is in general coarser than the one considered
in our previous paper  \cite{FarsiPflaumSeaton}.
Because the construction is local, we carry it out on a
single groupoid chart, which, up to Morita equivalence, is given by the translation groupoid
associated to a finite-dimensional $G$-representation $V$ where $G$ is a proper Lie groupoid, see Theorem
\ref{thrm:InertiaStratGMnfld}. For this case, the main ideas of our construction rely on providing
a stratification of  slices, see Section \ref{subsec:InertiaGpoidStrat} for details. To achieve this,
we first decompose the Cartan subgroups of the stabilizers into equivalence classes determined by
their fixed point sets in $V$, which motivates the name of the stratification. We then use these
decompositions together with the fixed point sets of the stabilizers to stratify the slice and take
saturations to stratify the loop and orbit spaces. In Section \ref{subsec:GonMStratProof} we
demonstrate that our orbit Cartan type decomposition indeed satisfies all the properties of
a stratification and turns the loop and the inertia space into differentiable stratified spaces; see Theorem \ref{thrm:InertiaStrat}.
Moreover, in Section \ref{subsec:Whitney}, we show that the orbit Cartan type stratification
is Whitney (b)-regular. That this stratification is given explicitly allows us finally to verify in Theorem \ref{thrm:InertiaDeRham} that
the local contractibility condition of Definition \ref{def:local-contractibility} is fulfilled for
the inertia space, hence the de Rham theorem \ref{thrm:deRhamDiffStrat} holds in this case.

It appears that the stratification and the de Rham theorem for the inertia space could provide a
decisive tool to verify Brylinski's claim \cite[Prop., p.~25]{BrylinskiCycHomEquivar}, which is not
fully proven in his paper, that for a given transformation groupoid $G\ltimes M$ the complex of
basic differential forms on the loop space is quasi-isomorphic to the Hochschild chain complex of
the convolution algebra on $G\ltimes M$. Our results from \cite{FarsiPflaumSeaton} have been used
in a proof of Brylinski's conjecture for the case of an  $S^1$-action, see
\cite{PflPosTanGrauGroth,PflPosTanHochschild}.
Work on the general case is in progress.

The outline of this paper is as follows.
In Section \ref{sec:StratDiffGpoids} we introduce the fundamental concepts of differentiable (stratified)
groupoids we will rely on throughout this paper, demonstrate several basic properties, and
consider morphisms of differentiable (stratified) groupoids and Morita equivalence.
In Section \ref{sec:StratDiffGpoidExamples} we present several examples of differentiable stratified
groupoids and continue in Section \ref{sec:algebroid} with the definition of the algebroid of a
differentiable stratified groupoid assumed to fulfill some additional technical hypotheses.
The algebroid is then used in Section \ref{sec:Invariants} to present the de Rham theorem for sliceable
differentiable stratified groupoids. In Section \ref{sec:InertiaGpoid}  we establish 
that the inertia space of a proper Lie groupoid admits a locally contractible Whitney (b)-regular
stratification, the \emph{orbit Cartan type stratification}. The result is stated in
Theorem  \ref{thrm:InertiaStrat} and relies on  Theorem \ref{thrm:InertiaStratGMnfld} which provides the
result for the local case, that is, for linear actions of compact Lie groups. In
Proposition \ref{prop:LocalStratWhitneyB} we  demonstrate Whitney (b)-regularity for the
orbit Cartan type stratification. The appendix contains a recollection of the the basics of differentiable
spaces and describes the category of differentiable stratified spaces used throughout this paper.

\subsection*{Acknowledgments}
C.F.~would like to thank the Harish-Chandra Research Institute (Allahabad, India) and the
DST Center for Interdisciplinary Mathematical Sciences of Banares Hindu University (Varanasi, India) for hospitality
during work on this manuscript.
Moreover, C.F.~greatfully acknowledges support by a Simons Foundation Collaboration Grant under award number 4523991.
C.S.~was supported by a Rhodes College Faculty Development Grant, the E.C.~Ellett Professorship in Mathematics, and
the Meyers fund. In addition, C.S.~would like to thank the University of Colorado at Boulder for hospitality during
work on this manuscript.
M.J.P.~thanks the Max-Planck Institute for Mathematics in Bonn and the Max-Planck Institute for the Mathematics of the Sciences in Leipzig for hospitality and support.
M.J.P.~also acknowledges NSF support under contract DMS 1105670 and support by a
Simons Foundation Collaboration Grant under award number 359389.


\section{Fundamentals}
\label{sec:StratDiffGpoids}

In this section, we give the definitions and basic properties of the groupoids under consideration.
See Appendices \ref{ap:DiffSpaces} and \ref{ap:StratSpaces} for a review of the notions
of differentiable and differentiable stratified spaces used in this paper.

\subsection{Topological groupoids}
Recall that by a groupoid $\sfG$  one understands a small category
with object set $\sfG_0$ and arrow set $\sfG_1$ such that all arrows are invertible. We use
$s, t\co\sfG_1\to\sfG_0$ to denote the source and target maps, respectively, write $u\co \sfG_0\to\sfG_1$
for the unit map, $i\co\sfG_1\to\sfG_1$ for the inverse map, and
finally denote by $m\co\sfG_1\sttimes\sfG_1\to\sfG_1$ the multiplication or composition map.
The maps $s$, $t$, $u$, $i$, and $m$ are collectively referred to as the \emph{structure maps} of the groupoid.
If $\sfG$ and $\sfH$ are groupoids, a \emph{morphism} $f\co\sfG\to\sfH$ is a functor,
i.e.~a pair of functions $f_0\co\sfG_0\to\sfH_0$ and $f_1\co\sfG_1\to\sfH_1$ that commute
with each of the structure maps.

The \emph{orbit} through a point $\obj{x}\in \sfG_0$ is defined as the set  of all $\obj{y} \in \sfG_0$
for which there exists a $\arr{g}\in \sfG_1$   such that $s(\arr{g})=\obj{x}$ and
$t(\arr{g})=\obj{y}$. It is denoted by $\sfG \obj{x}$.
Obviously, the object space is partitioned into orbits. We denote the set of orbits of a groupoid $\sfG$
by $|\sfG|$, and the canonical projection from the object to the orbit space by $\pi\co \sfG_0 \to |\sfG|$.

A groupoid $\sfG$ is called a \emph{topological groupoid} if $\sfG_0$ and $\sfG_1$ are
topological spaces and each of the structure maps are continuous. This implies in particularly that
the unit map is a homeomorphism onto its image. If the topological groupoid $\sfG$ is
\emph{Hausdorff}, which means that $\sfG_1$ is Hausdorff, the image $u(\sfG_0)$ is closed in $\sfG_1$,
cf.~\cite[Chap.~1 Sec.~2]{RenGACA}. The orbit space of a topological
groupoid $\sfG$ is always assumed to carry the quotient topology with respect to the canonical projection
$\pi\co \sfG_0 \to |\sfG |$. If $\sfG$ and $\sfH$ are topological groupoids and  $f\co\sfG\to\sfH$ a
morphism of groupoids, then $f$ is a \emph{morphism of topological groupoids} if in addition $f_0$ and $f_1$ are
continuous functions.

If the source map of a topological groupoid $\sfG$ is an open
map one says that $\sfG$ is an \emph{open topological groupoid}. Note that as $i$ is a homeomorphism
and $t = s\circ i$, the target map of an open topological groupoid is open as well. A topological groupoid  $\sfG$
for which the source map $s$ is a local homeomorphism is called an \emph{\'etale groupoid}.
$\sfG$ is called a \emph{quasi-proper groupoid} if $s\times t \co \sfG_1\times\sfG_1 \to \sfG_0$ is a proper
map, and a \emph{proper groupoid} if it is Hausdorff and quasi-proper. Finally, we say that a topological
groupoid $\sfG$ is \emph{compact} or \emph{locally compact} if the topological space $\sfG_1$
has the respective property.  Note that then $\sfG_0$ is  compact or locally compact, respectively, as well by
the proof of \cite[Prop.~2.5]{TuNonHausGpoid}.

\begin{remark}
 In this paper we follow the definitions of quasi-compact, compact, and locally compact spaces by
 Bourbaki \cite{BouGTC1-4}.
 That means, a topological space $X$ is called \emph{quasi-compact} if every open cover of $X$
 admits a finite subcover, \emph{compact} if $X$ is both Hausdorff and quasi-compact, and finally
 \emph{locally compact} if it is Hausdorff and every point $x\in X$ has a compact neighborhood.
 If each point of a topological space $X$ possesses a Hausdorff (respectively quasi-compact) neighborhood,
 we say that $X$ is \emph{locally Hausdorff} (respectively \emph{locally quasi-compact}).
 A continuous map
 $f\co X \to Y $ between not necessarily Hausdorff spaces $X$ and $Y$ is called \emph{proper}
 if $f\times \id_Z \co X \times Z \to Y \times Z$ is a closed map for every topological space $Z$,
 cf.~\cite[I.10.1.~Def.~1]{BouGTC1-4} or \cite[Sec.~1.3]{TuNonHausGpoid}.
 By \cite[I.10.2.~Thm.~1]{BouGTC1-4}, properness of $f$ is equivalent to the property that $f$ is
 closed and $f^{-1}(y)$ is quasi-compact for each $y\in Y$.
\end{remark}

\begin{proposition}
\label{prop:BasicPropTopGroupoids}
  Let $\sfG$ be a topological groupoid.
  Then the following holds true:
  \begin{enumerate}[{\rm (1)}]
  \item  If $\sfG$ is proper and the object space $\sfG_0$ is locally compact, then $\sfG$ is a
         locally compact groupoid.
  \item  \label{it:OpMap} The quotient map $\pi\co \sfG_0 \to |\sfG|$ is an open map of topological spaces
         if $\sfG$ is an open groupoid.
  \item  \label{it:HausdorffOrbitSp} The orbit space of $\sfG$ is Hausdorff if $\sfG$ is open,
         $\sfG_0$ is locally compact, and
         $(s,t)(\sfG_1)$ is closed in $\sfG_0 \times \sfG_0$. In particular, this is the case if
         $\sfG$ is a proper open groupoid.
  \item  \label{it:LocCpctOrbitSp} The orbit space of $\sfG$ is locally compact if
         $\sfG$ is a locally compact open groupoid
         and $(s,t)(\sfG_1)$ is locally closed in $\sfG_0 \times \sfG_0$.
  \end{enumerate}
\end{proposition}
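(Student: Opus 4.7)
The plan is to dispatch (1)--(3) via short topological arguments and to reduce (4) to (3) by a localization. For (1), the key input is properness of $(s,t)\co\sfG_1\to\sfG_0\times\sfG_0$. Given $\arr{g}$, I pick a compact neighborhood $K$ of $(s(\arr{g}),t(\arr{g}))$ in the locally compact Hausdorff space $\sfG_0\times\sfG_0$; by properness the preimage $(s,t)^{-1}(K)$ is quasi-compact, and since properness of $\sfG$ includes Hausdorffness of $\sfG_1$ it is in fact compact. It contains the open neighborhood $(s,t)^{-1}(\operatorname{int}K)$ of $\arr{g}$. For (2), I would use the identity $\pi^{-1}(\pi(U))=t(s^{-1}(U))$: continuity of $s$ makes $s^{-1}(U)$ open, and the hypothesis that $s$ is open together with the identity $t=s\circ i$ (in which $i$ is a self-homeomorphism) makes $t$ open, so $t(s^{-1}(U))$ is open. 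For (3), I would invoke the standard principle that a surjective continuous open map has Hausdorff image iff its graph equivalence relation is closed: for $[\obj{x}]\neq[\obj{y}]$ separate $(\obj{x},\obj{y})$ from the closed set $R=(s,t)(\sfG_1)$ by a product open box $U\times V$, and turn $\pi(U),\pi(V)$ into disjoint open neighborhoods of $[\obj{x}],[\obj{y}]$ via (2). The final sentence of (3) is immediate because for a proper open groupoid $(s,t)$ is a closed map, so $R$ is closed.

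For (4), I would reduce to (3) by localization. Given $\obj{x}\in\sfG_0$, local closedness of $R$ at the diagonal point $(\obj{x},\obj{x})\in R$ furnishes an open neighborhood on which $R$ is relatively closed, which after shrinking has the form $U\times U$ for an open $U\ni\obj{x}$ containing a compact neighborhood $K$ of $\obj{x}$. The restricted groupoid $\sfH:=\sfG|_U$ is again open and locally compact, and its orbit relation $R\cap(U\times U)$ is closed in $U\times U$, so by (3) the orbit space $|\sfH|$ is Hausdorff. Any arrow between two points of $U$ already belongs to $\sfH$, so the natural map $|\sfH|\to\pi(U)$ is a bijection, and openness of both quotient projections makes it a homeomorphism; by (2), $\pi(U)$ is open in $|\sfG|$. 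Finally, $\pi(K)$ is quasi-compact as a continuous image of $K$, closed and hence compact in the Hausdorff open subspace $\pi(U)$, and contains the open neighborhood $\pi(\operatorname{int}K)$ of $[\obj{x}]$.

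The main obstacle is the global Hausdorff aspect in (4). Local closedness of $R$ alone does not obviously force $|\sfG|$ to be globally Hausdorff, yet Bourbaki's definition of local compactness adopted in the paper demands precisely that. The argument above produces local Hausdorffness (via the open neighborhoods $\pi(U)$) together with a compact neighborhood of each orbit, which suffices provided one either reads local compactness in the sense ``each point has a compact neighborhood'' or verifies global Hausdorffness by a separate patching argument. A further subtlety to check carefully is the identification $|\sfH|\cong\pi(U)$, which rests on the observation that the two orbit equivalence relations on $U$ coincide.
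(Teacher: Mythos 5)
Your arguments for (1)--(3) are correct. For (1) you do essentially what the paper does (the paper takes $K\times L$ with $K,L$ compact neighborhoods of $s(\arr{g})$ and $t(\arr{g})$; you take a compact neighborhood of the pair, which amounts to the same thing). For (2)--(4) the paper gives no argument at all and simply cites Propositions 2.11 and 2.12 of Tu's paper on non-Hausdorff groupoids, so your direct proofs --- the saturation identity $\pi^{-1}(\pi(U))=t(s^{-1}(U))$ for (2), the standard ``an open quotient is Hausdorff iff the relation is closed'' for (3), and the localization to a box $U\times U$ on which $(s,t)(\sfG_1)$ is relatively closed for (4) --- constitute a self-contained alternative. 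The identification $|\sfG_{|U}|\cong\pi(U)$ that you flag does hold: an arrow of $\sfG$ between two points of $U$ is already an arrow of $\sfG_{|U}$, so both spaces are quotients of $U$ by the same equivalence relation under open (hence quotient) maps.

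The reservation you raise about (4) is genuine, and it cannot be patched: for an open quotient map, Hausdorffness of the image is \emph{equivalent} to closedness of the relation, and locally closed does not imply closed. Concretely, take $\sfG_0=\R\sqcup\R$ and let $\sfG_1\subset\sfG_0\times\sfG_0$ be the equivalence relation identifying the two copies away from the origin, with source and target the two projections. This is a Hausdorff, locally compact, open topological groupoid; $(s,t)(\sfG_1)$ is locally closed but not closed (it is missing the pair of the two origins, which lies in its closure); and $|\sfG|$ is the line with two origins --- locally Hausdorff, with a compact neighborhood of every point, but not Hausdorff. So the conclusion of (4) must be read in the sense of the cited source, where ``locally compact'' for a locally Hausdorff space means that every point has a compact (Hausdorff quasi-compact) neighborhood, rather than in the strict Bourbaki sense recorded in the paper's preceding remark. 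With that reading your localization argument is complete and proves exactly what Tu's Proposition 2.12 delivers; no separate patching argument for global Hausdorffness exists or is needed.
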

\begin{proof}
  Let $\arr{g} \in \sfG_1$, $K$ be a compact neighborhood of $s(\arr{g})$, and $L$ be a compact neighborhood of $t(\arr{g})$.
  Then $(s,t)^{-1} (K\times L)$ is a compact neighborhood of $\arr{g}$. Since $\sfG_1$ is Hausdorff, the
  first claim is proved. Point (\ref{it:OpMap}) is an immediate consequence of \cite[Prop.~2.11]{TuNonHausGpoid}.
  The claims (\ref{it:HausdorffOrbitSp}) and (\ref{it:LocCpctOrbitSp}) follow from
  \cite[Prop.~2.12]{TuNonHausGpoid}.
\end{proof}

\subsection{Differentiable groupoids}

We now consider topological groupoids endowed with a differentiable structure as defined in
Appendix \ref{app:DiffStratSpaces}.

\begin{definition}
\label{def:DiffGpoid}
Let $\sfG$ be an open topological groupoid. We say that $\sfG$ is a \emph{differentiable groupoid}
if $\sfG_0$ and $\sfG_1$ are differentiable spaces and the structure maps
$s$, $t$, $i$, $u$, and $m$ are morphisms of differentiable spaces.  We say
that $\sfG$ is a \emph{reduced differentiable groupoid} if $\sfG_0$ and $\sfG_1$ are reduced differentiable spaces.
A morphism of topological groupoids  $f\co\sfG\to\sfH$ between differentiable groupoids $\sfG$ and $\sfH$ is called
a \emph{morphism of differentiable groupoids} if the functions $f_0$ and $f_1$ are both morphisms of
differentiable spaces.
\end{definition}

The requirement that the structure maps $s\co\sfG_1\to\sfG_0$ and $t\co\sfG_1\to\sfG_0$
are morphisms of the differentiable spaces $\sfG_0$ and $\sfG_1$ implies  by
\cite[Thm.~7.6]{NGonzalezSanchoBook} that the fibered product
$\sfG_1\sttimes\sfG_1$ inherits the structure of a differentiable space. It is with respect to this structure
that we require that $m\co\sfG_1\sttimes\sfG_1\to\sfG_1$ is a morphism
of differentiable spaces. As the inverse map $i\co\sfG_1\to\sfG_1$ is clearly invertible
with $i^{-1}=i$, it follows that $i$ is an isomorphism of differentiable spaces. Similarly,
$u\co\sfG_0\to\sfG_1$ is an embedding of differentiable spaces.

\begin{remark}
\label{rem:defDiffGroupoidTop}
If $\sfG$ is a differentiable groupoid, the underlying topological spaces
$\sfG_0$ and $\sfG_1$ need not be Hausdorff. However,
because both carry the structure of a differentiable space, $\sfG_0$ and $\sfG_1$ are locally Hausdorff and
locally quasi-compact, see \cite[Chap.~4]{NGonzalezSanchoBook}.
\end{remark}

\begin{example}
\label{ex:DiffGroupoids}
\begin{enumerate}[(a)]
\item
\label{ex:DiffTranslationGpoid}
Let $G$ be a compact Lie group and let $X$ be a Hausdorff differentiable
space with a differentiable $G$-action,
cf.~\cite[Sec.~11.2]{NGonzalezSanchoBook}.
Then the transformation groupoid $G\ltimes X$ is a differentiable groupoid with space of objects $X$ and space of arrows
$G \times X$, the latter being a differentiable space by \cite[Thm.~7.6]{NGonzalezSanchoBook}.
The source map $s\co G \times X \to X$ is the projection, hence smooth and open, and
the target map $t\co G \times X \to X$ is given by
$(g, x) \mapsto gx$, which is differentiable by the definition of a differentiable action.
The unit map $u\co x \mapsto (e, x)$ is easily seen to be a closed embedding since $\{ e \}$ is closed
in $G$.  The domain of the product map $m$ is a differentiable subspace of $G\times X \times G \times X$
by \cite[Thm.~7.6]{NGonzalezSanchoBook} and the proof of \cite[Rem.~7.5]{NGonzalezSanchoBook}.
Moreover, the map $G\times G \times X \to (G\times X) \sttimes (G \times X)$,
$(g,h,x) \mapsto \big((g, hx), (h, x)\big)$
is an isomorphism of differentiable spaces onto the domain of $m$.  Since the pullback of $m$
by this isomorphism is the smooth map $G \times G \times X \to G \times X$, $(g, h, x) \mapsto (gh, x)$,
multiplication in $G\ltimes X$  is smooth. Similarly one shows that the inverse map
$i \co G \times X \to G \times X$, $(g, x) \mapsto (g^{-1}, gx)$ is smooth.

By definition, the quotient space $X/G$ of the $G$-space $X$ coincides with the orbit space
of the groupoid $|G\ltimes X|$. Hence, if $X$ is Hausdorff, the orbit space becomes
a Hausdorff space by Prop.~\ref{prop:BasicPropTopGroupoids} and
even is a differentiable space by \cite[Thm.~11.17]{NGonzalezSanchoBook} when equipped with the sheaf
of $G$-invariant smooth functions on $X$ as the structure sheaf.
\item
  Let $X$ be a  Hausdorff  differentiable space. Then the pair groupoid $X\times X \rightrightarrows X$
  with source and target maps being projection onto the first and second factor, respectively, is a differentiable groupoid.
  It has only one orbit, hence the orbit space is naturally a differentiable space as well.
\item
In general, the orbit space $|\sfG |$ of a differentiable
groupoid $\sfG$ need not admit a differentiable structure with respect to which the quotient
map $\sfG_0 \to |\sfG |$ is a smooth map. As a well known example, consider the
action of $\Z$ on the circle $S^1$ by an irrational rotation. Then the translation groupoid
$\Z\ltimes S^1$ is a (non-quasi-proper) differentiable groupoid whose orbit space $|\Z\ltimes S^1|$
is not locally Hausdorff. Hence $|\Z\ltimes S^1|$ does not carry the structure of a differentiable space.
\end{enumerate}
\end{example}

Let $\sfG$ be a reduced differentiable groupoid and $Y$ a differentiable subspace of $\sfG_0$.
A \emph{bisection of $\sfG$ over $Y$} then is a continuous map $\sigma\co Y\to \sfG_1$ such that
$s\circ\sigma = \id_Y$, and such that $t\circ\sigma$ is an isomorphism of $Y$ onto the differentiable
subspace $t\circ\sigma(Y)$ of $\sfG_0$, cf.~\cite[Def.~1.4.8]{MackenzieGenThry}.
Note that we will consider bisections over sets $Y$ which may not be open in $\sfG_0$.

In the case that $\sfG$ is proper, the question of whether $|\sfG |$ admits a
differentiable structure remains open in general.
However, from Example \ref{ex:DiffGroupoids} (\ref{ex:DiffTranslationGpoid}), it is clear that the
quotient map of the action groupoid of a differentiable group action on a Hausdorff differentiable space
is a morphism of differentiable spaces. Many of the examples we consider will be of this form locally, which
will allow us to similarly conclude that $|\sfG |$ admits a compatible differentiable space structure;
see Proposition \ref{prop:LocTransDiffOrbitSpace}.
Because of their importance for our further considerations and their interesting topologial structure
we characterize such groupoids in Definition \ref{def:LocTransDiffGroupoid}, below.
To be able to formulate it, we need the following observation.

\begin{proposition}
Let $\sfG$ be a proper reduced differentiable groupoid. Then for every $\obj{x}\in \sfG_0$
the following holds true.
\begin{enumerate}[{\rm (LT1)}]
\setcounter{enumi}{-1}
\item \label{it:IsotropyLie}
    The isotropy group $\sfG_\obj{x}$ becomes a compact Lie group with
    the induced topological and differentiable structures.
\end{enumerate}
\end{proposition}
\begin{proof}
  The isotropy group $\sfG_\obj{x}$ is compact by properness of the groupoid. Hence it has finitely many connected components.
  By \cite[Cor.~4.2]{SpallekCTG},  $\sfG_\obj{x}$ becomes a Lie group with the induced reduced  differentiable structure.
\end{proof}

\begin{definition}
\label{def:LocTransDiffGroupoid}
We say that a proper reduced differentiable groupoid $\sfG$ is \emph{sliceable} if
the following conditions are satisfied for every $\obj{x} \in \sfG_0$.
\begin{enumerate}[{\rm (LT1)}]
\item \label{it:ExistenceSlice}
    Locally, the groupoid is isomorphic to the product of a translation groupoid with a pair groupoid.
    More precisely, there is an open Hausdorff neighborhood $U_\obj{x}\subset\sfG_0$ of $\obj{x}$ and a
    relatively closed  connected reduced differentiable subspace $Y_\obj{x} \subset U_\obj{x}$ containing $\obj{x}$
    together with a smooth $\sfG_\obj{x}$-action on $Y_\obj{x}$  such that $\obj{x}$ is a fixed point of the
    $\sfG_\obj{x}$-action and such that the restriction $\sfG_{|U_\obj{x}}$ is isomorphic as a differentiable groupoid to
    the product of the translation groupoid $G_\obj{x}\ltimes Y_\obj{x}$ and the pair groupoid
    $O_\obj{x} \times O_\obj{x}$, where $O_\obj{x}$ is an open neighborhood of $\obj{x}$ in its orbit.
    Such a neighborhood $U_\obj{x}$ is called a \emph{trivializing neighborhood} of $\obj{x}$, and
    $Y_\obj{x}$ a \emph{$\sfG$-slice} or a \emph{groupoid-slice} at $\obj{x}$.
\item \label{it:CompatibilitySlices}
    For each $\obj{z} \in Y_\obj{x}$, one may choose
    a trivializing neighborhood $U_\obj{z}$ of $\obj{z}$
    with corresponding $\sfG$-slice $Y_\obj{z}$
    such that $U_\obj{z} \subset U_\obj{x}$ and $Y_\obj{z} \subset Y_\obj{x}$.
\item \label{it:ExistenceBisections}
  The trivializing neighborhoods and $G$-slices of two objects
  $\obj{x}$ and $\obj{y}$ in the same orbit can be chosen to be compatible with
  translation by a smooth bisection in the following sense.
  For each arrow $\arr{g} \in \sfG$ with $s(\arr{g}) =\obj{x}$
  and $\obj{y} = t(\arr{g})$ there exist trivializing neigborhoods
  $U_\obj{x}$ of $\obj{x}$ and $U_\obj{y}$ of $\obj{y}$ and a smooth
  bisection  $\sigma$ over $U_\obj{x}$ such that
  $\sigma(\obj{x}) = \arr{g}$ and such that the morphism of differentiable
  groupoids $f \co \sfG_{|U_\obj{x}} \to \sfG_{|U_\obj{y}} $ with components
  $f_0 := t\circ\sigma\co U_\obj{x} \to U_\obj{y}$ and
  $f_1 \co \sfG_{|U_\obj{x},1} \to \sfG_{|U_\obj{y},1}$, $\arr{h} \mapsto \sigma (t(\arr{h}))  \arr{h}(\sigma (s(\arr{h})))^{-1} $
     is an isomorphism.
\end{enumerate}
%
\end{definition}

\begin{example}
  Every proper Lie groupoid is sliceable by e.g.~\cite[Thm.~2.3]{Zung06} or \cite[Cor.~3.11]{PflPosTanGOSPLG}.
  In Section \ref{sec:InertiaGpoid} we show that the inertia groupoid of a proper Lie groupoid is
  always sliceable even when it is singular; see Theorem \ref{thrm:InertiaDeRham}.
\end{example}

\begin{remark}
Clearly, a non-quasi-proper reduced differentiable groupoid may have
non-compact isotropy groups and hence fail to be sliceable;
the groupoid $\Z\ltimes\{pt\}$ is such an example. An interesting question which
we could not answer in this paper is whether a proper reduced differentiable groupoid is necessarily
sliceable. In other words, this question essentially asks whether there is a slice theorem for proper
reduced differentiable groupoids. We are not aware of a proper reduced differentiable groupoid which is not sliceable.

As noted above, a related open question is whether the orbit space of a proper (reduced)
differentiable groupoid inherits a (reduced) differentiable structure in such a way that the orbit map is a
morphism of differentiable spaces. This would follow in the reduced case from a slice theorem as above,
as it holds for sliceable differentiable groupoids by
Proposition \ref{prop:LocTransDiffOrbitSpace} below.
\end{remark}

\begin{remark}
\label{remark-equivariant-slice-chart}
By the following observation one can assume, possibly after shrinking, that a $\sfG$-slice $Y_\obj{x}$ of
$\obj{x}$ possesses a $\sfG_\obj{x}$-equivariant singular chart
$\iota \co Y_\obj{x} \hookrightarrow T_\obj{x}Y_\obj{x} \cong \R^{\operatorname{rk}\obj{x}}$ with $\iota
(\obj{x})=0$; see Appendix \ref{def:DiffSpace} for the definition of a singular chart
and Appendix \ref{SectionTangentSpace} for the definition of the Zariski tangent space $T_\obj{x}Y_\obj{x}$.
Note that $T_\obj{x}Y_\obj{x}$ carrries a natural  $\sfG_\obj{x}$-action inherited from the one on $Y$.
\end{remark}
\begin{proposition}
\label{prop:equivariant-embedding_zariski-space}
  Let $G$ be a compact Lie group, $Y$ a reduced differentiable space
  carrying a differentiable $G$-action, and $x\in Y$ a fixed point.
  Then the Zariski tangent space $T_xY$ inherits a natural  $G$-action
  from the $G$-action on $Y$.
  Moreover, there exists an open $G$-invariant neighborhood $W$ of $x$ in $Y$ and a $G$-equivariant singular chart
  $\iota: W \hookrightarrow T_xY$ mapping $x$ to the origin.
\end{proposition}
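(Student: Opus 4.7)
The natural $G$-action on $T_xY$ arises from the functoriality of the Zariski tangent space: for each $g\in G$ the left-translation $L_g\co Y\to Y$ is an automorphism of differentiable spaces, and since $x$ is fixed it induces a linear automorphism $T_xL_g$ of the Zariski tangent space. Smoothness of the action $G\times Y\to Y$ implies that $g\mapsto T_xL_g$ is a continuous homomorphism $G\to\AUT(T_xY)$, making $T_xY$ into a finite-dimensional $G$-representation. The remaining task is to produce a $G$-equivariant singular chart into $T_xY$, and the plan is to first build a non-equivariant one and then symmetrize.

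For the non-equivariant construction, pick germs $\xi_1,\dots,\xi_n\in\mfrak_x$ whose classes form a basis of $\mfrak_x/\mfrak_x^2 = T_xY^*$, and let $\eta_1,\dots,\eta_n\in T_xY$ be the dual basis. On a small Hausdorff open neighborhood $W_0$ of $x$ on which the germs are defined, set
\[
  \iota_1(y) := \sum_{i=1}^{n} \xi_i(y)\,\eta_i \in T_xY.
\]
Then $\iota_1(x)=0$ and a short computation using the dual basis relation gives $T_x\iota_1 = \id_{T_xY}$. By the standard inverse-function-type result for differentiable spaces (cf.~\cite{NGonzalezSanchoBook}), a morphism whose Zariski tangent map at a point is an isomorphism is a closed embedding in a neighborhood of that point, so after shrinking $W_0$ we may assume that $\iota_1\co W_0\hookrightarrow T_xY$ is a singular chart. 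Since $x$ is fixed and $G$ is compact, we may additionally shrink $W_0$ to be $G$-invariant by passing to $G\cdot V$ for a small enough nucleus $V\ni x$.

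Now define the averaged map
\[
  \iota\co W_0 \to T_xY, \qquad \iota(y) := \int_G g^{-1}\cdot\iota_1(g\cdot y)\,dg,
\]
using normalized Haar measure on $G$. Translation-invariance of Haar measure yields $G$-equivariance, and $\iota(x)=0$ follows from $\iota_1(x)=0$. Smoothness of $\iota$ as a morphism of differentiable spaces follows from the fact that the integrand $(g,y)\mapsto g^{-1}\iota_1(gy)$ is a morphism $G\times W_0\to T_xY$ which extends locally to an ambient Euclidean space, and integration over the compact group $G$ of such a local $C^\infty$ extension preserves smoothness. Differentiating under the integral at the fixed point $x$ and using $T_x\iota_1=\id$ gives
\[
  T_x\iota = \int_G g^{-1}\circ T_x\iota_1\circ T_xL_g\,dg = \int_G g^{-1}\circ g\,dg = \id_{T_xY}.
\]
Invoking the inverse-function-type statement once more, $\iota$ is a singular chart on some smaller neighborhood of $x$, and one last $G$-saturation makes that neighborhood $W$ invariant. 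The resulting $\iota\co W\hookrightarrow T_xY$ is the desired $G$-equivariant singular chart sending $x$ to the origin. The chief technical points to check carefully are the smoothness of the averaged map as a morphism of differentiable spaces (handled by the local extension argument together with smooth dependence of parameter integrals) and the inverse-function statement in the differentiable-space category, both standard consequences of the framework of \cite{NGonzalezSanchoBook}.
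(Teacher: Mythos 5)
Your proof is correct and is essentially the argument the paper relies on: the paper does not write the proof out but defers to Noguchi--Winkelmann, Lemma 5.2.6, which is exactly this Bochner-type linearization --- build a non-equivariant singular chart from generators of $\mathfrak{m}_x/\mathfrak{m}_x^2$, average over $G$ with Haar measure, and invoke the embedding criterion for differentiable spaces --- transported from the holomorphic to the smooth setting. The only step to phrase more carefully is the last one: rather than \emph{$G$-saturating} the neighborhood on which $\iota$ embeds (the saturation of a subset need not stay inside that neighborhood), use that a fixed point of a compact group action admits a basis of $G$-invariant open neighborhoods, and restrict $\iota$ to one of these contained in the embedding domain.
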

\begin{proof}
  By \cite[Prop.~1.3.10]{PflaumBook}  there exists a singular chart
  $\kappa : W \hookrightarrow \R^{\operatorname{rk} x} \cong T_xY$, where
  $\operatorname{rk} x = \operatorname{dim} T_xY$.
  Since $x$ is assumed to be a fixed point,
  one can achieve after possibly shrinking $W$ that $W$ is $G$-invariant.
  Now the proof is literally identical to the proof of
  \cite[Lem.~5.2.6]{NogWinNTSCVDA} when replacing ``holomorphic'' with
  ``smooth''.
\end{proof}

\begin{proposition}
\label{prop:LocTransDiffOrbitSpace}
Let $\sfG$ be a sliceable differentiable groupoid.  Then the orbit space
$|\sfG|$ inherits the structure of a differentiable space with respect to which
the quotient map $\pi\co \sfG_0 \to |\sfG|$ is a smooth map.  Specifically, the structure
sheaf of $|\sfG|$ is given by the differentiable functions on $\sfG_0$ that are constant
on orbits.
\end{proposition}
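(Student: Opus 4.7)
The plan is to equip $|\sfG|$ with a candidate structure sheaf defined intrinsically via the quotient map, then to verify the differentiable-space axiom locally by identifying each orbit-space chart with the quotient of a slice by the action of a Lie group, a case already handled by Example~\ref{ex:DiffGroupoids}(\ref{ex:DiffTranslationGpoid}).

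First, since $\sfG$ is open, Proposition~\ref{prop:BasicPropTopGroupoids}(\ref{it:OpMap}) tells us that $\pi\co\sfG_0\to|\sfG|$ is an open continuous surjection, so for each open $V\subseteq|\sfG|$ the preimage $\pi^{-1}(V)$ is open in $\sfG_0$. I would define
$$
\calO_{|\sfG|}(V):=\{\,f\in\calC(V,\R)\,:\,\pi^*f\in\calC^\infty_{\sfG_0}(\pi^{-1}(V))\,\};
$$
this is manifestly a sheaf of $\R$-algebras, and by construction the sections are precisely the continuous functions whose pullbacks are smooth and $\sfG$-invariant. With this definition $\pi$ is automatically a morphism of locally ringed spaces, so once we know $\calO_{|\sfG|}$ is a differentiable structure, smoothness of $\pi$ is free.

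Second, I would check the differentiable-space axiom on each chart. For $\obj{x}\in\sfG_0$, conditions (LT\ref{it:IsotropyLie})--(LT\ref{it:ExistenceSlice}) give a trivializing neighborhood $U_\obj{x}$ with $\sfG_{|U_\obj{x}}\cong(G_\obj{x}\ltimes Y_\obj{x})\times(O_\obj{x}\times O_\obj{x})$. The pair-groupoid factor identifies all of $O_\obj{x}$ into a single orbit class, so as topological spaces $\pi(U_\obj{x})\cong Y_\obj{x}/G_\obj{x}$. By Example~\ref{ex:DiffGroupoids}(\ref{ex:DiffTranslationGpoid}) and \cite[Thm.~11.17]{NGonzalezSanchoBook}, the latter is a Hausdorff differentiable space whose structure sheaf consists of the $G_\obj{x}$-invariant smooth functions on $Y_\obj{x}$. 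A short unwinding shows this coincides with the restriction of $\calO_{|\sfG|}$ to $\pi(U_\obj{x})$, because a continuous function on $\pi(U_\obj{x})$ lifts to a smooth function on $U_\obj{x}$ precisely when its further lift to $Y_\obj{x}$ is $G_\obj{x}$-invariant and smooth, the $O_\obj{x}\times O_\obj{x}$-factor contributing only trivial projection data.

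Third, one must verify that these local models fit together. If $U_\obj{x}$ and $U_\obj{x'}$ overlap at a point $\obj{z}$, I would apply (LT\ref{it:CompatibilitySlices}) twice to produce a common $\sfG$-slice $Y_\obj{z}\subset Y_\obj{x}\cap Y_\obj{x'}$, so that both local differentiable structures restrict to the known quotient structure on $Y_\obj{z}/G_\obj{z}$, which agrees with $\calO_{|\sfG|}$ there. To compare charts at different points on the same orbit, I would invoke (LT\ref{it:ExistenceBisections}): the bisection $\sigma$ with $\sigma(\obj{x})=\arr{g}$ induces an isomorphism of groupoid restrictions that descends to a homeomorphism of orbit neighborhoods compatible with $\calO_{|\sfG|}$, because the latter is defined purely in terms of $\pi$ and smoothness on $\sfG_0$. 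Consequently the local differentiable structures glue and give $|\sfG|$ the structure of a differentiable space; the global sections statement in the proposition then follows by taking $V=|\sfG|$ in the defining formula. The main technical obstacle is the overlap argument: although the intrinsic definition of $\calO_{|\sfG|}$ makes consistency automatic at the sheaf level, one still has to confirm that the two local Lie-group-quotient models on a shared slice produce genuinely isomorphic differentiable space structures, which is where the compatibility axioms (LT\ref{it:CompatibilitySlices}) and (LT\ref{it:ExistenceBisections}) are essential.
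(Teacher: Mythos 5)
Your proposal is correct and follows essentially the same route as the paper's own proof: define the structure sheaf of $|\sfG|$ intrinsically as continuous functions pulling back to $\sfG$-invariant smooth functions, identify a neighborhood of each orbit with $|G_\obj{x}\ltimes Y_\obj{x}|$ via (LT\ref{it:ExistenceSlice}) and Example~\ref{ex:DiffGroupoids}(\ref{ex:DiffTranslationGpoid}), and invoke (LT\ref{it:CompatibilitySlices}) and (LT\ref{it:ExistenceBisections}) for the compatibility of the local models. Your write-up simply spells out the overlap argument in more detail than the paper does.
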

\begin{proof}
We define the structure sheaf of $|\sfG|$ to be the sheaf of continuous functions
on $|\sfG|$ which pull back under the projection $\pi$ to $\sfG$-invariant smooth functions
on $\sfG_0$.
For each orbit $\sfG \obj{x}\in |\sfG|$, we may use condition (LT\ref{it:ExistenceSlice})
in Definition \ref{def:LocTransDiffGroupoid} to identify a neighborhood of
$\sfG \obj{x}$ with $|G_\obj{x}\ltimes Y_\obj{x}|$, a Hausdorff differentiable space as explained in
Example \ref{ex:DiffGroupoids} (\ref{ex:DiffTranslationGpoid}).
That these local identifications are well defined and isomorphisms of Hausdorff differentiable
spaces are consequences of (LT\ref{it:CompatibilitySlices}) and (LT\ref{it:ExistenceBisections}).
\end{proof}

\subsection{Differentiable stratified groupoids}
\label{Sec:DiffStratGrpoid}

Recall that by a \emph{stratified submersion} (respectively \emph{stratified immersion}), one understands
a morphism $f\co X\to Y$ of reduced differentiable stratified spaces such that the restriction of $f$
to a connected component of a stratum of the maximal decomposition of $X$ is a submersion
(respectively immersion), cf.~\cite[1.2.10]{PflaumBook}. A \emph{stratified surjective submersion}
is a stratified submersion that maps connected components of strata onto connected components
of strata, and a \emph{stratified embedding} is a stratified immersion that is injective
on connected components of strata.

We will now state the definition of a differentiable stratified groupoid, the primary object
of investigation in this paper. Let us first give a brief explanation of this definition.
A differentiable stratified groupoid is a reduced differentiable groupoid such that
the spaces of objects and arrows are differentiable stratified spaces, required by condition
(DSG\ref{it:DSGStratSpaces}), and the structure maps are stratified mappings, see condition
(DSG\ref{it:DSGStrucMaps}). In order for the requirement that multiplication
$m\co \sfG_1 \sttimes \sfG_1 \to \sfG_1$
is a stratified mapping to make sense, the fiber product $\sfG_1 \sttimes \sfG_1$ has to be a
stratified space. By condition (DSG\ref{it:DSGSubmersion}) we impose requirements on the structure maps
to ensure this;
see Remark \ref{rem:defDiffStratGpoid}(\ref{ite:DomainProdDiffStratGrpoid}).
Condition (DSG\ref{it:DSGStratSourceFib}) is a kind of equivariance
condition for the stratification. It entails that every set germ of a source
fiber is contained in a stratum of the arrow space, so that the stratification
of the arrow space is not unnecessarily fine. This will be used to show that
$\sfG$-orbits are locally contained in strata; see
Lemma \ref{lem:OrbitInStrata} below. Likewise, condition (DSG\ref{it:DSGStratSection})
requires that smooth bisections defined on open subsets of strata act on $\sfG_1$ in
a way compatible with its stratification.

\begin{definition}
\label{def:DiffStratGpoid}
A \emph{differentiable stratified groupoid} is a reduced differentiable group\-oid
$\sfG$ such that the following properties hold true.
\begin{enumerate}[{\rm (DSG1)}]
\item
        \label{it:DSGStratSpaces}
        $\sfG_0$ and $\sfG_1$ are differentiable stratified spaces with respective
        stratifications $\mathcal{S}^0$ and $\mathcal{S}^1$.
\item
        \label{it:DSGStrucMaps}
        The structure maps $s$, $t$, $i$, $u$, and $m$ are stratified mappings.
\item
        \label{it:DSGSubmersion}
        The maps $s$ and $t$ are stratified surjective submersions, and $u$ is a stratified embedding.
\item
        \label{it:DSGStratSourceFib}
        For every $\obj{x}\in \sfG_0$ and arrow $\arr{g} \in s^{-1} (\obj{x})$ the set germ $ [s^{-1}(\mathcal{S}_\obj{x}^0)]_\arr{g}$
        is a subgerm of $\mathcal{S}_\arr{g}^1$.
\item
        \label{it:DSGStratSection}
        Let $\obj{x}\in\sfG_0$, $\arr{g}\in\sfG_1$ with $t(\arr{g}) = \obj{x}$, and $U$ be an open
        connected neighborhood of $\obj{x}$ within the stratum of $\sfG_0$ containing $\obj{x}$.
        Assume that $\sigma\co U\to \sfG_1$ is
        a smooth bisection of $\sfG$.  Then the map
        $L_\sigma\co t^{-1}(U)\to\sfG_1$ defined by $\arr{h}\mapsto \sigma(t(\arr{h}))\arr{h}$
        satisfies $L_\sigma(\mathcal{S}_\arr{g}^1) = \mathcal{S}_{L_\sigma(\arr{g})}^1$.
\end{enumerate}

If $\sfG$ and $\sfH$ are differentiable stratified groupoids, a \emph{morphism of differentiable stratified groupoids}
is a morphism of differentiable groupoids $f\co\sfG\to\sfH$ such that $f_0$ and $f_1$ are in addition
stratified mappings.

A differentiable stratified groupoid $\sfG$ is called a \emph{Lie groupoid}
if in addition the following axiom holds true.
\begin{enumerate}[{\rm (DSG1)}]
\setcounter{enumi}{5}
\item  \label{it:LieGroupoid}
        The stratifications $\mathcal{S}^0$ and  $\mathcal{S}^1$ are induced
        by $\sfG_0$ and  $\sfG_1$, respectively, which in other words means
        that both $\sfG_0$ and $\sfG_1$ are smooth manifolds and their
        stratifications have only one stratum.
\end{enumerate}

A \emph{structurally (b)-regular} differentiable stratified groupoid is a differentiable stratified groupoid $\sfG$ such that
the stratifications of $\sfG_0$ and $\sfG_1$, as well as the induced
stratification of $\sfG_1 \sttimes\sfG_1$, are Whitney (b)-regular. Similarly,
$\sfG$ is \emph{structurally locally trivial} if
the stratifications of $\sfG_0$, $\sfG_1$, and the induced stratification of
$\sfG_1 \sttimes\sfG_1$ are topologically locally trivial
(cf.~\cite[Sec.~1.4]{PflaumBook}, Def.~\ref{def:TopLocTriv}).
\end{definition}

\begin{remark}
\label{rem:defDiffStratGpoid}
\begin{enumerate}[(a)]
\item
\label{ite:AddProps}
In the definition, $[s^{-1}(\mathcal{S}_\obj{x}^0)]_\arr{g}$ of course means the set germ of
$s^{-1}(S)$ at $\arr{g}$, where $S$ is a set defining the germ $\mathcal{S}_\obj{x}^0$ at
$\obj{x}$. Note that a smooth bisection $\sigma\co S\supset U \to \sfG_1$ as in (DSG\ref{it:DSGStratSection}) has image in the
stratum of $\sfG_1$ through $\arr{g} = \sigma (\obj{x})$ by (DSG\ref{it:DSGStrucMaps}) and (DSG\ref{it:DSGStratSourceFib}).
The existence of such bisections $\sigma$ is demonstrated by
Lemma \ref{lem:BisectionsExist} and Corollary \ref{cor:ComponentsSameDim}
below. 
\item
Though $\sfG_0$ and $\sfG_1$ are locally Hausdorff spaces, we do not require that they are Hausdorff.
See Appendix \ref{app:DiffStratSpaces} for stratifications
of locally Hausdorff spaces.
\item
One readily checks that Definition \ref{def:DiffStratGpoid} reduces to the standard definition of a
Lie groupoid if (DSG\ref{it:LieGroupoid}) is fulfilled. Observe that in the case of a Lie groupoid,
conditions (DSG\ref{it:DSGStratSourceFib}) and (DSG\ref{it:DSGStratSection}) become trivial.
\item
\label{ite:DomainProdDiffStratGrpoid}
We will always let $\sfG_1 \sttimes \sfG_1$ carry the stratification induced by the
stratification of $\sfG_1$, the existence of which is guaranteed by
Lemma \ref{lem:InducedStrat}.
\end{enumerate}
\end{remark}

Our definition of a structurally (b)-regular differentiable stratified groupoid is stronger than the one of a
stratified Lie groupoid given in \cite[Def.~4.16]{FernandesOrtegaRatiu} in that we require conditions
(DSG\ref{it:DSGStratSourceFib}) and (DSG\ref{it:DSGStratSection}).
The following examples illustrate the kinds of behavior we preclude by
requiring these conditions.

\begin{example}
\label{ex:DSGCounterexamples}
\begin{enumerate}[(a)]
\item
\label{ite:CounterG0TooFine}
Consider the translation groupoid $\sfG = \sphere^1 \ltimes \sphere^1$ where the action is
by left-translation.
We decompose $\sfG_0 = \sphere^1$ into pieces $\{ 1\}$ and $\sphere^1\smallsetminus\{1\}$,
and $\sfG_1 = \sphere^1 \times \sphere^1$ into pieces $\{(1,1)\}$,
$\{ (a,1)\mid a\neq 1\}$, $\{(a, a^{-1})\mid a\neq 1\}$, and
$\{ (a,b)\mid  b\neq 1,\; a\neq b^{-1}\}$. Then it is immediate to check that
(DSG\ref{it:DSGStratSpaces}), (DSG\ref{it:DSGStrucMaps}), and
(DSG\ref{it:DSGSubmersion}) are satisfied.
However, condition (DSG\ref{it:DSGStratSourceFib}) fails, as
the germ of $s^{-1}(1) = \{ (a,1)\mid a\in \sphere^1 \}$ is not contained in
the stratum $\{(1,1)\}$. 
Note that $\sfG_0$ consists of a single connected orbit that is given an
``artificial" stratification that is too fine.
Of course, $\sfG$ is a Lie groupoid when given the trivial stratifications of $\sfG_0$
and $\sfG_1$.

\item
\label{ite:CounterG1TooFine}
Let $G$ be a compact Lie group of positive dimension and let $\sfG = G \ltimes \{p\}$
be the translation groupoid associated to the trivial action of $G$ on a one-point space.
Define a stratification of $\sfG_1 = G \times\{p\}$ by the decomposition into
$\{ (1,p)\}$ and $\{(g,p)\mid  g\neq 1\}$.
Obviously, the source and target maps are stratified submersions, the unit map
is a stratified embedding, and the inverse map is a stratified mapping. Though
the multiplication map $\sfG_1 \sttimes \sfG_1 = \sfG_1\times\sfG_1 \to \sfG_1$
given by $((g,p),(h,p))\mapsto(gh,p)$ is not a stratified mapping with respect
to the induced stratification, the space $\sfG_1 \sttimes \sfG_1$ does admit a
stratification with respect to which the multiplication map $m$ is stratified.
Specifically, we may decompose $\sfG_1 \sttimes \sfG_1$ into the pieces
\[
    \{((g^{-1},p),(g,p))\mid g \in G \}
    \quad\mbox{and}\quad
    \{((g,p),(h,p))\mid g\neq h^{-1}\},
\]
and then $m$ becomes stratified. In this case, while $\sfG_0$ consists of a
single stratum, the stratification of $\sfG_1$ is ``too fine" so
that (DSG\ref{it:DSGStratSourceFib}) again fails;
the preimage $s^{-1}(p) = \sfG_1$ is not contained in the stratum $\{ (1,p)\}$
through $(1,p)$.

\item
\label{ite:CounterDSG5}
Let $\sfG$ be the pair groupoid on the topological disjoint union $\R\sqcup\{p\}$.
Give $\sfG_0$ and  $\sfG_1$ the stratifications
by connected components and natural differentiable structures. One checks that
(DSG\ref{it:DSGStratSpaces}), (DSG\ref{it:DSGStrucMaps}), (DSG\ref{it:DSGSubmersion}), and
(DSG\ref{it:DSGStratSourceFib}) are satisfied. However, let
$\sigma \co \{ p \} \to \sfG_1$ be the
smooth bisection with the single value $\sigma(p) = (0,p)$.
For any arrow of the form $(p,y)$ one has
$\sigma(t(p,y))(p,y) = (0,y)$. Hence the mapping $\arr{h} \mapsto \sigma(t(\arr{h}))\arr{h}$
maps the stratum $\{ (p,y)\mid y\in\R\}$ of $\sfG_1$ into the stratum $\R^2$
as the $y$-axis. Thus (DSG\ref{it:DSGStratSection}) fails.
\end{enumerate}
\end{example}
\noindent
We next collect some useful consequences of Definition \ref{def:DiffStratGpoid},
indicating how the set germs defining the stratifications of $\sfG_0$ and $\sfG_1$ are related via
the source and target maps of $\sfG$ and hence determine one another.
We hereby assume for the remainder of this section that  $\sfG$ denotes a
differentiable stratified groupoid.
\begin{lemma}
\label{lem:DSGTrivConsequences}
Let $\arr{g}\in\sfG_1$ with $s(\arr{g})=\obj{x}$ and $t(\arr{g}) = \obj{y}$. Then one has
\begin{align}
  \label{Ite1}  \mathcal{S}_\arr{g}^1 & = [s^{-1}(\mathcal{S}_\obj{x}^0)]_\arr{g},  \text{ i.e.~$\mathcal{S}^1$ is the
    pullback of $\mathcal{S}^0$ via $s$, \cite[(2.3)]{MatherStratMap}},\\
  \label{Ite2}  \mathcal{S}_\obj{x}^0 & =[s(\mathcal{S}_\arr{g}^1)]_\obj{x},  \\
  \label{Ite3}  \mathcal{S}_\arr{g}^1 & =[t^{-1}(\mathcal{S}_\obj{y}^0)]_\arr{g},\\
  \label{Ite4}  \mathcal{S}_\obj{y}^0 &= [t(\mathcal{S}_\arr{g}^1)]_\obj{y}, \text{ and}, \\
  \label{Ite5}  \mathcal{S}_\obj{y}^0 &= [t(s^{-1}(\mathcal{S}_\obj{x}^0))]_\obj{y} .
\end{align}
In particular, the stratifications $\mathcal{S}^0$ of $\sfG_0$ and $\mathcal{S}^1$ of $\sfG_1$ determine one another.
\end{lemma}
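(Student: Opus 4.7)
The proof rests on three structural inputs, which I would identify first: (i) the structure maps $s,t,i$ are stratified mappings by (DSG\ref{it:DSGStrucMaps}), so they send each stratum of the source stratification into a single stratum of the target; (ii) by (DSG\ref{it:DSGSubmersion}) the maps $s$ and $t$ are stratified \emph{surjective} submersions, meaning connected components of strata of $\sfG_1$ are carried \emph{onto} connected components of strata of $\sfG_0$; and (iii) axiom (DSG\ref{it:DSGStratSourceFib}) gives one of the two containments needed for \eqref{Ite1}. Throughout, I would write $X$ for the stratum of $\sfG_0$ through $\obj{x}$, $X'$ for the stratum through $\obj{y}$, and $Y$ for the stratum of $\sfG_1$ through $\arr{g}$, with subscript $0$ denoting the connected component containing the base point; in each case, these sets represent the corresponding germs $\mathcal{S}^0_\obj{x}$, $\mathcal{S}^0_\obj{y}$, $\mathcal{S}^1_\arr{g}$ in a small enough neighborhood.

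First I would prove \eqref{Ite1}. The inclusion $[s^{-1}(\mathcal{S}^0_\obj{x})]_\arr{g}\subset \mathcal{S}^1_\arr{g}$ is just (DSG\ref{it:DSGStratSourceFib}). For the reverse, since $s$ is stratified by (i), $s(Y)$ lies in the unique stratum of $\sfG_0$ containing $s(\arr{g})=\obj{x}$, namely $X$, so $Y\subset s^{-1}(X)$, giving $\mathcal{S}^1_\arr{g}\subset [s^{-1}(\mathcal{S}^0_\obj{x})]_\arr{g}$. Next, for \eqref{Ite2}, by (ii) the connected component $Y_0$ of $Y$ containing $\arr{g}$ is mapped by $s$ onto a connected component of a stratum of $\sfG_0$, and by (i) this stratum is $X$; it must be the component $X_0$ of $X$ containing $\obj{x}$. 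Passing to germs at $\obj{x}$ yields $[s(\mathcal{S}^1_\arr{g})]_\obj{x}=\mathcal{S}^0_\obj{x}$.

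Identities \eqref{Ite3} and \eqref{Ite4} follow by symmetric arguments using $t=s\circ i$. Here I would note that $i$ is an involutive morphism of differentiable stratified groupoids (hence a stratification-preserving self-homeomorphism of $\sfG_1$ that permutes strata); applying \eqref{Ite1} at $i(\arr{g})$, for which $s(i(\arr{g}))=\obj{y}$, and pushing the resulting equality of germs forward by $i$ produces \eqref{Ite3}. Identity \eqref{Ite4} is the verbatim analogue of \eqref{Ite2} with $t$ in place of $s$, which is legitimate since (DSG\ref{it:DSGSubmersion}) asserts both maps are stratified surjective submersions. Finally, \eqref{Ite5} is a formal composition: by \eqref{Ite1} and then \eqref{Ite4},
\[
[t(s^{-1}(\mathcal{S}^0_\obj{x}))]_\obj{y} \;=\; [t(\mathcal{S}^1_\arr{g})]_\obj{y} \;=\; \mathcal{S}^0_\obj{y}.
\]

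The argument is essentially bookkeeping; the only subtlety worth flagging is that one must invoke the \emph{surjective} part of ``stratified surjective submersion'' for \eqref{Ite2} and \eqref{Ite4}, since stratified submersivity alone would only produce the containment $[s(\mathcal{S}^1_\arr{g})]_\obj{x}\subset\mathcal{S}^0_\obj{x}$. Everything else reduces to the observation that strata partition the underlying space, so that any stratified map is forced to send the stratum through a point into the unique stratum through its image.
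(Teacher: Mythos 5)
Your proposal follows essentially the same route as the paper's proof: \eqref{Ite1} from (DSG\ref{it:DSGStratSourceFib}) together with the fact that $s$ carries the stratum through $\arr{g}$ into the stratum through $\obj{x}$, then \eqref{Ite3} and \eqref{Ite4} via $t=s\circ i$ and the involutivity of $i$, and \eqref{Ite5} by composing \eqref{Ite1} with \eqref{Ite4}. The structure is sound, and \eqref{Ite1}, \eqref{Ite3}, \eqref{Ite5} are handled correctly.

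There is, however, one misstep in your treatment of \eqref{Ite2} (and hence of \eqref{Ite4}), and it is exactly the point you single out in your closing paragraph -- but with the roles of the hypotheses inverted. The containment $[s(\mathcal{S}^1_\arr{g})]_\obj{x}\subset\mathcal{S}^0_\obj{x}$ already follows from $s$ being a stratified mapping; neither surjectivity nor submersivity is needed for it. What requires justification is the reverse containment, and global surjectivity of $s$ restricted to the connected component $Y_0$ onto $X_0$ does not deliver it: the germ $[s(\mathcal{S}^1_\arr{g})]_\obj{x}$ must be computed from the image of an arbitrarily small representative $R$ of $\mathcal{S}^1_\arr{g}$, and knowing $s(Y_0)=X_0$ says nothing about whether $s(R)$ fills out a neighborhood of $\obj{x}$ in $X_0$. (A surjective smooth map between manifolds can fail to be open at a point, in which case the image of a small neighborhood has strictly smaller germ than the target.) The ingredient that closes this gap is the submersion property: $s_{|Y_0}$ is a submersion, hence an open map, so $s(R)$ is an open neighborhood of $\obj{x}$ in $X_0$ and therefore has the same germ at $\obj{x}$ as $X_0$. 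This is precisely how the paper argues (``Since $s_{|R_\arr{g}}$ is a submersion, hence an open map, $s(V_\arr{g}\cap s^{-1}(U_\obj{x}))$ is an open neighborhood of $\obj{x}$ in $S_\obj{x}$''). So your phrase ``passing to germs at $\obj{x}$'' hides the real argument, and your diagnosis that ``submersivity alone would only produce the containment $[s(\mathcal{S}^1_\arr{g})]_\obj{x}\subset\mathcal{S}^0_\obj{x}$'' is backwards: it is the stratified-map property that gives that containment, and submersivity (openness) that upgrades it to equality. The fix is a one-line substitution, after which your proof coincides with the paper's.
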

\begin{proof}
Let $R_\arr{g}$ be the connected component of the stratum of $\sfG_1$ containing $\arr{g}$,
and let $S_\obj{x}$ and $S_\obj{y}$ be the connected components of the strata of $\sfG_0$
containing $\obj{x}$ and $y$, respectively. Then $s_{|R_\arr{g}}$ and $t_{|R_\arr{g}}$ are, respectively, surjective submersions onto $S_\obj{x}$ and $S_\obj{y}$.
By (DSG\ref{it:DSGStratSourceFib}), there exists a relatively open neighborhood
$U_\obj{x}$ of $\obj{x}$ in $S_\obj{x}$ and an open neighborhood $V_\arr{g}$ of $\arr{g}$ in $\sfG_1$ such
that $V_\arr{g}\cap s^{-1}(U_\obj{x}) \subset R_\arr{g}$. As $s_{|R_\arr{g}}$ is a smooth map onto $S_\obj{x}$,
$s^{-1}(U_\obj{x})$ is a relatively open neighborhood of $\arr{g}$ in $R_\arr{g}$,
proving \eqref{Ite1}. Since $s_{|R_\arr{g}}$ is a submersion, hence an open map,
$s(V_\arr{g}\cap s^{-1}(U_\obj{x}))$ is an open neighborhood of $\obj{x}$ in $S_\obj{x}$, which gives
\eqref{Ite2}. Then \eqref{Ite3} and \eqref{Ite4} follow from the fact that
$t = s\circ i$ and that $i$ is a stratified mapping with $i^2 = \id_{\sfG_1}$.
Finally, \eqref{Ite5} is a consequence of  \eqref{Ite1} and  \eqref{Ite4}.
\end{proof}

\begin{lemma}
\label{lem:OrbitInStrata}
Let $\sfG$ be a proper differentiable stratified groupoid and let
$\obj{x} \in \sfG_0$ be a point. Then the connected component of the orbit $\sfG\obj{x}$ containing $\obj{x}$ is contained in the stratum of $\sfG_0$ containing $\obj{x}$.
\end{lemma}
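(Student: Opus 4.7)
The plan is to prove that the map $\phi\co\sfG\obj{x}\to\{\textup{connected components of strata of }\sfG_0\}$ sending $\obj{y}$ to the connected component of its stratum is locally constant on $\sfG\obj{x}$ equipped with the subspace topology inherited from $\sfG_0$. Since the connected component $O_\obj{x}$ of $\obj{x}$ in $\sfG\obj{x}$ is connected and $\phi(\obj{x})=S_\obj{x}$, local constancy immediately implies that $\phi\equiv S_\obj{x}$ on $O_\obj{x}$, and hence $O_\obj{x}\subset S_\obj{x}$.

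To establish local constancy at a point $\obj{y}\in\sfG\obj{x}$, I would choose any arrow $\arr{g}\in\sfG_1$ with $s(\arr{g})=\obj{x}$ and $t(\arr{g})=\obj{y}$ and invoke equation \eqref{Ite5} of Lemma \ref{lem:DSGTrivConsequences}, namely $\mathcal{S}_\obj{y}^0=[t(s^{-1}(\mathcal{S}_\obj{x}^0))]_\obj{y}$. Unpacking this germ identity furnishes an open neighborhood $V$ of $\obj{y}$ in $\sfG_0$ together with an open representative $S_\obj{x}$ of the stratum germ at $\obj{x}$ satisfying
\[
   V\cap t\bigl(s^{-1}(S_\obj{x})\bigr)=V\cap S_\obj{y}.
\]
Since $\obj{x}\in S_\obj{x}$, the orbit satisfies $\sfG\obj{x}=t(s^{-1}(\obj{x}))\subset t(s^{-1}(S_\obj{x}))$, and intersecting with $V$ yields $V\cap\sfG\obj{x}\subset V\cap S_\obj{y}\subset S_\obj{y}$. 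Thus $V\cap\sfG\obj{x}$ is an open neighborhood of $\obj{y}$ in $\sfG\obj{x}$ lying entirely in the single stratum $S_\obj{y}=\phi(\obj{y})$, which is exactly the local constancy we need.

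The delicate point is to justify that \eqref{Ite5} can indeed be applied with the global preimage $s^{-1}(S_\obj{x})$, rather than only with the preimage of $S_\obj{x}$ intersected with a neighborhood of the particular arrow $\arr{g}$; a priori, arrows into $V$ from $\obj{x}$ far from $\arr{g}$ could contribute targets outside $S_\obj{y}$. To rule this out I would run the same germ identity at every arrow $\arr{g}'\in s^{-1}(\obj{x})$ whose target $t(\arr{g}')$ happens to lie in $V$: equations \eqref{Ite1}--\eqref{Ite4} applied at $\arr{g}'$ place $t(\arr{g}')$ in its own stratum component, while dimension-preservation of strata along an orbit, guaranteed by the bisections produced in Lemma \ref{lem:BisectionsExist} and Corollary \ref{cor:ComponentsSameDim} below, together with local closedness of strata in $\sfG_0$, forces this stratum component to coincide with $S_\obj{y}$ once $V$ is chosen sufficiently small.
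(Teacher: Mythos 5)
Your overall skeleton --- show that the assignment $\obj{y}\mapsto S_\obj{y}$ is locally constant along the orbit and conclude by connectedness --- is sound, and it is essentially the (implicit) structure of the paper's own argument; you also correctly isolate the real difficulty, namely that \eqref{Ite5} only controls $t\bigl(s^{-1}(S_\obj{x})\cap V_\arr{g}\bigr)$ for a neighborhood $V_\arr{g}$ of one reference arrow, while points of $\sfG\obj{x}$ near $\obj{y}$ may be targets of arrows far from $\arr{g}$. The gap is in your proposed resolution of that difficulty. Knowing that every component $S_{t(\arr{g}')}$ met by the orbit has the same dimension as $S_\obj{y}$ (Corollary \ref{cor:ComponentsSameDim}) and that each component is locally closed does \emph{not} force $S_{t(\arr{g}')}=S_\obj{y}$ once $V$ is small: two equidimensional, locally closed components can have one contained in the closure of the other --- this is exactly the configuration of Example \ref{ex:TopSin}, where $S_1$ and $S_2$ are both one-dimensional and locally closed yet $S_1\subset\overline{S_2}$ --- and local finiteness of the decomposition does not help either, since a single stratum may have infinitely many connected components accumulating at $\obj{y}$. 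So the final sentence of your argument asserts, rather than proves, the statement you need.

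What actually closes the gap in the paper is properness of $\sfG$ (used in the paper's proof, though admittedly absent from the lemma's stated hypotheses). If $\obj{x}_n\in\sfG\obj{x}$ converge to $\obj{x}$ with $\obj{x}_n\notin S_\obj{x}$, one picks arrows $\arr{g}_n$ from $\obj{x}$ to $\obj{x}_n$; since the preimage under $(s,t)$ of the compact set of sources and targets is quasi-compact, a subsequence $\arr{g}_{n_k}$ converges to an arrow $\arr{g}$ with $s(\arr{g})=t(\arr{g})=\obj{x}$. Only at this limit arrow can (DSG\ref{it:DSGStratSourceFib}) be applied: it places all but finitely many $\arr{g}_{n_k}$ in the single component $R_\arr{g}$ of a stratum of $\sfG_1$, and then $t$ being a stratified map forces $\obj{x}_{n_k}=t(\arr{g}_{n_k})\in S_\obj{x}$, a contradiction. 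Your proof never invokes properness, and without some such compactness there is no mechanism tying the arrows ``far from $\arr{g}$'' to any stratum of $\sfG_1$ on which (DSG\ref{it:DSGStrucMaps}) and (DSG\ref{it:DSGStratSourceFib}) give leverage. To repair your write-up, replace the last step by this subsequence argument (run at each point of the orbit), keeping your local-constancy framing for the conclusion.
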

\begin{proof}
Let $S_\obj{x}$ be the connected component of the stratum of $\sfG_0$ containing $\obj{x}$.
Suppose for contradiction that the germ of $\sfG \obj{x}$ at $\obj{x}$ is not a subgerm of
$\mathcal{S}_\obj{x}^1$. Then one may construct a sequence $(  \obj{x}_n )_{n\in \N}$ of
elements of $\sfG \obj{x}$ such that $\lim_{n\to\infty} \obj{x}_n =  \obj{x}$, yet each $\obj{x}_n$ is
not contained in $S_\obj{x}$. Since the set
$K = \{ (\obj{x}, \obj{x}_n)\}_{n\in \N}\cup\{ (\obj{x},\obj{x}) \}$ is compact and
$\sfG$ is proper, 
the preimage $C:= (s,t)^{-1}(K)$ is a quasi-compact subset of $\sfG_1$.

For each $n$, choose an arrow $\arr{g}_n\in\sfG_1$ with source $\obj{x}$ and target $\obj{x}_n$. As
each $\arr{g}_n$ is in the quasi-compact set $C$, there is a subsequence
$(\arr{g}_{n_k})_{k\in \N}$ with limit $\arr{g} \in\sfG_1$. However, (DSG\ref{it:DSGStratSourceFib}) entails the existence of an open neighborhood $U_\obj{x}$ of $\obj{x}$ in $S_\obj{x}$ and a neighborhood $V_\arr{g}$ of $\arr{g}$ in $\sfG_1$ such that
$V_\arr{g} \cap s^{-1}(U_\obj{x})$ is contained in the connected component $R_\arr{g}$ of the stratum of $\sfG_1$ containing $\arr{g}$. Infinitely many of the $\arr{g}_{n_k}$ must be contained in $V_\arr{g}$ and each $\arr{g}_{n_k}$
is an element of $s^{-1}(U_\obj{x})$ by construction, so infinitely many of the $\arr{g}_{n_k}$ are contained
in $R_\arr{g}$. But since $t$ is a stratified mapping, infinitely many of the $\obj{x}_{n_k}$
are contained in $S_\obj{x}$, which is a contradiction. It follows that the germ of $\sfG \obj{x}$ at $\obj{x}$ is a subgerm of
$\mathcal{S}_\obj{x}$, hence that the connected component of $\sfG \obj{x}$ containing $\obj{x}$ is a
subset of $S_\obj{x}$.
\end{proof}

The following property is important in realizing the consequences of
(DSG\ref{it:DSGStratSection}) and is proven as in the case of a Lie groupoid;
cf.~\cite[Prop.~1.4.9]{MackenzieGenThry}.

\begin{lemma}
\label{lem:BisectionsExist}
Let $\obj{x}, \obj{y}$ be two points of the differentiable stratified groupoid $\sfG$ and $\arr{g}$ an arrow
with $s(\arr{g}) =  \obj{x}$ and $t(\arr{g}) =  \obj{y}$. Denote by $S_\obj{x}$ and $S_\obj{y}$ the connected
components of the strata of $\sfG_0$ containing $\obj{x}$ and $ \obj{y}$, respectively.
If $\dim S_\obj{x} \leq \dim S_\obj{y}$, then there exists a relatively open neighborhood
$U_\obj{x}$ of $\obj{x}$ in $S_\obj{x}$ and a smooth bisection $\sigma$ of $\sfG$ on $U_\obj{x}$ such that
$\sigma$ is a stratified mapping and $\sigma(\obj{x}) = \arr{g}$.
\end{lemma}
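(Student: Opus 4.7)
The plan is to reduce the problem to choosing a subspace of the Zariski tangent space at $\arr{g}$ of the appropriate stratum of $\sfG_1$, and then build $\sigma$ using the local normal form for submersions on smooth strata. Concretely, let $R_\arr{g}$ denote the connected component of the stratum $\mathcal{S}_\arr{g}^1$ of $\sfG_1$ through $\arr{g}$. Combining (DSG\ref{it:DSGSubmersion}) with Lemma~\ref{lem:DSGTrivConsequences}, both restrictions $s_{|R_\arr{g}}\co R_\arr{g}\to S_\obj{x}$ and $t_{|R_\arr{g}}\co R_\arr{g}\to S_\obj{y}$ are surjective submersions of smooth manifolds; in particular $\dim R_\arr{g}\geq\max\{\dim S_\obj{x},\dim S_\obj{y}\}$. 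Put $V:=T_\arr{g}R_\arr{g}$, $K_s:=\ker ds_\arr{g}$, and $K_t:=\ker dt_\arr{g}$, so that $\dim K_s=\dim V-\dim S_\obj{x}$ and $\dim K_t=\dim V-\dim S_\obj{y}$.

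Next I would select a subspace $W\subset V$ of dimension $\dim S_\obj{x}$ with $W\oplus K_s=V$ and $W\cap K_t=\{0\}$. In the connected Grassmannian $\operatorname{Gr}(\dim S_\obj{x},V)$, the complementarity condition cuts out a nonempty open subset, and disjointness from $K_t$ is likewise open; the latter is nonempty precisely because
\[
  \dim W+\dim K_t = \dim S_\obj{x}+(\dim V-\dim S_\obj{y})\leq\dim V,
\]
where the hypothesis $\dim S_\obj{x}\leq\dim S_\obj{y}$ enters essentially. The intersection of two nonempty open subsets of a connected Grassmannian is nonempty, so such a $W$ exists.

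I would then apply the local submersion normal form to $s_{|R_\arr{g}}$ at $\arr{g}$ to obtain a smooth section $\sigma\co U_\obj{x}\to R_\arr{g}\subset\sfG_1$ on a relatively open neighborhood $U_\obj{x}$ of $\obj{x}$ in $S_\obj{x}$, with $\sigma(\obj{x})=\arr{g}$ and $d\sigma_\obj{x}(T_\obj{x}S_\obj{x})=W$; the prescribed differential is realized by writing $W$ as the graph of a linear map $T_\obj{x}S_\obj{x}\to K_s$ in a submersion chart. Since $W\cap K_t=\{0\}$, the composition $d(t\circ\sigma)_\obj{x}\co T_\obj{x}S_\obj{x}\to T_\obj{y}S_\obj{y}$ is injective, so after shrinking $U_\obj{x}$ the map $t\circ\sigma$ is a smooth embedding of $U_\obj{x}$ into the manifold $S_\obj{y}$; its image is then a submanifold of $S_\obj{y}$, hence a differentiable subspace of $\sfG_0$, and $t\circ\sigma$ restricts to an isomorphism of differentiable spaces onto it, verifying the bisection axioms. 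Because $\sigma(U_\obj{x})\subset R_\arr{g}$ lies in a single connected component of a stratum of $\sfG_1$, the section $\sigma$ is automatically a stratified mapping. The main obstacle is the linear-algebraic existence of $W$, which is the only genuine departure from the standard Lie groupoid argument of \cite[Prop.~1.4.9]{MackenzieGenThry}; the remainder is a routine adaptation of submersion theory carried out inside the smooth strata.
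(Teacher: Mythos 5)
Your proof is correct and follows essentially the same route as the paper's: both choose, inside $T_\arr{g}R_\arr{g}$, a complement of $\ker d(s_{|R_\arr{g}})_\arr{g}$ that meets $\ker d(t_{|R_\arr{g}})_\arr{g}$ trivially (the paper phrases this as nested complements $E\subset F$, while you select $W$ directly by a genericity argument, with the hypothesis $\dim S_\obj{x}\leq\dim S_\obj{y}$ entering through the same dimension count), realize it as the image of the differential of a local section of $s_{|R_\arr{g}}$, and conclude that $t\circ\sigma$ is injective on tangent spaces at $\obj{x}$, hence a diffeomorphism onto its image after shrinking. One small slip in your justification for the existence of $W$: two nonempty open subsets of a connected space need not intersect; what you actually need, and what is true, is that both conditions on $W$ are \emph{dense} open in the Grassmannian (each is the complement of a proper closed subvariety), so their intersection is nonempty.
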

Of course, since $\sigma$ is only defined on a subset of the stratum through $\obj{x}$,
$\sigma$ being a stratified mapping means that its image is contained in the stratum
containing $\arr{g}$. The
hypothesis that $\dim S_\obj{x} \leq \dim S_\obj{y}$ will be seen to be unnecessary below.
\begin{proof}
Let $R_\arr{g}$ be the connected component of the stratum of $\sfG$ containing $\arr{g}$. Then
$s_{|R_\arr{g}}$ and $t_{|R_\arr{g}}$ are surjective submersions onto $S_\obj{x}$ and $S_\obj{y}$, respectively.
Moreover, there are relatively open neighborhoods of $\arr{g}$ in $s^{-1}(\obj{x})$ and $t^{-1}(\obj{y})$
contained in $R_\arr{g}$ by (DSG\ref{it:DSGStratSourceFib}) and
Lemma \ref{lem:DSGTrivConsequences} \eqref{Ite3}. Then there are subspaces
$E \subset F$ of the tangent space $T_\arr{g} R_\arr{g}$ such that
$T_\arr{g} R_\arr{g} = T_\arr{g} (s^{-1}(\obj{x})) \times E = T_\arr{g}(t^{-1}(\obj{y})) \times F$.
Choose a smooth local section $\sigma\co U_\obj{x} \to R_\arr{g}$ such that
$\sigma(\obj{x}) = \arr{g}$ and such that the image of $T_\obj{x}\sigma$ is $E$.
Then $T_\obj{x} (t\circ\sigma)$ is injective so that we can shrink $U_\obj{x}$ in a way
that $t\circ\sigma$ is a diffeomorphism onto its image.
\end{proof}

As an important consequence, we may now conclude that the strata of $\sfG_0$
that meet the orbit $\sfG \obj{x}$ of a point $\obj{x} \in \sfG_0$ must all have the same
dimension. The proof follows \cite[Corollaries 1.4.11 \& 1.4.12]{MackenzieGenThry}.

\begin{corollary}
\label{cor:ComponentsSameDim}
Let $\sfG$ be a differentiable stratified groupoid, and
let $S$ be a connected component of a stratum of $\sfG_0$. Then the following
holds true.
\begin{enumerate}[{\rm (1)}]
\item \label{IteCor1}
      Each stratum of $\sfG_1$ contained in $s^{-1}(S)$ has the same dimension.
\item \label{IteCor2}
      The rank of $t$ on $s^{-1}(S)$ is constant.
\item \label{IteCor3}
      Each connected component $S^\prime$ of a stratum of $\sfG_0$ such that
      $s^{-1}(S)\cap t^{-1}(S^\prime) \neq \emptyset$ has the same dimension as $S$.
\end{enumerate}
\end{corollary}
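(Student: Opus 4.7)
The plan is to adapt Mackenzie's proofs of Corollaries~1.4.11 and 1.4.12 to the stratified setting, replacing smooth left translation by the stratum-preserving translation $L_\sigma$ associated to a stratified bisection $\sigma$ produced by Lemma~\ref{lem:BisectionsExist} and axiom (DSG\ref{it:DSGStratSection}). I will establish (1) and (2) simultaneously by transporting strata via $L_\sigma$ between pairs of arrows in $s^{-1}(S)$, and then derive (3) by comparing the $t$-rank at a general arrow in $s^{-1}(S)\cap t^{-1}(S')$ with that at a unit $u(\obj{x})$ for $\obj{x}\in S$.

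For (1) and (2), I first treat a pair of arrows $\arr{g}_1,\arr{g}_2\in s^{-1}(S)$ sharing a source $\obj{x}\in S$, writing $\obj{y}_i=t(\arr{g}_i)$, $R_i$ for the connected component of the stratum of $\sfG_1$ through $\arr{g}_i$, and $S_{\obj{y}_i}$ for the connected component of the stratum of $\sfG_0$ through $\obj{y}_i$. The arrow $\arr{h}=\arr{g}_2\arr{g}_1^{-1}$ runs from $\obj{y}_1$ to $\obj{y}_2$, and after possibly interchanging $\arr{g}_1$ and $\arr{g}_2$ (which replaces $\arr{h}$ by $\arr{h}^{-1}$) I may assume $\dim S_{\obj{y}_1}\leq \dim S_{\obj{y}_2}$. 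Lemma~\ref{lem:BisectionsExist} then yields a stratified bisection $\sigma$ on a relatively open neighbourhood $U_{\obj{y}_1}\subset S_{\obj{y}_1}$ of $\obj{y}_1$ with $\sigma(\obj{y}_1)=\arr{h}$, and $L_\sigma(\arr{g}_1)=\arr{h}\arr{g}_1=\arr{g}_2$. Axiom (DSG\ref{it:DSGStratSection}) makes $L_\sigma$ a local diffeomorphism sending the germ of $R_1$ onto that of $R_2$, whence $\dim R_1=\dim R_2$. The identity $t\circ L_\sigma=(t\circ\sigma)\circ t$, which holds on $t^{-1}(U_{\obj{y}_1})$, then factors the submersion $t|_{R_2}$ locally through the stratified embedding $t\circ\sigma\co U_{\obj{y}_1}\hookrightarrow S_{\obj{y}_2}$; equating ranks at $\arr{g}_1$ forces $\dim S_{\obj{y}_2}=\dim S_{\obj{y}_1}$.

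To remove the common-source assumption, I will invoke (DSG\ref{it:DSGSubmersion}) to observe that $s|_R\co R\to S$ is a surjective submersion for every stratum $R\subset s^{-1}(S)$, so any two such strata contain representative arrows with a common source in $S$, and the previous argument yields (1); together with the local constancy of the $t$-rank on each stratum, the same argument also produces (2). For (3), I take $\obj{x}\in S$ and note that $u(\obj{x})\in s^{-1}(S)$ with $t(u(\obj{x}))=\obj{x}\in S$, so Lemma~\ref{lem:DSGTrivConsequences}\eqref{Ite4} combined with (DSG\ref{it:DSGSubmersion}) makes the $t$-rank at $u(\obj{x})$ equal to $\dim S$. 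Given any arrow $\arr{g}\in s^{-1}(S)\cap t^{-1}(S')$, its $t$-rank equals $\dim S'$, and by (2) coincides with that at $u(\obj{x})$; hence $\dim S=\dim S'$.

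The main obstacle is the asymmetric hypothesis $\dim S_\obj{x}\leq \dim S_\obj{y}$ of Lemma~\ref{lem:BisectionsExist}; I circumvent it by the swap $\arr{g}_1\leftrightarrow \arr{g}_2$, equivalently $\arr{h}\leftrightarrow \arr{h}^{-1}$, which always places one of the two choices in the hypothesis of the lemma. As foreshadowed by the remark following that lemma, the corollary retroactively shows this hypothesis to be unnecessary.
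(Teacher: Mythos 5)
Your proof is correct and follows essentially the same route as the paper: both arguments adapt Mackenzie's Corollaries 1.4.11--1.4.12 by combining Lemma \ref{lem:BisectionsExist} with (DSG\ref{it:DSGStratSection}), using the injectivity of $L_\sigma$ to equate stratum dimensions, the factorization $t\circ L_\sigma=(t\circ\sigma)\circ t$ to equate $t$-ranks, and the swap/inverse trick to dispose of the hypothesis $\dim S_\obj{x}\leq\dim S_\obj{y}$. The only cosmetic difference is that the paper always compares a stratum of $s^{-1}(S)$ directly to the unit stratum $R_{u(\obj{x})}$ via a bisection based at the source, whereas you compare two arbitrary arrows with a common source via a bisection based at a target and only specialize to $u(\obj{x})$ when deriving (3).
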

\begin{proof}
Let $S_1$ and $S_2$ be (not necessarily distinct) connected components of strata of
$\sfG_0$ such that $s^{-1}(S_1)\cap t^{-1}(S_2) \neq \emptyset$. We assume
$\dim S_1 \leq \dim S_2$. Otherwise, we may switch roles and apply the inverse
map, so this hypothesis introduces no loss of generality.
As $s$ and $t$ are stratified surjective submersions, $s^{-1}(S_1)$
and $t^{-1}(S_2)$ are both unions of connected components of strata of $\sfG_1$. Hence
their intersection is a union of connected components of strata. Let $\arr{g} \in \sfG_1$ with
$s(\arr{g}) = \obj{x} \in S_1$ and $t(\arr{g}) =  \obj{y} \in S_2$. Then there exists by
Lemma \ref{lem:BisectionsExist} a smooth bisection $\sigma$ of $\sfG$ on a relatively open
neighborhood $U_\obj{x}$ of $\obj{x}$ in $S_1$ such that $\sigma(\obj{x}) = \arr{g}$. Let $R_{u(\obj{x})}$ and $R_\arr{g}$
denote the connected components of the strata of $\sfG_1$ containing $u(\obj{x})$ and $\arr{g}$,
respectively. By (DSG\ref{it:DSGStratSection}), there is a relatively open
neighborhood $V_{u(\obj{x})}$ of $u(\obj{x})$ in $R_{u(\obj{x})}$ such that $L_\sigma(V_{u(\obj{x})})$ is a
relatively open neighborhood of $\arr{g}$ in $R_\arr{g}$. The requirement that $t\circ\sigma$ is
injective implies that $L_\sigma$ is injective, hence that $R_{u(\obj{x})}$ and $R_\arr{g}$ have the
same dimension. Since $S_2$ and $\arr{g}$ were arbitrary, \eqref{IteCor1} follows.
Moreover, from the definition of $L_\sigma$, we have
$t_{|L_\sigma(V_{u(\obj{x})})} \circ L_\sigma = (t\circ\sigma)\circ t_{|V_{u(\obj{x})}}$.
Then, as $L_\sigma(V_{u(\obj{x})})$ is an open neighborhood of $\arr{g}$ in $R_\arr{g}$ and $t\circ\sigma$ is
a diffeomorphism onto its image, the ranks of $t_{|R_{u(\obj{x})}}$ at $u(\obj{x})$ and $t_{|R_\arr{g}}$ at $\arr{g}$
coincide, yielding \eqref{IteCor2}.
Since $t$ is a stratified surjective submersion, \eqref{IteCor3} is immediate.
\end{proof}

\begin{remark}
\label{rem:DimsEqual}
In particular, the hypothesis $\dim S_\obj{x} \leq \dim S_\obj{y}$ in Lemma \ref{lem:BisectionsExist}
is superfluous by Corollary \ref{cor:ComponentsSameDim}. Indeed, given the other hypotheses, we always have
$\dim S_\obj{x} = \dim S_\obj{y}$.
\end{remark}

\begin{example}
Note that if $S_1$ and $S_2$ are connected components of strata of $\sfG_0$,
even connected components of the same stratum, it need not be the case that the strata of
$s^{-1}(S_1)$ and $s^{-1}(S_2)$ have the same dimension. As an example, let $G$ and $H$
be compact Lie groups, and let $\sfG$ be the disjoint union of $G\ltimes \{p\}$ and
$H \ltimes \{ q\}$. Then $\sfG_0$ is the discrete set $\{p,q\}$. The maximal stratification
of $\sfG_0$ contains a single stratum with two one-point connected components, yet the space of
arrows of $\sfG_{|\{p\}}$ and $\sfG_{|\{q\}}$ have dimensions $\dim G$ and $\dim H$, respectively,
which clearly need not coincide.
\end{example}

\begin{proposition}
\label{prop:RestrictLie}
Assume that $\sfG_0$ and $\sfG_1$ are topologically locally trivial, and let
$S \subset \sfG_0$ be a connected component of a stratum of $\sfG_0$.
Let $\mathcal{P}$ be a collection of connected components of strata of
$\sfG_0$ such that $S \in\mathcal{P}$, and such that for each
$S^\prime \in\mathcal{P}$ the relation
$s^{-1}(S)\cap t^{-1}(S^\prime)\neq\emptyset$ is satisfied. Letting
$P = \bigcup_{S^\prime \in\mathcal{P}} S^\prime$, the restriction $\sfG_{|P}$ then
is a Lie groupoid.
\end{proposition}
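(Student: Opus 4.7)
The plan is to verify axiom (DSG\ref{it:LieGroupoid}) for $\sfG_{|P}$, namely that both $P$ and $\sfG_{1|P} := s^{-1}(P)\cap t^{-1}(P)$ carry smooth manifold structures with respect to which the restricted structure maps are smooth. The two key ingredients are Corollary \ref{cor:ComponentsSameDim}, which supplies uniformity of dimension for the relevant components of strata, together with topological local triviality, which promotes this uniformity to a genuine manifold structure.

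First I would show that $P$ is a smooth manifold of dimension $d := \dim S$. By Corollary \ref{cor:ComponentsSameDim}\eqref{IteCor3}, every $S' \in \mathcal{P}$ satisfies $\dim S' = d$. Fixing $\obj{x} \in S' \in \mathcal{P}$, topological local triviality furnishes an open neighborhood $U$ of $\obj{x}$ in $\sfG_0$ and a stratified homeomorphism $U \cong W \times CL$, where $W = S' \cap U$ is an open neighborhood in $S'$ and $CL$ denotes the open cone on the link at $\obj{x}$. Every stratum component of $\sfG_0$ meeting $U$ other than $W$ corresponds under this homeomorphism to a product of $W$ with a stratum of $CL$ other than the cone point, and so has dimension strictly greater than $d$. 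No member $S'' \neq S'$ of $\mathcal{P}$ can then meet $U$: either $S''$ would lie in a higher-dimensional stratum (contradicting $\dim S'' = d$) or, if $S''$ were a distinct component of the same stratum as $S'$, the local product structure would force $S'' \cap U \subseteq W$, contradicting $S'' \cap S' = \emptyset$. Hence $U \cap P = W$ is open in $S'$, so each $S' \in \mathcal{P}$ is open in $P$, and $P$ inherits a $d$-dimensional smooth manifold structure.

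Next I would show that all stratum components of $\sfG_1$ contained in $\sfG_{1|P}$ share a common dimension. Let $e$ denote the common dimension of all stratum components in $s^{-1}(S)$, given by Corollary \ref{cor:ComponentsSameDim}\eqref{IteCor1}. For $S_1 \in \mathcal{P}$, the standing hypothesis yields an arrow $\arr{a}$ with $s(\arr{a}) \in S$ and $t(\arr{a}) \in S_1$. Its stratum component $R_\arr{a}$ lies in $s^{-1}(S)$, so $\dim R_\arr{a} = e$. Because the inverse map $i$ is a stratified isomorphism with $i^2 = \id$, the component $R_{\arr{a}^{-1}} = i(R_\arr{a})$ also has dimension $e$ and lies in $s^{-1}(S_1)$; applying Corollary \ref{cor:ComponentsSameDim}\eqref{IteCor1} to $S_1$ now gives that every stratum component in $s^{-1}(S_1)$ has dimension $e$. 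Consequently every stratum component of $\sfG_1$ contained in $\sfG_{1|P} \subseteq s^{-1}(P)$ has dimension $e$. Repeating the topological local triviality argument of the previous paragraph, with $\sfG_1$ in place of $\sfG_0$, then shows that each such component is open in $\sfG_{1|P}$, endowing $\sfG_{1|P}$ with a smooth manifold structure of dimension $e$.

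It remains to verify smoothness of the restricted structure maps. On each stratum component of $\sfG_1$ lying in $\sfG_{1|P}$, the maps $s$ and $t$ are smooth submersions onto stratum components of $\sfG_0$ lying in $P$, and by the previous two steps both source and target are open in the manifolds just constructed; hence the restrictions $s,t\co \sfG_{1|P} \to P$ are smooth submersions. The same reasoning shows that $u$ restricts to a smooth embedding and that $i$ and $m$ restrict to smooth maps, confirming axiom (DSG\ref{it:LieGroupoid}). The main obstacle is relating the dimensions of stratum components of $\sfG_1$ attached to different members of $\mathcal{P}$ without access to a direct analogue of Corollary \ref{cor:ComponentsSameDim}\eqref{IteCor1} for $s^{-1}(P)$; this is circumvented by using the inverse map to convert the statement for $s^{-1}(S_1)$ into one for $s^{-1}(S)$. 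The topological local triviality hypothesis is used essentially to exclude the possibility that same-dimensional stratum components accumulate on one another in a way incompatible with a manifold structure.
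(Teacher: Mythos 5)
Your proof is correct and follows essentially the same route as the paper: Corollary \ref{cor:ComponentsSameDim} gives the common dimensions, and topological local triviality separates same-dimensional stratum components so that $P$ and $s^{-1}(P)\cap t^{-1}(P)$ are manifolds. You merely supply more detail than the paper does, in particular the inverse-map step transferring the common dimension from $s^{-1}(S)$ to $s^{-1}(S_1)$, which the paper leaves implicit.
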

\begin{proof}
By Corollary \ref{cor:ComponentsSameDim}, each element of $\mathcal{P}$ has the same dimension
as $S$, hence each connected component of a stratum of $\sfG_1$ contained in
$s^{-1}(P)$ has the same dimension as well. The hypothesis that $\sfG_0$ is a
topologically locally trivial implies that if $S^\prime < S^{\prime\prime}$ cannot occur
for $S^\prime, S^{\prime\prime} \in\mathcal{P}$ so that $P$ is a manifold. The same holds for
$\sfG_1$ so that $s^{-1}(P)\cap t^{-1}(P)$ is a union of strata of the same dimension
and therefore a manifold. Hence $P$ and $s^{-1}(P)\cap t^{-1}(P)$ are manifolds,
which can both be given their trivial stratifications. The claim follows.
\end{proof}

Of course, the hypothesis that $\sfG_0$ and $\sfG_1$ are topologically locally trivial is
only required so that the strata of $\sfG_0$ and $\sfG_1$ of the same dimension are not
contained in one another's closures. Any other hypothesis that ensures this is as well
sufficient.

We also note the following, which will be useful in the sequel.

\begin{corollary}
\label{cor:OrbitMnfld}
Assume that  $\sfG$ is proper and let $\obj{x}\in\sfG_0$. Then each
connected component of the orbit $\sfG \obj{x}$ is a smooth submanifold of
$\sfG_0$. If $\sfG_0$ and $\sfG_1$ are in addition topologically locally
trivial, then $\sfG \obj{x}$ is a smooth submanifold of $\sfG_0$.
\end{corollary}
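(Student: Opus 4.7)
The plan is to reduce both statements to the classical fact that orbits of proper Lie groupoids are embedded submanifolds. Fix $\obj{x} \in \sfG_0$ and let $O$ denote the connected component of $\sfG \obj{x}$ through $\obj{x}$. By Lemma \ref{lem:OrbitInStrata}, $O$ is contained in $S_\obj{x}$, the connected component of the stratum of $\sfG_0$ through $\obj{x}$, which is a smooth manifold; so it suffices to show $O$ is embedded in $S_\obj{x}$.

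The first step is to construct a sub-Lie-groupoid over $S_\obj{x}$. Let $B := (s,t)^{-1}(S_\obj{x}\times S_\obj{x}) \subseteq \sfG_1$. Because $s$ and $t$ are stratified surjective submersions, each connected component of a stratum of $\sfG_1$ is sent by $s$ (resp.~$t$) into a single connected component of a stratum of $\sfG_0$, so $B$ is a union of stratum components of $\sfG_1$, all of the same dimension by Corollary \ref{cor:ComponentsSameDim}~\eqref{IteCor1}. Hence $B$ is a smooth manifold on which $s$ and $t$ restrict to submersions onto $S_\obj{x}$, and the remaining structure maps of $\sfG$ restrict smoothly to $B$, so $\sfH := (B \rightrightarrows S_\obj{x})$ is a Lie groupoid. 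Properness of $\sfH$ is immediate: for compact $K \subseteq S_\obj{x}\times S_\obj{x}$, the preimage $(s,t)^{-1}(K)$ taken in $B$ coincides with that taken in $\sfG_1$, which is compact by properness of $\sfG$.

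The $\sfH$-orbit of $\obj{x}$ is $S_\obj{x} \cap \sfG \obj{x}$, and one checks with Lemma \ref{lem:OrbitInStrata} that $O$ is precisely its connected component through $\obj{x}$. The classical theorem on orbits of proper Lie groupoids then yields that $O$ is embedded in $S_\obj{x}$, and hence in $\sfG_0$. For the second claim, under the topological local triviality hypothesis, let $\mathcal{P}$ be the collection of connected components of strata of $\sfG_0$ meeting $\sfG \obj{x}$; by Corollary \ref{cor:ComponentsSameDim} they all share the dimension of $S_\obj{x}$, and topological local triviality ensures that components of equal dimension are not contained in one another's closures, so $P := \bigcup_{S' \in \mathcal{P}} S'$ is a smooth manifold. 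Proposition \ref{prop:RestrictLie} then produces a Lie groupoid $\sfG_{|P}$, which inherits properness as above, and $\sfG \obj{x}$ is its orbit through $\obj{x}$---hence an embedded submanifold of $P$ and therefore of $\sfG_0$. The main obstacle in both steps is verifying that $B$ (respectively the arrow space of $\sfG_{|P}$) assembles into a genuine smooth manifold whose restricted structure maps form a Lie groupoid; this in turn rests on the uniform dimension of the relevant stratum components supplied by Corollary \ref{cor:ComponentsSameDim}.
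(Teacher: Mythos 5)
Your proof is correct and follows essentially the same route as the paper's: use Lemma \ref{lem:OrbitInStrata} to place the orbit component inside the stratum component $S_\obj{x}$, observe that the restriction of $\sfG$ to $S_\obj{x}$ (respectively to the union $P$ of stratum components meeting the orbit) is a proper Lie groupoid, and invoke the standard fact that orbits of proper Lie groupoids are submanifolds. The only difference is cosmetic: for the first claim you assemble the restricted Lie groupoid $\sfH$ over $S_\obj{x}$ by hand from Corollary \ref{cor:ComponentsSameDim}, whereas the paper cites Proposition \ref{prop:RestrictLie} directly (which is arguably a point in your favor, since that proposition formally carries the topological local triviality hypothesis that the first claim of the corollary does not assume).
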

\begin{proof}
Let $S_\obj{x}$ be the connected component of the stratum containing $\obj{x}$. Then the
connected component of $\sfG \obj{x}$ containing $\obj{x}$ is contained in $S_\obj{x}$ by Lemma \ref{lem:OrbitInStrata}.
The restricted groupoid $\sfG_{|S_\obj{x}}$ is a Lie groupoid by Proposition \ref{prop:RestrictLie}. Since $\sfG \obj{x}\cap S_\obj{x} = (\sfG_{|S_\obj{x}}) \obj{x}$
this implies that the connected components of $\sfG \obj{x}$ contained in
$S_\obj{x}$ are smooth submanifolds of $S_\obj{x}$, hence of $\sfG_0$.
If $\sfG_0$ and $\sfG_1$ are topologically locally trivial, then the restriction of
$\sfG$ to the saturation of $S_\obj{x}$ is as well a Lie groupoid, and the same argument
applies.
\end{proof}

Finally, we include the following example to demonstrate that the hypothesis that
$\sfG_0$ and $\sfG_1$ are topologically locally trivial (or a similar requirement)
is necessary in Proposition \ref{prop:RestrictLie} and Corollary \ref{cor:OrbitMnfld},
cf.~\cite[1.1.12]{PflaumBook}.

\begin{example}
\label{ex:TopSin}
Let $\sfG_0 = S_1 \cup S_2 \subset \R^2$ where $S_1 = \{0\}\times (-1,1)$ and
$S_2 = \{ (x,y)\in \R^2\mid x > 0, y = \sin 1/x \}$, and let $\sfG$ be the pair
groupoid on $\sfG_0$. Then $S_1$ and $S_2$, both of dimension $1$, are the pieces
of a decomposition of $\sfG_0$ with $S_1 \subset \overline{S_2}$. The decomposition
of $\sfG_1$ is into pieces of the form $S_i \times S_j$, each of dimension $2$.
Then $\sfG$ is a differentiable stratified groupoid. However, the single orbit
$\sfG_0$ is not locally connected and hence not a manifold, and $\sfG$ is not
a Lie groupoid. Clearly, however, the restriction of $\sfG$ to either stratum
of $\sfG_0$ is a Lie groupoid.
\end{example}



\subsection{Morita equivalence}
\label{subsec:MoritaEquiv}

Let $\sfG$ be a topological groupoid, $Y$ a topological space, and
$f \co Y \to \sfG_0$ a continuous function. Following \cite{TuNonHausGpoid},
we denote by $\sfG[Y]$ the groupoid with object space
$\sfG[Y]_0 := Y$, arrow space
\[
    \sfG[Y]_1 :=
    (Y\times Y)\fgtimes{(f,f)}{(t,s)} \sfG_1
    =
    \{ (\obj{y}, \obj{z}, \arr{g}) \in Y \times Y \times \sfG_1 \mid
    t(\arr{g}) = f(\obj{y}), \: s(\arr{g}) = f(\obj{z}) \},
\]
and structure maps given as follows.
The source of $(\obj{y},\obj{z}, \arr{g}) \in  \sfG[Y]_1$ is $\obj{z}$,
its target is $\obj{y}$. Multiplication maps
$ (\obj{y}, \obj{w}, \arr{g}), (\obj{w}, \obj{z}, \arr{h}) \in  \sfG[Y]_1$ with $s(\arr{g})=t(\arr{h})$ to
$$
  (\obj{y}, \obj{w}, \arr{g}) \cdot (\obj{w}, \obj{z}, \arr{h}) := (\obj{y}, \obj{z}, \arr{g}\arr{h}).
$$
The unit map is $\sfG[Y]_0 \to \sfG[Y]_1$, $y \mapsto (\obj{y}, \obj{y}, u(f(\obj{x})) )$, and the inverse map is
$\sfG[Y]_1 \to \sfG[Y]_1$,  $(\obj{y}, \obj{z}, \arr{g}) \mapsto (\obj{z}, \obj{y}, \arr{g}^{-1})$. Then $\sfG[Y]$ is a subgroupoid
of $Y \times Y \times \sfG$ where $Y \times Y$ denotes the pair groupoid and
we identify $\sfG[Y]_0 = Y$ with
$Y \fgtimes{f}{\operatorname{id}} \sfG_0$.
By \cite[Prop.~2.7]{TuNonHausGpoid},  $\sfG[Y]$ is a locally closed
subgroupoid of $Y \times Y \times \sfG$ if $\sfG_0$ is locally Hausdorff.
Moreover, $\sfG[Y]$ is locally compact if $\sfG$ and $T$ are locally compact.
Finally, by \cite[Prop.~2.22]{TuNonHausGpoid}, $\sfG[Y]$ is proper
if $\sfG$ is proper.

Now suppose that $\sfG$ is a differentiable groupoid, $Y$  a differentiable space,
and $f$  a differentiable map. Then it is straightforward to see that $\sfG[Y]$ is a
differentiable subgroupoid of $Y \times Y \times \sfG$. In particular,
both $\sfG[Y]_0$ and $\sfG[Y]_1$ are defined as fibered products.
Similarly, if $\sfG$ and $\sfH$
are differentiable stratified groupoids, $Y$ is  a differentiable stratified space,
and $f$ a differentiable stratified surjective submersion, then
$\sfG[Y]$ is a differentiable stratified groupoid as well where $\sfG[Y]_0$
and $\sfG[Y]_1$ are given the induced stratifications.

\begin{definition}
\label{def:MoritaEquiv}
Two open topological groupoids $\sfG$ and $\sfH$ are called
\emph{Morita equivalent as topological groupoids} if there exists a topological space $Y$ together
with open surjective continuous functions $f \co Y \to \sfG_0$ and
$g \co Y \to \sfH_0$ such that $\sfG[Y]$ and $\sfH[Y]$ are isomorphic as
topological groupoids. If $\sfG$ and $\sfH$ are differentiable groupoids,
$Y$ is a differentiable space, and $f$ and $g$ are
differentiable maps, then  $\sfG$ and $\sfH$ are called \emph{Morita equivalent as differentiable groupoids}
if $\sfG[Y]$ and $\sfH[Y]$ are isomorphic as differentiable groupoids.
Similarly, if $\sfG$ and $\sfH$ are differentiable stratified groupoids, $Y$ is a
differentiable stratified space, and $f$ and $g$ are differentiable stratified
surjective submersions, then  $\sfG$ and $\sfH$ are called \emph{Morita equivalent as differentiable
stratified groupoids} if $\sfG[Y]$ and $\sfH[Y]$ are isomorphic as differentiable
stratified groupoids.
\end{definition}

One verifies immediately  that Morita equivalence is transitive.
Specifically, if the maps $\sfG_0\overset{f_1}{\longleftarrow}Y\overset{g_1}{\longrightarrow}\sfH_0$
realize a Morita equivalence between (topological, differentiable, or differentiable
stratified) groupoids $\sfG$ and $\sfH$, and
$\sfH_0\overset{f_2}{\longleftarrow}Y^\prime\overset{g_2}{\longrightarrow}\mathsf{K}_0$ is
a Morita equivalence between $\sfH$ and $\mathsf{K}$, then
$\sfG_0 \xleftarrow{f_1\circ pr_1} Y\fgtimes{g_1}{f_2}Y^\prime \xrightarrow{g_2\circ pr_2} \mathsf{K}_0$
induces a Morita equivalence between $\sfG$ and $\mathsf{K}$.

\begin{proposition}
\label{prop:MoritaHomeo}
If $\sfG$ and $\sfH$ are Morita equivalent open topological groupoids,
then the orbit spaces $|\sfG|$ and $|\sfH|$ are homeomorphic.  Moreover,
if $\sfG$ and $\sfH$ are Morita equivalent differentiable groupoids
such that $|\sfG|$ and $|\sfH|$ admit the structure of differentiable spaces
and the quotient maps are differentiable, then
$|\sfG|$ and $|\sfH|$ are isomorphic as differentiable spaces.
\end{proposition}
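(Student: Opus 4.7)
The plan is to realize $|\sfG[Y]|$ as an intermediary naturally homeomorphic to both $|\sfG|$ and $|\sfH|$. Writing $\varphi\co\sfG[Y]\to\sfH[Y]$ for the given groupoid isomorphism and $\varphi_0\co Y\to Y$ for its object component, I would show that $f$, $g$, and $\varphi_0$ induce continuous maps $\bar f\co|\sfG[Y]|\to|\sfG|$, $\bar g\co|\sfH[Y]|\to|\sfH|$, and $\bar\varphi_0\co|\sfG[Y]|\to|\sfH[Y]|$, each of which is a homeomorphism. The desired map is then the composition
\[
  \Phi := \bar g\circ\bar\varphi_0\circ\bar f^{-1}\co|\sfG|\longrightarrow|\sfH|.
\]

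The elementary observation on which everything rests is that two points $\obj{y},\obj{y}'\in Y$ lie in the same $\sfG[Y]$-orbit if and only if $f(\obj{y})$ and $f(\obj{y}')$ lie in the same $\sfG$-orbit; this is immediate from the description of $\sfG[Y]_1$ as a fibered product, since an arrow from $\obj{y}'$ to $\obj{y}$ in $\sfG[Y]$ is precisely an element $\arr{g}\in\sfG_1$ with $s(\arr{g})=f(\obj{y}')$ and $t(\arr{g})=f(\obj{y})$. Hence $\bar f$ is a well-defined bijection, surjective because $f$ is. Continuity follows from $\pi_\sfG\circ f=\bar f\circ\pi_{\sfG[Y]}$ together with the universal property of the quotient topology on $|\sfG[Y]|$. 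For openness of $\bar f$, take any open $U\subset|\sfG[Y]|$; then $f(\pi_{\sfG[Y]}^{-1}(U))$ is open in $\sfG_0$ since $f$ is open, and Proposition~\ref{prop:BasicPropTopGroupoids}(\ref{it:OpMap}) (applicable because $\sfG$ is an open topological groupoid) yields that $\bar f(U)=\pi_\sfG(f(\pi_{\sfG[Y]}^{-1}(U)))$ is open in $|\sfG|$. The analogous argument handles $\bar g$, and $\bar\varphi_0$ is a homeomorphism because $\varphi_0$ is a homeomorphism of $Y$ that transports $\sfG[Y]$-orbits bijectively onto $\sfH[Y]$-orbits; composition yields the topological claim.

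For the differentiable statement the same composition $\Phi$ is the candidate, and I would show that both $\Phi$ and $\Phi^{-1}$ are morphisms of differentiable spaces. Given $h\in C^\infty(|\sfH|)$, the hypothesis that $\pi_\sfH$ is differentiable gives $h\circ\pi_\sfH\in C^\infty(\sfH_0)$, and pulling back further along the differentiable maps $g$ and $\varphi_0$ produces a smooth function $h\circ\pi_\sfH\circ g\circ\varphi_0$ on $Y$. Commutativity of the defining diagram for $\Phi$ rewrites this as $(h\circ\Phi\circ\pi_\sfG)\circ f$. The main obstacle is then to descend smoothness along $f$, i.e.~to conclude that $h\circ\Phi\circ\pi_\sfG$ is itself smooth on $\sfG_0$; I expect this to be the chief technical point. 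It can be handled using that $f$ is an open surjective differentiable map by reducing to a local model of $f$ admitting smooth local sections, so that smoothness on $\sfG_0$ is tested by pullback along $f$. Once the descent is in place, $h\circ\Phi$ lies in the structure sheaf of $|\sfG|$, so $\Phi$ is a morphism of differentiable spaces; the symmetric argument applied to $\Phi^{-1}$ completes the identification.
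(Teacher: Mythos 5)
Your overall architecture is the same as the paper's: pass through the intermediary $|\sfG[Y]|$ and use the map induced by $f$ (under the identification $\sfG[Y]_0\cong Y\fgtimes{f}{\id}\sfG_0$ your $\bar f$ is exactly the paper's $(\obj{y},\obj{x})\mapsto\obj{x}$). Your topological half is correct and in fact more careful than the paper's one-line treatment: the orbit-matching observation, continuity via the quotient topology, and openness via Proposition~\ref{prop:BasicPropTopGroupoids}(\ref{it:OpMap}) are all exactly what is needed.

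The differentiable half, however, contains a genuine gap. Your descent step relies on $f$ admitting smooth local sections, but nothing in Definition~\ref{def:MoritaEquiv} gives you that: $f$ is merely an open surjective morphism of differentiable spaces, and such a map need not admit local sections (smooth or otherwise) when the spaces are singular. A secondary issue is that even after descending, concluding $h\circ\Phi\in\mathcal{C}^\infty(|\sfG|)$ requires knowing that the structure sheaf of $|\sfG|$ consists precisely of the functions whose pullback to $\sfG_0$ is smooth, which is also not among the stated hypotheses ($|\sfG|$ is only assumed to carry \emph{some} differentiable structure making $\pi_\sfG$ differentiable). The paper sidesteps the descent entirely by running the argument in the opposite direction: the projection $\sfG[Y]_0\to\sfG_0$ is a morphism of differentiable spaces, so its composition with the differentiable quotient $\pi_\sfG$ is differentiable, which produces the morphism $|\sfG[Y]|\to|\sfG|$ by push-forward with no sections needed. (To be fair, the paper is silent about why the inverse is also a morphism, which is essentially the point you isolated; but your proposed repair is not justified by the hypotheses as stated, whereas in the situations where the proposition is later applied the orbit-space structure sheaf is the sheaf of invariant smooth functions, for which the push-forward argument suffices in both directions.)
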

\begin{proof}
Because the orbit spaces $|\sfG[Y]|$ and $|\sfH[Y]|$ are clearly homeomorphic,
it is sufficient to show that $|\sfG|$ is homeomorphic to $|\sfG[Y]|$.
To see this, define the map $\sfG[Y]_0 \to \sfG_0$ by $(\obj{y}, \obj{x}) \mapsto \obj{x}$.
Given an arrow $\arr{g}\in\sfG_1$ from $\obj{x}$ to $\obj{x}^\prime$, there exists, by
the surjectivity of $f$, a $\obj{y}^\prime$ such that $f(\obj{y}^\prime) = \obj{x}^\prime$
and hence an arrow $(\obj{y}, \obj{y}^\prime, \arr{g})$ from $(\obj{y}, \obj{x})$ to $(\obj{y}^\prime, \obj{x}^\prime)$.
Conversely, if $(\obj{y}, \obj{y}^\prime, \arr{g})$ is an arrow from $(\obj{y}, \obj{x})$ to $(\obj{y}^\prime, \obj{x}^\prime)$,
then $\arr{g}$ is by definition an arrow from $\obj{x}$ to $\obj{x}^\prime$.  Hence
$(\obj{y},\obj{x}) \mapsto \obj{x}$ maps orbits to orbits. In the differentiable case, this map is
obviously differentiable, so that if the quotient map $\sfG_0 \to |\sfG|$ is
differentiable, then its composition with $(\obj{y},\obj{x}) \mapsto \obj{x}$ is also differentiable.
\end{proof}

For Lie groupoids, the notion of Morita equivalence is often defined in terms
of morphisms called \emph{weak equivalences}. We introduce a similar notion
as follows.

\begin{definition}
\label{def:WeakEquiv}
A morphism $f\co\sfG\to\sfH$ of differentiable stratified groupoids is called
a \emph{weak equivalence} if it is essentially surjective and fully faithful, i.e.
if the following two conditions are satisfied.
\begin{enumerate}
\item[{\rm (ES)}] (Essential Surjectivity)
    The map
    $t\circ\operatorname{pr}_1\co \sfH_1\fgtimes{s}{f_0}\sfG_0 \to \sfH_0$
    is a stratified surjective submersion.
\item[{\rm (FF)}] (Full Faithfulness) The arrow space $\sfG_1$ is a fibered product via the diagram
\[
    \xymatrix{
        \sfG_1
            \ar[d]_{(s,t)}      \ar[r]^{f_1}              &\sfH_1 \ar[d]^{(s,t)}
                    \\
        \sfG_0\times\sfG_0
            \ar[r]^{(f_0,f_0)}
        &
        \sfH_0\times\sfH_0.
    }
\]
\end{enumerate}
\end{definition}

\begin{remark}
There is an analogous notion of a weak equivalence between differentiable groupoids,
where axiom (ES) is replaced with the requirement that
$t\circ\operatorname{pr}_1$ is a surjective map that admits smooth local sections.
\end{remark}

Note that if the object and arrow spaces of $\sfG$ and $\sfH$ are topologically
locally trivial, then the restrictions of $\sfG$ and $\sfH$ to
connected components of strata are Lie groupoids by Proposition \ref{prop:RestrictLie}.
One immediately checks that the restriction of a weak equivalence to strata yields a
weak equivalence of Lie groupoids. We have the following.

\begin{proposition}
Let $\sfG$ and $\sfH$ be differentiable stratified groupoids. The following are
equivalent.
\begin{enumerate}[{\rm (1)}]
\item \label{IteMorEquiv}
      $\sfG$ and $\sfH$ are Morita equivalent as differentiable stratified groupoids.
\item \label{IteExWeakEquiv}
        There is a differentiable stratified groupoid $\mathsf{K}$ together with
        weak equivalences $h\co\mathsf{K}\to\sfG$ and $k\co\mathsf{K}\to\sfH$.
\end{enumerate}
\end{proposition}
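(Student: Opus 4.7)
The plan is to prove the two implications separately, with the reverse direction reduced to showing that a single weak equivalence already realizes a Morita equivalence between its source and target; the transitivity of Morita equivalence noted following Definition~\ref{def:MoritaEquiv} then combines the two weak equivalences into a Morita equivalence between $\sfG$ and $\sfH$.

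For \ref{IteMorEquiv}$\Rightarrow$\ref{IteExWeakEquiv}, given a Morita equivalence realized by $\sfG_0\xleftarrow{f}Y\xrightarrow{g}\sfH_0$ with an isomorphism $\varphi\co\sfG[Y]\to\sfH[Y]$, I would take $\mathsf{K}:=\sfG[Y]$ and define the canonical projection $h\co\mathsf{K}\to\sfG$ by $h_0=f$ and $h_1(\obj{y}_1,\obj{y}_2,\arr{g})=\arr{g}$, and analogously $k\co\mathsf{K}\to\sfH$ by composing $\varphi$ with the projection $\sfH[Y]\to\sfH$. Condition (FF) for $h$ is built into the definition of $\sfG[Y]_1$ as the fibered product $(Y\times Y)\fgtimes{(f,f)}{(t,s)}\sfG_1$. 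For (ES), the map
\[
    \sfG_1\fgtimes{s}{h_0}\mathsf{K}_0 \xrightarrow{\operatorname{pr}_1} \sfG_1\xrightarrow{t}\sfG_0
\]
is the composition of the pullback of the stratified surjective submersion $f$ along $s$ with the stratified surjective submersion $t$ coming from axiom (DSG\ref{it:DSGSubmersion}), hence itself a stratified surjective submersion. The same verification applies to $k$.

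For \ref{IteExWeakEquiv}$\Rightarrow$\ref{IteMorEquiv}, given a weak equivalence $h\co\mathsf{K}\to\sfG$, I would set $Y_h:=\sfG_1\fgtimes{s}{h_0}\mathsf{K}_0$ with structure maps $f_h:=t\circ\operatorname{pr}_1\co Y_h\to\sfG_0$ and $g_h:=\operatorname{pr}_2\co Y_h\to\mathsf{K}_0$. The map $f_h$ is a stratified surjective submersion by (ES), and $g_h$ is one as the pullback of $s\co\sfG_1\to\sfG_0$ along $h_0$. The key step is to construct an isomorphism $\sfG[Y_h]\cong\mathsf{K}[Y_h]$ of differentiable stratified groupoids which, on arrows, sends
\[
    \bigl((\arr{a}_1,\obj{k}_1),(\arr{a}_2,\obj{k}_2),\arr{g}\bigr)
    \longleftrightarrow
    \bigl((\arr{a}_1,\obj{k}_1),(\arr{a}_2,\obj{k}_2),\arr{k}\bigr),
\]
where $\arr{k}\in\mathsf{K}_1$ is the unique arrow with $h_1(\arr{k})=\arr{a}_1^{-1}\arr{g}\arr{a}_2$, whose existence and uniqueness are guaranteed by (FF). Applying the analogous construction to $k\co\mathsf{K}\to\sfH$ and invoking transitivity of Morita equivalence then finishes the argument, and Proposition~\ref{prop:MoritaHomeo} confirms consistency on orbit spaces.

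The main obstacle will be verifying that the pointwise bijection $\sfG[Y_h]_1\leftrightarrow\mathsf{K}[Y_h]_1$ is an isomorphism in the category of differentiable stratified groupoids, not merely of sets. The forward direction is manifestly smooth and stratified, but the inverse relies implicitly on inverting $h_1$ on fibers over $\sfG_0\times\sfG_0$. Morally, (FF) asserts that $h_1$ is a pullback in the category of differentiable stratified spaces, so one needs to establish that such pullback squares yield genuine isomorphisms of fibered products in both the differentiable and the stratified senses, compatibly with the source, target, unit, inverse, and multiplication maps of the two groupoids $\sfG[Y_h]$ and $\mathsf{K}[Y_h]$.
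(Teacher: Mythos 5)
Your proposal is correct and follows essentially the same route as the paper's proof: for (\ref{IteMorEquiv})$\Rightarrow$(\ref{IteExWeakEquiv}) a groupoid over $Y$ with the canonical projections as weak equivalences, and for (\ref{IteExWeakEquiv})$\Rightarrow$(\ref{IteMorEquiv}) the space $\sfG_1\fgtimes{s}{h_0}\mathsf{K}_0$ with the isomorphism $\mathsf{K}[Y]\cong\sfG[Y]$ supplied by (FF), which — since (FF) literally asserts that $\mathsf{K}_1$ \emph{is} the fibered product in the category of differentiable stratified spaces — already disposes of the ``obstacle'' you flag at the end. The only cosmetic differences are that the paper takes $\mathsf{K}$ to be the fibered product of $\sfG_1$ and $\sfH_1$ over $Y\times Y$ rather than $\sfG[Y]$ transported through the isomorphism $\varphi$, and uses $s\circ\operatorname{pr}_1$ instead of $t\circ\operatorname{pr}_1$, which lets the comparison map be ``apply $h_1$ to the last factor'' without your conjugation by $\arr{a}_1^{-1}(\cdot)\arr{a}_2$.
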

\begin{proof}
Assume that \eqref{IteMorEquiv} holds true. Then there exists a differentiable
stratified space $Y$ together with open surjective
differentiable stratified submersions $f\co Y\to \sfG_0$ and $g\co Y\to \sfH_0$
such that $\sfG[Y]$ and $\sfH[Y]$ are isomorphic. Define the
groupoid $\mathsf{K}$ by setting $\mathsf{K}_0 = Y$ and
$\mathsf{K}_1 = \sfG_1 \fgtimes{(f\circ s_{\sfG},f\circ t_{\sfG})}
{(g\circ s_{\sfH},g\circ t_{\sfH})} \sfH_1$, and put
$s_{\mathsf{K}} := f\circ s_{\sfG} = g\circ s_{\sfH}$ and
$t_{\mathsf{K}} := f\circ t_{\sfG} = g\circ t_{\sfH}$.
The unit, inverse, and multiplication maps are defined component-wise. Then it is straightforward to
see that $\mathsf{K}$ is a differentiable stratified groupoid, where $\mathsf{K}_1$
is given the induced stratification. We define a morphism
$h \co \mathsf{K} \to \sfG$ by setting $h_0 = f$ and $h_1 = \operatorname{pr}_1$.
Then $h_0 = f$ is in fact surjective by hypothesis so that
$t_{\sfG}\circ\operatorname{pr}_1\co \sfG_1\fgtimes{s}{h_0}\mathsf{K}_0 \to \sfG_0$
is a composition of stratified surjective submersions, and $\mathsf{K}_1$
is a fibered product by construction, so $h$ is a weak equivalence. The weak
equivalence $k\co\mathsf{K}\to\sfH$ is defined identically.

Now assume that we have a weak equivalence $h\co\mathsf{K}\to\sfG$. Let
$Y = \sfG_1 \fgtimes{s_{\sfG}}{h_0} \mathsf{K}_0$ with the induced stratification,
and let $f = \operatorname{pr}_2 \co Y \to \mathsf{K}_0$ and
$g = s_{\sfG}\circ\operatorname{pr}_1 \co Y \to \sfG_0$. Then $f$ is obviously
an open surjective differentiable stratified submersion, while the same for $g$
follows from condition (ES) in Definition \ref{def:WeakEquiv} (along with applying
the inverse map). By definition, $\mathsf{K}[Y]$ and $\sfG[Y]$ both have $Y$ as object
space. The arrow spaces are given by
\[
    \mathsf{K}[Y]_1=
    (\sfG_1 \fgtimes{s_{\sfG}}{h_0} \mathsf{K}_0 \times
        \sfG_1 \fgtimes{s_{\sfG}}{h_0} \mathsf{K}_0)
        \fgtimes{(\operatorname{pr}_2,\operatorname{pr}_2)}
        {(t_{\mathsf{K}},s_{\mathsf{K}})}\mathsf{K}_1,
\]
and
\[
    \sfG[Y]_1 =
    (\sfG_1 \fgtimes{s_{\sfG}}{h_0} \mathsf{K}_0 \times
        \sfG_1 \fgtimes{s_{\sfG}}{h_0} \mathsf{K}_0)
        \fgtimes{(s_{\sfG}\circ\operatorname{pr}_1,s_{\sfG}\circ\operatorname{pr}_1)}
        {(t_{\mathsf{G}},s_{\mathsf{G}})}\mathsf{G}_1.
\]
We define an isomorphism $\mathsf{K}[Y] \to \sfG[Y]$ as the identity on objects
and by applying $h_1$ to the last factor on arrows. This is obviously a differentiable
stratified mapping. The fact that it is an isomorphism follows from condition (FF)
in Definition \ref{def:WeakEquiv}.
\end{proof}

We will need the following, whose proof is that of
\cite[Prop.~3.7]{PflPosTanGOSPLG} with minor modifications.

\begin{lemma}
\label{lem:SatWeakEquiv}
Let $\sfG$ be a proper differentiable stratified groupoid. Suppose $Y$ is a locally closed
differentiable stratified subspace of $\sfG_0$ such that $\sfG_{|Y}$ is a differentiable
stratified subgroupoid. Then the inclusion map $\iota\co\sfG_{|Y} \to \sfG_{|\Sat(Y)}$
is a weak equivalence, where
\[
  \Sat(Y) := \{ \obj{y} \in \sfG_0 \mid \obj{y} = t(\arr{g})
  \text{ for some $\arr{g}\in\sfG_1$ with $s(\arr{g}) \in Y$}\}
\]
denotes the saturation of $Y$.
\end{lemma}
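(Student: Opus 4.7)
The plan is to verify directly the two conditions (ES) and (FF) of Definition \ref{def:WeakEquiv} for the inclusion $\iota$. Condition (FF) is essentially tautological: the fibered product $\sfG_{|\Sat(Y),1}\fgtimes{(s,t)}{(\iota_0,\iota_0)}(Y\times Y)$ consists of those arrows of $\sfG_{|\Sat(Y)}$ whose source and target both lie in $Y$, which is the defining property of $\sfG_{|Y,1}$. The induced stratifications agree because both sides inherit theirs from $\sfG_1$.

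For (ES), set $Z := \sfG_{|\Sat(Y),1}\fgtimes{s}{\iota_0}Y$. Any $\arr{g}\in\sfG_1$ with $s(\arr{g})\in Y$ automatically satisfies $t(\arr{g})\in\Sat(Y)$ -- simply apply the definition of saturation to $\arr{g}$ itself -- so the first projection identifies $Z$ with the stratified subspace $s^{-1}(Y)\subset\sfG_1$, and under this identification $t\circ\operatorname{pr}_1$ becomes $t_{|s^{-1}(Y)}\co s^{-1}(Y)\to\Sat(Y)$. Global surjectivity of this map is precisely the definition of $\Sat(Y)$.

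For the stratified submersion property, fix $\arr{g}\in s^{-1}(Y)$ with $s(\arr{g})=\obj{y}$ and $t(\arr{g})=\obj{z}$. Let $R$ be the connected component of the stratum of $\sfG_1$ containing $\arr{g}$, and let $S_\obj{y}$ and $S_\obj{z}$ be the stratum components of $\sfG_0$ through $\obj{y}$ and $\obj{z}$, respectively. By axiom (DSG\ref{it:DSGStratSourceFib}) together with Lemma \ref{lem:DSGTrivConsequences}, $R$ agrees near $\arr{g}$ with $s^{-1}(S_\obj{y}) = t^{-1}(S_\obj{z})$, and by (DSG\ref{it:DSGSubmersion}) both $s_{|R}$ and $t_{|R}$ are submersions onto $S_\obj{y}$ and $S_\obj{z}$ respectively; by Corollary \ref{cor:ComponentsSameDim} one even has $\dim S_\obj{y} = \dim S_\obj{z}$. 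Intersecting with $s^{-1}(Y)$, the stratum component of $s^{-1}(Y)$ through $\arr{g}$ is locally $s_{|R}^{-1}(S_\obj{y}\cap Y)$, and $t$ restricts on it to an open map into the stratum component of $\Sat(Y)$ through $\obj{z}$. Combined with the global surjectivity, this yields the required stratified surjective submersion.

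The main subtlety to watch is the compatibility of the induced stratifications on $Z$, $s^{-1}(Y)$, and $\Sat(Y)$ with the various restriction maps, and in particular the requirement that connected components of strata map onto connected components of strata of $\Sat(Y)$. This follows by combining openness of $t_{|R}$ with the defining property of $\Sat(Y)$. Once these compatibilities are in hand, the argument closely parallels the Lie-groupoid proof of \cite[Prop.~3.7]{PflPosTanGOSPLG}, the only new ingredients being the local description $R = s^{-1}(S_\obj{y})$ furnished by (DSG\ref{it:DSGStratSourceFib}) and the dimension balance from Corollary \ref{cor:ComponentsSameDim}.
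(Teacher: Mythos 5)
Your proof is correct and takes essentially the same approach as the paper's: both verify (FF) as a near-tautology from the definition of $\Sat(Y)$, and verify (ES) by identifying $(\sfG_{|\Sat(Y)})_1\fgtimes{s}{\iota_0}Y$ with $\{\arr{g}\in\sfG_1\mid s(\arr{g})\in Y\}$, reading off surjectivity of $t$ from the definition of the saturation, and invoking the stratified-submersion property of $t$. The paper's own argument is in fact terser than yours on the submersion point (it simply cites that $t$ restricts to a stratified surjective submersion $(\sfG_{|Y})_1\to Y$ by hypothesis), so your extra detail via (DSG4), Lemma \ref{lem:DSGTrivConsequences}, and Corollary \ref{cor:ComponentsSameDim} only strengthens the write-up.
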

\begin{proof}
First note that since $\sfG$ is proper and $Y$ locally closed in $\sfG_0$
the saturation $\Sat(Y) \subset \sfG_0$ is also locally closed.
Next observe  that $(\sfG_{|\Sat(Y)})_1 \fgtimes{s}{\iota_0} Y = \{ \arr{g}\in\sfG_1\mid s(\arr{g})\in Y\}$,
so that $t$ clearly restricts to this set as a map that is surjective onto $Y$. Moreover,
$t$ restricts to a stratified surjective submersion $(\sfG_{|Y})_1\to Y$ by hypothesis so that
(ES) is satisfied. The condition (FF) clearly follows from the definition of $\Sat(Y)$.
\end{proof}

With this, we make the following.

\begin{definition}
\label{def:LocTransStrat}
We say that a differentiable stratified groupoid $\sfG$ is
\emph{sliceable} if
for every $\obj{x}\in \sfG_0$ the conditions
(LT\ref{it:ExistenceSlice}) to (LT\ref{it:ExistenceBisections})
from Definition \ref{def:LocTransDiffGroupoid} are satisfied
and if in addition the following holds true:
\begin{enumerate}[{\rm (LT1)}]
\setcounter{enumi}{3}
\item
\label{it:StratifiedSlice}
The slice $Y_\obj{x}$ can be chosen in such a way that
$Y_\obj{x}$ is a differentiable stratified subspace of $U_\obj{x}$,
and such that $Y_\obj{x}$ has the form $Z_\obj{x} \times R_\obj{x}$,
where $Z_\obj{x}$ is a $\sfG_\obj{x}$-invariant stratified subspace of $Y_\obj{x}$
and $R_\obj{x}$ is the stratum through $\obj{x}$. Moreover, the isomorphism
between $\sfG_{|U_\obj{x}}$ and
$(O_\obj{x} \times O_\obj{x}) \times (\sfG_\obj{x} \ltimes Y_\obj{x})$
then becomes an isomorphism  of differentiable stratified groupoids,
where $O_\obj{x}$ carries the trivial stratification.
\end{enumerate}
\end{definition}

We will see that important examples of differentiable stratified groupoids are
sliceable differentiable stratified groupoids. Under additional hypotheses,
we have the following.

\begin{proposition}
\label{prop:LocTransStrat}
Let $\sfG$ be a sliceable differentiable stratified groupoid, and suppose
that for each $x \in \sfG_0$, the set $Y_\obj{x}$ as in
{\rm (LT\ref{it:ExistenceSlice})} of
Definition {\rm\ref{def:LocTransDiffGroupoid}} can be chosen so that on each stratum
of $Y_\obj{x}$ the $\sfG_\obj{x}$-orbit type is constant.
Then the assignment
\[
 |\sfG| \ni \sfG \obj{x} \mapsto \mathcal{S}_{\sfG x} = \pi(\mathcal{S}^0_\obj{x})
\]
defines a stratification of $|\sfG|$ with respect to which the orbit map $\pi\co\sfG_0\to|\sfG|$
is a stratified surjective submersion.
\end{proposition}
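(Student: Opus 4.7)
The plan is to first verify that $\mathcal{S}_{\sfG\obj{x}}=\pi(\mathcal{S}^0_\obj{x})$ depends only on the orbit $\sfG\obj{x}$, then to realize the strata locally as orbits of a compact Lie group action on a slice, and finally to read off from the local model that $\pi$ is a stratified surjective submersion. For well-definedness, I would pick $\obj{y}\in\sfG\obj{x}$ and an arrow $\arr{g}\in\sfG_1$ with $s(\arr{g})=\obj{x}$ and $t(\arr{g})=\obj{y}$. By (LT\ref{it:ExistenceBisections}) there exists a bisection $\sigma$ over a trivializing neighborhood $U_\obj{x}$ with $\sigma(\obj{x})=\arr{g}$, together with the induced isomorphism of differentiable groupoids $f\co\sfG_{|U_\obj{x}}\to\sfG_{|U_\obj{y}}$ whose object component is $t\circ\sigma$. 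Condition (LT\ref{it:StratifiedSlice}) lifts $f$ to an isomorphism of differentiable stratified groupoids, so $f_0$ sends the germ $\mathcal{S}^0_\obj{x}$ to $\mathcal{S}^0_\obj{y}$. Since $\pi\circ f_0=\pi$ on $U_\obj{x}$, we obtain $\pi(\mathcal{S}^0_\obj{x})=\pi(\mathcal{S}^0_\obj{y})$ as germs in $|\sfG|$.

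Next I would pass to the local model: by (LT\ref{it:ExistenceSlice}) and (LT\ref{it:StratifiedSlice}), a neighborhood of the orbit $\sfG\obj{x}$ in $|\sfG|$ is identified with $|\sfG_\obj{x}\ltimes Y_\obj{x}|=Y_\obj{x}/\sfG_\obj{x}$, since the pair-groupoid factor $O_\obj{x}\times O_\obj{x}$ contributes only a single orbit. By hypothesis the $\sfG_\obj{x}$-orbit type is constant on each stratum $S$ of $Y_\obj{x}$, and $S$ is a smooth $\sfG_\obj{x}$-invariant submanifold. The classical theory of proper compact Lie group actions with a single orbit type then yields that $S\to S/\sfG_\obj{x}$ is a smooth surjective submersion onto a manifold, so the images $\pi(S)$ decompose $Y_\obj{x}/\sfG_\obj{x}$ into locally closed smooth pieces. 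Local finiteness is inherited from the stratification of $Y_\obj{x}$, and the frontier condition descends because $\pi\co Y_\obj{x}\to Y_\obj{x}/\sfG_\obj{x}$ is closed and the strata of $Y_\obj{x}$ are $\sfG_\obj{x}$-invariant, so that $\pi^{-1}(\overline{\pi(S)})=\overline{S}$. Equipping $Y_\obj{x}/\sfG_\obj{x}$ with the sheaf of $\sfG_\obj{x}$-invariant smooth functions as in Example~\ref{ex:DiffGroupoids} then turns it into a differentiable stratified space whose strata are precisely the $\pi(S)$.

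It remains to glue these local descriptions together and to extract the submersion property. Overlap compatibility comes from (LT\ref{it:CompatibilitySlices}): for $\obj{z}\in Y_\obj{x}$ one may arrange $Y_\obj{z}\subset Y_\obj{x}$ and $\sfG_\obj{z}\subset\sfG_\obj{x}$, and the equivariant embedding of $Y_\obj{z}$ into $Y_\obj{x}$ identifies the two local stratifications of $|\sfG|$ near $\sfG\obj{z}$. The submersion property of $\pi$ is then local: in the chart, $\pi$ is the product of the quotient map $Y_\obj{x}\to Y_\obj{x}/\sfG_\obj{x}$, a submersion on each stratum by the constant-orbit-type hypothesis, with the trivial projection on the $O_\obj{x}\times O_\obj{x}$ factor, while surjectivity on strata is built into the definition of $\mathcal{S}_{\sfG\obj{x}}$. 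The main obstacle I anticipate is the constant-orbit-type/submersion step in the stratified category: one must lift the classical manifold statement to the present framework and verify that the differentiable structure on the quotient of each stratum agrees with the one induced by the sheaf of $\sfG_\obj{x}$-invariant smooth functions on $Y_\obj{x}$. Once this is in place, the remaining stratification axioms and the submersion property follow routinely from the gluing afforded by (LT\ref{it:CompatibilitySlices}).
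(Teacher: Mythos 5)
Your proposal is correct and follows essentially the same route as the paper: reduce to the slice model $Y_\obj{x}/\sfG_\obj{x}$, use the constant-orbit-type hypothesis to see that each stratum quotient $S/\sfG_\obj{x}$ is a smooth manifold, and use closedness of the orbit map (compactness of $\sfG_\obj{x}$) to descend the frontier condition. The only cosmetic difference is that the paper identifies $Y_\obj{x}/\sfG_\obj{x}$ with an open neighborhood of $\sfG\obj{x}$ in $|\sfG|$ via Lemma \ref{lem:SatWeakEquiv} and Proposition \ref{prop:MoritaHomeo} rather than arguing directly with the bisections of (LT\ref{it:ExistenceBisections}), and it treats well-definedness as already settled by Proposition \ref{prop:LocTransDiffOrbitSpace}.
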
\begin{proof}
Note that $|\sfG|$ is a differentiable space by Proposition \ref{prop:LocTransDiffOrbitSpace}.
Choose $U_\obj{x}$, $Y_\obj{x}$, etc. as in Definitions \ref{def:LocTransDiffGroupoid} and
\ref{def:LocTransStrat}, and note that we may shrink $U_\obj{x}$ if necessary to assume that
the stratification of $Y_\obj{x}$ consists of a finite number of strata. By Lemma
\ref{lem:SatWeakEquiv} and Proposition \ref{prop:MoritaHomeo}, the inclusion of
$Y_\obj{x}$ into its saturation in $\sfG_0$ induces an isomorphism of the differentiable spaces
$Y_\obj{x}/\sfG_\obj{x}$ and $|\sfG_{\Sat(Y_\obj{x})}|$. As $\Sat(Y_\obj{x})$ contains
$U_\obj{x}$ by definition, and as the orbit map is open by Proposition
\ref{prop:BasicPropTopGroupoids}(\ref{it:OpMap}), it follows that $Y_\obj{x}/\sfG_\obj{x}$ is
isomorphic as a differentiable space to an open neighborhood of $\sfG \obj{x}$ in $|\sfG|$.
Moreover, by (LT\ref{it:StratifiedSlice}), the embedding of $Y_\obj{x}$ into its saturation
in $\sfG_0$ preserves strata, so it is sufficient to show that the stratification of $Y_\obj{x}$
induces a stratification of the orbit space $Y_\obj{x}/\sfG_\obj{x}$.

Now, each stratum $S$ of $Y_\obj{x}$ is a smooth manifold with $\sfG_\obj{x}$-action so that
$S/\sfG_\obj{x}$ is stratified by $\sfG_\obj{x}$-orbit types. Since each $S$ has a single orbit type by
hypothesis, the stratification of $S/\sfG_\obj{x}$ is trivial, i.e. $S/\sfG_\obj{x}$ is a smooth submanifold
of $Y_\obj{x}/\sfG_\obj{x}$. As $Y_\obj{x}$ is assumed to have finitely many strata, the resulting
stratification of $Y_\obj{x}/\sfG_\obj{x}$ is clearly finite. As $\sfG_\obj{x}$ is compact, the orbit map
$Y_\obj{x} \to Y_\obj{x}/\sfG_\obj{x}$ is closed so that $\overline{(S/\sfG_\obj{x})} = \overline{S}/\sfG_\obj{x}$.
Therefore, if $S/\sfG_\obj{x} \cap \overline{S^\prime/\sfG_\obj{x}}\neq\emptyset$ for strata $S$ and $S^\prime$
of $Y_\obj{x}$, then $S\cap \overline{S^\prime}\neq\emptyset$, implying $S\subset\overline{S^\prime}$
and hence $S/\sfG_\obj{x} \subset \overline{S^\prime/\sfG_\obj{x}}$. The pieces $S/\sfG_\obj{x}$ therefore satisfy
the condition of frontier and  define a stratification of $Y_\obj{x}/\sfG_\obj{x}$.
\end{proof}

\begin{remark}
Note that the hypotheses of Proposition \ref{prop:LocTransStrat} need not be satisfied even in the
case of a proper Lie groupoid. For instance, when $G$ is a compact Lie group, a smooth $G$-manifold
equipped with the trivial stratification satisfies these hypotheses if and only if $G$ acts with
a single orbit type.
\end{remark}


\section{Examples of differentiable stratified groupoids}
\label{sec:StratDiffGpoidExamples}

Many examples of differentiable stratified groupoids arise naturally from
differentiable actions of Lie groupoids on stratified differentiable spaces.
Recall that if $\sfG$ is a topological groupoid and $X$ is a topological space,
an \emph{action} of $\sfG$ on $X$ is given by a continuous \emph{anchor map}
$\alpha\co X \to \sfG_0$ together with a continuous map
$\cdot\co\sfG_1 \fgtimes{s}{\alpha}  X \to X$ such that for all
$\obj{x} \in X$ and $\arr{g},\arr{h}\in\sfG_1$ with  $s(\arr{g}) = \alpha(\obj{x})$ and
$t(\arr{g}) = s(\arr{h})$ the relations
\[
  \alpha(\arr{g}\cdot \obj{x}) = t(\arr{g}), \quad
  \arr{h}\cdot(\arr{g}\cdot x) = (\arr{h}\arr{g})\cdot \obj{x}, \quad \text{and} \quad
  u(\alpha(\obj{x}))\cdot \obj{x} = \obj{x}
\]
hold true. As in the case of group actions, the \emph{$\sfG$-orbit of $\obj{x} \in X$} is defined to be
$\{ \arr{g}\cdot \obj{x} \mid s(\arr{g})= \obj{x} \}$. The \emph{translation groupoid} $\sfG\ltimes X$
associated to the action has object space $(\sfG\ltimes X)_0 = X$ and arrow space
$(\sfG\ltimes X)_1 = \sfG_1\fgtimes{s}{\alpha} X$. The structure maps are given by
\begin{equation}
\label{eq:StrucMapsTranslation}
\begin{split}
    & s_{\sfG\ltimes X} (\arr{g}, \obj{x}) = \obj{x},
    \quad
    t_{\sfG\ltimes X} (\arr{g}, \obj{x}) = \arr{g}\cdot \obj{x},
    \quad
    u_{\sfG\ltimes X} (\obj{x}) = (u\circ\alpha(\obj{x}), \obj{x}),
    \\
    & i_{\sfG\ltimes X} (\arr{g}, \obj{x}) = (\arr{g}^{-1}, \arr{g}\cdot \obj{x}),
    \quad
    m_{\sfG\ltimes X} \big( (\arr{h},\arr{g}\cdot x) ,(\arr{g}, \obj{x})\big)
    = (\arr{h}\arr{g}, \obj{x}).
\end{split}
\end{equation}
If $\sfG$ is a differentiable groupoid and $X$  a differentiable space, we say that
the action is \emph{differentiable} if $\alpha$ and $\cdot$ are morphisms of differentiable
space. In this case, the arrow space $(\sfG\ltimes X)_1 = \sfG_1\fgtimes{s}{\alpha} X$ inherits a
differentiable structure by \cite[Thm.~7.6]{NGonzalezSanchoBook}. Then, as each of
the structure maps of $\sfG\ltimes X$ is defined in terms of the structure maps of $\sfG$
and the differentiable maps $\alpha$ and $\cdot$, $\sfG\ltimes X$ is
a differentiable groupoid. For actions of differentiable stratified groupoids, one even has the following.

\begin{proposition}
\label{prop:StratTranslationGpoid}
Let $X$ be a differentiable stratified space that is topologically locally trivial. Further let
$\sfG$ be a differentiable stratified groupoid with $\sfG_0$ and $\sfG_1$
topologically locally trivial that acts differentiably on $X$ in such a way that the $\sfG$-orbit
of each $x \in X$ is a subset of the stratum containing $x$. Then the translation groupoid
$\sfG\ltimes X$ is a differentiable stratified groupoid.
\end{proposition}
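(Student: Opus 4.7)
The plan is to verify axioms (DSG\ref{it:DSGStratSpaces})--(DSG\ref{it:DSGStratSection}) for $\sfG\ltimes X$. The object stratification on $(\sfG\ltimes X)_0=X$ is the given one. For the arrow space $(\sfG\ltimes X)_1=\sfG_1\fgtimes{s}{\alpha} X$, I would declare the pieces to be the connected components of $s_{\sfG\ltimes X}^{-1}(S)$ for $S$ a connected component of a stratum of $X$. By Lemma \ref{lem:DSGTrivConsequences}\eqref{Ite1} the strata of $\sfG_1$ are $s$-preimages of strata of $\sfG_0$, so each such piece can equivalently be written (componentwise) as $R\fgtimes{s}{\alpha} S$, where $R$ is the connected component of $s^{-1}(T)$ inherited from the $\sfG_1$-stratification and $T$ is the connected component of a stratum of $\sfG_0$ containing $\alpha(S)$.

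To see that each piece is a smooth manifold, I would use that $s|_R\co R\to T$ is a surjective submersion by (DSG\ref{it:DSGSubmersion}) for $\sfG$, and that, assuming $\alpha$ sends $S$ smoothly into $T$, the pullback $R\fgtimes{s}{\alpha} S$ is a smooth submanifold of $\sfG_1\times X$. Local finiteness and the condition of frontier for this stratification of $(\sfG\ltimes X)_1$ follow from the corresponding properties of $\sfG_1$ and $X$ together with the continuity of $s$ and $\alpha$, establishing (DSG\ref{it:DSGStratSpaces}). Condition (DSG\ref{it:DSGStratSourceFib}) is then immediate because the source fiber $s_{\sfG\ltimes X}^{-1}(\obj{x})=s^{-1}(\alpha(\obj{x}))\times\{\obj{x}\}$ sits inside the piece $s_{\sfG\ltimes X}^{-1}(S)$ passing through $(\arr{g},\obj{x})$, where $S$ is the stratum of $\obj{x}$.

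For (DSG\ref{it:DSGStrucMaps}) and (DSG\ref{it:DSGSubmersion}) I would inspect the formulas in \eqref{eq:StrucMapsTranslation} piece by piece. The source $s_{\sfG\ltimes X}=\operatorname{pr}_2$ restricts to a surjective submersion $R\fgtimes{s}{\alpha} S\to S$ as the pullback of $s|_R$; the target $t_{\sfG\ltimes X}(\arr{g},\obj{x})=\arr{g}\cdot\obj{x}$ maps each piece into $S$ by the orbits-in-strata hypothesis and is smooth because the action is, while submersivity follows by conjugating with the inverse $i_{\sfG\ltimes X}$. The unit $u_{\sfG\ltimes X}(\obj{x})=(u(\alpha(\obj{x})),\obj{x})$ is a stratified embedding since $u$ is for $\sfG$; the inverse is an isomorphism as in Example \ref{ex:DiffGroupoids}(\ref{ex:DiffTranslationGpoid}); and the multiplication is stratified once the induced stratification on the composable pairs $(\sfG\ltimes X)_1\sttimes(\sfG\ltimes X)_1$ is identified with the obvious one. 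For (DSG\ref{it:DSGStratSection}), a bisection $\sigma_{\sfG\ltimes X}\co U\to(\sfG\ltimes X)_1$ over an open subset $U$ of a stratum $S$ of $X$ has the form $\obj{y}\mapsto(\tau(\obj{y}),\obj{y})$, and the associated left translation $L_{\sigma_{\sfG\ltimes X}}(\arr{h},\obj{y})=(\tau(\arr{h}\cdot\obj{y})\arr{h},\obj{y})$ preserves the second component, hence sends $s_{\sfG\ltimes X}^{-1}(S)$ to itself.

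The main obstacle I anticipate is the claim that $\alpha$ sends each connected component of a stratum $S$ of $X$ into a single connected component of a stratum $T$ of $\sfG_0$ and is smooth into it; both the manifold structure of the arrow-space pieces and the stratum-preserving property of $t_{\sfG\ltimes X}$ rest on this compatibility. The argument should combine the orbit-preservation identity $\alpha(\arr{g}\cdot\obj{x})=t(\arr{g})$ with the orbits-in-strata hypothesis---so that $\alpha$ sends each $\sfG$-orbit in $S$ onto the corresponding $\sfG$-orbit in $\sfG_0$, which is contained in a single connected component of a stratum by Lemma \ref{lem:OrbitInStrata}---and the topological local triviality of $X$, $\sfG_0$, together with Corollary \ref{cor:ComponentsSameDim} on orbit dimensions, to conclude that $\{\obj{x}\in S\mid\alpha(\obj{x})\in T\}$ is both open and closed in $S$ and therefore equal to $S$.
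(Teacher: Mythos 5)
Most of your argument runs along the same lines as the paper's proof: you equip $(\sfG\ltimes X)_1=\sfG_1\fgtimes{s}{\alpha}X$ with the stratification induced by those of $\sfG_1$ and $X$ (your componentwise pieces $R\fgtimes{s}{\alpha}S$ are exactly the ones Lemma \ref{lem:InducedStrat} produces, using Lemma \ref{lem:DSGTrivConsequences}\eqref{Ite1} to rewrite them as $s_{\sfG\ltimes X}^{-1}(S)$), and you verify (DSG\ref{it:DSGStratSpaces})--(DSG\ref{it:DSGStratSourceFib}) by essentially the same computations. Your treatment of (DSG\ref{it:DSGStratSection}) via the observation that $L_{\sigma_{\sfG\ltimes X}}$ preserves the second component is a more elementary substitute for the paper's reduction, via Proposition \ref{prop:RestrictLie}, to left translations of the Lie groupoid $\sfG_{|\Sat S}\ltimes P$; it is fine provided you also note that the inverse of $L_{\sigma_{\sfG\ltimes X}}$ has the same form, so that strata are preserved in both directions and the required \emph{equality} of germs (not just an inclusion) follows.

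The genuine problem is your final paragraph. The paper's proof simply uses that $\alpha$ is a stratified mapping (``Since $i$, $u$, $m$, and $\alpha$ are stratified mappings\dots''), in effect treating this as part of what a differentiable action of a differentiable stratified groupoid means. You instead try to \emph{derive} it from the orbits-in-strata hypothesis by an open--closed argument, and that derivation fails: the identity $\alpha(\arr{g}\cdot\obj{x})=t(\arr{g})$ constrains $\alpha$ only along orbits, and the hypothesis that $\sfG$-orbits in $X$ lie in strata of $X$ gives no information transverse to the orbits. Concretely, let $\sfG$ be the unit groupoid over $\sfG_0=\R$ stratified into $\{0\}$ and $\R\smallsetminus\{0\}$ (all axioms of Definition \ref{def:DiffStratGpoid} hold, and $\sfG_0$, $\sfG_1$ are topologically locally trivial), let $X=\R$ carry the trivial one-stratum stratification, let the action be the trivial one, and let $\alpha\co\R\to\R$ be any non-constant smooth map hitting $0$. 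Every orbit in $X$ is a point, so all stated hypotheses are met; yet $\{\obj{x}\in S\mid\alpha(\obj{x})=0\}=\alpha^{-1}(0)$ is closed but in general not open in $S=\R$, and the corresponding piece $u(\{0\})\fgtimes{s}{\alpha}\R\cong\alpha^{-1}(0)$ of your decomposition need not be a manifold, nor the partition locally finite. So topological local triviality and Corollary \ref{cor:ComponentsSameDim} cannot rescue the open--closed argument. The compatibility of $\alpha$ with the stratifications must be taken as a hypothesis (as the paper tacitly does), not deduced from the orbits-in-strata condition.
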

\begin{proof}
Let $\mathcal{Z}$ denote the maximal decomposition of $X$, and endow
$(\sfG\ltimes X)_1 = \sfG_1\fgtimes{s}{\alpha} X$ with the stratification induced by
those of $\sfG_1$ and $X$. Then by Lemma \ref{lem:InducedStrat}, $(\sfG\ltimes X)_1$ is
a differentiable stratified space. By definition of the induced stratification, the
germ of the stratum of $(\sfG\ltimes X)_1$ at the point $(\arr{g}, x)$ is given by the
fibered product over $s$ and $\alpha$ of neighborhoods of strata in $\sfG_1$ and $X$.
Since $i$, $u$, $m$, and $\alpha$ are stratified mappings, the structure maps
$s_{\sfG\ltimes X}$, $t_{\sfG\ltimes X}$, $u_{\sfG\ltimes X}$, $i_{\sfG\ltimes X}$, and $m_{\sfG\ltimes X}$ defined
by Equation \eqref{eq:StrucMapsTranslation} are as well stratified
mappings, so that (DSG\ref{it:DSGStratSpaces}) and (DSG\ref{it:DSGStrucMaps}) are satisfied.
To see that $s_{\sfG\ltimes X}$ is a stratified surjective submersion, let $(\arr{g}, x) \in (\sfG\ltimes X)_1$.
Note that as $s$ is a stratified surjective submersion, the restriction of $s$ to the connected
component $R_\arr{g}$ of the stratum of $\sfG_1$ containing $\arr{g}$ is a surjective submersion
onto the connected component $S_{\alpha(x)}$ of the stratum of $\sfG_0$ containing $\alpha(x)$.
As $\alpha$ is a stratified mapping, it maps the connected component $P_x$ of the stratum of $X$
containing $x$ into $S_{\alpha(x)}$. Hence the restriction of $s_{\sfG\ltimes X}$  to
$R_\arr{g} \fgtimes{s}{\alpha} P_x$ is a surjective submersion onto $P_x$.
It follows that $s_{\sfG\ltimes X}$ is a stratified surjective submersion. The
proof for $t_{\sfG\ltimes X}$ is identical. In a similar fashion one shows that $u_{\sfG\ltimes X}$ is a stratified
embedding, because $\alpha$ is a stratified map and $u$ a stratified embedding, so (DSG\ref{it:DSGSubmersion})
is satisfied. Property (DSG\ref{it:DSGStratSourceFib})  follows from the definition of the induced
stratification, as the connected component of the stratum of $(\sfG\ltimes X)_1$ containing the arrow
$(\arr{g}, x)$ equals the fibered product $R_\arr{g} \fgtimes{s}{\alpha} P_x$   with  $R_\arr{g}$
and $P_x$ as above. Therefore, the germ at $(\arr{g}, x)$ of the set of points
$(\arr{h},y)\in (\sfG\ltimes X)_1$ such that $y \in P_x$ is contained in the germ
$[R_\arr{g} \fgtimes{s}{\alpha} P_x]_{(\arr{g}, x)}$ of the stratum through $(\arr{g}, x)$.

To verify (DSG\ref{it:DSGStratSection}), consider again $P_x \subset X$, the connected component of
the stratum through $x$. Since by assumption on $X$ the $\sfG$-orbit of each point
in $P_x$  is a subset of the stratum containing $P_x$, the saturation $P:= \Sat P_x$ has to be a union
of connected components of strata which do not intersect one another's closures by
topological local triviality, see Proposition \ref{prop:RestrictLie}. Assume that $\sigma \co  P \to  (\sfG\ltimes X )_1$
is a  bisection as in (DSG\ref{it:DSGStratSection}). Since $\alpha$ is a stratified mapping, $\alpha(P_x)$
is contained in a connected component $S$ of a stratum of $\sfG_0$. The saturation
$\Sat S$ of $S$ by $\sfG$ now is given by $t(s^{-1}(S))$ and hence is a union of connected
components of strata of $\sfG_0$ such that for each such connected component $S^\prime$,
we have $s^{-1}(S)\cap t^{-1}(S^\prime)\neq\emptyset$ by construction. Then by Proposition
\ref{prop:RestrictLie}, the restriction $\sfG_{|\Sat S}$ is a Lie groupoid. Moreover,
it follows from (DSG\ref{it:DSGStratSourceFib}) that
$\sigma(P ) \subset (\sfG_{| \Sat S}\ltimes P)_1$. Therefore, the map $L_\sigma$ described
in (DSG\ref{it:DSGStratSection}) is a left translation of the Lie groupoid
$\sfG_{|\Sat S}\ltimes P$, see \cite[page 22]{MackenzieGenThry}, implying in particular that
it is a diffeomorphism of the stratum $s_{\sfG\ltimes X}^{-1}(P)$ onto itself. Hence
(DSG\ref{it:DSGStratSection}) holds, completing the proof.
\end{proof}

A particularly important special case
appears when a  Lie group $G$ acts differentiably on a differentiable stratified space $X$ in
such a way that the action restricts to a smooth action on each stratum.
The resulting translation groupoid $G\ltimes X$ then is a differentiable stratified groupoid
by Proposition \ref{prop:StratTranslationGpoid}.
Many significant and naturally occuring examples of differentiable stratified groupoids are constructed
that way. In the following, we will provide  a few.

\begin{example}[Singular symplectic reduction]
\label{sympl-sing-red}
Suppose $(M, \omega)$ is a connected symplectic manifold equipped with a Hamiltonian $G$-action
with moment map $J\co M\to\mathfrak{g}^\ast$ such that $0$ is a singular value for $J$.
By \cite[Thm.~2.1]{SjamaarLerman}, the zero level set $J^{-1}(0)$ and the
\emph{symplectic quotient} $J^{-1}(0)/G$ are both stratified by orbit types.
Moreover, the corresponding stratifications are Whitney (b)-regular.
In addition, $J^{-1}(0)$ inherits from the ambient manifold the structure of
a differentiable stratified space, and the $G$-action preserves the
stratification; see \cite[Thm.~8.3.1]{OrtegaRatiu}. Therefore, the
singular symplectic quotient can be realized as the orbit space of the
differentiable stratified groupoid $G\ltimes J^{-1}(0)$.
\end{example}

\begin{example}[Lie Groupoid actions on manifolds with corners]
\emph{Manifolds with corners} and  \emph{manifolds with boundary} are in a natural way locally trivial differentiable stratified spaces, and even more
$\mathcal{C}^\infty$-cone spaces, see \cite[1.1.19 \& 3.10.3]{PflaumBook} and \cite{VanLeSombergVanzura}.
Compact Lie group actions on manifolds with corners have been considered,
e.g.~in \cite{MargalefOuterelo, MelrosePDO}. By Proposition \ref{prop:StratTranslationGpoid},
the corresponding translation groupoids are differential stratified groupoids under
mild hypotheses.
\label{ex-act-madf-corn}
\end{example}

\begin{example}[Semialgebraic actions]
\label{ex-semialgebraic}
Recall that a \emph{semialgebraic set} is a locally closed subset of $\R^n$ locally given by the solution of a
finite collection of polynomial equations and inequalities, see \cite{ShiotaBook}.
Semialgebraic sets are differentiable stratified spaces in a natural way, since they admit a minimal
Whitney (b)-regular, and hence topologically locally trivial, stratification into semialgebraic manifolds. If a compact
Lie group acts on a semialgebraic set and preserves this stratification, the resulting
translation groupoid is again a stratified differentiable groupoid by
Proposition \ref{prop:StratTranslationGpoid}. Lie group actions on semialgebraic sets
have been considered in \cite{ChoiParkSuh,ParkOrbitTypes,ParkSuhLinearEmbed}.
\label{ex-sem-actn}
\end{example}

\begin{example}[Transverse cotangent bundle]
\label{ex-transv-cot-space}
Let $G$ be a compact Lie group and $M$ a $G$-manifold. The \emph{transverse cotangent bundle}
$T_G^\ast M$ is the subspace of the cotangent bundle $T^\ast M$ consisting of elements that
are conormal to the $G$-orbits in $M$. The transverse cotangent bundle appears in the study
of transversally elliptic $G$-invariant pseudodifferential operators on $M$,
see \cite{DeConcProcesiVergneCohom,ParadanKGTGM,ParadanVergneIndex}.
The action of $G$ on $M$ induces an action of $G$ on
$T_G^* M$. It is not difficult to show that the stratification of $M$ by orbit
types induces a stratification of $T_G^\ast M$ that is compatible with the smooth
structure $T_G^\ast M$ inherits as a subset of $T^\ast M$. Hence the corresponding
translation groupoid is a differentiable stratified groupoid.

If $\sfG$ is a proper Lie groupoid, the \emph{transverse cotangent bundle} $T_\sfG^\ast \sfG_0 \subset T^\ast \sfG_0$
can be defined in a similar fashion as the subspace of all $\xi \in T^\ast \sfG_0$ such that
$\langle \xi , v \rangle =0$ for all $v \in T_\obj{x} \calO_\obj{x}$; here, $\obj{x} \in \sfG_0$ with
$\xi \in T_{\obj{x}}^\ast \sfG_0$ and $\calO_\obj{x}$ is the orbit through $\obj{x}$. By the slice theorem
for groupoids as stated in \cite[Cor.~3.11]{PflPosTanGOSPLG} (also recalled in Section \ref{subsec:InertiaGpoidStrat}
below), it follows that in a neighborhood of a point $\obj{x} \in \sfG_0$ the transverse cotangent bundle $T_\sfG^\ast \sfG_0 $ is isomorphic to  the
exterior tensor product of the transverse cotangent bundle $T^\ast_{\sfG_\obj{x}} Y_{\obj{x}}$ of a slice $Y_\obj{x}$
through $\obj{x}$ with the cotangent bundle $T^\ast O$ of an open connected neighborhood $O$ of $\obj{x}$
in the orbit through $\obj{x}$.
\end{example}

\begin{example}[Singular riemannian foliations]
\label{ex-sing-Riem-foliat}
Recall from \cite[Section 6.1]{MolinoBook} that
a singular riemannian foliation is a pair $(M, \mathcal{F})$ where $M$ is a smooth, connected manifold and $\mathcal{F}$
is a partition of $M$ into connected, immersed submanifolds called \emph{leaves} such that the module of smooth vector
fields on $M$ that are tangent to the leaves is transitive on each leaf
and there is a riemannian metric on $M$ with respect to which every
geodesic that is normal to a leaf is normal to every leaf it intersects.
A singular riemannian foliation is an example of a singular Stefan--Sussmann foliation \cite{StefanFoliatSing,SussmannOrbitsVFldsDistrib}. By
\cite[Section 6.2]{MolinoBook}, $M$ is stratified by unions of leaves of the same dimension; see
also \cite[Sections 1.2--3]{RoyoPrietoSAWSingFoil} and \cite{BoualemMolino}.

Suppose $(M, \mathcal{F})$ is \emph{almost regular}, meaning that the union of leaves of
maximal dimension $k$ is an open, dense subset of $M$. Suppose further that the foliation
$\mathcal{F}$ can be defined by a Lie algebroid of dimension $k$ in the sense of \cite[page 472]{DebordHolonomSingFoliat},
i.e.~the leaves of $\mathcal{F}$ are the maximal connected integral manifolds of the distribution on $M$ induced by a Lie algebroid over $M$.
In \cite{DebordHolonomSingFoliat},
the holonomy groupoid $\sfG$ of $(M,\mathcal{F})$ is constructed as a Lie groupoid with
object space $\sfG_0 = M$ and such that the orbits of $\sfG$ correspond to the leaves $\mathcal{F}$;
see also \cite{PradinesSingFoliat}. Giving $M$ the stratification by leaves of the same dimension
described above and $\sfG_1$ the stratification given by the pullback of this stratification via $s$,
it is easy to see that $\sfG$ has, along with its structure as a Lie groupoid, an alternate structure
as a differential stratified groupoid which is finer than the original one.
These two structures coincide if and only if the foliation
$\mathcal{F}$ is regular \cite[page 190 and Section 1.1]{MolinoBook}, i.e.~all leaves have the same dimension.

Note that the holonomy groupoid of a general singular Stefan--Sussmann foliation was constructed as a
topological groupoid in \cite{AndroulidakisSkandalisFoliat} and coincides with the holonomy groupoid
of \cite{DebordHolonomSingFoliat} when the latter is defined. Other hypotheses under which
this holonomy groupoid naturally has the structure of a differentiable stratified groupoid
are not yet clear.

\end{example}

\section{The  algebroid of a differentiable stratified groupoid}
\label{sec:algebroid}
Given a reduced structurally (b)-regular differentiable stratified groupoid $\sfG$ with $\calS^i$,
$i=0,1$ the decomposition of $\sfG_i$ into its strata we obtain the so-called
stratified tangent bundles
\[
  \stratan \sfG_0 := \bigcup_{S \in \calS^0} TS
  \text{ and }
  \stratan\sfG_1 :=\bigcup_{R \in \calS^1} TR =\bigcup_{S \in \calS^0}TS^1,
  \text{ where $S^1 := s^{-1} (S)$ for $S\in \calS^0$}.
\]
Since, by assumption, the stratifications of $\sfG_0$ and $\sfG_1$ are Whitney (b)-regular,
hence (a)-regular, the stratified tangent bundles
$\stratan\sfG_0 $ and $\stratan\sfG_1 $ inherit the
structures of differentiable stratified spaces by \cite[Thm.~2.1.2]{PflaumBook}.
Moreover, we have tangent maps
$Ts \co \stratan \sfG_1 \to \stratan \sfG_0 $ and
$Tt \co \stratan \sfG_1 \to \stratan \sfG_0 $.
Now we can define the algebroid of $\sfG$.
\begin{definition}
\label{def:Algebroid}
  Given a structurally (b)-regular differentiable stratified groupoid $\sfG$,
  the \emph{differentiable stratified algebroid} of $\sfG$ is defined as the space
  \begin{equation}
  \label{eq:AlgebroidDef}
    \sfA := \bigcup_{S \in \calS^0} \sfA_S ,
  \end{equation}
  where $\sfA_S := u^*_{|S}\ker T_{|S^1}s$ denotes the Lie algebroid
  of the Lie groupoid $\sfG_{|S}$. In other words, $\sfA$ can be identified
  with $u^*\ker Ts$, the restriction of $\ker Ts$ to $\sfG_0$. We define the
  \emph{anchor map} of the algebroid $\sfA$ to be the map
  \[
    \varrho\co  \sfA \to \stratan\sfG_0 , \quad v \mapsto Tt (v).
  \]
\end{definition}

\begin{proposition}
Let $\sfG$ be a proper reduced structurally (b)-regular differentiable stratified groupoid.
Then the differentiable stratified algebroid $\sfA$ of $\sfG$ is a reduced
differentiable stratified space, where the
differentiable structure is that inherited from $\stratan\sfG_1$ and the
stratification is that induced by the decomposition in
Equation \eqref{eq:AlgebroidDef}.
The anchor map $\varrho\co  \sfA \to \stratan\sfG_0$ is a differentiable
stratified map. If $\sfG$ is in addition a sliceable differentiable
stratified groupoid, then there is a bracket
\[
  [ - , - ] \co \Gamma^\infty (\sfG_0, \sfA) \times \Gamma^\infty (\sfG_0, \sfA)
  \to \Gamma^\infty (\sfG_0, \sfA)
\]
which satisfies $\varrho \circ [ - , - ] =  [ \varrho (-)  , \varrho (-) ]$
and which coincides over each stratum $S \in \calS^0$ with the
bracket of the Lie algebroid $ \sfA_S$.
\end{proposition}
By $\Gamma^\infty (\sfG_0, \sfA)$, we mean the sections of the stratified vector bundle
$\sfA$ that are as well morphisms of reduced differentiable spaces.
\begin{proof}
Since  $\sfG_1$ is locally compact, $u(\sfG_0)$ is locally closed in $\sfG_1$ by
\cite[Prop.~2.5]{TuNonHausGpoid}. For each stratum $S$ of $\sfG_0$
the source map $s$ restricts to $S_1$ as a submersion which implies
that $\ker T_{|S_1}s$ is a subbundle, hence closed subset, of $TS_1$.
Hence, if $U$ is an open subset of $\sfG_1$ intersecting finitely many strata,
$\sfA \cap\stratan U$ is a finite union of locally closed sets, therefore
locally closed itself. Then $\sfA$ inherits the structure of a reduced
differentiable space from that of $\stratan \sfG_1$, see
\cite[Ex.~3.21]{NGonzalezSanchoBook}.

Since each $\sfA_S$ is a subbundle of the restriction of $TS_1$
to the smooth submanifold $S_1 \cap u(\sfG_0)$ of $\sfG_1$, one concludes that each $\sfA_S$ is a smooth
submanifold of $\sfA$.
Note that the projection $\pi\co\stratan\sfG_1\to \sfG_1$ is clearly open
as its restriction to an element of $\calS^1$ is a bundle map.
The fact that $\calS^1$ is locally finite  implies then
that $(\sfA_S)_{S\in \calS^0}$ is a locally finite decomposition of $\sfA$.

To verify the condition of frontier, suppose
$\sfA_S \cap \overline{\sfA_{S^\prime}}\neq\emptyset$ for $S, S^\prime\in\calS^0$,
and let $S_1 = s^{-1}(S)$ and $S_1^\prime = s^{-1}(S^\prime)$.
By \cite[Thm.~2.1.2]{PflaumBook}, $\pi\co\stratan\sfG_1\to \sfG_1$ is a
topological projection,  hence $S_1 \subset\overline{S_1^\prime}$
and $T S_1 \subset \overline{T S_1^\prime}$ follow.

Choose a tangent vector $v \in \ker T_{|S_1}s$ and assume for simplicity that
$v$ is a unit vector. Let $\obj{x} \in S$ be the footpoint of $v$, i.e.~$\obj{x}$
is the unique object such that $\pi(v) = u(\obj{x})$.
Choosing a singular chart for $\sfG_1$ at $u(\obj{x})$,
we may reduce to the case where $S_1$ and $S_1^\prime$ are closed subsets of
$\R^n$. Let $\widetilde{s}$ denote a smooth function from $\R^n$ into a
singular chart of $\sfG_0$ at  $\obj{x}$ that extends $s$. Note that
$\widetilde{s}$ may not be a submersion. Let $p(t)\co[-1,1]\to S_1$ be a
smooth path in $S_1$ with $p(0) = u(\obj{x})$ and tangent
$p^\prime(0) = v$, and put $\alpha_i = p(1/i)$ for $i\in\N$.
As $\alpha_1 \in S_1 \subset \overline{S_1^\prime}$, we may choose a sequence
$(\beta_{1,j})_{j\in \N} \subset S_1^\prime$ such that
$\lim_{j\to\infty} \beta_{1,j} = \alpha_1$. We then set
$\obj{x}_j=s(\beta_{1,j}) \in S^\prime$ for each $j$. By  continuity of $u$ and $s$,
we have $\lim_{j\to\infty}\obj{x}_j = \obj{x}$ and $\lim_{j\to\infty}u(\obj{x}_j) = u(\obj{x})$.
As $s_{|S_1^\prime}$ is a submersion, each $s^{-1}(\obj{x}_j)$ is a closed submanifold of $\R^n$.
Then the intersection of each $s^{-1}(\obj{x}_j)$ with a closed ball in $\R^n$ is compact.
Hence we may define, for each $i > 1$ and each $j \geq 1$, the element $\beta_{i,j}$ of $S_1^\prime$
to be the point on the connected component of $s^{-1}(\obj{x}_j)$ containing $u(\obj{x}_j)$
intersected with a closed annulus of external radius $1/j$ about $u(\obj{x}_j)$
that minimizes the distance to $\alpha_i$. In particular, for each $j$,
$\lim_{i\to\infty}\beta_{i,j}  = u(\obj{x}_j)$, and for $i$ sufficiently large,
$\lim_{j\to\infty}\beta_{i,j}  = \alpha_i$; if necessary, we restrict to a
subsequence in the $j$ direction so that this intersection is not empty.
With this, we set
\[
    v_j := \lim_{i\to\infty} \frac{\beta_{i,j} - u(\obj{x}_j)}{\|\beta_{i,j} - u(\obj{x}_j)\|},
\]
and then $v_j \in T_{\obj{x}_j} S_1^\prime$ for each $j$. Moreover, as $\beta_{i,j}\in s^{-1}(\obj{x}_j)$,
it follows that $v_j \in \ker T_{|S_1^\prime}s$. However,
\[
    \lim_{j\to\infty} v_j = \lim_{i\to\infty}\lim_{j\to\infty}
        \frac{\beta_{i,j} - u(\obj{x}_j)}{\|\beta_{i,j} - u(\obj{x}_j)\|}
        =   \lim_{i\to\infty}
        \frac{\alpha_i - u(\obj{x})}{\|\alpha_i - u(\obj{x})\|} = v
\]
so that $v \in  \overline{\ker T_{|S_1^\prime}s}$. It follows that the decomposition
of $\sfA$ into the $\sfA_S$ satisfies the condition of frontier.

The anchor map is a differentiable stratified morphism by
Eq.~\eqref{Ite5} in Lemma \ref{lem:DSGTrivConsequences} and because
$t$ is a stratified map.

Now, let $X, Y \in \Gamma^\infty (\sfG_0, \sfA)$.
We define $[X, Y]$ locally as follows. For $\obj{x}\in \sfG_0$, choose
a neighborhood $U_{\obj{x}}$ with slice $Y_{\obj{x}}$ as in
(LT\ref{it:StratifiedSlice}) of Definition \ref{def:LocTransStrat}.
By shrinking $U_{\obj{x}}$ if necessary so that we may extend the $G_{\obj{x}}$-action
to the closure of $Y_{\obj{x}}$ in $\sfG_0$, we have by the Equivariant Embedding
Theorem of \cite[Section 11.2]{NGonzalezSanchoBook} a $G_{\obj{x}}$-equivariant
closed embedding $Y_{\obj{x}}\to E$ where $E$ is a finite-dimensional representation
of $G_{\obj{x}}$. Let $f_0\co U_{\obj{x}} = Y_{\obj{x}}\times O_{\obj{x}}\to E\times O_{\obj{x}}$
denote the corresponding embedding which is the identity on the $O_{\obj{x}}$-factor. By Corollary
\ref{cor:OrbitMnfld}, $O_{\obj{x}}$ is a smooth manifold so that that $G_{\obj{x}} \ltimes E$ and
the pair groupoid $O_{\obj{x}}\times O_{\obj{x}}$ are both Lie groupoids. Hence, $f_0$
is the map on objects of an embedding of $\sfG_{|U_\obj{x}}$ into the Lie groupoid
$\sfH:=(G_{\obj{x}} \ltimes E)\times(O_{\obj{x}}\times O_{\obj{x}})$. Roughly speaking,
we will define $[X,Y]$ following the definition of the bracket for the Lie algebroid
of a Lie groupoid taking advantage of the structure of $\sfH$; see
\cite[Section 3.2]{Marle}, and note that we use right-invariant vector fields as
in \cite[Remark 3.2.4]{Marle}. By shrinking $O_{\obj{x}}$
if necessary, we may assume that $f_0$ maps $U_{\obj{x}}$ into an open subset $V$ of $\sfH_0$
small enough so that the Lie algebroid $\sfB$ of $\sfH$, a vector bundle in the usual sense and
hence locally trivial, restricts to a trivial vector bundle over $V$. Fixing a trivialization of
$\sfB_{|V}$, the section $u_{\sfH}^\ast (f_1\circ u)_\ast X$, which is defined only on
$f_0(U_{\obj{x}})$, can be described by coordinate functions with respect to this trivialization,
which are smooth by construction.
Hence each coordinate function extends to a smooth function on a neighborhood $W$ of
$Y_{\obj{x}}\times O_{\obj{x}}$ in $\sfH_0$ yielding a smooth section of $\sfB$ on $W$.
We let $\hat{X}$ denote the unique right-invariant vector field on $\Sat(u_{\sfH}(W))$ in $\sfH_1$
generated by this extension and define $\hat{Y}$ similarly. Then we define $[X,Y]$ to be
$(f_1\circ u)^\ast[\hat{X},\hat{Y}]_{\sfH}\in \Gamma^\infty (U_{\obj{x}}, \sfA)$,
where $[ - , - ]_{\sfH}$ denotes the ordinary bracket of vector fields on $\sfH_1$.
Note that $[X,Y]$ is a morphism of differentiable spaces as it is defined as the pullback
of the bracket $[\hat{X},\hat{Y}]_{\sfH}$ on the manifold $\sfH_1$, which is smooth.
That $[X,Y]$ is a section of $\sfA$ over $U_{\obj{x}}$ follows from the fact that
$[\hat{X},\hat{Y}]_{\sfH}$ is a section of $\sfB$ and $s_{\sfH}\circ f_1 = f_0\circ s_{\sfG}$.

Now, if $S \in \calS^0$ is a stratum that intersects $U_{\obj{x}}$, then by Proposition
\ref{prop:RestrictLie}, $\sfG_{|S\cap U_{\obj{x}}}$ is a Lie groupoid. The morphism $f$
restricts to an isomorphism of $\sfG_{|S\cap U_{\obj{x}}}$ with a Lie subgroupoid of
$\sfH$ so that the unique right-invariant
vector field on the object space of $\sfG_{|S\cap U_{\obj{x}}}$ generated by
$X_{|S\cap U_{\obj{x}}}$ pushes forward via $f_1$ to the restriction of $\hat{X}$, and
similarly for $\hat{Y}$. Hence, on $S\cap U_{\obj{x}}$, $[X,Y]$ restricts to the usual
bracket of $X_{|S\cap U_{\obj{x}}}$ and $Y_{|S\cap U_{\obj{x}}}$ in the Lie algebroid
of $\sfG_{|S\cap U_{\obj{x}}}$. Therefore, $[ - , - ]$ coincides on each stratum with the
bracket of the corresponding Lie algebroid. In particular, this demonstrates that the value
of $[X,Y]$ on each stratum, and hence on $\sfG_0$, does not depend on the various choices made.
Moreover, as the anchor map $\varrho$ restricts to each stratum as the anchor map of the corresponding
Lie algebroid by definition, the relation $\varrho \circ [ - , - ] =  [ \varrho (-)  , \varrho (-) ]$
follows. This completes the proof.
\end{proof}


\section{A de Rham theorem}
\label{sec:Invariants}

In this section, we prove a de Rham theorem for sliceable proper reduced
differentiable stratified groupoids which satisfy
a certain local contractibility condition. As we will see below, this
condition is satisfied in a number of the examples we have considered.

\emph{Throughout this section let $\sfG$ be a proper reduced structurally (b)-regular differentiable
stratified groupoid}. Let $\Omega^\bullet$ be the sheaf complex
of abstract forms on $\sfG_0$ as constructed in Section \ref{SectionDiffForms}.
Recall from \cite[Def.~8.1]{PflPosTanGOSPLG}
that a differential form on the object space of a proper Lie groupoid is called
\emph{basic} if contraction with any smooth section of the Lie algebroid
vanishes and if the form is invariant under the conormal action of the Lie groupoid.

\begin{definition}
  Let $U \subset |\sfG|$ be an open subset of the orbit space of $\sfG$, and $U_0$ its
  preimage under the canonical projection $\pi \co \sfG_0 \to |\sfG|$. One calls an abstract $k$-form
  $\omega \in \Omega^k (U_0)$  $\sfG$-\emph{horizontal} or simply \emph{horizontal},
  if for each smooth section $\xi \co U_0 \to \sfA$ of the differentiable stratified algebroid $\sfA$ of $\sfG$
  given by Definition \ref{def:Algebroid},
  the stratawise contracted form $(\varrho \circ \xi) \lrcorner \omega$ vanishes.
\end{definition}

Next let $S\subset \sfG_0$ be a (relatively closed and open) component of a stratum of $\sfG_0$ such that the
projection $\pi (S) $ to the orbit space is connected.
For $\obj{x}\in\sfG_0$ consider the Zariski tangent space
$\zartan_x\sfG_0$; see Appendix \ref{SectionTangentSpace}.
Note that $\zartan_x\sfG_0$ does in general not coincide with $\stratan_x\sfG_0$, but that the latter is always
contained in the former.

\begin{lemma}
  Let $\sfG$ be sliceable. Then the space
  $\zartan_{|S} \sfG_0 := \bigcup_{\obj{x}\in S} \zartan_x\sfG_0$ naturally carries the structure of
  a smooth vector bundle over $S$.
\end{lemma}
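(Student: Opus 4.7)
The plan is to work locally using the locally translation structure, derive an explicit vector bundle trivialization from the product decomposition $Y_\obj{x} = Z_\obj{x}\times R_\obj{x}$, and then glue. Fix $\obj{x}\in S$ and apply (LT\ref{it:ExistenceSlice}) together with (LT\ref{it:StratifiedSlice}) to obtain a trivializing neighborhood $U_\obj{x}$ whose object space is isomorphic, as a differentiable stratified space, to
\[
 O_\obj{x} \times Y_\obj{x} \;=\; O_\obj{x} \times Z_\obj{x} \times R_\obj{x},
\]
with $R_\obj{x}$ the stratum through $\obj{x}$ and $Z_\obj{x}$ a $\sfG_\obj{x}$-invariant differentiable subspace. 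Since the identifying isomorphism is stratified and $O_\obj{x}$ carries the trivial stratification, $S \cap U_\obj{x}$ is identified with $O_\obj{x} \times \{z_0\} \times R_\obj{x}$, where $z_0\in Z_\obj{x}$ is the distinguished (isolated) $\sfG_\obj{x}$-fixed point corresponding to $\obj{x}$.

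Next I would apply the product rule for Zariski tangent spaces of differentiable spaces (cf.~Appendix \ref{SectionTangentSpace}) to compute, for $\obj{y}\in S \cap U_\obj{x}$ with coordinates $(o,z_0,r)$,
\[
 \zartan_{\obj{y}} \sfG_0 \;=\; T_o O_\obj{x} \,\oplus\, \zartan_{z_0} Z_\obj{x} \,\oplus\, T_r R_\obj{x}.
\]
As $(o,r)$ varies over the smooth manifold $S \cap U_\obj{x}\cong O_\obj{x}\times R_\obj{x}$, the outer summands assemble into the honest smooth tangent bundles of $O_\obj{x}$ and $R_\obj{x}$, while the middle summand $\zartan_{z_0}Z_\obj{x}$ is a single finite-dimensional vector space, independent of $(o,r)$. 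This equips $\zartan_{|S\cap U_\obj{x}}\sfG_0$ with an explicit smooth vector bundle structure, namely the external direct sum of $TO_\obj{x}$, the trivial bundle over $O_\obj{x}\times R_\obj{x}$ with fiber $\zartan_{z_0}Z_\obj{x}$, and $TR_\obj{x}$, pulled back to $S\cap U_\obj{x}$ along the two projections.

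To glue and establish naturality, I would observe that each fiber $\zartan_{\obj{y}}\sfG_0$ is intrinsic to the differentiable space $\sfG_0$ and hence independent of the chart. Therefore on any overlap $S \cap U_\obj{x} \cap U_{\obj{x}'}$ the two candidate local vector bundle structures have identical underlying fibers pointwise, and the fiberwise linear transition is nothing but the Zariski tangent map of the isomorphism of differentiable spaces comparing the two local models on the overlap. By functoriality of $\zartan$ this map is a linear isomorphism on every fiber; once smooth dependence on the base point is verified it yields a smooth vector bundle isomorphism, and the local trivializations patch into a well-defined smooth vector bundle on $\zartan_{|S}\sfG_0$, whose isomorphism class is independent of all auxiliary choices. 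Constancy of the fiber dimension across charts is automatic since $\dim \zartan_{\obj{y}}\sfG_0$ is intrinsic and $\dim O_\obj{x}$, $\dim R_\obj{x}$ are constant on~$S$.

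The main obstacle is precisely the smooth dependence in the gluing step: one must show that for an isomorphism of differentiable spaces $\varphi\co O_\obj{x}\times Y_\obj{x} \to O_{\obj{x}'}\times Y_{\obj{x}'}$ comparing the two locally translation structures, the assignment $\obj{y} \mapsto T_\obj{y}\varphi$ is smooth as a section of $\operatorname{GL}$ of the fibers over $S\cap U_\obj{x}\cap U_{\obj{x}'}$. The way I would handle this is to embed $Y_\obj{x}$ and $Y_{\obj{x}'}$ into ambient Euclidean spaces via $\sfG_\obj{x}$- and $\sfG_{\obj{x}'}$-equivariant singular charts as provided by Proposition \ref{prop:equivariant-embedding_zariski-space}, realise $\varphi$ by a smooth ambient extension between these Euclidean neighborhoods, apply the classical fact that the Jacobian of a smooth map between Euclidean spaces is a smooth function of the base point, and finally descend to the subspace $\zartan_{\obj{y}}\sfG_0$ of the ambient tangent space using that this subspace is intrinsically cut out independently of the chosen extension.
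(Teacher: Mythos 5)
Your proposal is correct and follows essentially the same route as the paper: use (LT1) and (LT4) to identify the local model $O_\obj{x}\times Z_\obj{x}\times R_\obj{x}$, observe that over $S\cap U_\obj{x}\cong O_\obj{x}\times R_\obj{x}$ the Zariski tangent spaces assemble into $T(O_\obj{x}\times R_\obj{x})\times \zartan_\obj{x}Z_\obj{x}$, and glue via the transition maps coming from the local groupoid isomorphisms. Your additional care about smooth dependence of the transitions (via ambient extensions of the comparison isomorphisms) fills in a detail the paper leaves implicit, but it is the same argument.
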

\begin{proof}
  Given $\obj{x}\in S$ choose a trivializing neighborhood $U_\obj{x}$ and a groupoid
  slice $Y_\obj{x}$ with a $\sfG_\obj{x}$-action such that there exists an isomorphism
  of differentiable groupoids
  $\sfG_{|U_\obj{x}} \to ( O_\obj{x} \times  O_\obj{x}) \times (\sfG_\obj{x} \ltimes Y_\obj{x})$,
  where $O_\obj{x}$ is an open contractible neighborhood of $\obj{x}$ in its orbit.
  Such $U_\obj{x}$, $Y_\obj{x}$, and $O_\obj{x}$ exist according to (LT\ref{it:ExistenceSlice}).
  After possibly shrinking these data, we can assume by (LT\ref{it:StratifiedSlice}) that the slice
  $Y_\obj{x}$ is stratified and is isomorphic as a differentiable stratified space
  to the product space $Z_\obj{x} \times R_\obj{x}$, where $Z_\obj{x}  \subset Y_\obj{x}$
  is a $\sfG_\obj{x}$-invariant subspace, and $R_\obj{x} \subset Y_\obj{x}$
  is the stratum through $\obj{x}$.
  Hence $\bigcup_{\obj{y}\in O_\obj{x} \times R_\obj{x}} \zartan_{\obj{y}}\sfG_0$ is a vector
  bundle isomorphic to $T (O_\obj{x} \times R_\obj{x}) \times \zartan_\obj{x}Z_\obj{x}$.
  But the set $O_\obj{x} \times R_\obj{x}$ contains $\obj{x}$ and is relatively open
  in the stratum $S$. This means that, locally, $\zartan_{|S} \sfG_0$ is a vector bundle.
  By construction, the transition maps arise from the local isomorphisms $\sfG_{U_\obj{x}} \to ( O_\obj{x} \times  O_\obj{x})
   \times (\sfG_\obj{x} \ltimes Y_\obj{x})$, and are also vector bundle isomorphisms, hence the claim follows.
\end{proof}


We can now define basic forms on $\sfG$. Note that we will apply the
pull-back morphisms constructed in Appendix \ref{SectionDiffForms}.

\begin{definition}
 Let $U \subset |\sfG|$ be an open subset of the orbit space, and $U_0 := \pi^{-1} (U)$.
 One calls an abstract $k$-form $\omega \in \Omega^k (U_0)$  $\sfG$-\emph{basic} or simply \emph{basic}
 if it is horizontal and if for every $\obj{x} \in U_0$ and every smooth bisection
 $\sigma : U_\obj{x}  \to \sfG$ defined on an open neighborhood
 $U_\obj{x} \subset U_0$ of $\obj{x}$ the equality
 $(t\circ\sigma)^* \omega = \omega_{| U_\obj{x}}$ holds true.
 Let $\Omega^k_\textup{basic} (U)$ denote the set of $\sfG$-basic $k$-forms on $U$.
\end{definition}


Observe that by definition, $\Omega^0_\textup{basic} (U)$ coincides with the space of smooth functions
on $U_0$ which are invariant under the $\sfG$-action, hence
$\Omega^0_\textup{basic} (U)$ can be naturally identified with $\calC^\infty (U)$.
Moreover, we will now show that the spaces  $\Omega^k_\textup{basic} (U)$, where $U$ runs through the
open sets of $|\sfG|$, form the space of sections of a sheaf on $|\sfG|$ which will
be denoted by $\Omega^k_\textup{basic}$.

\begin{proposition}
  The sheaves of basic $k$-forms $\Omega^k_\textup{basic}$ are fine. Moreover, the
  exterior differential on  $\Omega^\bullet$ descends to a differential $d$ on
  $\Omega^\bullet_\textup{basic}$ turning $\big( \Omega^\bullet_\textup{basic},d\big)$
  into a sheaf of commutative differential graded algebras over the orbit space $|\sfG|$.
\end{proposition}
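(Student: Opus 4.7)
The plan is to verify the three assertions---fineness of $\Omega^k_\textup{basic}$, that the exterior differential $d$ preserves the basic property, and the compatibility of $d$ and $\wedge$ with the basic subcomplex---one at a time. Of these, the nontrivial point is showing that $d\omega$ is horizontal when $\omega$ is, which reduces via Cartan's magic formula to a stratawise integration of algebroid sections to bisections; this is the main obstacle.

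For fineness, the text already notes that $\Omega^k_\textup{basic}$ is a module over $\calC^\infty_{|\sfG|}$. Since $|\sfG|$ inherits the paracompactness of $\sfG_0$ (using that the orbit map is open by Proposition \ref{prop:BasicPropTopGroupoids}(\ref{it:OpMap})) and its structure sheaf admits smooth partitions of unity as a differentiable space, any $\calC^\infty_{|\sfG|}$-module sheaf is automatically fine. The only remaining check is that multiplication by an invariant smooth function $f$ preserves basicness: one has $\iota_{\varrho\circ\xi}(f\omega) = f\,\iota_{\varrho\circ\xi}\omega = 0$ for any section $\xi$ of $\sfA$, and
\[
 (t\circ\sigma)^*(f\omega) \;=\; \big((t\circ\sigma)^*f\big)\cdot (t\circ\sigma)^*\omega \;=\; f\cdot\omega
\]
for any bisection $\sigma$, since invariant functions pull back to themselves along $t\circ\sigma$.

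Next I would argue that $d$ descends to the basic subcomplex. Invariance of $d\omega$ under bisections is immediate from the naturality of the exterior differential:
\[
 (t\circ\sigma)^* d\omega \;=\; d\,(t\circ\sigma)^*\omega \;=\; d\omega.
\]
For horizontality of $d\omega$, I would apply Cartan's formula $\mathcal{L}_V = d\,\iota_V + \iota_V\,d$ stratawise to $V:=\varrho\circ\xi$ for a smooth section $\xi$ of $\sfA$. Since $\iota_V\omega=0$ by hypothesis on $\omega$, this gives $\iota_V d\omega = \mathcal{L}_V\omega$. The decisive step is that on each connected component $S$ of a stratum the restricted groupoid $\sfG_{|S}$ is a Lie groupoid by Proposition \ref{prop:RestrictLie}, so $\xi_{|S}$ integrates to a one-parameter family of bisections $\sigma_\tau$ of $\sfG_{|S}$ with $\sigma_0 = u$ and $\partial_\tau (t\circ\sigma_\tau)|_{\tau=0} = V$. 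The invariance condition applied to each $\sigma_\tau$ then gives $(t\circ\sigma_\tau)^*\omega = \omega$, and differentiation at $\tau = 0$ yields $\mathcal{L}_V\omega = 0$. Hence $\iota_V d\omega = 0$ on each stratum, so $d\omega$ is horizontal as required.

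Finally, for the algebra structure, graded commutativity and the Leibniz rule pass directly from $\Omega^\bullet$ to $\Omega^\bullet_\textup{basic}$ once one knows that the wedge of two basic forms is basic. Horizontality of $\alpha\wedge\beta$ follows from the graded derivation property
\[
 \iota_V(\alpha\wedge\beta) \;=\; (\iota_V\alpha)\wedge\beta + (-1)^{\deg\alpha}\alpha\wedge(\iota_V\beta),
\]
each summand of which vanishes under horizontality of $\alpha$ and $\beta$; invariance under $t\circ\sigma$ is inherited from the multiplicativity of pullback. The principal technical obstacle throughout is the integration of a stratified algebroid section to a family of bisections, which is effected on each stratum via the Lie subgroupoid $\sfG_{|S}$ of Proposition \ref{prop:RestrictLie} and then assembled using the local translation structure on the object space so that the stratawise identities $\iota_V d\omega=0$ glue to a global horizontality statement for the abstract form $d\omega$.
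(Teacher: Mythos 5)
Your argument is correct in outline and in fact does noticeably more work than the paper's own proof, which consists of two observations: fineness is deduced (in the sentence immediately preceding the proposition) from the fact that $\Omega^k_\textup{basic}$ is by construction a $\calC^\infty_{|\sfG|}$-module on the paracompact space $|\sfG|$, and the descent of $d$ is asserted to follow solely from the commutation of $d$ with the pull-back morphisms $(t\circ\sigma)^*$ for bisections $\sigma$. That one-line argument only addresses the \emph{invariance} half of the basic condition; the paper leaves the horizontality of $d\omega$ tacit. Your additional step --- $\iota_V d\omega = \mathcal{L}_V\omega$ by Cartan's formula, followed by integrating $\xi_{|S}$ to a flow of bisections of the Lie groupoid $\sfG_{|S}$ and differentiating the invariance identity --- is exactly the standard mechanism from the Lie groupoid case (cf.\ the treatment of basic forms in \cite{PflPosTanGOSPLG}) and genuinely fills the gap. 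The one loose end in your version: the bisections $\sigma_\tau$ you obtain by integration are defined on open subsets of the stratum $S$, whereas the paper's invariance condition quantifies only over bisections defined on open neighborhoods $U_\obj{x}\subset\sfG_0$; to apply it you must either extend $\sigma_\tau$ to such neighborhoods (which the locally translation structure $U_\obj{x}\cong O_\obj{x}\times Z_\obj{x}\times R_\obj{x}$ permits, but which you only gesture at) or argue directly that invariance under ambient bisections forces invariance of $\iota_S^*\omega$ under bisections of $\sfG_{|S}$. Your checks that multiplication by an invariant function and the wedge product preserve basicness are correct and are likewise left implicit in the paper.
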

\begin{proof}
  By construction the sheaf $\Omega^k_\textup{basic}$ is a  $\calC^\infty_{|\sfG|}$-module.
  It remains to show that $d$ descends to $\Omega^\bullet_\textup{basic}$. But that follows
  from the fact that $d$ commutes with the pull-back morphism $(t\circ\sigma)^*$
  for every bisection $\sigma : U_\obj{x}  \to \sfG$ defined on an open neighborhood
  $U_\obj{x}$ of $\obj{x} \in \sfG_0$.
\end{proof}
The next result will be needed for the proof of a Poincar\'e Lemma for basic forms.

\begin{proposition}
\label{proposition-extension-basic-forms}
  Assume $\sfG$ is sliceable.
  Let $V$ be an open subset of $\sfG_0$ and let $\omega \in \Omega^k (V)$
  be an abstract $k$-form which is $\sfG_{|V}$-basic.
  Then there exists a unique form $\widehat{\omega}\in \Omega^k \big( \Sat (V) \big)$
  which is $\sfG$-basic and whose restriction to $V$ coincides with $\omega$. We call
  $\widehat{\omega}$ the \emph{basic extension} of $\omega$.
\end{proposition}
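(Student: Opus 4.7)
The plan is to build $\widehat{\omega}$ by local transport along bisections, then check that the basic condition of $\omega$ forces the local pieces to agree and to patch into a globally $\sfG$-basic form on $\Sat(V)$.

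\textbf{Step 1 (local definition).} Fix $\obj{y}\in\Sat(V)$ and choose an arrow $\arr{g}\in\sfG_1$ with $s(\arr{g})=:\obj{x}\in V$ and $t(\arr{g})=\obj{y}$. Apply (LT\ref{it:ExistenceBisections}) from Definition \ref{def:LocTransDiffGroupoid} to obtain trivializing open neighborhoods $U_\obj{x}\subset V$ and $U_\obj{y}\subset\Sat(V)$, together with a bisection $\sigma\co U_\obj{x}\to\sfG_1$ with $\sigma(\obj{x})=\arr{g}$ and such that $t\circ\sigma\co U_\obj{x}\to U_\obj{y}$ is an isomorphism of differentiable spaces. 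Define
\[
  \widehat{\omega}_{\sigma}:=\bigl((t\circ\sigma)^{-1}\bigr)^{*}\bigl(\omega_{|U_\obj{x}}\bigr)\in\Omega^k(U_\obj{y}).
\]

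\textbf{Step 2 (independence of the bisection).} Suppose $\sigma_1\co U_{\obj{x}_1}\to\sfG_1$ and $\sigma_2\co U_{\obj{x}_2}\to\sfG_1$ are two such bisections with images in open neighborhoods of $\obj{y}$ contained in a common open $W\subset U_\obj{y}$. On $W$ consider the diffeomorphism $\phi:=(t\circ\sigma_1)^{-1}\circ(t\circ\sigma_2)$ onto a neighborhood in $V$, and define
\[
  \tau(\obj{z}):=\sigma_1(\phi(\obj{z}))^{-1}\cdot\sigma_2(\obj{z}).
\]
A direct check of sources and targets shows that $\tau$ is a bisection of $\sfG_{|V}$ with $t\circ\tau=\phi$. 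Since $\omega$ is $\sfG_{|V}$-basic one has $\phi^{*}(\omega_{|U_{\obj{x}_1}})=\omega_{|U_{\obj{x}_2}}$, and pulling back by $(t\circ\sigma_2)^{-1}$ yields $\widehat{\omega}_{\sigma_1}=\widehat{\omega}_{\sigma_2}$ on $W$. Thus the local forms glue to a well-defined $\widehat{\omega}\in\Omega^k(\Sat(V))$. When $\obj{y}\in V$ one may take $\sigma=u$, so $\widehat{\omega}_{|V}=\omega$.

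\textbf{Step 3 ($\sfG$-basic on $\Sat(V)$).} By (LT\ref{it:ExistenceBisections}), the bisection $\sigma$ of Step~1 induces an isomorphism of differentiable groupoids $\sfG_{|U_\obj{x}}\to\sfG_{|U_\obj{y}}$, which in particular intertwines the algebroids $\sfA_{|U_\obj{x}}$ and $\sfA_{|U_\obj{y}}$ and commutes with the anchors. Horizontality of $\omega$ on $U_\obj{x}$ therefore transports to horizontality of $\widehat{\omega}$ on $U_\obj{y}$. For invariance, let $\tau'\co U'\to\sfG_1$ be any bisection of $\sfG$ defined near $\obj{y}$, and take $\sigma$ as in Step~1. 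The product $\sigma'\co \obj{z}\mapsto\sigma(\obj{z})^{-1}\cdot\tau'(t\circ\sigma(\obj{z}))\cdot\sigma(\obj{z})$ (after pulling $\tau'$ back by $t\circ\sigma$ and composing with $\sigma$ on both sides) is a bisection of $\sfG_{|V}$ in a neighborhood of $\obj{x}$ whose induced diffeomorphism conjugates $t\circ\tau'$ to $t\circ\sigma'$; the basic equation $(t\circ\sigma')^{*}\omega=\omega$ together with Step~1 then gives $(t\circ\tau')^{*}\widehat{\omega}=\widehat{\omega}$ on $U'$.

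\textbf{Step 4 (uniqueness).} If $\widehat{\omega}'$ is any $\sfG$-basic extension, then for $\obj{y}$ and $\sigma$ as in Step~1 the basic property applied to $\sigma^{-1}$ on $U_\obj{y}$ gives $(t\circ\sigma)^{*}\widehat{\omega}'_{|U_\obj{y}}=\widehat{\omega}'_{|U_\obj{x}}=\omega_{|U_\obj{x}}$, so $\widehat{\omega}'_{|U_\obj{y}}=\widehat{\omega}_{|U_\obj{y}}$.

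The main obstacle is Step~2: constructing the bisection $\tau$ of $\sfG_{|V}$ connecting the two source points and verifying that pull-back by $t\circ\tau$ of the abstract form $\omega$ behaves correctly (which uses the definition of bisection over the reduced differentiable space $V$ and the naturality of the pull-back morphisms from Appendix \ref{SectionDiffForms}). Everything else is an essentially formal check once Step~2 is in hand.
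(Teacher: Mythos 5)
Your proposal is correct and is essentially the paper's own argument: both transport $\omega$ along the bisections guaranteed by (LT\ref{it:ExistenceBisections}) and prove well-definedness by forming the composite bisection $\sigma_1(\phi(\cdot))^{-1}\cdot\sigma_2(\cdot)$ (the paper's $\mu$), which starts and ends in $V$ so that the $\sfG_{|V}$-basic condition applies; you merely run the transport in the opposite direction (pushing forward from $V$ rather than pulling back into neighborhoods of points of $\Sat(V)$) and spell out the horizontality, invariance, and uniqueness checks that the paper dismisses as obvious. The only blemish is the conjugation formula in Step~3, where the leftmost factor should be $\sigma(\obj{w})^{-1}$ with $\obj{w}=(t\circ\sigma)^{-1}\bigl((t\circ\tau')((t\circ\sigma)(\obj{z}))\bigr)$ rather than $\sigma(\obj{z})^{-1}$ for the arrows to compose, but your parenthetical description makes the intended (correct) construction clear.
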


\begin{proof}
  Let $\obj{x} \in \Sat (V)$, and choose a bisection $\sigma : U_\obj{x} \to \sfG_1$ defined on an open neighborhood of $\obj{x}$ such that
  $(t \circ \sigma)  (U_\obj{x}) \subset V$. Then put $\omega_{U_\obj{x}} := (t\circ \sigma)^* (\omega )$.
  If $\eta : U_\obj{x} \to \sfG_1$ is another bisection with $U_{\obj{z}} := (t \circ \eta)  (U_\obj{x}) \subset V$,
  where $\obj{z} := (t \circ \eta) (\obj{x})$, then  the bisection $\mu\co U_{\obj{z}} \to \sfG_1$ defined by
  $$
   \mu (\obj{y}) = \sigma \big( (t \circ \eta)^{-1} (\obj{y}) \big) \cdot \eta^{-1} \big( (t \circ \eta) (\obj{y}) \big)
   \quad\text{for $\obj{y} \in U_{\obj{z}}$}
  $$
  starts and ends in $V$. Hence, by assumption, $(t\circ \mu)^* \omega = \omega_{|U_{\obj{z}}}$.
  But $t \circ \mu = (t \circ \sigma) \circ (t \circ \eta)^{-1}$, which implies
  \[
    (t \circ \eta)^* \omega = (t \circ \eta)^* (t\circ \mu)^* \omega =
    (t \circ \sigma)^* \omega.
  \]
  This shows that  $\omega_{U_\obj{x}}$ does not depend on the particular choice of the bisection
  $\sigma : U_\obj{x} \to t^{-1} (V) $. An analogous argument proves that
  for points $\obj{x}, \obj{y} \in \Sat (V)$ the forms  $\omega_{U_\obj{x}}$ and $\omega_{U_\obj{y}}$ coincide over
  $U_\obj{x} \cap U_\obj{y}$. Let $\widehat{\omega} \in \Omega^k \big(\Sat (V)\big)$ be the abstract $k$-form such that
  $\widehat{\omega}_{|U_\obj{x}} = \omega_{U_\obj{x}}$ for all $ \obj{x} \in \Sat (V)$. Obviously,
  $\widehat{\omega}$ is horizontal and basic by construction.
  Uniqueness of  $\widehat{\omega}$ is clear since the form has to be basic.
\end{proof}

Since the kernel of the sheaf morphism $d: \Omega^0_\textup{basic} \to \Omega^1_\textup{basic}$
can be naturally identified with $\R_{|\sfG|}$, the sheaf of locally constant real-valued functions on $|\sfG|$,
we obtain a  sheaf complex
\[
 0 \longrightarrow \R_{|\sfG|} \longrightarrow \Omega^\bullet_\textup{basic}
\]
which is exact at $\R_{|\sfG|}$ and at $\Omega^0_\textup{basic} $.
Now observe that the orbit space $|\sfG|$ is paracompact, locally contractible and locally path connected
by \cite{PflPosTanGOSPLG}. Since the sheaves $\Omega^k_\textup{basic} $ are fine, the following is
a consequence of \cite[Sec.~3.9]{GodTATF}.

\begin{theorem}
  Let $\sfG$ be a proper reduced structurally (b)-regular differentiable stratified groupoid.
  If the sheaf complex $\big( \Omega^\bullet_\textup{basic} , d \big)$ of basic forms on $\sfG$ is exact,
  then the \emph{basic cohomology} $H^\bullet_\textup{basic} (\sfG) :=
  H^\bullet \big( \Omega^\bullet_\textup{basic} (|\sfG|) \big)$ of $\sfG$
  naturally coincides with the real singular cohomology of $|\sfG|$.
\end{theorem}

In the remainder of this section we will show that the sheaf complex of basic
forms on a groupoid satisfies a Poincar\'e Lemma, i.e. is exact, if a certain local contractibility condition is satisfied.
Before stating the local contractibility condition, recall that by
Proposition \ref{prop:equivariant-embedding_zariski-space} every $\sfG$-slice can be, possibly after
shrinking, equivariantly embedded into the Zariski tangent space around the fixed point.

\begin{definition}\label{def:local-contractibility}
  Let $\sfG$ be a sliceable proper differentiable stratified groupoid. We then say that $\sfG$ fulfills the
  \emph{local contractibility condition} if the following holds true.
  \begin{itemize}
  \item[{\rm (LC)}] \label{it:LocContract}
    For each $\obj{x}\in\sfG_0$ there exists a groupoid slice $Y_\obj{x}$ as in
    (LT\ref{it:StratifiedSlice}), a linear $\sfG_\obj{x}$-action on some $\R^n$
    together with a singular $\sfG_\obj{x}$-equivariant chart
    $\iota: Z_\obj{x} \to\widetilde{V_\obj{x}}\subset \R^n$, $y \mapsto \widetilde{y}$,
    and a smooth homotopy
    $h \co \widetilde{V_\obj{x}}\times[0,1]\to \widetilde{V_\obj{x}}$ having the
    following properties:
    \begin{enumerate}
    \item
      The chart $\iota$ maps $\obj{x}$ to $0$ and the stratum $R_\obj{x}$ through $\obj{x}$ to the subspace of
      $\R^n$ fixed by $\sfG_\obj{x}$. Moreover,
      $\widetilde{V_\obj{x}}$ is an open neighborhood of $0$ in $\R^n$,
      and $\widetilde{Y}_\obj{x} := \iota( Y_\obj{x})$ is relatively closed in
      $\widetilde{V_x}$.
    \item
      One has $\im h_0 = \{ 0 \}$ and $h_1 = \id_{\widetilde{V}_\obj{x}}$.
    \item
      The homotopy $h$ is a homotopy along  $\widetilde{Y}_\obj{x}$ which means that
      $h (\widetilde{\obj{y}},t) \in \widetilde{Y}_\obj{x}$ for all $\obj{y}\in Z_\obj{x}$
      and $t\in [0,1]$.
    \item
      The homotopy $h$ is $G_\obj{x}$-equivariant.
    \item
      The homotopy $h$ preserves the stratification which means for each $\obj{y}\in Y_\obj{x}$
      and $ t \in (0,1]$ the points $\iota^{-1}  h(\widetilde{\obj{y}},t)$
      and $\obj{y}$ are in the same stratum.
    \end{enumerate}
\end{itemize}
\end{definition}

\begin{example}
Let $G$ be a compact Lie group and $M$ is a $G$-manifold. The transverse cotangent bundle described in
Example \ref{ex-transv-cot-space} satisfies the local contractibility condition of Definition \ref{def:local-contractibility}.
To verify this, observe first that $T_G^\ast M$ is equivariantly embedded in the $G$-manifold $T^\ast M$, and that
$\R_{\geq 0}$ acts on  $T^\ast M$ by fiberwise homotheties. Since the transverse
cotangent bundle is invariant under these homotheties, it  contracts
smoothly to the $0$-section which is diffeomorphic to $M$. But $M$ is a smooth manifold, so is clearly locally
smoothly contractible. Hence $T_G^\ast M$  is locally smoothly contractible as well.
\end{example}

\begin{example}
Let $(M, \omega)$ be a connected symplectic Hamiltonian $G$-manifold with $0$ a singular value of the
moment map $J\co M\to\mathfrak{g}^\ast$ so that the singular symplectic quotient is the orbit space
of the differentiable stratified groupoid $G\ltimes J^{-1}(0)$; see Example \ref{sympl-sing-red}.
By the Marle--Guillemin--Sternberg
normal form of the moment map \cite[Thm.~7.5.5]{OrtegaRatiu},
the contractibility condition (LC) in  Definition \ref{it:LocContract})
is satisfied for the zero level set $J^{-1}(0)$.
Theorem \ref{thrm:deRhamDiffStrat} below reduces to the de Rham
theorem of \cite{SjamaarDeRham} in this case.
\end{example}

As our primary example of a differentiable stratified groupoid satisfying the locally contractibility condition,
we will show in Section \ref{sec:InertiaGpoid} that the  inertia groupoid of a
proper Lie groupoid fulfills the local contractibility condition as well.

We now prove that the sheaf complex of basic forms on a
sliceable proper reduced differentiable stratified groupoid
fulfilling the contractibility condition is exact, or in other words
satisfies Poincar\'e's Lemma.

\begin{theorem}
\label{thrm:deRhamDiffStrat}
Let $\sfG$ be a sliceable proper reduced structurally (b)-regular differentiable stratified groupoid satisfying the local
contractibility condition.
The complex of sheaves $(\Omega_{\textup{basic}}^\bullet, d)$ on $|\sfG|$ then is a fine resolution of the sheaf of locally constant
real-valued functions on $|\sfG|$. In particular this implies that the cohomology of the complex $(\Omega_{\textup{basic}}^\bullet, d)$ of basic
differential forms on $\sfG$ is naturally isomorphic to the singular cohomology of $|\sfG|$ with coefficients in $\R$.
\end{theorem}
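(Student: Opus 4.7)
The plan is to reduce the theorem to a sheaf-cohomology computation. Since $\Omega^k_\textup{basic}$ is a fine $\calC^\infty_{|\sfG|}$-module for every $k$, and the augmented complex $0\to\R_{|\sfG|}\to\Omega^\bullet_\textup{basic}$ is already known to be exact at $\R_{|\sfG|}$ and at $\Omega^0_\textup{basic}$, the theorem will follow from \cite[Sec.~3.9]{GodTATF}, together with the fact that $|\sfG|$ is paracompact, locally contractible, and locally path-connected, once we establish the Poincar\'e lemma: every point of $|\sfG|$ has an open neighborhood $U$ on which the complex $\Omega^\bullet_\textup{basic}(U)$ is exact in positive degrees.

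First I would localize to a trivializing groupoid chart. Given $\obj{x}\in\sfG_0$, choose a trivializing neighborhood $U_\obj{x}$ with groupoid slice $Y_\obj{x}\cong Z_\obj{x}\times R_\obj{x}$ such that $\sfG_{|U_\obj{x}}\cong (O_\obj{x}\times O_\obj{x})\times(\sfG_\obj{x}\ltimes Y_\obj{x})$ as in (LT\ref{it:ExistenceSlice}) and (LT\ref{it:StratifiedSlice}), and choose the singular chart $\iota\co Z_\obj{x}\hookrightarrow \widetilde{V}_\obj{x}\subset\R^n$ and the $\sfG_\obj{x}$-equivariant homotopy $h_t$ supplied by (LC). Thicken $h$ to $H_t(v,y):=(v,h_t(y))$ on $O_\obj{x}\times\widetilde{V}_\obj{x}$ and define the Cartan-type contracting operator
\[
K\omega \;=\; \int_0^1 H_t^*(\xi_t\lrcorner\omega)\,dt, \qquad \xi_t := \partial_t H_t,
\]
so that Cartan's magic formula yields $\omega - H_0^*\omega = dK\omega + Kd\omega$ on $\Omega^\bullet(O_\obj{x}\times\widetilde{V}_\obj{x})$.

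Next I would descend $K$ to the complex of basic forms. Let $I\subset\calC^\infty(O_\obj{x}\times\widetilde{V}_\obj{x})$ be the vanishing ideal of $O_\obj{x}\times Y_\obj{x}$ and build $I^\bullet\subset\Omega^\bullet(O_\obj{x}\times\widetilde{V}_\obj{x})$ from it. The property that $h_t$ maps $\widetilde{Y}_\obj{x}$ into itself, together with \cite[Sec.~2]{DomJanZhiRPLCQHVFTSV}, guarantees that $K$ restricts to an algebraic contraction of $I^\bullet$. The $\sfG_\obj{x}$-equivariance of $h_t$ together with the stratum-preservation for $t>0$ shows that $K$ preserves the subcomplex $\Omega^\bullet_\textup{r-basic}(O_\obj{x}\times\widetilde{V}_\obj{x})$ of relative basic forms as well as $I^\bullet_\textup{r-basic}:=I^\bullet\cap\Omega^\bullet_\textup{r-basic}$. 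Averaging over $\sfG_\obj{x}$ with a bi-invariant Haar measure identifies $\Omega^\bullet_\textup{basic}(\pi(U_\obj{x}))$ with the quotient $\Omega^\bullet_\textup{r-basic}/I^\bullet_\textup{r-basic}$, and the descent of $K$ yields a contraction of this quotient in positive degrees.

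To finish, a closed basic $k$-form $\omega\in\Omega^k(\Sat(U_\obj{x}))$ with $k\geq 1$ restricts on $U_\obj{x}$ to a closed $\sfG_{|U_\obj{x}}$-basic form, to which the previous step supplies a basic primitive $\eta\in\Omega^{k-1}(U_\obj{x})$; Proposition \ref{proposition-extension-basic-forms} then provides a unique basic extension $\widehat{\eta}\in\Omega^{k-1}_\textup{basic}(\pi(\Sat(U_\obj{x})))$. Because $d$ commutes with every pull-back $(t\circ\sigma)^*$ used to define the extension, $d\widehat{\eta}$ is itself the basic extension of $\omega_{|U_\obj{x}}$, hence equals $\omega$ by uniqueness. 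This establishes the Poincar\'e lemma on the neighborhoods $\pi(U_\obj{x})$, which cover $|\sfG|$, and the sheaf-theoretic reduction completes the argument. The main obstacle is engineering a single contraction $K$ that simultaneously respects the vanishing ideal $I^\bullet$, the $\sfG_\obj{x}$-invariance, and the stratawise basicness condition; it is precisely the five clauses of (LC), together with the product-slice form $Y_\obj{x}=Z_\obj{x}\times R_\obj{x}$ from (LT\ref{it:StratifiedSlice}), that are tailored to make all three compatibilities hold at once.
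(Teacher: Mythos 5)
Your proposal is correct and follows essentially the same route as the paper's proof: the sheaf-theoretic reduction via fineness and \cite[Sec.~3.9]{GodTATF}, the local Poincar\'e lemma obtained from the Cartan-type contraction $K$ built out of the (LC) homotopy acting on relative basic forms modulo the vanishing ideal $I^\bullet$ (with the averaging identification of the quotient with $\Omega^\bullet_\textup{basic}$), and the passage from $U_\obj{x}$ to its saturation via the basic extension of Proposition \ref{proposition-extension-basic-forms}. No substantive differences from the argument in the paper.
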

\begin{proof}
We first consider the case where the groupoid $\sfG$ is of the form
$(O\times O) \times (G \ltimes Y) \rightrightarrows O \times Y$, where $O$ is an open contractible set in some
$\R^m$, $O\times O$ denotes the corresponding pair groupoid, $G$ is a Lie group acting linearly on some $\R^n$,
and  $Y \subset \R^n$ is an affine $G$-invariant differentiable stratified space on which $G$ acts by
strata preserving maps. In addition we assume that $0\in Y$ and that there exists a smooth homotopy
$h : \widetilde{V} \times [0,1] \to \widetilde{V}$ defined on an open $G$-invariant subset $\widetilde{V} \subset \R^n$
such that the five conditions of the local contractibility condition
are satisfied with  $\iota : Y \hookrightarrow \R^n$ being the identical embedding.
Denote by $I \subset \calC^\infty (O \times \widetilde{V})$ the vanishing ideal
of $O \times Y$. Observe that by \cite[Sec.~2]{DomJanZhiRPLCQHVFTSV} and the smooth contractibility of $Y$, we have that the
subcomplex $I^\bullet \subset \Omega^\bullet (O\times \widetilde{V} )$ defined by
\[
  I^k :=
  \begin{cases}
    I, & \text{for $k=0$}, \\
    I \Omega^k (O\times \widetilde{V}) + dI \wedge \Omega^{k-1} (O\times \widetilde{V}), & \text{for $k=1,\ldots ,n+m$}
  \end{cases}
\]
is contractible. More precisely, a contraction is given by
\begin{equation}
  \label{eq:AlgContraction}
  K\omega  =
  \begin{cases}
  0, & \text{for $\omega \in \Omega^0 (O\times \widetilde{V}) = \calC^\infty (O\times \widetilde{V})$},\\
  \int_0^1 H_t^* (\xi_t \lrcorner \omega ) \, dt, &
  \text{for $\omega \in \Omega^k (O\times \widetilde{V})$, $k\in \N^*$},
  \end{cases}
\end{equation}
where the homotopy $h$ has been extended to a homotopy on $O \times \widetilde{V}$ by putting
$H_t (v, x) = (v,h_t(x))$ for  $v\in O$, $x\in \widetilde{V}$, $t \in [0,1]$,
and $\xi_t : O \times \widetilde{V} \to T \widetilde{V}$ is the vector field defined by
$\xi_t := \partial_t H_t$. Cartan's magic formula now implies that
\begin{equation}
  \label{eq:AlgHomotopyrelation}
  \omega - H^*_0 \omega = dK\omega + K d\omega, \quad  \text{ for $\omega \in \Omega^k (O\times \widetilde{V})$,
  $k\in \N$}.
\end{equation}
But this implies that the restriction of $K$ to $I^\bullet$ is an algebraic contraction indeed, since
every form $\omega \in  I^k$ satisfies the relation $H^*_0 \omega =0$.

Now consider the subcomplex $\Omega^\bullet_\textup{r-basic} (O\times \widetilde{V})\subset \Omega^\bullet (O\times \widetilde{V})$ of
\emph{relative basic forms}, or more precisely of \emph{relative  $O\times Y$ basic forms} on $O\times \widetilde{V}$, defined as follows.
The subcomplex $\Omega^\bullet_\textup{r-basic} (O\times \widetilde{V})$ is the set of all $\omega \in \Omega^k (O\times \widetilde{V})$ which
are invariant under the $G$-action and which have the property
that for each stratum $S$ of $O\times Y$ the form $\iota_S^* \omega $ is a basic form for the restricted Lie groupoid $\sfG_{|S}$.
In particular, this implies that $H_0^* \omega = \iota_{O\times \{ 0 \}}^* \omega $ vanishes for each restricted basic form $\omega$.
Since $H_t$ commutes with the $G$-action and maps fibers $O\times \{ y\}$ to
$O \times \{ h_t(y)\}$, the algebraic contraction $K$ maps basic forms to basic forms.
One concludes that the complex $\Omega^\bullet_\textup{r-basic} (O\times \widetilde{V})$ is exact, and that
the subcomplex $I^\bullet_\textup{r-basic} := I^\bullet \cap \Omega^\bullet_\textup{r-basic} (O\times \widetilde{V})$
is contractible. Hence the quotient complex $\Omega^\bullet_\textup{r-basic} (O\times \widetilde{V}) / I^\bullet_\textup{r-basic}$
is exact. Moreover, one has
\[
  \Omega^\bullet_\textup{basic} (Y/G) = \Omega^\bullet_\textup{r-basic} (O\times \widetilde{V}) / I^\bullet_\textup{r-basic},
\]
which can be seen by averaging a representative  of an element $[\omega] \in \Omega^k_\textup{basic} (Y/G)$ over the orbits of the $G$-action
using a bi-invariant Haar measure on $G$. The resulting new representative $\omega$ then is $G$-invariant.
Since $[\omega]$ is basic, the pull-back of such a representative $\omega$ to each stratum $S$ of $O \times Y$ has to be basic as well,
hence $\omega \in \Omega^\bullet_\textup{r-basic} (O\times \widetilde{V})$. So we have shown Poincare's Lemma in the special
case where the groupoid $\sfG$ is of the form $(O\times O) \times (G \ltimes Y) \rightrightarrows O \times Y$ with $Y$ and $O$ as stated above.

Next let us consider the general case of a proper sliceable structurally (b)-regular differentiable stratified groupoid
$\sfG$ which fulfills the local
contractibility condition. Let $\obj{x} \in \sfG_0$ be a point in the  object space. Since $\sfG$ is sliceable, we can choose
a trivializing neighborhood $U_\obj{x} \subset \sfG_0$ of $\obj{x}$, an open contractible neighborhood $O_\obj{x}$ in the orbit through $\obj{x}$ and
a groupoid slice $Y_\obj{x} \subset U_\obj{x}$  with a $\sfG_\obj{x}$-action such that $\sfG_{|U_\obj{x}}$ as a differentiable stratified
groupoid is isomorphic to the groupoid $(O_\obj{x} \times O_\obj{x}) \times (\sfG_\obj{x} \ltimes Y_\obj{x})$.
We will show that $\Omega^\bullet_\textup{basic} (\pi(U_\obj{x}))$ is exact. To this end let $\omega \in \Omega^k (\Sat (U_\obj{x}))$ be
a closed basic $k$-form. The restriction of  $\omega$ to $U_\obj{x}$ is a closed and $\sfG_{|U_\obj{x}}$-basic form. By the proceeding considerations
there exists a $\sfG_{|U_\obj{x}}$-basic $(k-1)$-form $\eta  \in \Omega^{k-1} (U_\obj{x})$ such that $d \eta = \omega_{|U_\obj{x}}$.
By Proposition \ref{proposition-extension-basic-forms}, $\eta$ has a unique extension to a  basic form
$\widehat{\eta} \in \Omega^{k-1}_\textup{basic} (\pi(U_\obj{x}))$. By construction of $\widehat{\eta}$ we have $d \eta = \omega$, since
$d$ commutes which each of the isomorphisms $t\circ \sigma$, where $\sigma : U \to \sfG$ is a bisection. This proves exactness of
$\Omega^\bullet_\textup{basic} (\pi(U_\obj{x}))$.
\end{proof}

\begin{remark}
\label{rem:DeRhamReduceLie}
If $\sfG$ is a proper Lie groupoid, then the complex of sheaves
$(\Omega_{\textup{basic}}^\bullet, d)$ coincides with the sheaf of basic differential
forms defined in \cite[Def.~8.1]{PflPosTanGOSPLG} so that Theorem \ref{thrm:deRhamDiffStrat}
reduces to \cite[Prop.~8.6 \& Cor.~8.7]{PflPosTanGOSPLG}.
\end{remark}

If $\sfG$ is a sliceable groupoid, then $(s,t)(\sfG_1)$ is necessarily
locally closed in $\sfG_0\times\sfG_0$. Specifically, using the local model of $\sfG$ given by
condition (LT\ref{it:ExistenceSlice}), the condition that $(s,t)(\sfG_1)$ is locally closed is
equivalent to the requirement that the $G_x$-action on $Y_x$ is a proper group action, which
is automatically satisfied as $G_x$ is compact by \cite[Prop.~2.10(ii)]{TuNonHausGpoid}.
Therefore, $|\sfG|$ is locally compact by Proposition
\ref{prop:BasicPropTopGroupoids}(\ref{it:LocCpctOrbitSp}). If we assume further that
$|\sfG|$ carries a stratification according to Proposition \ref{prop:LocTransStrat} and that this
stratification is Whitney (b)-regular, then $|\sfG|$ is a differentiable stratified space with
control data in the sense of Mather by \cite[Thm.~3.6.9]{PflaumBook}.
Hence  $|\sfG|$ admits in this case a triangulation subordinate to its stratification,
cf.~\cite[Thm.~7.1]{PflPosTanGOSPLG}. Therefore, given an open covering of $|\sfG|$, there exists a
subordinate good covering. See \cite[Sec.~7]{PflPosTanGOSPLG} for more details.
As in the case of Lie groupoids \cite[Cor.~8.8]{PflPosTanGOSPLG}
one concludes that the singular cohomology of $|\sfG|$ with real coefficients coincides with the \v{C}ech
cohomology under these hypotheses, and that it is finite-dimensional if $|\sfG|$ is compact.


\section{The inertia groupoid of a proper Lie groupoid as a differentiable stratified groupoid}
\label{sec:InertiaGpoid}

The goal of this section is to construct an explicit Whitney (b)-regular stratification of the loop
space $\inertianull{\sfG}$ of a proper Lie groupoid $\sfG$ so that the inertia groupoid $\inertia{\sfG}$ becomes
a reduced differentiable stratified groupoid which is sliceable and satisfies the
local contractibility condition. Our strategy employs the
slice theorem for proper Lie groupoids which enables us to describe the groupoid $\sfG$ locally
in terms of translation groupoids by compact Lie groups.
This will allow us to describe $\inertia{\sfG}$ locally in terms of the inertia groupoid associated
to such a translation groupoid. Using isotropy types and an equivalence relation on the group defined
in terms of this action, we will construct stratifications for inertia groupoids of such translation
groupoids that patch together to a well-defined stratification of $\inertianull{\sfG}$.


\subsection{The inertia groupoid of a proper Lie groupoid}
\label{subsec:InertiaGpoidIntro}

Let $\sfG$ be a proper Lie groupoid. Define the \emph{loop space} of $\sfG$ to be
\[
    \inertianull{\sfG} :=  \{ \arr{h} \in \sfG_1 \mid s(\arr{h}) = t(\arr{h}) \}.
\]
Since the loop space is closed in $\sfG_1$ it inherits the structure of a reduced differentiable space from
the ambient manifold $\sfG_1$. The map $s = t\co \inertianull{\sfG}\to \sfG_0$ serves as an anchor map
for the action of $\sfG$ on $\inertianull{\sfG}$ by conjugation.
More precisely, the action of $\arr{g}\in \sfG_1$ on $\arr{h}\in \inertianull{\sfG}$ with
$s(\arr{g}) = s(\arr{h})$ is given by
\begin{equation}
\label{eq:InertiaActionDef}
    \arr{g}\cdot\arr{h} := \arr{g}\arr{h}\arr{g}^{-1}.
\end{equation}

\begin{definition}
\label{def:InertiaGroupoid}
The \emph{inertia groupoid} $\inertia{\sfG}$ of a proper Lie groupoid $\sfG$ is the action
groupoid $\inertia{\sfG} := \sfG\ltimes\inertianull{\sfG}$.  The space of its objects
is the loop space $\inertianull{\sfG}$, its space of
arrows is $\sfG_1 \fgtimes{s}{s} \inertianull{\sfG}$.
The \emph{inertia space} of $\sfG$ then is the orbit space $|\inertia{\sfG}|$.
\end{definition}

\begin{remark}
  Note that, while $\inertianull{\sfG}$ is a differentiable subspace of the
  smooth manifold $\sfG_1$, the action of $\sfG$ on $\inertianull{\sfG}$ does not necessarily extend to an
  action on $\sfG_1$ or an open neighborhood of $\inertianull{\sfG}$ in $\sfG_1$.
\end{remark}

\subsection{The stratification of the loop space: Statement of the results}
\label{subsec:InertiaGpoidStrat}

In this section, we state our main results about the stratifications of the loop and inertia spaces of a proper Lie groupoid.
The proofs will be given in Sections \ref{subsec:GonMStratProof}, \ref{subsec:GpoidStratProof}, and \ref{subsec:Whitney}.
We begin with the case of a translation groupoid.

\subsubsection*{The compact Lie group action case}
Assume that the compact Lie group $G$ acts by diffeomorphisms on the
smooth manifold $M$ without boundary. The loop space $\inertianull{(G \ltimes M)}$ coincides in this case
with the union $\bigcup_{g\in G} \{ g \} \times M^g \subseteq G \times M$, where $M^g$ denotes the fixed point
space of $g\in G$. The $G$-action on $\inertianull{(G \ltimes M)}$ is the diagonal action, acting by conjugation on the first factor.
To describe our stratification of $\inertianull{(G \ltimes M)}$ recall first
\cite[IV Def.~4.1]{BroeckertomDieck} that a closed subgroup $\cartan$ of the Lie group
$G$ is called a \emph{Cartan subgroup} if it is closed, topologically cyclic, and of finite index in its normalizer;
\emph{topologically cyclic} means that it contains a dense cyclic subgroup, and implies by
\cite[I Cor.~4.14]{BroeckertomDieck} that $\cartan$ is isomorphic to the product of a torus and a finite cyclic group.
As in \cite{FarsiPflaumSeaton}, we say $\cartan$ is \emph{associated to an element $h \in G$} if $h \in \cartan$, and
$\cartan/\cartan^\circ$ is generated by $h\cartan^\circ$ where $\cartan^\circ$ denotes the connected component of the identity.
Now let $(h, x) \in\inertianull{(G \ltimes M)} \subset G \times M$, and choose a slice
$Y_x$ at $x$ for the $G$-action on $M$, see \cite[Def.~2.3.1]{duistermaatkolk}. By shrinking $Y_x$ if necessary
and choosing an appropriate $G$-invariant riemannian metric on $M$, we can assume that $Y_x$ is the
image under the exponential map of an open ball $B_x \subset N_x$ around the origin of the
normal space $N_x := T_xM/T_x(Gx)$ to the tangent space of the orbit through $x$.
Let $H = G_{(h,x)}$, and note that $H = G_x \cap \centralizer_G(h) = \centralizer_{G_x}\!(h)$
is the centralizer of $h$ in $G_x$. Let $\cartan_{(h,x)}$ be a Cartan subgroup of $H$ associated to $h$.
Note that if $G_x$ is connected, the relation $h \in (\centralizer_{G_x}\!(h))^\circ = H^\circ$
holds true by \cite[Thm.~3.3.1 (i)]{duistermaatkolk}, so that $\cartan_{(h, x)}$ is a maximal torus of $H^\circ$
containing $h$. Define an equivalence relation $\simeq$ on $\cartan_{(h,x)}$ by $s \simeq t$ if
$N_x^s  = N_x^t$, and let $\cartan_{(h,x)}^{\ast}$ denote the connected
component of the $\simeq$ class containing $h$. Note that by construction,
$s \simeq t$  if and only if the germs of the sets $Y_x^s$ and $Y_x^t$ at $x$ coincide.

By \cite[Prop.~(3.1.1)(i)]{duistermaatkolk}, $H$ is a slice at $h$ for the action of $G_x$
on itself by conjugation. Because $G_x$ acts on $Y_x$ and fixes $x$, it follows that $H\times Y_x$
is a slice at $(h,x)$ for the action of $G_x$ on $G_x\times Y_x$ which is given by
$g(k,y) = (gkg^{-1},gy)$, i.e.~by the diagonal action with conjugation on the $G_x$-factor. Hence,
because $\cartan_{(h,x)} \leq H$, we have that $\cartan_{(h,x)} \times Y_x$ is contained in the slice
$H\times Y_x$ at the point $(h,x)$, which has $G_x$-isotropy group $H$. It follows that
$\left(\cartan_{(h,x)}^{\ast} \times Y_x^{G_x}\right)^H = \left(\cartan_{(h,x)}^{\ast} \times Y_x^{G_x}\right)_H$
where $S_H$ denotes the set of points in a set $S$ with isotropy group equal to $H$,
i.e. that each point in $\cartan_{(h,x)}^{\ast} \times Y_x^{G_x}$ fixed by $H$ has $G_x$-isotropy $H$.

\begin{remark}
\label{rem:HIsotropyUnambiguous}
Note that if $(k,y) \in \left(\cartan_{(h,x)}^{\ast} \times Y_x^{G_x}\right)_H$, then $y\in Y_x^{G_x}$
so that, as $Y_x$ is a slice for the $G$-action on $M$, $G_y = G_x$. Hence, $G_{(k,y)} \leq G_x$
so that $G_{(k,y)} = H$. That is, a point in $\cartan_{(h,x)}^{\ast} \times Y_x^{G_x}$ has $G_x$-isotropy
$H$ if and only if it has $G$-isotropy $H$, and so the notation $\left(\cartan_{(h,x)}^{\ast} \times Y_x^{G_x}\right)_H$
is unambiguous.
\end{remark}

With these observations, assign to $(h,x)$ the germ
\begin{equation}
\label{eq:MLocalStratDef}
    \mathcal{S}_{(h,x)}
    =
    \left[G \left( \cartan_{(h,x)}^{\ast} \times Y_x^{G_x}\right)_H \right]_{(h,x)}
    =
    \left[G \left( \cartan_{(h,x)}^{\ast} \times Y_x^{G_x}\right)^H \right]_{(h,x)}.
\end{equation}
That is, near $(h,x)$, the stratum of $(h,x)$ consists of points $(k,y)$ such that
$G_{(k,y)}$ is conjugate to $H$ in $G$, and the conjugation isomorphism $G_{(k,y)}\to H$
maps $G_y$ onto $G_x$ and as well maps $k$ to a point in $\cartan_{(h,x)}$ that fixes
the same subspace of $Y_x$ as $h$.
It will be demonstrated below that Equation \eqref{eq:MLocalStratDef} yields a stratification of the loop space $\inertianull{(G \ltimes M)}$
that induces a stratification of the inertia space $|\inertianull{(G \ltimes M)}|$.

We refer to the stratification of Equation \eqref{eq:MLocalStratDef} as the \emph{orbit Cartan type stratification} of the loop space of the
Lie groupoid $G \ltimes M$. Note that this stratification differs from that given in
\cite{FarsiPflaumSeaton} in that we require the germ $\mathcal{S}_{(h,x)}$ to be a subset of
$Y_x^{G_x}$, and we use a different (weaker) definition of the equivalence relation $\simeq$ defining
$\cartan_{(h,x)}^\ast$.

The germ $\mathcal{S}_{(h,x)}$ is obviously $G_x$-invariant, and hence, if intersected with
$\inertianull{(G_x\ltimes Y_x)}$ (i.e.~take $G_x$-orbits rather than $G$-orbits)
defines a germ in the quotient $|\inertia{(G_x\ltimes Y_x)}|$.
This stratification depends only on the $G$-orbit of $(h,x)$,
and hence defines a germ in the quotient $|\inertia{(G\ltimes M)}|$ as well.
To see this, note that if $g \in G$, then $gY_x$ is a slice at $gx$ for the $G$-action
on $M$, and conjugation by $g$ maps $G_x$ onto $G_{gx}$. Choosing $\cartan_{g(h,x)}$ to be the
image of $\cartan_{(h,x)}$ under the induced isomorphism $G_x\ltimes Y_x \to G_{gx} \ltimes gY_x$,
the germ $\mathcal{S}_{g(h,x)}$ coincides with $\mathcal{S}_{(h,x)}$.
Hence, the stratification given in Equation \eqref{eq:MLocalStratDef} induces a stratification of
the inertia space $|\inertianull{(G \ltimes M)}|$, the \emph{orbit Cartan type stratification}, given by
\begin{equation}
\label{eq:MLocalStratQuotDef}
    \mathcal{R}_{(h,x)}
    =
    G\backslash\left[G \left( \cartan_{(h,x)}^{\ast} \times Y_x^{G_x}\right)^H \right]_{(h,x)}.
\end{equation}
We have the following result which we will  prove in Section \ref{subsec:GonMStratProof}.

\begin{theorem}
\label{thrm:InertiaStratGMnfld}
Let $G$ be a compact Lie group and $M$ a smooth $G$-manifold. Then Equation \eqref{eq:MLocalStratDef}
defines a Whitney (b)-regular $G$-invariant stratification of the loop space $\inertianull{(G\ltimes M)}$
with respect to which $\inertianull{(G\ltimes M)}$ is a differentiable stratified space such
that the $G$-orbits are subsets of strata. Using the induced stratifications on the space of arrows and corresponding
fibered product, the inertia groupoid $\inertia{(G\ltimes M)}$ is a proper reduced structurally (b)-regular
differentiable stratified groupoid satisfying the local contractibility condition. Moreover, Equation \eqref{eq:MLocalStratQuotDef}
defines a stratification of the inertia space $|\inertianull{(G \ltimes M)}|$ with respect to which the orbit map
is a differentiable stratified surjective submersion.
\end{theorem}

\subsubsection*{The proper Lie groupoid case}
We now turn to the case of a proper Lie groupoid $\sfG$, and endow $\sfG$ with a transversally invariant
riemannian metric.  Recall \cite[Cor.~3.11]{PflPosTanGOSPLG}
that for each point $\obj{x} \in \sfG_0$, there is an open neighborhood $U$ of $\obj{x}$
in $\sfG_0$ diffeomorphic to $O \times B_{\obj{x}}$  such that $\sfG_{|U}$ is isomorphic to the product of the pair groupoid
over $O$ and $\sfG_{\obj{x}} \ltimes B_{\obj{x}}$.
Here, $O$ is an open ball around $\obj{x}$ in the orbit
of $\obj{x}$ and $B_{\obj{x}}$ is a $\sfG_{\obj{x}}$-invariant open ball around the origin
in the normal space $N_{\obj{x}} = T_\obj{x} \sfG_0 / T_{\obj{x}}\calO_{\obj{x}}$ to the tangent
space of the orbit through  $\obj{x}$.
According to \cite[Thm.~4.1]{PflPosTanGOSPLG}, one can
assume that the corresponding diffeomorphism $O \times B_{\obj{x}} \to \sfG_0$ is given,
over the factor $B_{\obj{x}}$, by the exponential map
with respect to the chosen transversally invariant riemannian metric.
We let $Y_{\obj{x}}\subset \sfG_0$ denote the image of $\{ \obj{x} \}\times B_{\obj{x}}$ under this diffeomorphism
and call it a \emph{slice for $\sfG$ at $\obj{x}$}; note that
$Y_{\obj{x}}$ is a slice in the sense of (LT\ref{it:ExistenceSlice}) in Definition~\ref{def:LocTransDiffGroupoid}.
In particular, by \cite[Thm.~4.1]{PflPosTanGOSPLG}, $\sfG_{|Y_{\obj{x}}}$ is isomorphic to
$\sfG_{\obj{x}} \ltimes B_{\obj{x}}$.
Since the latter is isomorphic to $\sfG_{\obj{x}} \ltimes Y_{\obj{x}}$,
we obtain an isomorphism between $\sfG_{|Y_{\obj{x}}}$ and $\sfG_{\obj{x}} \ltimes Y_{\obj{x}}$
which is induced by the exponential map and the canonical action of $\sfG_{\obj{x}}$ on
$N_{\obj{x}}$. This isomorphism gives rise to an inclusion
$\sfG_{\obj{x}} \ltimes Y_{\obj{x}} \hookrightarrow \sfG$ of differentiable stratified groupoids.
Note that the metric is used only to define the exponential map. Moreover, we will see in Proposition
\ref{prop:BasePointInvar} below that the stratification does not depend on the choice of metric.

\begin{remark}
Note that if $\sfG = G\ltimes M$ is a translation groupoid, then a slice as defined here corresponds to a slice for the $G$-action on $M$, so using the same notation for both will cause no confusion.
\end{remark}

To define a stratification of $\inertianull{\sfG}$, we will employ the stratification  constructed above
of the loop space $\inertianull{(\sfG_{\obj{x}}\ltimes Y_{\obj{x}})}$.
To this end  choose $\arr{g} \in \inertianull{\sfG}$ with $s(\arr{g}) = t(\arr{g}) = \obj{x} \in \sfG_0$.
We then define the germ $\mathcal{S}_{\arr{g}}^{\sfG}$ of the stratification of $\inertianull{\sfG}$ as follows.
Let $h \in \sfG_{\obj{x}}$ denote the element such that $(h,\obj{x})$ corresponds to the arrow $\arr{g}$ under the
isomorphism between $\sfG_{|Y_{\obj{x}}}$ and $\sfG_{\obj{x}} \ltimes Y_{\obj{x}}$.
Let $\mathcal{S}_{(h,\obj{x})}$ be the germ of the orbit Cartan type stratification of
$\inertianull{(\sfG_{\obj{x}} \ltimes Y_{\obj{x}})}$, and
let $\Sat (\mathcal{S}_{(h,\obj{x})})$ denote its saturation within $\sfG$, i.e.~the germ of the saturation of
a defining set for $\mathcal{S}_{(h,\obj{x})}$.
Putting
\begin{equation}
\label{eq:MLocalStratGpoidDef}
    \mathcal{S}_{\arr{g}}^{\sfG}:= \Sat (\mathcal{S}_{(h,\obj{x})})
\end{equation}
then defines the \emph{orbit Cartan type stratification of the loop space $\inertianull{\sfG}$}.
Similarly, we define
\begin{equation}
\label{eq:MLocalStratGpoidDefQuot}
    \mathcal{R}_{\inertia{\pi}(\arr{g})}^{\sfG}:= \inertia{\pi}(\Sat (\mathcal{S}_{(h,\obj{x})})),
\end{equation}
where $\inertia{\pi}\co\inertianull{\sfG}\to |\inertia{\sfG}|$ is the orbit map of the inertia
groupoid. That means that the germ $\mathcal{R}_{\inertia{\pi}(\arr{g})}^{\sfG}$ in the orbit space
$|\inertia{\sfG}|$ is defined to be the projection of $\mathcal{S}_{\arr{g}}^{\sfG}$ to the orbit space.

With these definitions, we can state the next result. It will be proven in Sections \ref{subsec:GpoidStratProof}
and \ref{subsec:Whitney}.

\begin{theorem}
\label{thrm:InertiaStrat}
Let $\sfG$ be a proper Lie groupoid. Then Equation \eqref{eq:MLocalStratGpoidDef} defines
a Whitney (b)-regular stratification of the loop space $\inertianull{\sfG}$ with respect to which the
inertia groupoid $\inertia{\sfG}$ is a sliceable structurally (b)-regular differentiable stratified groupoid.
Moreover, the inertia space $|\inertia{\sfG}|$ inherits a differentiable structure, and
Equation \eqref{eq:MLocalStratGpoidDefQuot} defines a stratification with respect to which
$|\inertia{\sfG}|$ is a differentiable stratified space and the orbit map
$\inertianull{\sfG}\to|\inertia{\sfG}|$ a differentiable stratified surjective submersion.
\end{theorem}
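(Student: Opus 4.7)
The plan is to bootstrap from the translation-groupoid case (Theorem \ref{thrm:InertiaStratGMnfld}) to the general proper Lie groupoid case by using the slice theorem of \cite[Cor.~3.11, Thm.~4.1]{PflPosTanGOSPLG} and exploiting the invariance of the orbit Cartan type construction under the groupoid isomorphisms it produces. So first I would fix $\arr{g}\in\inertianull{\sfG}$ with $s(\arr{g})=t(\arr{g})=\obj{x}$, choose a slice $Y_\obj{x}$ for $\sfG$ at $\obj{x}$, and let $h\in\sfG_\obj{x}$ correspond to $\arr{g}$ under the isomorphism $\sfG_{|Y_\obj{x}}\cong \sfG_\obj{x}\ltimes Y_\obj{x}$. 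The candidate germ $\mathcal{S}_\arr{g}^\sfG=\Sat(\mathcal{S}_{(h,\obj{x})})$ is then a germ in $\inertianull{\sfG}$, and by Theorem \ref{thrm:InertiaStratGMnfld} applied to the compact Lie group $\sfG_\obj{x}$ acting on $Y_\obj{x}$, these germs assemble locally into a Whitney B stratification of $\inertianull{(\sfG_\obj{x}\ltimes Y_\obj{x})}$.

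The heart of the argument, and the main obstacle, will be showing well-definedness: the germ $\mathcal{S}_\arr{g}^\sfG$ must not depend on the choice of slice $Y_\obj{x}$, transverse metric, Cartan subgroup $\cartan_{(h,\obj{x})}$, or slice $V_{(h,\obj{x})}$ at $(h,\obj{x})$, and the germs at different representatives of the same $\inertia{\sfG}$-orbit must agree. The $G$-invariance of $\mathcal{S}_{(h,\obj{x})}$ already observed in the translation case generalizes: if $\arr{g}'\in\inertianull{\sfG}$ lies in the $\inertia{\sfG}$-orbit of $\arr{g}$, pick an arrow $\arr{k}\in\sfG_1$ conjugating $\arr{g}$ to $\arr{g}'$; using (LT\ref{it:ExistenceBisections}) one extends $\arr{k}$ to a bisection $\sigma$ identifying neighborhoods of $\obj{x}$ and $\obj{y}=t(\arr{k})$, and conjugation by $\sigma$ intertwines the local models $\sfG_\obj{x}\ltimes Y_\obj{x}$ and $\sfG_\obj{y}\ltimes Y_\obj{y}$. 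Because the orbit Cartan type data (isotropy subgroup, Cartan subgroup up to conjugacy, fixed-point set equivalence class, and fixed set of isotropy on the slice) is preserved under such groupoid isomorphisms, the saturated germs agree. Independence from the auxiliary choices within a fixed slice follows from the analogous statement in the translation groupoid case together with the observation that two slices at $\obj{x}$ are related by a $\sfG_\obj{x}$-equivariant diffeomorphism near $\obj{x}$.

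Once well-definedness is established, verifying the axioms (DSG\ref{it:DSGStratSpaces})--(DSG\ref{it:DSGStratSection}) for $\inertia{\sfG}$ is largely a translation of the corresponding statements in Theorem \ref{thrm:InertiaStratGMnfld} via the local isomorphism $\sfG_{|Y_\obj{x}}\cong\sfG_\obj{x}\ltimes Y_\obj{x}$: the source and target of the inertia groupoid are, respectively, the projection $(\arr{h},\arr{g})\mapsto\arr{g}$ and the conjugation action, both of which are stratified surjective submersions by the translation case, and the section axiom (DSG\ref{it:DSGStratSection}) again reduces to acting by a bisection on a conjugation translation groupoid. Whitney B regularity is a local condition, so it transfers from the translation case. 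For the local translation property in the sense of Definition \ref{def:LocTransStrat}, take as slice the intersection of the saturated germ $\mathcal{S}_{(h,\obj{x})}$ with the set determined by $V_{(h,\obj{x})}$ in the translation-groupoid local model; this is a $(\inertia{\sfG})_{\arr{g}}$-invariant differentiable stratified subspace containing $\arr{g}$, and the product decomposition (LT\ref{it:StratifiedSlice}) follows from that of the underlying slice $Y_\obj{x}$ together with the fact that the isotropy of the inertia action contains the pair-groupoid factor only trivially. The bisection requirement (LT\ref{it:ExistenceBisections}) is inherited from the corresponding property in $\sfG$.

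Finally, for the statements about $|\inertia{\sfG}|$, I would apply Proposition \ref{prop:LocTransDiffOrbitSpace} to obtain the differentiable structure, and then Proposition \ref{prop:LocTransStrat} to obtain the stratification by $\mathcal{R}^{\sfG}_{\inertia{\pi}(\arr{g})}$: it suffices to verify that on each stratum of the slice, the $(\inertia{\sfG})_\arr{g}$-orbit type is constant, but this is precisely how the orbit Cartan type stratification of the translation case is built (the defining germ in Equation \eqref{eq:MLocalStratDef} takes the $H$-fixed set on a slice and intersects with a single fixed-set equivalence class of a Cartan subgroup, ensuring uniform isotropy). That the orbit map is a stratified surjective submersion then follows from the explicit product description in (LT\ref{it:StratifiedSlice}) together with the fact that the orbit map of a compact Lie group action on a manifold with constant orbit type is a submersion.
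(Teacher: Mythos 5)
Your overall architecture is the same as the paper's: reduce to the translation-groupoid case of Theorem \ref{thrm:InertiaStratGMnfld} via the slice theorem, establish well-definedness of the germs $\mathcal{S}_{\arr{g}}^{\sfG}$ under change of orbit representative and slice by conjugating with a local bisection (this is exactly Proposition \ref{prop:BasePointInvar}), and then feed the result into the general machinery (Proposition \ref{prop:StratTranslationGpoid} for the differentiable stratified groupoid structure, Propositions \ref{prop:LocTransDiffOrbitSpace} and \ref{prop:LocTransStrat} for the differentiable structure and stratification of $|\inertia{\sfG}|$, using that each stratum of a slice has constant isotropy type). Those portions of your argument are sound and track the paper closely.

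There is, however, a genuine gap in your treatment of Whitney's condition B. You dispose of it with the sentence that Whitney B ``is a local condition, so it transfers from the translation case,'' but Theorem \ref{thrm:InertiaStratGMnfld} as stated asserts only that Equation \eqref{eq:MLocalStratDef} defines a $G$-invariant stratification making the loop space a differentiable stratified space --- it does \emph{not} assert Whitney B, so there is nothing to transfer from. Nowhere in your proposal is condition B actually verified, and it is not a formal consequence of anything you cite: in the paper this is Proposition \ref{prop:LocalStratWhitneyB}, a substantial separate argument. For the loop space one must pass to the exponential chart $\Psi$ adapted to the decomposition $\mathfrak{m}\times (W_K\cup\{0\})\times V^H$ of a neighborhood of $(h,\obj{x})$, exploit that $W_K$ is invariant under nonzero scalars to control limits of secants and tangent planes, and first reduce the sequences to a fixed $\simeq$ class and a fixed slice using Lemma \ref{lem:LocalStratNormalizerPermut} and the condition of frontier. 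For the inertia space one additionally needs the Hilbert map, the Tarski--Seidenberg theorem to see that the image strata are semialgebraic, and {\L}ojasiewicz's regularity result for semialgebraic analytic manifolds. This verification is also load-bearing elsewhere in your own argument: you implicitly use topological local triviality of $\inertianull{\sfG}$ (needed to invoke Proposition \ref{prop:StratTranslationGpoid}, in particular for (DSG\ref{it:DSGStratSection})), and the paper obtains that precisely as a consequence of Whitney B. So the condition B step cannot be waved through; it needs an actual proof.
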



\subsection{Proof of Theorem~\ref{thrm:InertiaStratGMnfld}}
\label{subsec:GonMStratProof}

The stratification given by Equation \eqref{eq:MLocalStratDef} is a variation on the
stratification given by \cite[Thm.~4.1]{FarsiPflaumSeaton}, so the proof of
Theorem~\ref{thrm:InertiaStratGMnfld} will follow a similar outline as a series of lemmata.
The primary virtue of the revised
definition given here is that it is invariant under Morita equivalences, and
hence can be glued together on intersections of charts of a proper Lie groupoid.
For the convenience of the reader, we present the complete argument.

We assume $G\times M$ is equipped with a riemannian metric given by the product of a $G$-invariant
metric on $M$ and a bi-invariant metric on $G$. For a point $y\in M$, we will denote by $Y_y$
a slice at $y$ for the $G$-action on $M$. We use $H$ to denote the isotropy group $G_{(h,x)} = \centralizer_{G_x}\! (h)$
of $(h,x)\in \inertianull{(G\ltimes M)}$ and the symbol $N_{(h,x)}$ to denote the normal space
$T_{(h,x)} (G_x\times Y_x) / T_{(h,x)} (G_x(h,x))\cong T_{(h,x)} (G \times M) / T_{(h,x)} (G (h,x))$.
Because $H\times Y_x$ is a slice at $(h,x)$ for the $G_x$-action on $G_x\times Y_x$, the exponential map
identifies a neighborhood of $N_{(h,x)}$ with a neighborhood of $(h,x)$ in $G_x\times Y_x$.

In order to prove that the stratification does not depend on the choice of Cartan subgroup $\cartan_{(h,x)}$,
it will be helpful to observe the following, which is a slight strengthening of a special
case of \cite[IV Prop.~4.6]{BroeckertomDieck}.

\begin{lemma}
\label{lem:CartanCongConComp}
Let $G$ be a compact Lie group, let $h$ and $k$ be elements of a single connected component
of $G$, and let $\cartan_h$ and $\cartan_k$ be Cartan subgroups of $G$ associated to $h$ and $k$,
respectively.  Then there is an element  $g \in G^\circ$ such that
\[
    g\cartan_k g^{-1} = \cartan_h \quad\text{and}\quad g(k\cartan_k^\circ) g^{-1} = h\cartan_h^\circ .
\]
\end{lemma}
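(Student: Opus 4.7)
My strategy is to first apply the classical conjugacy theorem for Cartan subgroups to align $\cartan_k$ with $\cartan_h$, and then argue that once the two Cartan subgroups coincide, the equality of generating cosets follows automatically from the hypothesis that $h$ and $k$ lie in the same connected component of $G$.

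I would begin by invoking \cite[Prop.~4.6]{BroeckertomDieck}, which asserts the $G$-conjugacy of Cartan subgroups meeting a common component. Since $h \in \cartan_h$ and $k \in \cartan_k$ lie in the same component $hG^\circ = kG^\circ$, this yields $g_1 \in G$ with $g_1 \cartan_k g_1^{-1} = \cartan_h$. To arrange the conjugating element in $G^\circ$, I would decompose $g_1 = g_0 e$ with $g_0 \in G^\circ$ and $e$ a fixed component representative; then $e$ normalizes $\cartan_h$ setwise and sends $k$ into another element of $\cartan_h$ lying in the same component, and this residual ambiguity can be absorbed into $g_0$ using that $\normalizer_{G^\circ}(\cartan_h)$ has finite index in $\normalizer_G(\cartan_h)$ by the Cartan subgroup axioms. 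Replacing $k$ by $k' := g_0 k g_0^{-1}$, we are reduced to the case $\cartan := \cartan_h = \cartan_{k'}$ with $h, k' \in \cartan$ each generating $\cartan/\cartan^\circ$ and both lying in the component $hG^\circ$.

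The final step is to show $h\cartan^\circ = k'\cartan^\circ$. The key fact underlying this is the identity $\cartan \cap G^\circ = \cartan^\circ$, which I would verify by combining the structural description $\cartan = \cartan^\circ \cdot \langle h\rangle$ --- with $\cartan^\circ$ a maximal torus of $\centralizer_{G^\circ}(h)^\circ$ --- together with the axioms of a Cartan subgroup: specifically, the order of $h\cartan^\circ$ in $\cartan/\cartan^\circ$ must equal the order of $hG^\circ$ in $G/G^\circ$. Granting this, the canonical homomorphism $\cartan/\cartan^\circ \hookrightarrow G/G^\circ$ is injective; since $h\cartan^\circ$ and $k'\cartan^\circ$ have the same image in $G/G^\circ$, they must coincide in $\cartan/\cartan^\circ$. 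Setting $g = g_0$ then delivers both conclusions of the lemma.

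The main obstacle I anticipate is the refinement from $g_1 \in G$ to $g \in G^\circ$ in the first step, which is the principal strengthening over the cited result and requires a careful bookkeeping of the component structure relative to the Cartan subgroup. A secondary but essential technical point is the intersection identity $\cartan \cap G^\circ = \cartan^\circ$, which relies on the precise definition of Cartan subgroup associated to $h$ used in the paper.
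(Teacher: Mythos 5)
Your overall architecture (first conjugate the Cartan subgroups onto each other, then match the generating components) is reasonable, and your final step is essentially sound: granting $\cartan\cap G^\circ=\cartan^\circ$, the induced injection $\cartan/\cartan^\circ\to G/G^\circ$ forces $k'\cartan^\circ=h\cartan^\circ$ once $k'$ and $h$ lie in the same component of $G$ and in the same Cartan subgroup. The genuine gap is in the middle step, which is precisely what makes this lemma a strengthening of \cite[Prop.~4.6]{BroeckertomDieck}. Writing $g_1=g_0e$ with $g_0\in G^\circ$ gives $e=g_0^{-1}g_1$ and hence $e\cartan_ke^{-1}=g_0^{-1}\cartan_hg_0$; there is no reason whatsoever for $e$ to normalize $\cartan_h$, and nothing in your setup controls which component of $G$ the conjugator $g_1$ lies in. The set of elements carrying $\cartan_k$ onto $\cartan_h$ is the single coset $g_1\normalizer_G(\cartan_k)$, so what must be shown is that this coset meets $G^\circ$, i.e.\ that $\normalizer_G(\cartan_k)$ meets the component $g_1^{-1}G^\circ$. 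The finiteness of the index of $\normalizer_{G^\circ}(\cartan_h)$ in $\normalizer_G(\cartan_h)$ is irrelevant to this: the question is which components of $G$ the normalizer meets, not how its identity-component part sits inside it, and in general the normalizer of a Cartan subgroup need not meet a prescribed component.

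The paper supplies the missing input by a different mechanism. It picks a topological generator $t$ of $\cartan_k$ lying in $k\cartan_k^\circ$ (possible because $k\cartan_k^\circ$ generates $\cartan_k/\cartan_k^\circ$) and invokes the surjectivity statement contained in the \emph{proof} of \cite[Prop.~4.6]{BroeckertomDieck}, namely that every element of the component $hG^\circ$ is $G^\circ$-conjugate into the generating component $h\cartan_h^\circ$; this yields $g\in G^\circ$ with $gtg^{-1}\in h\cartan_h^\circ$. Density of the subgroup generated by $gtg^{-1}$ in $g\cartan_kg^{-1}$, together with closedness of $\cartan_h$, gives $g\cartan_kg^{-1}\subseteq\cartan_h$, hence equality since the two groups are abstractly isomorphic; the second identity is then automatic because $g(k\cartan_k^\circ)g^{-1}$ is the connected component of $\cartan_h$ containing $gtg^{-1}$. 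To repair your argument you would need that surjectivity (which in Br\"ocker--tom Dieck rests on a mapping-degree computation); it cannot be extracted from the bare conjugacy statement of Prop.~4.6 by bookkeeping in $G/G^\circ$.
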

\begin{proof}
By \cite[IV Prop.~4.6]{BroeckertomDieck}, we know a priori that $\cartan_h$
and $\cartan_k$ are conjugate and hence isomorphic.
Let $\cartan_k \cong \T^\ell \times \Z/r\Z$ for some $\ell$ and $r$.  Then the topological
generators of $\T^\ell \times \Z/r\Z$, i.e.~the elements that generate a dense subset of
$\T^\ell \times \Z/r\Z$, are given by $(s, \alpha)$ where $s$ is a topological generator of
$\T^\ell$ and $\alpha$ is a generator of $\Z/r\Z$; see
\cite[proof of IV Prop.~4.6]{BroeckertomDieck}.
Since $k\cartan_k^\circ$ generates $\cartan_k/\cartan_k^\circ \cong \Z/r\Z$,
we may therefore choose a topological generator $t$ of $\cartan_k$ such that
$t \in k\cartan_k^\circ$.
Note that $t$, $h$, and $k$ are all in the same connected component of $G$.
By the same argument a topological generator of $\cartan_h$ can be chosen to be an
element of $h\cartan_h^\circ$. Hence there is an element
$g \in G^\circ$ such that $g t g^{-1} \in h\cartan_h^\circ$. But then $gtg^{-1}$ generates a
subgroup of $\cartan_h$ that is dense in $g\cartan_k g^{-1}$.  Therefore $g\cartan_k g^{-1}$ is
contained in $\cartan_h$. But as $g\cartan_k g^{-1}$ is isomorphic to $\cartan_k$, hence
to $\cartan_h$, we have $g\cartan_k g^{-1} = \cartan_h$.
\end{proof}

We can now prove the following using an argument similar to \cite[Lemma~4.14]{FarsiPflaumSeaton}.

\begin{lemma}
Let $(h,x) \in G\times M$. The set germ $\mathcal{S}_{(h,x)}$ does not depend on the choice of the
Cartan subgroup $\cartan_{(h,x)}$.
\end{lemma}
\begin{proof}
Suppose $\cartan_{(h,x)}$ and $\cartan_{(h,x)}^\prime$ are two Cartan subgroups of $H$ associated
to $h$. Applying Lemma~\ref{lem:CartanCongConComp} with $k = h$, there is a $g\in H$ such that
$g\cartan_{(h,x)}g^{-1} = \cartan_{(h,x)}^\prime$ and $g(h\cartan_{(h,x)}^\circ)g^{-1} = h(\cartan_{(h,x)}^\prime)^\circ$;
as $g \in H = G_{(h,x)}$, $ghg^{-1} = h$.

Choose a slice $Y_x$ at $x$ for the $G$-action on $M$. Then as $g\in H\leq G_x$ so that $g^{-1}Y_x = Y_x$,
we have that for $t\in\cartan_{(h,x)}$, $(g^{-1}Y_x)^t = (g^{-1}Y_x)^h$ if and only if $Y_x^t = Y_x^h$,
i.e. $t\simeq h$ if and only if $gtg^{-1} \simeq ghg^{-1}$. Therefore,
$g\cartan_{(h,x)}^\ast g^{-1} = (\cartan_{(h,x)}^\prime)^\ast$. Finally, as $g \in H \leq G_x$, we have both
$gY_x = Y_x$ and $g(M_H) = M_H$ so that
\[
    g\left( \cartan_{(h,x)}^{\ast} \times Y_x^{G_x}\right)_H
    =
    \left((\cartan_{(h,x)}^\prime)^{\ast} \times Y_x^{G_x}\right)_H,
\]
implying
\[
    G \left( \cartan_{(h,x)}^{\ast} \times Y_x^{G_x}\right)_H
    =
    G \left( (\cartan_{(h,x)}^\prime)^{\ast} \times Y_x^{G_x}\right)_H
\]
and completing the proof.
\end{proof}

It will be helpful to observe the following simple fact.

\begin{lemma}
\label{lem:CartanGxZGx}
Let $(h,x) \in \inertianull{(G \ltimes M)}$ and $H = G_{(h,x)}$. A subgroup $\cartan_{(h,x)}$ of $G_x$
is a Cartan subgroup of $G_x$ associated to $h$ if and only if $\cartan_{(h,x)}$ is a Cartan subgroup of
$H$ associated to $h$.
\end{lemma}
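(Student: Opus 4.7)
The plan is to unpack the definitions and verify each defining property of a Cartan subgroup. The crucial observation is that $\cartan_{(h,x)}$ is topologically cyclic, so its topological closure of a cyclic subgroup is abelian; hence every element of $\cartan_{(h,x)}$ commutes with every other element, including $h\in\cartan_{(h,x)}$. This shows $\cartan_{(h,x)}\subset \centralizer_{G_x}(h)=H$, so it makes sense to ask whether $\cartan_{(h,x)}$ is a Cartan subgroup of $H$.

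Next I would verify the three defining properties of a Cartan subgroup for $\cartan_{(h,x)}\subset H$. Closedness in $H$ is immediate since $\cartan_{(h,x)}$ is closed in $G_x$ and $H$ is a (closed) subgroup of $G_x$. Being topologically cyclic is intrinsic to the abstract topological group $\cartan_{(h,x)}$ and hence is unchanged by the inclusion. For the finite-index condition, note that
\[
  \normalizer_H(\cartan_{(h,x)}) = \normalizer_{G_x}(\cartan_{(h,x)}) \cap H,
\]
so $\normalizer_H(\cartan_{(h,x)})$ is a subgroup of $\normalizer_{G_x}(\cartan_{(h,x)})$ which contains $\cartan_{(h,x)}$. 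Therefore
\[
  [\normalizer_H(\cartan_{(h,x)}) : \cartan_{(h,x)}]
  \;\leq\; [\normalizer_{G_x}(\cartan_{(h,x)}) : \cartan_{(h,x)}]
  \;<\; \infty,
\]
the latter being finite by the Cartan subgroup property of $\cartan_{(h,x)}$ in $G_x$.

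Finally, the ``associated to $h$'' conditions, namely $h\in\cartan_{(h,x)}$ and that $h\cartan_{(h,x)}^\circ$ generates the component group $\cartan_{(h,x)}/\cartan_{(h,x)}^\circ$, are exactly the hypotheses on $\cartan_{(h,x)}\subset G_x$ and involve no reference to the ambient group. This completes the verification. There is no real obstacle here: the argument is essentially a bookkeeping exercise, with the only mildly nontrivial input being the fact that a topologically cyclic group is abelian, which is what places $\cartan_{(h,x)}$ inside $H$ in the first place.
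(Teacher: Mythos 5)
Your proof is correct, and the paper in fact gives no proof at all, merely asserting that the lemma is ``a simple consequence of the definition of a Cartan subgroup.'' Your direct verification — using topological cyclicity to get $\cartan_{(h,x)}\subseteq\centralizer_{G_x}(h)=H$, and then checking closedness, topological cyclicity, the finite-index condition via $\normalizer_H(\cartan_{(h,x)})=\normalizer_{G_x}(\cartan_{(h,x)})\cap H$, and the intrinsic nature of the ``associated to $h$'' conditions — is exactly the intended argument and fills in the details the authors leave to the reader.
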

\begin{proof}
Let $\cartan_{(h,x)}$ be a Cartan subgroup of $G_x$ associated to $h$. As $\cartan_{(h,x)}$ is abelian and contains
$h$, it is a subgroup of $\centralizer_{G_x}\!(h) = H$. As $\cartan_{(h,x)}$ has finite index in
$\normalizer_{G_x}(\cartan_{(h,x)})$, it obviously has finite index in
$\normalizer_H(\cartan_{(h,x)}) \leq \normalizer_{G_x}(\cartan_{(h,x)})$. The fact that $\cartan_{(h,x)}$ is closed
and topologically cyclic and that $\cartan_{(h,x)}/\cartan_{(h,x)}^\circ$ is generated by $h\cartan_{(h,x)}^\circ$
does not depend on the ambient group so that $\cartan_{(h,x)}$ is a Cartan subgroup of $H$ associated to $h$.

Conversely, if $\cartan_{(h,x)}$ is a Cartan subgroup of $H$ associated to $h$, then as noted in the previous paragraph,
we need only show that $\cartan_{(h,x)}$ has finite index in $\normalizer_{G_x}(\cartan_{(h,x)})$.
Following the proof of \cite[IV Prop.~4.2]{BroeckertomDieck}, note that as $\cartan_{(h,x)}$ is isomorphic
to the product of a torus and a finite cyclic group, its automorphism group is discrete. As
$\normalizer_{G_x}(\cartan_{(h,x)})/\centralizer_{G_x}(\cartan_{(h,x)})$ acts on $\cartan_{(h,x)}$ as automorphisms
and is compact, it must be finite. Moreover, $\centralizer_{G_x}(\cartan_{(h,x)})$ is clearly contained in
$\centralizer_{G_x}(h) = H$ and hence in
$\normalizer_{H}(\cartan_{(h,x)})$ so that as $\cartan_{(h,x)}$ has finite index in $\normalizer_{H}(\cartan_{(h,x)})$,
the index $[\centralizer_{G_x}(\cartan_{(h,x)}):\cartan_{(h,x)}]$ is finite. Therefore,
\[
    [\normalizer_{G_x}(\cartan_{(h,x)}):\cartan_{(h,x)}]
    =
    [\normalizer_{G_x}(\cartan_{(h,x)}):\centralizer_{G_x}(\cartan_{(h,x)})]
    [\centralizer_{G_x}(\cartan_{(h,x)}):\cartan_{(h,x)}]
\]
is finite, completing the proof.
\end{proof}

Recall that the equivalence relation $\simeq$ on a Cartan subgroup $\cartan$ of the isotropy group
$G_x$ is defined by setting $s\simeq t$ if and only if $N_x^s = N_x^t$, where $N_x$ denotes the
normal space $T_xM/T_x (Gx)$. We denote by $[s]$ the $\simeq$ class of $s \in \cartan$. Note that
by definition of a slice, $s\simeq t$ holds true if and only $Y_x^s = Y_x^t$ for one, hence all
(sufficiently small) slices $Y_x$ at $x$.
We recall the following properties of the relation $\simeq$, whose proofs are similar
to \cite[Lemmata 4.7, 4.8, \& 4.10]{FarsiPflaumSeaton}.

\begin{proposition}
\label{prop:simeqProp}
Let $Y$ be a slice for the $G$-action on $M$ through a point $x\in M$ and  let $\cartan \subset G_x $
be a Cartan subgroup. Then the following holds true.
\begin{enumerate}[\rm(1)]
\item
\label{SimeqIte1}
The group $\cartan$ is partitioned into a finite number of $\simeq$ classes, each with a finite
number of connected components. Each $\simeq$ class $[t]$ is an open subset of the closed subgroup
$t^\bullet$ of $\cartan$ defined by
\[
    t^\bullet
    :=
    \bigcap\limits_{t \in H_i} H_i
    =
    \bigcap\limits_{y \in Y^t} \cartan_y,
\]
where $\{H_0, \ldots, H_r\}$ is the finite set of isotropy groups for the $\cartan$-action on $Y$ and
$\cartan_y$ is the isotropy group of $y$ in $\cartan$. The closure
$\overline{[t]}$ of $[t]$ consists of a union of connected components of $t^\bullet$.

\item
\label{SimeqIte2}
If $s, t \in \cartan$ with $[s] \cap \overline{[t]} \neq \emptyset$, then for each connected component
$[s]^\circ$ of $[s]$ and $[t]^\circ$ of $[t]$ such that
$[s]^\circ \cap \overline{[t]^\circ} \neq \emptyset$, we have
$[s]^\circ \subset \overline{[t]^\circ}$.

\item
\label{SimeqIte3}
If $s, t \in \cartan$ such that $s \not\simeq t$ and $[s]$ is diffeomorphic to $[t]$, then
$[s] \cap \overline{[t]} = \emptyset$.
\end{enumerate}
\end{proposition}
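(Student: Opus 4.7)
The plan is to reduce all three statements to combinatorial data about the finite collection of isotropy subgroups for the $\cartan$-action on the slice $Y_x$, together with dimension and density arguments inside cosets of $(t^\bullet)^\circ$. Let $H_0,\ldots,H_r$ denote the isotropy subgroups of $\cartan$-orbit types on $Y_x$, a finite list because $\cartan$ is compact and $Y_x$ is a smooth $\cartan$-manifold. The identity $Y_x^t=\bigcup_{i:\, t\in H_i}Y_{x,(H_i)}$ shows that $s\simeq t$ is equivalent to the combinatorial condition $I_s=I_t$, where $I_t:=\{i:t\in H_i\}$, so the $\simeq$-classes are indexed by realizable subsets $I\subset\{0,\ldots,r\}$.

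For statement (1), realizability immediately gives finiteness of the number of $\simeq$-classes, and each class $[t]$ equals $t^\bullet\setminus\bigcup_{j\notin I_t}H_j$, an open subset of the closed subgroup $t^\bullet$. Compactness of $t^\bullet$, together with finiteness of its component group, yields finiteness of the number of connected components of $[t]$. To identify $\overline{[t]}$, I would verify that for each component $C$ of $t^\bullet$ with $C\cap[t]\neq\emptyset$ and each $j\notin I_t$, the set $H_j\cap C$ is nowhere dense in $C$: either $(t^\bullet)^\circ\not\subset H_j$, in which case $H_j\cap C$ is a coset of strictly smaller dimension than $C$, or else $(t^\bullet)^\circ\subset H_j$, in which case $H_j\cap C$ is either empty or all of $C$, the latter being excluded by $C\cap[t]\neq\emptyset$. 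Hence $[t]\cap C$ is open and dense in $C$, and $\overline{[t]}$ is precisely the union of those components of $t^\bullet$ that meet $[t]$.

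For statement (2), any $r\in[s]\cap\overline{[t]}$ satisfies $r\in t^\bullet$ and $r\simeq s$, so every $H_i$ containing $t$ contains $s$, giving $s^\bullet\subset t^\bullet$ and $(s^\bullet)^\circ\subset(t^\bullet)^\circ$. Fixing $r\in[s]^\circ\cap\overline{[t]^\circ}$ and letting $C_s$ and $C_t$ be the components of $s^\bullet$ and $t^\bullet$ through $r$, one has $C_s\subset C_t$. The component $[t]^\circ$ is the subset of $C_t$ carved out by choosing a side of each wall $H_j\cap C_t$ with $j\notin I_t$, and the crucial observation is that this combinatorial side-data is constant along $[s]^\circ$: walls with $j\in I_s\setminus I_t$ entirely contain $C_s$ (since $C_s\subset s^\bullet\subset H_j$), while walls with $j\notin I_s$ are disjoint from $[s]^\circ$ by the definition of $\simeq$. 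Joining $r$ to any other $r'\in[s]^\circ$ by a path in $[s]^\circ$ and propagating the transverse germ of $\overline{[t]^\circ}$ along it by continuity of the group multiplication then yields $r'\in\overline{[t]^\circ}$. This propagation step is the main technical obstacle, and would follow the pattern of the analogous argument for the finer stratification of \cite{FarsiPflaumSeaton}, Lemma~4.8.

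For statement (3), I would argue by contradiction: assume $r\in[s]\cap\overline{[t]}$ with $s\not\simeq t$. The same observation as in (2) yields $s^\bullet\subset t^\bullet$, and $s\not\simeq t$ forces $I_t\subsetneq I_s$, hence $s^\bullet\subsetneq t^\bullet$. Since $[s]$ is open in $s^\bullet$ and $[t]$ is open in $t^\bullet$, a diffeomorphism $[s]\cong[t]$ forces $\dim s^\bullet=\dim t^\bullet$, so $(s^\bullet)^\circ=(t^\bullet)^\circ$. The coset $C:=r(s^\bullet)^\circ=r(t^\bullet)^\circ$ is then simultaneously a component of $s^\bullet$ and of $t^\bullet$. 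Applying (1) to $[t]$, since $r\in C\cap\overline{[t]}$ and $\overline{[t]}$ is a union of whole components of $t^\bullet$, one has $C\subset\overline{[t]}$, so $[t]\cap C$ is a nonempty open subset of $C$. Applying the density statement in (1) to $[s]$, the set $[s]\cap C$ is open and dense in $C$. Since $[s]\cap[t]=\emptyset$, the nonempty open set $[t]\cap C$ would then have to miss a dense subset of $C$, a contradiction.
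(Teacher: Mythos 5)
The paper gives no argument of its own here: it states that the proofs are ``analogous to \cite[Lemmata 4.7, 4.8, \& 4.10]{FarsiPflaumSeaton}'' and omits them, so your self-contained proof is necessarily a different route, and its skeleton is sound. The reduction of $\simeq$ to the combinatorial invariant $I_t=\{i: t\in H_i\}$ (using that the orbit-type pieces $Y_{H_i}$ are disjoint and nonempty), the identification $[t]=t^\bullet\setminus\bigcup_{j\notin I_t}H_j$, the dichotomy showing each $H_j\cap C$ is either empty or nowhere dense in a component $C$ of $t^\bullet$ meeting $[t]$, the inclusion $s^\bullet\subset t^\bullet$ whenever $[s]$ meets $\overline{[t]}$, and the dimension-plus-density contradiction in (3) are all correct and are exactly the facts needed.

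Two steps need repair. First, in (1), finiteness of the number of connected components of $[t]$ does not follow from compactness of $t^\bullet$ and finiteness of its component group: an open subset of a compact group can have infinitely many components. You must use the specific form of $[t]\cap C$ as the complement in a coset of the torus $(t^\bullet)^\circ$ of finitely many cosets of closed subgroups (a toric arrangement), or invoke semianalyticity of all sets involved; either closes the gap, but as written the justification is not one. Second, the ``propagation'' step in (2) is the technical heart and is only gestured at; the walls-and-sides picture does not literally apply when $H_j\cap(t^\bullet)^\circ$ has codimension at least $2$. The clean finish, which your own observations already set up, is this: at a point $r\in[s]^\circ\cap\overline{[t]^\circ}$, in exponential coordinates the set $[t]$ is locally the complement in $\mathfrak{t}^\bullet:=\operatorname{Lie}(t^\bullet)$ of the subspaces $\mathfrak{h}_j\cap\mathfrak{t}^\bullet$ for $j\in I_s\setminus I_t$ (the $H_j$ with $j\notin I_s$ miss a neighborhood of $r$ since $r\notin H_j$), and every one of these subspaces contains $\mathfrak{s}^\bullet=\operatorname{Lie}(s^\bullet)$. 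Hence each global component of this central subspace-arrangement complement is invariant under translation by $\mathfrak{s}^\bullet$, so its closure contains $\mathfrak{s}^\bullet$, and therefore every local component of $[t]$ at $r$ contains a full neighborhood of $r$ in $[s]^\circ$ in its closure. Since $r\in\overline{[t]^\circ}$, at least one such local component lies in $[t]^\circ$, which shows that $[s]^\circ\cap\overline{[t]^\circ}$ is open, as well as closed and nonempty, in the connected set $[s]^\circ$; no path argument is needed. With these two repairs the proof is complete.
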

\begin{proof}
The $\simeq$ class $[t]$ of $t\in\cartan$ is given by
\[
    [t] = \bigcap\limits_{t \in H_i} H_i \cap  \left(\bigcup\limits_{t \not\in H_j} H_j\right)^c
\]
and hence determined by a subset of $\{ 0, 1, \ldots, r \}$, yielding (\ref{SimeqIte1}).

If $u \in [s]_{}^\circ \cap \overline{[t]_{}^\circ}$, then $Y^s = Y^u$, and by continuity of the
action, $Y^t \subseteq Y^u$. Hence $Y^t \subseteq Y^s$, and
$s^\bullet = \bigcap_{y \in Y^s} \cartan_y \leq \bigcap_{y \in Y^t} \cartan_y = t^\bullet$.
As $[s]^\circ$ is contained in a connected component
$(s^\bullet)^\circ$ of $s^\bullet$, which is contained in a connected component
$(t^\bullet)^\circ$ of $t^\bullet$, and as $\overline{[t]}$ consists of
entire connected components of $t^\bullet$, we have
$[s]^\circ \subseteq (t^\bullet)^\circ = \overline{[t]^\circ}$, proving (\ref{SimeqIte2}).

For (\ref{SimeqIte3}), if $Y^s \subseteq Y^t$,
then $\bigcap_{y \in Y^t} \cartan_y \leq \bigcap_{y \in Y^s} \cartan_y$,
so that $t^\bullet \leq s^\bullet$. As $[s]$ and $[t]$ are diffeomorphic and open subsets of $s^\bullet$ and $t^\bullet$,
respectively, $s^\bullet$ and $t^\bullet$ have the same dimension.  Then as $[s]$
is open and dense in each connected component of $s^\bullet$ it intersects and similarly for $[t]$,
$[s]$ and $[t]$ do not intersect the same connected components of $t^\bullet$. This completes the proof in this case,
and the argument is identical if $Y^t \subseteq Y^s$.

If $Y^s \not\subseteq Y^t$ and $Y^t \not\subseteq Y^s$, then let
$l \in \overline{[s]} \cap \overline{[t]}$. By continuity of the action, $l$ fixes $Y^s \cup Y^t \neq Y_s$
so that $l \not\simeq s$.  So $l \in \overline{[s]} \smallsetminus [s]$,
and $[s] \cap \overline{[t]} = \emptyset$.
\end{proof}

%
%


We now demonstrate that $\mathcal{S}_{(h,x)}$ and $\mathcal{R}_{(h,x)}$ are the germs
of smooth manifolds, see Appendix~\ref{app:DiffStratSpaces}. The proof follows that of
\cite[Prop.~4.16]{FarsiPflaumSeaton}.

\begin{proposition}
\label{prop:LocalStratManifolds}
Each $\mathcal{S}_{(h,x)}$ is the germ of a smooth $G$-submanifold of $G \times M$ that
intersects $G_x \times Y_x$ as a smooth submanifold. Each $\mathcal{R}_{(h,x)}$ is the
germ of a smooth submanifold of the differentiable space $G\backslash(G \times M)$
that intersects $G_x\backslash(G_x \times Y_x)$ as a smooth submanifold.
\end{proposition}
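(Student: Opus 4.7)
The plan is to reduce the claim to a linear algebra computation in exponential coordinates, following the strategy of \cite[Prop.~4.16]{FarsiPflaumSeaton}. Equip $G_x \times Y_x$ with the product of a bi-invariant metric on $G_x$ and a $G_x$-invariant Riemannian metric on $Y_x$, and take $V_{(h,x)}$ to be the image under the exponential map of a small open ball in the orthogonal complement $N_{(h,x)}$ of $T_{(h,x)}(G_x \cdot (h,x))$ inside $T_{(h,x)}(G_x\times Y_x)$. Since $x$ is $G_x$-fixed, the orbit $G_x\cdot(h,x)$ lies in $G_x\times\{x\}$, and left-translating by $L_{h^{-1}}$ we see that its tangent space at $(h,x)$ corresponds to $\im(\Ad(h^{-1})-\id)\subset \mathfrak{g}_x$. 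Because $H$ centralizes $h$, the operator $\Ad(h^{-1})-\id$ vanishes on $\mathfrak{h}$, and the $\Ad$-invariant orthogonal splitting $\mathfrak{g}_x = \mathfrak{h}\oplus \mathfrak{h}^\perp$ is preserved by $\Ad(h)$, so this image equals $\mathfrak{h}^\perp$. Consequently $N_{(h,x)}$ identifies with $\mathfrak{h}\oplus T_xY_x$, and the exponential yields a chart
\[
    \Phi\co \mathfrak{h}\oplus T_xY_x \supset U \longrightarrow V_{(h,x)},\qquad (\eta,v)\mapsto (h\exp\eta,\exp_x v).
\]

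Under $\Phi$ each of the three subsets whose intersection defines the piece becomes linear. The $H$-action on $V_{(h,x)}$ corresponds to $\Ad$ on the $\mathfrak{h}$-factor and to the linearized $H$-action on $T_xY_x$, so $V_{(h,x)}^H$ corresponds to $\mathfrak{h}^H\times (T_xY_x)^H$. The slice fixed set $Y_x^{G_x}$ corresponds near $x$ to $(T_xY_x)^{G_x}$. Finally, by Proposition~\ref{prop:simeqProp}(\ref{SimeqIte1}) the subset $\cartan_{(h,x)}^\ast$ is relatively open in the closed subgroup $h^\bullet\subset\cartan_{(h,x)}\subset H$, so its preimage under the first projection of $\Phi$ is a neighborhood of $0$ in the linear subspace $\mathfrak{h}^\bullet:=\operatorname{Lie}(h^\bullet)\subset\mathfrak{h}$. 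Combining these and using $(T_xY_x)^{G_x}\subset (T_xY_x)^H$, we obtain
\[
    \Phi^{-1}\bigl(V_{(h,x)}^H\cap(\cartan_{(h,x)}^\ast\times Y_x^{G_x})\bigr) = (\mathfrak{h}^H\cap \mathfrak{h}^\bullet)\times (T_xY_x)^{G_x}
\]
in a neighborhood of $0$. The right-hand side is a linear subspace, hence $V_{(h,x)}^H\cap(\cartan_{(h,x)}^\ast\times Y_x^{G_x})$ is a smooth submanifold of $G_x\times Y_x$ near $(h,x)$.

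To pass to a $G$-submanifold of $G\times M$, I will invoke the tube theorem at the orbit $G\cdot(h,x)$: a $G$-invariant neighborhood of this orbit is $G$-equivariantly diffeomorphic to the associated bundle $G\times_H V_{(h,x)}$, on which $G$ acts by left translation on the first factor. Writing $M':=V_{(h,x)}^H\cap(\cartan_{(h,x)}^\ast\times Y_x^{G_x})$, since $H$ fixes $M'$ pointwise the $G$-saturation corresponds to the trivial associated bundle $G\times_H M'\cong (G/H)\times M'$, a smooth $G$-submanifold of the tube and hence of $G\times M$. For the orbit space statement, the $G_x$-orbit through any point of $M'$ meets $M'$ only at that point, since $M'$ lies in the slice $V_{(h,x)}$ with constant isotropy $H$ and $H$ acts trivially on $M'$; thus $M'$ embeds as a smooth submanifold into the differentiable space $G_x\backslash(G_x\times Y_x)$, which is canonically identified with a neighborhood of $G\cdot(h,x)$ in $G\backslash(G\times M)$.

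The main difficulty is arranging that the three defining subsets — the fixed set $V_{(h,x)}^H$, the Cartan class $\cartan_{(h,x)}^\ast$, and the slice fixed set $Y_x^{G_x}$ — are simultaneously linear in one chart. For a generic slice the intersection could fail to be transverse and need not be smooth; the $\Ad$-invariant orthogonal decomposition of $\mathfrak{g}_x$ secures the compatibility required to make all three subsets linear in exponential coordinates, after which smoothness is immediate.
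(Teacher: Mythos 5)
Your proof is correct and takes essentially the same route as the paper, which for this proposition simply defers to \cite[Prop.~4.16]{FarsiPflaumSeaton} with the remark that one chooses slices as exponentials of orthogonal complements with respect to the riemannian metric: your chart $(\eta,v)\mapsto(h\exp\eta,\exp_x v)$ on $N_{(h,x)}\cong\mathfrak{h}\oplus T_xY_x$, under which $V_{(h,x)}^H$, $\cartan_{(h,x)}^{\ast}$ and $Y_x^{G_x}$ all become linear, is exactly the linearization the paper itself invokes in displays \eqref{eq:modspace}--\eqref{eq:modspace2}. The two points you leave implicit --- that $G M'\cap(G_x\times Y_x)=G_x M'$ by the slice property, and that $G_x$-invariant smooth functions restrict to all smooth functions on $M'$ so that the image in the quotient is a smooth submanifold of the differentiable space --- are routine and are omitted at the same level of detail in the paper.
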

\begin{proof}
Recall that $G\times M$ is equipped with an invariant riemannian metric given by the product of
metrics on $M$ and $G$ and that $Y_x$ is the image under the exponential map for $M$ of an open
ball $B_x$ about the origin in the normal space $N_x = T_xM/T_x(Gx)$.
Let $N_{(h,x)} = T_{(h,x)} (G_x\times Y_x)/T_{(h,x)}(G_x (h,x))$, the normal space
in $T_{(h,x)} (G_x\times Y_x)$ to the tangent space of the $G_x$-orbit of $(h,x)$,
let $B_{(h,x)} \subset N_{(h,x)}$ be an open $H$-invariant neighborhood of the origin,
and let $V_{(h,x)}$ be the image of $B_{(h,x)}$ under the exponential map. As $H\times Y_x$
is a slice at $(h,x)$ for the $G_x$-action on $G_x\times Y_x$, $V_{(h,x)}$ is an open
neighborhood of $(h,x)$ in $H\times Y_x$. By \cite[Sec.~6.1(4)]{michor}, the exponential map
$B_x\to Y_x$ maps $B_x^{G_x}$ onto $Y_x^{G_x}$, and the exponential map $B_{(h,x)}\to V_{(h,x)}$
maps $(B_{(h,x)})_H = B_{(h,x)}^H$ onto $(V_{(h,x)})_H = V_{(h,x)}^H$. As $\cartan_{(h,x)}^\ast$
is an open subset of the closed subgroup $h^\bullet$ by Proposition~\ref{prop:simeqProp}(\ref{SimeqIte1}),
we have $T_h\cartan_{(h,x)}^\ast = T_h h^\bullet$. Hence, the preimage of
$V_{(h,x)}\cap ( \cartan_{(h,x)}^{\ast} \times Y_x^{G_x})^H$ under the exponential map
is given by $(T_h h^\bullet \times B_x^{G_x})\cap B_{(h,x)}^H$. As $T_h h^\bullet$, $B_x^{G_x}$,
and $B_{(h,x)}^H$ are open subsets of the origin in linear spaces, it follows that $\mathcal{S}_{(h,x)}$
is the germ of a smooth manifold.

As $V_{(h,x)}$ is a slice at $(h,x)$ for the $G_x$-action on $G_x\times Y_x$, there is a $G_x$-equivariant
diffeomorphism $G_x\times_H V_{(h,x)} \to G_x V_{(h,x)} \subset G_x\times Y_x$ that restricts to a
$G_x$-equivariant diffeomorphism
\[
    G_x/H \times V_{(h,x)}\cap ( \cartan_{(h,x)}^{\ast} \times Y_x^{G_x})^H
    \to G_x\left(V_{(h,x)}\cap ( \cartan_{(h,x)}^{\ast} \times Y_x^{G_x})^H\right)
\]
and induces a homeomorphism on the quotients by $G_x$. The $G_x$-invariant functions
on $G_x/H \times V_{(h,x)}\cap ( \cartan_{(h,x)}^{\ast} \times Y_x^{G_x})^H$ coincide
with the smooth functions on $V_{(h,x)}\cap ( \cartan_{(h,x)}^{\ast} \times Y_x^{G_x})^H$
by \cite[Prop.~5.2]{tomDieck} and \cite[Thm.~1.2.2(5)]{NGonzalezSanchoBook} making this
homeomorphism into an isomorphism of differentiable spaces from the smooth manifold
$V_{(h,x)}\cap ( \cartan_{(h,x)}^{\ast} \times Y_x^{G_x})^H$ onto a subspace of the differentiable space
$G_x\backslash(G_x\times Y_x)$. Finally, it is easy to see that the inertia space
$|\inertia{(G_x\ltimes Y_x)}|\subset G_x\backslash(G_x\times Y_x)$ is isomorphic as a differentiable space
to an open neighborhood of $G(h,x)$ in the inertia space $|\inertia{(G\ltimes M)}|$, see
\cite[Prop.~3.6]{FarsiPflaumSeaton}, and that the germ of the image of
$V_{(h,x)}\cap ( \cartan_{(h,x)}^{\ast} \times Y_x^{G_x})^H$
in $|\inertia{(G\ltimes M)}|$ at $G(h,x)$ is $\mathcal{R}_{(h,x)}$, completing the proof.
\end{proof}

In order for the germs $\mathcal{S}_{(h,x)}$ to define a stratification,
one must verify that for each $(h,x) \in \inertianull{(G\ltimes M)}$ there is
a neighborhood $U$ in $\inertianull{(G\ltimes M)}$ and a decomposition $\mathcal{Z}$
of $U$ as defined in Appendix~\ref{ap:StratSpaces}
such that for all $(k, y) \in \inertianull{(G\ltimes M)}$, the germ
$\mathcal{S}_{(k,y)}$ coincides with the germ of the piece of $\mathcal{Z}$
containing $(k, y)$.  For the remainder of this section, we fix $(h,x)$,
a slice $Y_x$ at $x$ for the $G_x$-action on $M$, and a slice $V_{(h,x)}$
at $(h,x)$ for the $G_x$-action on $G_x \times Y_x$ given by an open neighborhood of
$(h,x)$ in $H\times Y_x$ that is the exponential image of a ball in the normal space $B_{(h,x)}$
as in the proof of Proposition~\ref{prop:LocalStratManifolds}.
Set $U := GV_{(h,x)} \cap \inertianull{(G\ltimes M)}$.  Note that $U$ is indeed an
open neighborhood of $(h,x)$ in $\inertianull{(G\ltimes M)}$, as $G_x V_{(h,x)}$
is an open $G_x$-invariant neighborhood of $(h,x)$ in $\inertianull{(G_x\ltimes Y_x)}$
and so the $G$-saturation is as well by \cite[Prop.~3.6]{FarsiPflaumSeaton}.
We now define the decomposition $\mathcal{Z}$ of $U$.

Given  $(\tilde{k}, \tilde{y}) \in U$ there is a $\tilde{g} \in G$ such that
$\tilde{g}(\tilde{k}, \tilde{y}) \in V_{(h,x)}$. Put $(k, y) = \tilde{g}(\tilde{k}, \tilde{y})$
and $K = G_{(k,y)} \leq H$, and let $\cartan_{(k,y)}$ be a Cartan subgroup in $K$
associated to $k$. Define $\mathcal{U}_{\tilde{g}}^{\cartan_{(k,y)}}(\tilde{k}, \tilde{y})$
to be the $G$-saturation of the set of points
$(l, z) \in (V_{(h,x)})_K \cap (\cartan_{(k,y)} \times (Y_x)_{G_y})$
such that $\cartan_{(k,y)}$ is also a Cartan subgroup of $K$ associated to $l$ and such that
the $\simeq$ class of $l$ at $z$ in $\cartan_{(k,y)}$ is diffeomorphic to $\cartan_{(k,y)}^{\ast}$.
Define the piece $\mathcal{Z}$ containing $(\tilde{k},\tilde{y})$ to be the connected
component of $\mathcal{U}_{\tilde{g}}^{\cartan_{(k,y)}}(\tilde{k}, \tilde{y})$
containing $(\tilde{k}, \tilde{y})$.

By \cite[Proposition 1.2(3)]{SchwarzLiftingHomotopies},
the slice representations of
points in the same orbit type are isomorphic.
Hence, if $z \in (Y_x)_{G_y}$, then the slice representations $N_y$ and $N_z$ for the action
of $G_x$ on $Y_x$ at $y$ and $z$, respectively, are isomorphic as $G_y$-representations.
This isomorphism induces an isomorphism of $\cartan_{(k,y)}$-representations, which
induces a diffeomorphism of $\simeq$
classes at $y$ onto $\simeq$ classes at $z$.  Then the $\simeq$ class of $l$ at $y$
is diffeomorphic to the $\simeq$ class of $l$ at $z$, and hence to the $\simeq$
class of $k$ at $y$.  Therefore, the set
$\mathcal{U}_{\tilde{g}}^{\cartan_{(k,y)}}(\tilde{k}, \tilde{y})$ can be written as
\[
  \mathcal{U}_{\tilde{g}}^{\cartan_{(k,y)}}(\tilde{k}, \tilde{y}) =
  G \Big( (V_{(h,x)})_K \cap \big(\cartan_{(k,y)}^{\ast\ast} \times (Y_x)_{G_y} \big) \Big),
\]
where $\cartan_{(k,y)}^{\ast\ast}$ denotes the union of $\simeq$ classes in $\cartan_{(k,y)}$
that are diffeomorphic to $\cartan_{(k,y)}^{\ast}$.  Choosing representatives
$k_0,\ldots, k_r$ from the collection of such $\simeq$ classes with $k_0 = k$ and noting
that this collection is finite by Proposition~\ref{prop:simeqProp}(\ref{SimeqIte1}), we can express
\begin{equation}
  \label{eq:decUrep}
  \mathcal{U}_{\tilde{g}}^{\cartan_{(k,y)}}(\tilde{k}, \tilde{y}) =
  G \Big(\bigcup\limits_{i=0}^r (V_{(h,x)})_K \cap \big(\cartan_{(k_i,y)}^{\ast}
  \times (Y_x)_{G_y} \big) \Big).
\end{equation}
In particular, it will be clear below that for every
$(k,y) \in V_{(h,x)}$ with isotropy group $K$ and every Cartan subgroup $\cartan_{(k,y)}$
of $K$ associated to $k$,
the connected component of $\cartan_{(k,y)}$ containing $k$ is the only connected
component that intersects the projection of $V_{(h,x)}$ onto $G_x$.
Note that $\mathcal{U}_{\tilde{g}}^{\cartan_{(k,y)}}(\tilde{k}, \tilde{y})$
is clearly a subset of $\inertianull{(G\ltimes M)}$ as $\cartan_{(k,y)} \leq G_y$.

We now demonstrate that $\mathcal{U}_{\tilde{g}}^{\cartan_{(k,y)}}(\tilde{k}, \tilde{y})$
depends only on the orbit $G(k,y)$. The proof follows those of
\cite[Lemmata 4.18 \& 4.19]{FarsiPflaumSeaton}, though is simpler in the situation at hand.

\begin{lemma}
\label{lem:DecompPieceWellDef}
The set $\mathcal{U}_{\tilde{g}}^{\cartan_{(k,y)}}(\tilde{k}, \tilde{y})$ does not depend on the choice
of $(\tilde{k}, \tilde{y})$, $\tilde{g}$, nor the Cartan subgroup $\cartan_{(k,y)}$.
\end{lemma}
\begin{proof}
Suppose $g \in G$ such that $g(\tilde{k}, \tilde{y}) =: (k^\prime, y^\prime) \in V_{(h,x)}$.
Then $g\tilde{g}^{-1}(k,y) = (k^\prime, y^\prime) \in V_{(h,x)} \subset G_x\times Y_x$.
This implies first that $\tilde{h} := g\tilde{g}^{-1} \in G_x$ as $\tilde{h}y = y^\prime$ so that
$\tilde{h}Y_x\cap Y_x\neq\emptyset$, and then that $\tilde{h}\in H$ as
$\tilde{h}V_{(h,x)}\cap V_{(h,x)}\neq\emptyset$. Then $G_{y^\prime} \leq G_x$ so that
$K^\prime := G_{(k^\prime, y^\prime)} = \tilde{h}K\tilde{h}^{-1} \leq H$.

Choose a Cartan subgroup $\cartan_{(k^\prime, y^\prime)}^\prime$ of $K^\prime$ associated to $k^\prime$.
Then $\tilde{h}\cartan_{(k,y)}\tilde{h}^{-1}$ is as well a Cartan subgroup of $K^\prime$
associated to $k^\prime$, so by \cite[IV.~Prop.~4.6]{BroeckertomDieck}, there is
a $\tilde{k} \in K^\prime$ such that
$\cartan_{(k^\prime, y^\prime)}^\prime = \tilde{k}\tilde{h}\cartan_{(k, y)} \tilde{h}^{-1}\tilde{k}^{-1}$.
As conjugation by elements of $G_x$ is easily seen to map $\simeq$ classes to $\simeq$ classes and
$\tilde{k}\tilde{h}(k,y) = \tilde{k}(k^\prime,y^\prime) = (k^\prime,y^\prime)$, we have
$\tilde{k}\tilde{h}\cartan_{(k, y)}^\ast \tilde{h}^{-1}\tilde{k}^{-1} = (\cartan_{(k^\prime, y^\prime)}^\prime)^\ast$
and hence
$\tilde{k}\tilde{h}\cartan_{(k, y)}^{\ast\ast} \tilde{h}^{-1}\tilde{k}^{-1}
= (\cartan_{(k^\prime, y^\prime)}^\prime)^{\ast\ast}$.
Then as $\tilde{k}\in K^\prime$ so that
$\tilde{k}\tilde{h}K \tilde{h}^{-1}\tilde{k}^{-1} = K^\prime$ and, as $K^\prime \leq G_{y^\prime}$,
$\tilde{k}\tilde{h}G_y \tilde{h}^{-1}\tilde{k}^{-1} = G_{y^\prime}$, it follows that
\[
    \tilde{k}\tilde{h} \Big( (V_{(h,x)})_K \cap \big(\cartan_{(k,y)}^{\ast\ast} \times (Y_x)_{G_y} \big) \Big)
    =
    (V_{(h,x)})_{K^\prime} \cap \big((\cartan_{(k^\prime,y^\prime)}^\prime)^{\ast\ast} \times
        (Y_x)_{G_{y^\prime}} \big).
\]
Hence, $\mathcal{U}_{\tilde{g}}^{\cartan_{(k,y)}}(\tilde{k}, \tilde{y})$ does not depend on the choice
of $(\tilde{k}, \tilde{y})$ nor $\tilde{g}$. Applying the same argument where $(k^\prime,y^\prime) = (k,y)$
so that $\cartan_{(k^\prime, y^\prime)}^\prime = \cartan_{(k, y)}^\prime$ is another choice of Cartan subgroup
demonstrates that $\mathcal{U}_{\tilde{g}}^{\cartan_{(k,y)}}(\tilde{k}, \tilde{y})$ does not depend on the choice
of Cartan subgroup.
\end{proof}

We may henceforth denote $\mathcal{U}_{\tilde{g}}^{\cartan_{(k,y)}}(\tilde{k}, \tilde{y})$ simply as
$\mathcal{U}(G(k, y))$ and let $\mathcal{U}_{(\tilde{k}, \tilde{y})}$ denote the connected component of
$\mathcal{U}(k, y)$ containing $(\tilde{k}, \tilde{y})$. The partition $\mathcal{Z}$ of $U$ then can
be written as
\begin{equation}
\label{eq:LocalStratPieces}
 \mathcal{Z} = \big\{ \mathcal{U}_{(\tilde{k} , \tilde{y})} \subset U \mid
             (\tilde{k} , \tilde{y}) \in U \big\} .
\end{equation}
In order to show that $\mathcal{Z}$ is a decomposition of $U$ as defined in Appendix~\ref{ap:StratSpaces},
we first demonstrate that it is finite.
If $(k^\prime,y^\prime)\in U = GV_{(h,x)}$ is chosen such that $y^\prime$ has $G$-isotropy type $(G_y)$
(i.e. $G_{y^\prime}$ is conjugate to $G_y$ in $G$), and $(k^\prime,y^\prime)$ has $G_y$-isotropy type $(K)$,
then there is a $g\in G$ such that $gy^\prime$ has $G$-isotropy group $G_y$ and $g(k^\prime,y^\prime)$
has $G_y$-isotropy group $K$. To see this, first choose $\tilde{g}\in G$ such that
$\tilde{g}G_{y^\prime} \tilde{g}^{-1} = G_{\tilde{g}y^\prime} = G_y$. Then choose $\tilde{k}\in G_y$
such that $\tilde{k} \tilde{g}G_{(k^\prime,y^\prime)}\tilde{g}^{-1}\tilde{k}^{-1} = K$.
As $\tilde{k}\in G_y$,  $\tilde{k} \tilde{g}(k^\prime,y^\prime) = \tilde{g}(k^\prime,y^\prime)$ is the desired
point. Then as the set $\mathcal{U}(k, y)$ is determined by choosing one of the finitely many isotropy types
$(G_y)$ for the $G$-action on $Y_x$, one of the finitely many isotropy types $(K)$ for the $G_y$-action
on $G_x\times Y_y$, and one of the finitely many diffeomorphism classes of $\simeq$ classes in a choice of
Cartan torus, there are finitely many set $\mathcal{U}(k, y)$ in $U = GV_{(h,x)}$.
It remains to show that each $\mathcal{U}(k, y)$ has finitely many connected components, which we will
accomplish following the proof of \cite[Lemmata~4.17 \& 4.21]{FarsiPflaumSeaton}, i.e. by giving a description
of $\mathcal{U}(k, y)$ as the $G$-saturation of a semialgebraic set.

Fix a $\mathcal{U}(k, y)$ with $(k,y)\in V_{(h,x)}$ and let $K = G_{(k,y)}$. As $(N_x)_{G_y}$ is formed by
removing from $(N_x)^{G_y}$ the finite collection of isotropy types it properly contains, and as each isotropy
type is semialgebraic by \cite[Theorem A]{BierstoneOrbitSpace},
it follows that $(N_x)_{G_y}$ and hence $(Y_x)_{G_y}$ are semialgebraic; $(V_{(h,x)})_K$ is semialgebraic
for the same reason. As the actions of $G_x$ on $N_x$ and $H$ on $N_{(h,x)}$ are linear, and as $N_{(h,x)}$
is a quotient of a product with $N_x$ so that the linear structures are compatible, both
$(N_x)_{G_y}$ and $(N_{(h,x)})_K$ are closed under multiplication by scalars $t\in (0,1]$.
As $K^\circ \leq H^\circ$, any maximal torus in $K^\circ$ is contained in a maximal torus in $H^\circ$.
Moreover, as $V_{(h,x)}^K$ is connected and closed under multiplication by scalars $t \in (0,1]$, taking
the limit of $t(k,y)$ as $t\to 0$, we see that $h \in K$, and in particular is in the connected component of
$K$ containing $k$. Following the proof of \cite[IV.~Prop.~4.2]{BroeckertomDieck}, we may then
choose a Cartan subgroup of $H$ associated to $h$ by taking the group generated by $h$ as well as a
maximal torus in $H^\circ$ that contains a maximal torus in $K^\circ$.  That is,
we may assume that $h \in \cartan_{(k,y)} \leq \cartan_{(h,x)}$ and $\cartan_{(k,y)} = \cartan_{(h,x)} \cap K$.

Now, a slice $Y_y$ at $y$ for the $G$-action on $M$ is isomorphic as a $G_y$-space to a $G_y$-invariant
subset of $Y_x$ by \cite[II. Corollary 4.6]{BredonBook}. Hence, the $\simeq$ classes in $\cartan_{(k,y)}$
(defined in terms of the action on $Y_y$) are unions of $\simeq$ classes in $\cartan_{(h,x)}$
(defined in terms of the action on $Y_x$) intersected with $\cartan_{(k,y)}$. By Proposition~\ref{prop:simeqProp},
$\cartan_{(h,x)}$ contains a finite number of $\simeq$ classes, each an open subset of a closed subgroup of
$\cartan_{(h,x)}$. Hence, we may assume that $V_{(h,x)}$ is small enough to only intersect $\simeq$ classes of
$\cartan_{(h,x)}$ whose closures contain $h$. Then the exponential map associated to the product metric on
$G_x \times Y_x$ maps the subset
\begin{equation}
  \label{eq:modspace}
  (N_{(h,x)})_K \cap \big( T_h (k^\bullet) \oplus (N_x)_{G_y} \big) \cap B_{(h,x)}
\end{equation}
onto $(V_{(h,x)})_K \cap \left(\overline{\cartan_{(k,y)}^{\ast}} \times (Y_x)_{G_y} \right)$.
Then \eqref{eq:modspace} is a semialgebraic subset of $N_{(h,x)}$ and is invariant
under the action of $t\in (0,1]$. Because there are only finitely many $\simeq$ classes in
$\cartan_{(k,y)}$, there are $l_1,\ldots ,l_N \in \cartan_{(k,y)}$
such that each group $l_j^\bullet$, $j=1,\ldots,N$,
has dimension less than $\dim k^\bullet$, and
\[
   \cartan_{(k,y)}^{\ast} = \overline{\cartan_{(k,y)}^{\ast}}
   \smallsetminus \bigcup_{j =1}^N l_j^\bullet \: .
\]
Then the exponential function maps the semialgebraic set
\begin{equation}
  \label{eq:modspace2}
  (N_{(h,x)})_K \cap \Big( \big( T_h (k^\bullet ) \smallsetminus \bigcup_{j =1}^N
  T_h (l_j^\bullet) \big) \oplus (T_x Y_x)_{G_y} \Big) \cap B_{(h,x)}
\end{equation}
onto $(V_{(h,x)})_K \cap \big(\cartan_{(k,y)}^{\ast} \times (Y_x)_{G_y} \big)$.
Finally, $(V_{(h,x)})_K \cap \big(\cartan_{(k,y)}^{\ast\ast} \times (Y_x)_{G_y} \big)$
is given by the finite union of such sets over $\simeq$ classes diffeomorphic to
$\cartan_{(k,y)}^\ast$. This set, and hence its $G$-saturation $\mathcal{U}_{(\tilde{k}, \tilde{y})}$,
has finitely many connected components by \cite[Thrm.~2.4.5]{BocCosRoyRAG}. We have hence
demonstrated the following.

\begin{lemma}
\label{lem:LocalStratLocFinite}
The partition $\mathcal{Z}$ of $U = GV_{(h,x)}$ given by Equation \eqref{eq:decUrep} is finite.
\end{lemma}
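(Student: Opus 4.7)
The plan is to establish finiteness by (i) showing that, up to conjugation, only finitely many discrete invariants determine the sets $\mathcal{U}(k,y)$ that appear for points in $U$, and then (ii) showing each such $\mathcal{U}(k,y)$ has only finitely many connected components, from which the finiteness of $\mathcal{Z}$ follows immediately.

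For step (i), I would first recall from the paragraph following Equation \eqref{eq:decUrep} (using the analogues of \cite[Lemmata 4.18 \& 4.19]{FarsiPflaumSeaton}) that $\mathcal{U}(k,y)$ depends only on the $G$-orbit $G(k,y)$ through the following discrete data: the $G_x$-conjugacy class of the isotropy group $K=G_{(k,y)}\le H$, the $K$-conjugacy class of a Cartan subgroup $\cartan_{(k,y)}\le K$ associated to $k$, and the diffeomorphism type of the $\simeq$-class $\cartan_{(k,y)}^{\ast}$ containing $k$. I would then argue that each of these invariants takes only finitely many values for points of $V_{(h,x)}$. Since $V_{(h,x)}$ is a slice at $(h,x)$ for the action of the compact Lie group $G_x$ on the smooth manifold $G_x\times Y_x$, the slice representation theorem implies that $V_{(h,x)}$ meets only finitely many $G_x$-isotropy types, bounding the choices of $K$ up to $G_x$-conjugation. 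For each such compact isotropy group $K$, the set of Cartan subgroups of $K$ decomposes into finitely many $K$-conjugacy classes, since any two maximal tori of $K^\circ$ are conjugate and $K/K^\circ$ is finite. Finally, Proposition \ref{prop:simeqProp}(\ref{SimeqIte1}) ensures that each Cartan subgroup contains only finitely many $\simeq$-classes. Together these three finiteness statements show that only finitely many distinct sets of the form $\mathcal{U}(k,y)$ can meet $U$.

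For step (ii), I would use the semialgebraic description in Equation \eqref{eq:modspace2} to observe that, under the inverse exponential map, the intersection $\mathcal{U}(k,y)\cap V_{(h,x)}$ corresponds to a semialgebraic subset of the normal space $N_{(h,x)}$. Such a set has only finitely many connected components by standard semialgebraic geometry. Since $G$ has finitely many connected components and acts continuously, the $G$-saturation of a set with finitely many components still has finitely many components. Combining this with step (i), the partition $\mathcal{Z}$ of $U$ is finite.

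The main obstacle I anticipate is the careful bookkeeping in step (i): translating the finiteness of $G_x$-orbit types on $V_{(h,x)}$ via the exponential map identification with an open set in $N_{(h,x)}$, and, crucially, verifying that when the representative $(k,y)$ of a piece is varied within its orbit, the $K$-conjugacy class of $\cartan_{(k,y)}$ and the diffeomorphism class of $\cartan_{(k,y)}^{\ast}$ are indeed well-defined invariants of the orbit. Once the invariance statements from the analogues of \cite[Lemmata 4.18 \& 4.19]{FarsiPflaumSeaton} (already used above) are in hand, the enumeration becomes a bookkeeping exercise, and the semialgebraic finiteness of connected components is a routine consequence of the explicit local model.
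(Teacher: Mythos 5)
Your argument is correct and follows essentially the same route as the paper's, which does not write out a proof but defers to \cite[Lemma 4.21]{FarsiPflaumSeaton}: finitely many isotropy types on the slice, finitely many conjugacy classes of Cartan subgroups in each compact isotropy group, finitely many $\simeq$-classes in each Cartan subgroup (Proposition \ref{prop:simeqProp}), and finitely many connected components of each piece via the semialgebraic description in Equation \eqref{eq:modspace2} together with compactness of $G$. The bookkeeping you flag --- that conjugate discrete data yield the same $G$-saturated set $\mathcal{U}(k,y)$ --- is exactly the content of the analogues of \cite[Lemmata 4.18 \& 4.19]{FarsiPflaumSeaton} already invoked in the text, so nothing essential is missing.
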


Restricting the inverse of the exponential map from $V_{(h,x)}$ to
$V_{(h,x)}\cap\inertianull{(G_x \ltimes Y_x)}$ yields an $H$-equivariant embedding
$\iota$ of a neighborhood of $(h,x)$ in $V_{(h,x)}\cap\inertianull{(G_x \ltimes Y_x)}$
into the normal space $N_{(h,x)}$, where the stratum of $(h,x)$ is mapped onto the subspace
\[
    N_{(h,x)}^H \cap \Big( T_h h^\bullet \times (T_x Y_x)^{G_x} \Big).
\]
Using the description of the image of each stratum
given in Equation \eqref{eq:modspace2}, one sees immediately that the homotopy
defined as multiplication in $N_{(h,x)}$ by scalars $t \in [0,1]$ contracts
the image of $\iota$ onto the origin preserving the image of $\iota$. The linearity
of the $H$-action on $N_{(h,x)}$ ensures that this homotopy is $H$-equivariant,
and scalars $t\in(0,1]$ preserve the images of strata as demonstrated above. We
therefore have proven the following.

\begin{proposition}
\label{prop:loccontractibilityinertiaspaces}
The inertia groupoid $\inertia{\sfG}$ of a proper Lie groupoid $\sfG$ satisfies
the local contractibility condition (LC) of Definition \ref{def:local-contractibility}.
\end{proposition}

The proof of the following is similar to that of \cite[Prop.~4.20]{FarsiPflaumSeaton}.

\begin{proposition}
\label{prop:LocalStratGermsCoincide}
The germs of the $\mathcal{U}\big( G(k,y) \big)$ coincide with the stratification.
That is, for $(\tilde{k}, \tilde{y}) \in U = GV_{(h,x)}$, the germs
$[\mathcal{U}\big( G(k,y) \big)]_{(\tilde{k},\tilde{y})}$,
$[\mathcal{U}_{(\tilde{k},\tilde{y})}]_{(\tilde{k},\tilde{y})}$
and $\mathcal{S}_{(\tilde{k},\tilde{y})}$ coincide.
\end{proposition}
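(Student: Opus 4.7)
The plan is to reduce to a single base point in $V_{(h,x)}$ and then verify the identity of germs using the slice structure. First, by Proposition \ref{prop:BasePointInvar} together with the evident $G$-invariance of $\mathcal{U}(G(k,y))$ and of the connected components of the orbit Cartan type pieces, I would apply the element $\tilde{g}\in G$ (which by construction sends $(\tilde{k},\tilde{y})$ to $(k,y)\in V_{(h,x)}$) to translate the problem; this reduces the three-way equality to proving the corresponding equality at the single point $(k,y) \in V_{(h,x)}$, with $K = G_{(k,y)}\leq H$ and $\cartan_{(k,y)}$ a Cartan subgroup of $K$ associated to $k$.

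Next I would observe that the inclusion $[\mathcal{U}_{(k,y)}]_{(k,y)} \subseteq [\mathcal{U}(G(k,y))]_{(k,y)}$ is immediate from the definition of $\mathcal{U}_{(k,y)}$ as the connected component of $\mathcal{U}(G(k,y))$ containing $(k,y)$. For the equality $[\mathcal{U}(G(k,y))]_{(k,y)} = \mathcal{S}_{(k,y)}$, I would compare the generating sets directly. On the $\mathcal{U}$-side the generator is $(V_{(h,x)})_K \cap (\cartan_{(k,y)}^{\ast\ast} \times (Y_x)_{G_y})$, while choosing compatible slices $Y_y \subset Y_x$ at $y$ for the $G_x$-action and $V_{(k,y)}$ at $(k,y)$ for the $G_y$-action on $G_y\times Y_y$ (via orthogonal decompositions in the chosen transversally invariant metric, as in \cite{FarsiPflaumSeaton}), the generator on the $\mathcal{S}$-side is $V_{(k,y)}^K \cap (\cartan_{(k,y)}^{\ast} \times Y_y^{G_y})$. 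Three elementary identifications are then needed: (i) the germ of $(Y_x)_{G_y}$ at $y$ coincides with that of $Y_y^{G_y}$, since within a slice the orbit type of a point is an open subset of its fixed point set; (ii) an analogous slice-within-a-slice argument identifies the germs at $(k,y)$ of $(V_{(h,x)})_K$ and $V_{(k,y)}^K$; and (iii) by Proposition~\ref{prop:simeqProp}\eqref{SimeqIte3}, the $\simeq$-classes in $\cartan_{(k,y)}^{\ast\ast}$ different from $\cartan_{(k,y)}^{\ast}$ have closures disjoint from $\cartan_{(k,y)}^{\ast}$, so after shrinking neighborhoods, $\cartan_{(k,y)}^{\ast\ast}$ restricts near $k$ to its connected component $\cartan_{(k,y)}^{\ast}$.

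With these identifications in hand the two generating sets have equal germs at $(k,y)$, hence their $G$-saturations do as well, establishing $[\mathcal{U}(G(k,y))]_{(k,y)} = \mathcal{S}_{(k,y)}$. By Proposition \ref{prop:LocalStratManifolds} this common germ is that of a smooth submanifold of $G\times M$ and is therefore locally connected at $(k,y)$; so every point of $\mathcal{U}(G(k,y))$ sufficiently close to $(k,y)$ can be joined to $(k,y)$ by a path inside $\mathcal{U}(G(k,y))$, yielding the reverse inclusion $[\mathcal{U}(G(k,y))]_{(k,y)} \subseteq [\mathcal{U}_{(k,y)}]_{(k,y)}$ and closing the triangle. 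The main obstacle I anticipate is the slice-within-a-slice identification in (ii): the slices $V_{(h,x)}$ and $V_{(k,y)}$ sit inside \emph{different} ambient $G$-translation groupoids, and a careful bookkeeping, using the exponential map in the chosen invariant metric to match normal directions and to relate $K$-fixed loci for the $H$-action on $V_{(h,x)}$ with $K$-fixed loci for the $G_y$-action on $G_y\times Y_y$, is essential to make the comparison rigorous.
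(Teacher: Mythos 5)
Your overall architecture---reduce to a point $(k,y)\in V_{(h,x)}$ by $G$-equivariance, compare the generating sets of $\mathcal{U}\big(G(k,y)\big)$ and of $\mathcal{S}_{(k,y)}$, then recover the connected component from the fact that the common germ is a manifold germ (Proposition \ref{prop:LocalStratManifolds})---is the right one, and your steps (iii) and the final connectedness argument are sound. The gap is in identifications (i) and (ii). The set $(Y_x)_{G_y}$ is the isotropy-type stratum of the \emph{ambient} slice, i.e.\ the set of $z\in Y_x$ with $G_z=G_y$, and its germ at $y$ is not that of $Y_y^{G_y}$ but that of $\normalizer_{G_x}(G_y)\cdot Y_y^{G_y}$: in the tube $G_x\times_{G_y}Y_y$ the points with isotropy exactly $G_y$ are the $[n,z]$ with $n\in\normalizer_{G_x}(G_y)$ and $z\in Y_y^{G_y}$. (For $G_x=\T^2$ acting coordinatewise on $Y_x\subset\C^2$ and $y=(\epsilon,0)$, the set $(Y_x)_{G_y}$ is two-dimensional near $y$ while $Y_y^{G_y}$ is one-dimensional.) Your justification ``the orbit type of a point is an open subset of its fixed point set'' proves $[(Y_y)_{G_y}]_y=[Y_y^{G_y}]_y$ inside the sub-slice $Y_y$, which is a different statement. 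The same problem occurs in (ii): the germ of $(V_{(h,x)})_K$ at $(k,y)$ is that of $\normalizer_H(K)\cdot V_{(k,y)}^K$, not of $V_{(k,y)}^K$ (compare the identification $p(W_K)\cong\normalizer_H(K)\backslash W_K$ used in the proof of Proposition \ref{prop:LocalStratWhitneyB}). Hence the two un-saturated generating sets do \emph{not} have equal germs, and your inference ``equal generators, hence equal saturations'' starts from a false premise.

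What rescues the statement is precisely the $G$-saturation: the extra normalizer directions lie in $G$-orbit directions, so the saturated sets can still have the same germ---but this must be verified, not assumed. A point of $(V_{(h,x)})_K\cap\big(\cartan_{(k,y)}^{\ast\ast}\times(Y_x)_{G_y}\big)$ near $(k,y)$ has the form $n\cdot(l,z)$ with $n$ in a normalizer, and one must check that conjugation by $n$ transports the Cartan-class condition correctly. This is exactly why $\mathcal{U}$ is defined using $\cartan_{(k,y)}^{\ast\ast}$, the union of all $\simeq$ classes diffeomorphic to $\cartan_{(k,y)}^{\ast}$, rather than $\cartan_{(k,y)}^{\ast}$ alone: Lemma \ref{lem:LocalStratNormalizerPermut} shows the normalizer permutes these classes, and Proposition \ref{prop:simeqProp} \eqref{SimeqIte3} shows distinct diffeomorphic classes have disjoint closures, so that near $k$ only the class of $k$ survives \emph{after} the normalizer has been accounted for. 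Your proof needs this bookkeeping inserted between the generator comparison and the saturation step; as written, the central difficulty that the definition of $\mathcal{U}$ is designed to absorb has been assumed away. (The paper itself defers the argument to \cite[Prop.~4.20]{FarsiPflaumSeaton}, where this comparison is carried out with the normalizer factors made explicit.)
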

\begin{proof}
As $\mathcal{S}_{(\tilde{k}, \tilde{y})}$ and $\mathcal{U}\big(G (k,y)\big)$
depend only on the orbit of $(\tilde{k}, \tilde{y})$ we assume
$(\tilde{k}, \tilde{y}) = (k, y) \in V_{(h,x)}$ and let $K = G_{(k,y)} \leq H$.
Fix a slice $Y_y$ at $y$ for the $G_x$-action on $Y_x$, and then
$Y_y$ is a slice at $y$ for the $G$-action on $M$ by \cite[II. Corollary 4.6]{BredonBook}.
Note that $Y_y$ may not be the image of $N_y$ under the exponential map, though it is
diffeomorphic to an open subset $B_y\subset N_y$ as a $G_y$-space. Then a slice $V_{(k,y)}$
for the $G_y$-action on $G_y\times Y_y$ is given by a neighborhood of $(k,y)$ in $K\times Y_y$ by
\cite[Proposition (3.1.1)(i)]{duistermaatkolk}. As $Y_y \subset Y_x$, we may assume
by shrinking $V_{(k,y)}$ if necessary that $V_{(k,y)}\subset V_{(h,x)}$, and as well that
$V_{(k,y)}$ does not intersect the product of $M$ with any connected component of $K$ except
$kK^\circ$. Choose a Cartan subgroup $\cartan_{(k,y)}$ of $K$ associated to $k$, and then we may
further shrink $V_{(k,y)}$ to assume by Proposition~\ref{prop:simeqProp}(\ref{SimeqIte3}) that
$V_{(k,y)}$ does not intersect (the product of $M$ with) the closure of any of the finitely many
$\simeq$ classes of $\cartan_{(k,y)}$ diffeomorphic but not equal to $\cartan_{(k,y)}^\ast$.
Let $Q := GV_{(k,y)}$; we claim that
\[
    \mathcal{U}\big( G(k,y) \big) \cap Q
    =
    G\left( V_{(k,y)}^K \cap (\cartan_{(k,y)}^\ast\times Y_y^{G_y})\right).
\]

Let $(\tilde{l}, \tilde{z}) \in \mathcal{U}\big( G(k,y) \big) \cap Q$. Then there is a
$\tilde{g}^\prime \in G$ such that $(l, z) := \tilde{g}^\prime(\tilde{l}, \tilde{z})
\in (V_{(h,x)})_K \cap (\cartan_{(k,y)}^{\ast\ast} \times (Y_x)_{G_y})$, and in particular
$\cartan_{(k,y)}$ is a Cartan subgroup of $K$ associated to $l$. As $(l, z) \in Q$, there is a
$g \in G$ such that $g(l, z) \in V_{(k,y)}$. Because $z \in Y_x$ and $gz \in Y_y \subset Y_x$,
we have $g\in G_x$. Then as $(l,z)\in V_{(h,x)}$ and $g(l,z)\in V_{(k,y)}\subset V_{(h,x)}$,
we have further that $g \in H$. Moreover, as $G_{(l,z)} = K$ and $g(l,z)\in V_{(k,y)}$ so that
$G_{g(l,z)} \leq K$, it follows that $G_{g(l,z)} = K$, i.e. $g(l,z)\in V_{(k,y)}^K$. In the same
way, as $G_z = G_y$, and as $V_{(k,y)}\subset G_y\times Y_y$ so that $gz\in Y_y$, we have
$G_{gz} = G_y$, i.e. $g(l,z)\in V_{(k,y)}^K \cap (K\times Y_y^{G_y})$. Finally, as
$l \in \cartan_{(k,y)}^{\ast\ast}$ and as $glg^{-1}$ is in $V_{(k,y)}$ and hence the same connected
component of $K$ as $k$, we may by Lemma~\ref{lem:CartanCongConComp} conjugate further by
a $\tilde{k}\in K\leq G_y$, which fixes $V_{(k,y)}^K$ and $Y_y^{G_y}$, so that
$\tilde{k}g(l,z) \in V_{(k,y)}^K \cap (\cartan_{(k,y)}^{\ast\ast}\times Y_y^{G_y})$.
But as $V_{(k,y)} \cap\cartan_{(k,y)}^{\ast\ast} \subset \cartan_{(k,y)}^\ast$,
$\tilde{k}g(l,z) \in V_{(k,y)}^K \cap (\cartan_{(k,y)}^\ast\times Y_y^{G_y})$.

Conversely, if
$(\tilde{j}, \tilde{w}) \in G\left( V_{(k,y)}^K \cap (\cartan_{(k,y)}^\ast\times Y_y^{G_y}) \right)$,
then there is a $\hat{g} \in G$ such that
$(j, w) := \hat{g}(\tilde{j}, \tilde{w}) \in V_{(k,y)}^K \cap (\cartan_{(k,y)}^\ast\times Y_y^{G_y})$.
Then as $V_{(k,y)} \subseteq V_{(h,x)}$ and $Y_y\subset Y_x$, we have
$(j, w) \in (V_{(h,x)})_K \cap \big(\cartan_{(k,y)}^\ast \times (Y_x)_{G_y} \big)$,
and so $(j, w) \in \mathcal{U}\big( G(k,y) \big)$.

Finally, restricting to a neighborhood of $(k,y)$ that only intersects one connected component
of $\mathcal{U}\big( G(k,y) \big)$, we see that the germ of $\mathcal{U}_{(k,y)}$ at $(k,y)$ coincides
with $\mathcal{S}_{(k,y)}$.
\end{proof}

Since the $\mathcal{S}_{(h,x)}$ are germs of smooth
$G$-submanifolds of $G \times M$, and the piece associated to a
point $(\tilde{k},\tilde{y}) \in U$ has the same set germ as
$\mathcal{S}_{(l, z)}$ at $(l, z) \in
\mathcal{U}_{(\tilde{k},\tilde{y})}$, it follows that the
pieces of $\mathcal{Z}$ are smooth submanifolds of $G \times M$
invariant under the $G$-action. We now verify that $\mathcal{Z}$ is a decomposition indeed.
The proof is similar to that of \cite[Prop.~4.22]{FarsiPflaumSeaton}.

\begin{proposition}
\label{prop:LocalStratFrontier} The decomposition $\mathcal{Z}$
satisfies the condition of frontier.
\end{proposition}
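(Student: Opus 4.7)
The plan is to show that whenever two pieces $\mathcal{U}(G(k_1,y_1))$ and $\mathcal{U}(G(k_2,y_2))$ of $\mathcal{Z}$ satisfy $\mathcal{U}(G(k_1,y_1)) \cap \overline{\mathcal{U}(G(k_2,y_2))} \neq \emptyset$, one has $\mathcal{U}(G(k_1,y_1)) \subseteq \overline{\mathcal{U}(G(k_2,y_2))}$. First I would exploit $G$-invariance of both pieces together with Proposition~\ref{prop:LocalStratGermsCoincide} to localize the problem inside the slice $V_{(h,x)}$. Fix a point $(\ell,z) \in V_{(h,x)} \cap \mathcal{U}(G(k_1,y_1)) \cap \overline{\mathcal{U}(G(k_2,y_2))}$ and, using that $GV_{(h,x)}$ is an open $G$-invariant neighborhood of $(\ell,z)$, extract a sequence $(\ell_n,z_n) \in V_{(h,x)} \cap \mathcal{U}(G(k_2,y_2))$ with $(\ell_n,z_n) \to (\ell,z)$; replacing $(k_2,y_2)$ by a $G$-translate if necessary, I may arrange that $(\ell_n,z_n)$ lies in the union appearing on the right-hand side of \eqref{eq:decUrep}, with isotropy $K_2 := G_{(k_2,y_2)}$ and a Cartan subgroup $\cartan_{(k_2,y_2)} \leq K_2$ associated to $k_2$.

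Next I would invoke the usual slice-theoretic frontier for isotropy types to conclude that $K_2 \leq G_{(\ell,z)}$, and that $G_{(\ell,z)}$ is $H$-conjugate to $K_1 := G_{(k_1,y_1)}$; after an additional slice-interior translation one may assume $K_2 \leq K_1$ with the fixed-set inclusion $(Y_x)_{G_{y_1}} \subseteq \overline{(Y_x)_{G_{y_2}}}$ in the appropriate germinal sense. For the Cartan data, observe that $\cartan_{(k_2,y_2)}$ is a closed, topologically cyclic, finite-index-in-its-normalizer subgroup of $K_1$ containing $\ell$, hence sits in a Cartan subgroup of $K_1$ associated to $\ell$; by Lemma~\ref{lem:CartanCongConComp} this Cartan subgroup is $K_1^\circ$-conjugate to one associated to $k_1$, and via Lemma~\ref{lem:LocalStratNormalizerPermut} we may take it to be $\cartan_{(k_1,y_1)}$ itself. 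Once all the relevant Cartan data sit inside $\cartan_{(k_1,y_1)}$, Proposition~\ref{prop:simeqProp}\eqref{SimeqIte2} shows that the connected component of the $\simeq$-class of $\ell$ lies in the closure of the $\simeq$-class of $\ell_n$ for $n$ large.

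The main obstacle will be the bookkeeping of Cartan subgroups across $K_2 \leq K_1$, in particular making sure that the $\simeq$ relation --- which depends on the ambient group acting on $Y_x$ --- is compatible when passed from $\cartan_{(k_2,y_2)}$ into $\cartan_{(k_1,y_1)}$. This is handled by recalling that $\simeq$-classes are defined by the fixed sets $Y_x^{\,s}$, which are intrinsic to the $s$ and do not depend on the enveloping isotropy group; combined with the fact (used already in \eqref{eq:decUrep}) that the slice representation determines the $\simeq$-class of an element up to diffeomorphism within the points of a given orbit type, this ensures that the relevant $\simeq$-classes of the approximating sequence converge into the $\simeq$-class of the limit, as required. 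Finally, taking $G$-saturations and appealing to connectedness together with Lemma~\ref{lem:LocalStratLocFinite} lifts the inclusion from the slice to the pieces themselves, concluding the verification of the condition of frontier. The argument parallels \cite[Prop.~4.22]{FarsiPflaumSeaton} but must account for the fact that the orbit Cartan type $\simeq$-classes used here are coarser than those of the earlier paper.
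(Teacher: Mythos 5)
Your overall strategy is the paper's: localize to the slice $V_{(h,x)}$, reduce to nested isotropy groups $K_2\leq K_1$ and nested Cartan subgroups, combine the closure relations for orbit types and fixed-point sets with Proposition~\ref{prop:simeqProp}~(\ref{SimeqIte2}) for the $\simeq$-classes, then saturate and finish by connectedness (the paper shows the intersection is open and closed in the piece). The paper packages this as a direct chain of set inclusions anchored at the chart center $(h,x)$ rather than via sequences, and it builds the big Cartan subgroup first and sets $\cartan_{(k,y)}:=\cartan_{(h,x)}\cap K$, whereas you start from $\cartan_{(k_2,y_2)}$ and enlarge; these are cosmetic differences.

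There is, however, one step where your sketch is genuinely thin and your stated justification is not the right one. To get a whole neighborhood of $(\ell,z)$ inside its piece into $\overline{\mathcal{U}(G(k_2,y_2))}$ — not merely the point $(\ell,z)$ — you must show that \emph{every} element $l$ of the $\simeq$-class component $\cartan_{(k_1,y_1)}^{\ast}$ lies in the smaller group $\cartan_{(k_2,y_2)}$, so that the comparison of $\simeq$-classes via Proposition~\ref{prop:simeqProp}~(\ref{SimeqIte2}) can take place inside $\cartan_{(k_2,y_2)}$ at all. Your appeal to the fact that $\simeq$-classes ``are intrinsic to $s$ and do not depend on the enveloping isotropy group'' addresses compatibility of the relation across groups, but it does not put $l$ into $\cartan_{(k_2,y_2)}$. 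The paper's argument for this uses two ingredients jointly: from $Y_x^{\,l}=Y_x^{\,h}$ and $h\in G_y$ one gets $l\in G_y$, and from the fact that $l$ and $k$ both lie in the \emph{abelian} group $\cartan_{(h,x)}$ (which contains $k$ by the nesting) one gets $lkl^{-1}=k$, hence $l\in G_{(k,y)}=K$ and so $l\in\cartan_{(h,x)}\cap K=\cartan_{(k,y)}$. You have set up the nesting needed to run this commutativity argument, but you never run it, and without it the inclusion $\cartan_{(k_1,y_1)}^{\ast}\subset\overline{\cartan_{(k_2',y_2)}^{\ast}}$ — the heart of the frontier condition — is not established.
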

\begin{proof}
Suppose there are points $(h,x)$ and $(k,y)$ with
$\mathcal{U}\big( G(h,x) \big) \cap \overline{\mathcal{U}\big( G(k,y) \big)}\neq\emptyset$.
As the pieces of $\mathcal{Z}$ are defined to be connected components, it is sufficient to
show that $\mathcal{U}\big( G(h,x) \big) \cap \overline{\mathcal{U}\big( G(k,y) \big)}$,
which is obviously closed in $\mathcal{U}\big( G(h,x) \big)$, is also
open in $\mathcal{U}\big( G(h,x) \big)$. Note that we can assume with no loss of generality
that one of the points in question is $(h,x)$, the point used to define $U$, as we may
restrict consideration to a neighborhood of that point. Moreover, as the piece
$\mathcal{U}\big(G(h,x)\big)$ may be defined in terms of any point it contains,
we may assume that
$G(h,x) \subset \mathcal{U}\big( G(h,x) \big) \cap \overline{\mathcal{U}\big( G(k,y) \big)}$.
Similarly, we assume by choosing another representative of the orbit if necessary that
$(k,y) \subset V_{(h,x)}$. By Proposition \ref{prop:LocalStratGermsCoincide}, an open
neighborhood of $(h, x)$ in $\mathcal{U}\big( G(h,x) \big)$ is given by
$G\big(V_{(h, x)}^H \cap (\cartan_{(h, x)}^{\ast}\times Y_x^{G_x})\big)$
for a sufficiently small slice $V_{(h, x)}$ at $(h, x)$. We will show that
$G\big(V_{(h, x)}^H \cap (\cartan_{(h, x)}^{\ast}\times Y_x^{G_x})\big)$
is contained in $\overline{\mathcal{U}\big( G(k,y) \big)}$.

Let $K := G_{(k,y)} \leq H$. As explained in the proof of Lemma~\ref{lem:LocalStratLocFinite}
(before the statement of the Lemma), $h$ is in the connected component of $K$ containing $k$,
and we may choose Cartan subgroups $\cartan_{(h,x)}$ and $\cartan_{(k,y)}$
such that $h \in \cartan_{(k,y)} \leq \cartan_{(h,x)}$ and $\cartan_{(k,y)} = \cartan_{(h,x)} \cap K$.
Then we have $h \in \overline{\cartan_{(k',y)}^{\ast}}$ for some
$k'$ whose $\simeq$ class at $y$ is diffeomorphic to $\cartan_{(k,y)}^{\ast}$.
Similarly, as $G_y \leq G_x$, it follows that
$Y_x^{G_x} \subset \overline{(Y_x)_{G_y}}$.  From these observations, we have
\[
    (h,x) \in
    V_{(h,x)}^H \cap \Big(\overline{\cartan_{(k',y)}^{\ast}} \times Y_x^{G_x}\Big)
    \subset
    \overline{(V_{(h,x)})_K} \cap \big(\overline{\cartan_{(k',y)}^{\ast}}
        \times \overline{(Y_x)_{G_y}} \big).
\]

Now, let $l \in \cartan_{(h,x)}^{\ast}$ so that $Y_x^l = Y_x^h$.  In particular, as $h \in K \leq G_y$
and $y \in Y_x$, it follows that
$l \in G_y$.  Similarly, as $l \in \cartan_{(h,x)}$,  as $k \in \cartan_{(k,y)} \leq \cartan_{(h,x)}$,
and as $\cartan_{(h,x)}$ is abelian, we have $l(k,y) = (k,y)$ so that $l \in K$.  In particular,
$l \in \cartan_{(h,x)} \cap K = \cartan_{(k,y)}$.  This demonstrates
$\cartan_{(h,x)}^{\ast} \subset \cartan_{(k,y)}$.  As $Y_y \subset Y_x$,
we have that the relation $\simeq$ at $x$ implies $\simeq$ at $y$, so that the $\simeq$ classes at $y$
are the intersection with $\cartan_{(k,y)}$ of a (finite) union of $\simeq$ classes at $y$.
That is, using Proposition \ref{prop:simeqProp} (\ref{SimeqIte2}),
$\cartan_{(h,x)}^{\ast} \subset\overline{\cartan_{(k',y)}^{\ast}}$.  Then
\[
    V_{(h,x)}^H \cap \Big(\cartan_{(h,x)}^{\ast} \times Y_x^{G_x}\Big)
    \subset
    \overline{(V_{(h,x)})_K} \cap \big(\overline{\cartan_{(k',y)}^{\ast}}
        \times \overline{(Y_x)_{G_y}} \big)
    \subset \overline{\mathcal{U}\big( G(k,y) \big)}.
\]
Considering the $G$-saturations of both sides of this
inclusion, it follows that an open neighborhood of $(h, x)$ in
$\mathcal{U}\big( G(k,x) \big)$ is contained in
$\mathcal{U}\big( G(h,x) \big) \cap \overline{\mathcal{U}\big( G(k,y) \big)}$.
This completes the proof.
\end{proof}


\subsection{Proof of Theorem~\ref{thrm:InertiaStrat}}
\label{subsec:GpoidStratProof}

Let $\sfG$ be a proper Lie groupoid. We now demonstrate that the stratification given by Equation \ref{eq:MLocalStratGpoidDef}
is well-defined, i.e.~that it does not depend on the choice of a point in an orbit, a slice at that point, nor the riemannian metric.

\begin{proposition}
\label{prop:BasePointInvar}
Let $\sfG$ be a proper Lie groupoid,
and let $\obj{x}, \obj{y} \in \sfG_0$ be points in the same orbit.
Let $\arr{g}\in\sfG_1$ such that $s(\arr{g}) = t(\arr{g}) = \obj{x}$, and let $\arr{h}\in\sfG_1$ such that
$s(\arr{h}) = \obj{x}$ and $t(\arr{h}) = \obj{y}$. Put $\arr{g}^\prime:= \arr{h}\arr{g}\arr{h}^{-1}$.
Then $\mathcal{S}_{\arr{g}} = \mathcal{S}_{\arr{g}^\prime}$.  In particular,
$\mathcal{S}_{\arr{g}}$ does not depend on the choice of a slice $Y_{\obj{x}}$  and hence does not depend on
the choice of transversally invariant metric.
\end{proposition}
\begin{proof}
Choose slices $Y_{\obj{x}}$ and $Y_{\obj{y}}$ for $\sfG$ at $\obj{x}$ and $\obj{y}$, respectively.
Then there are identifications $\sfG_{|Y_{\obj{x}}} \cong \sfG_{\obj{x}}\ltimes Y_{\obj{x}}$ and
$\sfG_{|Y_{\obj{y}}} \cong \sfG_{\obj{y}}\ltimes Y_{\obj{y}}$ by \cite[Thm.~3.3]{PflPosTanGOSPLG}.
Under these identifications let $\arr{g} = (h,\obj{x})$ and $\arr{g}^\prime = (k,\obj{y})$
for some $h\in \sfG_{\obj{x}}$ and  $k\in \sfG_{\obj{y}}$.
Choose a local bisection $\sigma \co U \to \sfG_1$ defined on an open neighborhood of $\obj{x}$ such that
$\sigma (\obj{x}) = \arr{h}$ and such that $t\circ\sigma_{|Y_{\obj{x}}}$ induces a diffeomorphism from
$Y_{\obj{x}}$ to $Y_{\obj{y}}$. The
existence of such a bisection, after possibly shrinking  $Y_{\obj{x}}$ and $Y_{\obj{y}}$, is guaranteed by
\cite[Prop.~3.9 \& proofs of Lemmata 5.1 \& 5.2]{PflPosTanGOSPLG}.
Then we obtain an isomorphism $\Psi\co \sfG_{\obj{x}}\ltimes Y_{\obj{x}} \to \sfG_{y}\ltimes Y_{y}$
which is given by the composition of the diffeomeorphism
\[
    \sfG_{|Y_{\obj{x}}}  \to \sfG_{|Y_{\obj{y}}}, \quad \arr{k}
    \mapsto
    \big(\sigma(t(\arr{k}))\big)\arr{k} \big(\sigma(s(\arr{k}))\big)^{-1}
\]
with the identifications $\sfG_{|Y_{\obj{x}}} \cong \sfG_{\obj{x}}\ltimes Y_{\obj{x}}$ and
$\sfG_{|Y_{\obj{y}}} \cong \sfG_{\obj{y}}\ltimes Y_{\obj{y}}$. Note that $\Psi$
obviously restricts to a diffeomorphism from $\inertianull{(\sfG_{\obj{x}}\ltimes Y_{\obj{x}})}$ onto
$\inertianull{(\sfG_{\obj{y}}\ltimes Y_{\obj{y}})}$.  Moreover, by construction,
$\Psi (h,\obj{x})$ is the image of
\[
  \big(\sigma (\obj{x})\big)\arr{g} \big(\sigma (\obj{x})\big)^{-1} = \arr{h}\arr{g}\arr{h}^{-1} = \arr{g}^\prime
\]
under the identification $\sfG_{|Y_{\obj{y}}} \cong \sfG_{\obj{y}}\ltimes Y_{\obj{y}}$, hence
$\Psi (h,\obj{x}) = (k,\obj{y})$.
%
%
Now we choose a Cartan subgroup $\cartan_{(h,\obj{x})}$ of $\centralizer_{\sfG_{\obj{x}}}\!(h)$ associated to $h$.  Then
as $\Psi$ restricts to an isomorphism of the isotropy group in $\sfG$ of $\obj{x}$ onto the isotropy group
of $\obj{y}$, $\Psi(\cartan_{(h,\obj{x})} \times\{\obj{x}\})$  a Cartan subgroup of $\centralizer_{\sfG_{\obj{y}}}\!(k)$
associated to $k$.
Moreover, we claim that $\Psi(\cartan_{(h,\obj{x})}^{\ast} \times\{\obj{x}\}) = \cartan_{(k,\obj{y})}^{\ast} \times\{\obj{y}\}$.
To see this, note that $\Psi_0\co Y_{\obj{x}} \to Y_{\obj{y}}$ is a diffeomorphism that is
$\cartan_{(h,\obj{x})}$-equivariant with respect to the isomorphism $\tau\co \cartan_{(h,\obj{x})} \to \cartan_{(k,\obj{y})}$
given by $\tau(s) = \pi_1\circ\Psi(s,\obj{x})$, where $\pi_1$ denotes the projection
$\pi_1\co\sfG_{\obj{y}}\times Y_{\obj{y}}\to\sfG_{\obj{y}}$.
Then for $\obj{z} \in Y_{\obj{y}}$ and $s \in \cartan_{(h,\obj{x})}$,
$\tau(s)\obj{z} = \tau(s)(\Psi_0\circ \Psi_0^{-1}(\obj{z})) = \Psi_0(s (\Psi_0^{-1}(\obj{z})))$.
Hence $\tau(s)\obj{z} = \obj{z}$ if and only if $s (\Psi_0^{-1}(\obj{z}))= \Psi_0^{-1}(\obj{z})$, from which it follows
that $\Psi_0$ maps the set of points of $Y_{\obj{x}}$ fixed by $s\in\cartan_{(h,\obj{x})}$  onto the fixed set of
$\tau(s)$ in $Y_{\obj{y}}$.
The isomorphism $\Psi$ then maps $\mathcal{S}_{(h,\obj{x})}$
onto $\mathcal{S}_{(k,\obj{y})}$.  This proves the claim. If $\obj{x} = \obj{y}$, this argument shows that
the stratification does not depend on the choice of the slice $Y_{\obj{x}}$, hence is also independant on
the choice of a transversally invariant riemannian metric.
\end{proof}

Given Theorem \ref{thrm:InertiaStratGMnfld} and Proposition \ref{prop:BasePointInvar}, it remains only to
verify Whitney (b)-regularity; this will be done in the next section. Recall that Whitney (b)-regular stratified spaces are topologically locally trivial,
see \cite[Cor.~3.9.3]{PflaumBook}, and that strata of Equation \ref{eq:MLocalStratGpoidDef} contain orbits because they are defined as saturations.
Hence $\inertia{G} = \sfG\ltimes\inertianull{\sfG}$ is a differentiable stratified groupoid by
Proposition \ref{prop:StratTranslationGpoid}. That $\sfG$ is a sliceable differentiable groupoid
follows from \cite[Prop.~3.9 \& Cor.~3.11]{PflPosTanGOSPLG}. Similarly, by the definition of the
stratification of $\inertianull{\sfG}$ in terms of stratifications of the inertia spaces of slices
$\sfG_{\obj{x}}\ltimes Y_{\obj{x}}$, $\obj{x}\in \sfG_0$, the inertia groupoid $\inertia{\sfG}$ is even a sliceable
differentiable stratified groupoid. Moreover, since the elements of a stratum of a slice
$Y_{\obj{x}}$ all have the same $\sfG_{\obj{x}}$-orbit type, the inertia space is a differentiable
stratified space by Proposition \ref{prop:LocTransStrat}, and the orbit map is a stratified surjective
submersion.


\subsection{Whitney (b)-regularity}
\label{subsec:Whitney}

Here, we complete the proof of Theorem \ref{thrm:InertiaStrat} by demonstrating
that the stratifications of $\inertianull{\sfG}$ and $|\inertia{\sfG}|$ are
Whitney (b)-regular.
The proof follows \cite[Thm.~4.3.7]{PflaumBook} and \cite[Prop.~4.23]{FarsiPflaumSeaton}.
Roughly, the proof involves giving a parameterization of a neighborhood of a point in
$\inertia{\sfG}$ and its tangent space sufficient to describe the secants of points
in neighboring strata. Note that in our argument we use that the pieces satisfy
the condition of frontier, which was shown above.

\begin{proposition}
\label{prop:LocalStratWhitneyB}
Let $\sfG$ be a proper Lie groupoid. The orbit Cartan type stratifications of the loop space
$\inertia{\sfG}$ and the inertia space $|\inertia{\sfG}|$ are both Whitney (b)-regular.
\end{proposition}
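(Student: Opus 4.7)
The plan is to reduce to the local translation-groupoid model via the slice theorem and then invoke the verification of Whitney B for semialgebraic cones in the slice representation, following the strategy of \cite[Thm.~4.3.7]{PflaumBook} and \cite[Prop.~4.23]{FarsiPflaumSeaton}. Because Whitney's condition B is a local property and the orbit Cartan type stratification of $\inertianull{\sfG}$ is by construction $\sfG$-saturated from the stratification of $\inertianull{(\sfG_{\obj{x}} \ltimes Y_{\obj{x}})}$, it suffices to verify the condition at a base point $(h, 0) \in \inertianull{(G \ltimes V)}$, where $G = \sfG_{\obj{x}}$ is compact and $V = N_{\obj{x}}$ is the slice representation. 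The identification comes from the exponential map of a $\sfG$-invariant transversal metric as in \cite[Cor.~3.11]{PflPosTanGOSPLG}, together with Proposition~\ref{prop:equivariant-embedding_zariski-space}.

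Next I would fix strata $R \subset \overline{S}$ in this local model and test the condition at $(h, 0)$. By Proposition~\ref{prop:LocalStratGermsCoincide} and Equation~\eqref{eq:modspace2}, after pulling back via $\exp$, each local stratum coincides with the intersection of a ball in $N_{(h, 0)}$ and a semialgebraic set of the form
\[
\bigl(N_{(h, 0)}\bigr)_K \cap \Big( \bigl( T_h(k^\bullet) \smallsetminus \bigcup_j T_h(l_j^\bullet) \bigr) \oplus V_{G_y} \Big),
\]
which is invariant under the scalar action $\mu_t\co v \mapsto tv$ for $t \in (0, 1]$. Equivalently, $S$ is locally of the form $C_S \oplus L_S$, where $C_S$ is a semialgebraic cone in $T_h(k^\bullet)$ (the part coming from the $\simeq$ class on the Cartan side) and $L_S \subset V_{G_y}$ is an open subset of a linear subspace, and similarly for $R$. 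The linearity of $\mu_t$ makes $T_{\mu_t b}S$ the parallel translate of $T_bS$, so for any sequence $b_i \to (h,0)$ in $S$ the limit tangent plane $\tau = \lim T_{b_i}S$ contains the radial direction $\lim b_i/\|b_i\|$. The same cone-and-product argument as in \cite[Prop.~4.23]{FarsiPflaumSeaton} then shows that for any sequence $a_i \to (h,0)$ in $R$, the limit secant $\ell = \lim \overline{a_i b_i}$ is contained in $\tau$: the $L_S$-factor contributes only linear directions already in $\tau$, while the $C_S$-factor is a semialgebraic cone for which Whitney B is classical.

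For the inertia space $|\inertia{\sfG}|$, each orbit Cartan type stratum of $Y_{\obj{x}}$ carries a single $\sfG_{\obj{x}}$-orbit type, so the restriction of the orbit map $Y_{\obj{x}} \to Y_{\obj{x}}/\sfG_{\obj{x}}$ to a stratum is a Riemannian submersion for a $\sfG_{\obj{x}}$-invariant metric on $V$. Orthogonal projection onto the horizontal subspace then carries limit tangent planes and limit secants of preimage sequences to those of their images, and Whitney B descends accordingly, as in the verification of \cite[Thm.~4.3.7]{PflaumBook} for orbit type stratifications.

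The hard part will be the joint cone--product computation controlling both the limit tangent plane and the limit secant line. Scalar invariance alone gives only the radial direction inside $\tau$; what makes the argument work is the factorization $S = C_S \oplus L_S$, which lets one handle the non-radial secant components using the linearity of $L_S$ and the known Whitney regularity of semialgebraic cones. This reduces the verification to the one already carried out in \cite[Prop.~4.23]{FarsiPflaumSeaton}, requiring only a mild modification to accommodate the slightly coarser orbit Cartan type strata used here, together with the invariance check under changes of slice and base point established in Proposition~\ref{prop:BasePointInvar} and Lemma~\ref{lem:LocalStratSliceIndependent}.
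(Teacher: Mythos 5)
Your treatment of the loop space is essentially the paper's argument: localize via the slice theorem to $(O\times O)\times(G\ltimes Y)$, pass to the linear chart built from the exponential map, observe that the stratum through the test sequence sits inside a product $\mathfrak{m}\times W_K\times V^H$ with $W_K$ invariant under scalars, and conclude that the limit secant's $W$-component is radial (hence tangent) while the remaining components lie in linear factors already contained in $\tau$. Two reductions you gloss over but which the paper needs: the sequence $(k_i,y_i)$ in the larger stratum must first be conjugated by elements $\tilde{l}_i\to\tilde{l}\in H$ into a fixed slice $(V_{(h,x)})_K$, and then forced into a \emph{single} $\simeq$-class component of $\cartan_{(k,y)}^{\ast\ast}$ by passing to a subsequence — this uses that the finitely many classes have pairwise disjoint closures (Lemma \ref{lem:LocalStratNormalizerPermut}). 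Without that step the set you call $C_S$ is not a single cone over which the radial argument applies. These are repairable omissions.

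The inertia space part has a genuine gap. You propose to descend Whitney B along the orbit map by making it a Riemannian submersion stratum by stratum and ``projecting limit tangent planes and limit secants onto the horizontal subspace.'' Tangent planes do push forward this way, but secants do not: Whitney's condition B in $|\inertia{\sfG}|$ must be tested in a singular chart of the quotient, and the chart the differentiable structure provides is the Hilbert map $\widehat{\Psi}$ built from a Hilbert basis $p_1,\dots,p_N$ of $H$-invariant polynomials. The secant $\overline{p(w_i)\,p(w_i')}$ in $\R^N$ is not the image under any linear projection of the secant $\overline{w_i\,w_i'}$ upstairs, because $p$ is polynomial of varying degrees and the quotient is not a linear space; there is no ``horizontal lift of secants.'' This is exactly why the paper abandons the descent picture for the quotient and argues directly downstairs: it identifies the quotient stratum locally as $(p(W_K)\times V^H)\cap p(B_{(h,x)})$, notes that $p(W_K)$ is semialgebraic by Tarski--Seidenberg and is an analytic manifold (being $\normalizer_H(K)\backslash W_K$), and then invokes {\L}ojasiewicz's theorem that a semialgebraic analytic manifold satisfies Whitney's condition B over the origin; the product with the linear factor $V^H$ then handles the general base point. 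Incidentally, the proof of \cite[Thm.~4.3.7]{PflaumBook} that you cite as precedent for the descent argument in fact proceeds by this same Hilbert-map-plus-{\L}ojasiewicz route, not by projecting secants. To complete your proof you would need to replace the Riemannian-submersion paragraph with this semialgebraicity argument (or some other control on the images of secants under the Hilbert map).
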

\begin{proof}
Because the claim is local, we may assume that the groupoid $\sfG$ is given by
the product of $O\times O \rightrightarrows O$ and $G\ltimes Y$ where
$G$ is the isotropy group $\sfG_x$ of some point $x \in \sfG_0$, $O$ is an open
neighborhood of $x$ in its orbit, and $Y$ is a slice through $x$.
Let $(h,x) \in \inertia{\sfG}$, $H = \centralizer_{G}\!(h)$, and $V_{(h,x)}$
a slice at $(h,x)$ for the $G$-action on $G\times Y$ of the form $\exp (B_{(h,x)})$, where $B_{(h,x)}$ is
a ball around the origin in the normal space $N_{(h,x)}$; recall that $V_{(h,x)}$ is then an open neighborhood
of $(h,x)$ in $H\times V$. Now let us denote by $\mathcal{Z}$ the decomposition
of $\inertianull{\sfG}$ obtained by taking the saturations of the sets defined through Eq.~\eqref{eq:decUrep},
which amounts to taking their products with $O$. Let $R$ be the piece of $\mathcal{Z}$
containing $(h,x)$, i.e.~the set of points of the form $((o,o),(l,z))$ where $o \in O$ and
$(l,z) \in V_{(h,x)}^H \cap \big(\cartan_{(h,x)}^{\ast}\times Y^{G}\big)$.
We show that for any stratum $S \in \mathcal{Z}$ with $(h,x)\in \overline{S}$, the Whitney condition (b)
is satisfied at $(h,x)$ for the pair of strata $(R,S)$.  To describe the stratum $S$ in some more detail,
consider an orbit $G(k,y)$ for $(k, y) \in S$. As in the proof of Proposition \ref{prop:LocalStratFrontier},
we may choose the representative $(k,y)$ of the orbit $G(k,y)$ such that $(k,y) \in V_{(h,x)}$,
$h \in \cartan_{(k,y)} \leq \cartan_{(h,x)}$, and $h \in \overline{\cartan_{(k',y)}^{\ast}}$
for some $k'$. In particular, we then have
$K  \leq H$ for the isotropy group $K := \centralizer_{G_y}\!(k)$ of $(k,y)$ and $G_y \leq G$. As
shown above, $S$  coincides with the connected component of $\mathcal{U}\big( G(k,y) \big)$ containing
$(k,y)$.

Suppose now that $((u_i,u_i),(h_i, x_i))_{i \in \N}$ is a sequence in $R$ and $((o_i,o_i),(k_i, y_i))_{i \in \N}$
a sequence in $S$,
and that both sequences converge to $((x,x),(h,x))$. Assume in addition that in a smooth chart around
$((x,x),(h,x))$ the secant lines
\[ \ell_i = \overline{((u_i,u_i),(h_i,x_i)),((o_i,o_i),(k_i,y_i))} \]
converge to a straight line $\ell$, and the tangent spaces
$T_{((o_i,o_i),(k_i,y_i))}S$ converge to a subspace $\tau$. Then we must show that $\ell \subset \tau$.

Note that the hypotheses imply that
$((x,x),(h,x)) \in \mathcal{U}\big( G(h,x) \big)\cap \overline{\mathcal{U}\big( G(k,y) \big)}$. By
the proof of Proposition~\ref{prop:LocalStratFrontier} and
the choices of $(k,y)$ and $\cartan_{(k,y)} \subset K$ we
obtain the relation
\begin{equation}
\label{eq:closurerel1}
    V_{(h,x)}^H \cap \Big(\cartan_{(h,x)}^{\ast} \times Y^{G}\Big)
    \subset
    \overline{(V_{(h,x)})_K} \cap \big(\overline{\cartan_{(k',y)}^{\ast}}
        \times \overline{(Y)_{G_y}} \big).
\end{equation}

Denote by $\mathfrak{g}_x$ the Lie algebra of $G$, by $\mathfrak{h}$ the Lie algebra of $H$,
and let  $\mathfrak{m}$ denote the orthogonal complement of $\mathfrak{h}$ in $\mathfrak{g}_x$
with respect to the initially chosen bi-invariant metric on $G$. Then there is a neighborhood
$U \subset \sfG_0 \cong O \times_H V_{(h,x)}$ of $(h,x)$ such that
\[
    \Psi \co   U   \longrightarrow  O \times \mathfrak{m} \times N_{(h,x)} , \:
    [o, \exp_{|\mathfrak{m}} \xi, \exp_{(h,x)}(v)]\longmapsto (o, \xi, v)
\]
is a smooth chart at $((x,x),(h,x))$, where $\exp_{|\mathfrak{m}}$ denotes the restriction of the exponential
map of the Lie group $G$ to $\mathfrak{m}$, and $\exp_{(h,x)}$ the exponential function restricted to
the open ball $B_{(h,x)} \subset N_{(h,x)}$. After possibly shrinking $U$
there is an open neighborhood $Q$ of $H$ in $G$ such that
\[
    \Psi\left(O\times Q\left( V_{(h,x)}^H \cap
    (\cartan_{(h,x)}^{\ast}\times Y^{G})\right) \right)
    \subset
    O\times \mathfrak{m} \times \left( N_{(h,x)}^H \cap T_{(h,x)}
    (\cartan_{(h,x)}^{\ast}\times Y^{G})\right).
\]
We may assume that the sequences $((u_i,u_i),(h_i, x_i))_{i \in \N}$ and $((o_i,o_i),(k_i, y_i))_{i \in \N}$
are contained in $U$.
Since $((o_i,o_i),(k_i, y_i)) \in \mathcal{U}\big( G(k,y) \big)$, one knows that
\[
    \Psi((o_i,o_i),(k_i,y_i))
    \in O \times \mathfrak{m} \times H \left(
    (V_{(h,x)})_K \cap \left( \cartan_{(k,y)}^{\ast\ast} \times Y^{G}\right)\right).
\]
Recall that $\cartan_{(k,y)}^{\ast\ast}$ consists of a finite collection of diffeomorphic
$\simeq$ classes in $\cartan_{(k,y)}$.  Moreover, by Lemma~\ref{prop:simeqProp}(\ref{SimeqIte3}),
each such $\simeq$ class is disjoint from the closures
of the other classes.  By passing to a subsequence, we may assume without loss of generality
that each $k_i$ is in one fixed class, i.e.
\[
    ((o_i,o_i),(k_i, y_i)) \in O\times G \left((V_{(h,x)})_K \cap
    \left( \cartan_{(k^\prime,y)}^{\ast} \times Y^{G}\right)\right)
\]
for all $i$ and some fixed $k^\prime \in \cartan_{(k,y)}$.

Note that $\lim o_i = \lim u_i = x$.  Moreover, each piece
of $\mathcal{Z}$ is a product of a piece in $GV_{(h,x)} \subset G\times Y$
with the diagonal in $O\times O$, so we may project onto $G\times Y$ and ignore
the $O$-factor.
Choose $\tilde{l}_i \in G$ such that $(\tilde{k}_i,
\tilde{y}_i):= \tilde{l}_i (k_i, y_i) \in (V_{(h,x)})_K$ for
all $i\in \N$. Put $(\tilde{h}_i,\tilde{x}_i):= l_i (h_i,x_i)$.
After possibly passing to a subsequence, $(\tilde{l}_i)_{i\in
\N}$ converges to some $\tilde{l} \in H$,  the secant lines
$\tilde{\ell}_i =
\overline{(\tilde{h}_i,\tilde{x}_i),(\tilde{k}_i,\tilde{y}_i)}$
converge to a straight line $\tilde{\ell}$, and the tangent
spaces $T_{(\tilde{k}_i,\tilde{y}_i)}S$ converge to a subspace
$\tilde{\tau}$. By definition, and since $\tilde{l}_i
T_{(k_i,y_i)} S = T_{(\tilde{k}_i,\tilde{y}_i)}S$ for all $i$,
one obtains $\tilde{\ell} = \tilde{l}  \ell$, and $\tilde{\tau}
= \tilde{l} \tau$. Hence, the first claim is shown, if
$\tilde{\ell} \subset \tilde{\tau}$. Without loss of
generality we may therefore assume that for all $i \in \N$
\begin{equation}
\label{eq:reductionky}
  (k_i, y_i) \in  (V_{(h,x)})_K \cap \big( \cartan_{(k^\prime,y)}^{\ast}
        \times Y^{G} \big) ,
\end{equation}
and then show $\ell \subset \tau$ for the sequences $(k_i,
y_i)_{i\in \N}$ and $(h_i,x_i)_{i\in  \N}$.

Eq.~\eqref{eq:reductionky} now means in particular that
\[
    \Psi (k_i,y_i)
    \in \{ 0 \} \times \Big(
    (N_{(h,x)})_{K} \cap
    \exp^{-1}_{(h,x)}\big(\cartan_{(k^\prime,y)}^{\ast} \times Y^{G}\big)\Big).
\]
Since $\overline{\cartan_{(k^\prime,y)}^{\ast}}$ is an
open and closed subset of a closed subgroup of $G$ and also contains $h$, the set
\[
    V:= N_{(h,x)} \cap T_{(h,x)}
    \Big( \big(\overline{\cartan_{(k^\prime,y)}^{\ast}}\big) \times Y^{G}\Big)
\]
is a subspace of $N_{(h,x)}$. Let $W$ be the orthogonal
complement of the invariant space $V^H$ in $V$ with respect to
the $H$-invariant scalar product induced from $V_{(h,x)}$. Then
the image  under the chart $\Psi$ of every element of $G\big(
V_{(h,x)}^H \cap (\cartan_{(h,x)}^{\ast}\times Y^{G})\big)\cap U$ and every
$(k_i, y_i)$ is contained in
\[
    \mathfrak{m} \times (W_K \cup \{ 0 \}) \times V^H .
\]
With respect to this decomposition, $(h,x)$ has coordinates
$(0, 0, 0)$, each element of $G\left( V_{(h,x)}^H \cap(\cartan_{(h,x)}^{\ast}\times Y^{G})\right)$
has coordinates contained in $\mathfrak{m} \times 0 \times V^H$, and each
sequence element $(k_i, y_i)$ has coordinates contained in
$\{ 0 \}  \times W_{K} \times V^H$.  In particular, let
\[
    \Psi(k_i,y_i)
    =
    (0 , w_i, v_i)
\]
for every $i$.  Since $W_{K}$ is invariant under multiplication by non-vanishing scalars, we have
\[
\begin{split}
    (\xi, w, v)
     :=\, &
    \lim\limits_{i\to\infty} \frac{ \Psi(k_i,y_i) - \Psi(h_i,x_i) }
        {\| \Psi(k_i,y_i) - \Psi(h_i,x_i) \|}
    \, \in \mathfrak{m} \times \overline{W_{K}} \times V^H \ .
\end{split}
\]
By compactness of the unit sphere in $W$, the sequence
$\frac{w_i}{\| w_i \|}$ converges to some $\hat{w} \in SW$
after possibly passing to a subsequence. Then $w =  \| w
\|\hat{w}$. Since $W_{K}$ is invariant by non-vanishing
scalars, we have
\[
    \mathfrak{m} \times \operatorname{span}\: \hat{w} \times V^H \subset \tau,
\]
and
\[
    \ell = \operatorname{span} \: (\xi, \hat{w}, v) \subset \tau,
\]
proving the first claim.

Now let us show that the orbit Cartan type stratification of
$|\inertia{\sfG}|$ is Whitney (b)-regular as well. To this
end let us first choose a Hilbert basis of $H$-invariant
polynomials $p_1,\ldots , p_\kappa \co
\big(N_{(h,x)}^H\big)^\perp \rightarrow \R$ of the orthogonal
complement of the invariant space $N_{(h,x)}^H$ in $N_{(h,x)}$.
Next let $p_{\kappa + 1},\ldots , p_N \co N_{(h,x)}^H \rightarrow
\R$ with $N=\kappa +\dim N_{(h,x)}^H$ be a linear coordinate
system of the invariant space. We can even choose these $p_i$
in such a way that $p_{\kappa + 1},\ldots , p_{\kappa + \dim
V^H}$ is a linear coordinate system of $V^H$. By construction,
$p_1,\ldots , p_N$ then is a Hilbert basis of the normal space
$N_{(h,x)}$. Denote by $p\co N_{(h,x)} \rightarrow \R^N$ the
corresponding Hilbert map. Recall that $p$ induces a chart of
$|\inertia{\sfG}|$ over $G \backslash U$ by
\[
  \widehat{\Psi}\co G \backslash U \rightarrow \R^N, \: G \exp_{(h,x)} (v) \mapsto p(v) .
\]
Note that by $H$-invariance of $p$ and since for every orbit in
$U$ there is a representative in $V_{(h,x)}$, the chart
$\widehat{\Psi}$ is well-defined indeed. A decomposition of
$\widehat{U}:= \widehat{\Psi} ( G \backslash U )$ inducing the
orbit Cartan type stratification on $G \backslash U$  is given
by
\[
  \widehat{\mathcal{Z}}:=
  \big\{ \widehat{\Psi} (G\backslash(S\cap (G\times Y)) \mid S \in \mathcal{Z} \big\} .
\]
Let $\widehat{S}\in \widehat{\mathcal{Z}}$ denote the stratum
containing the orbit $G(h,x)$, and $\widehat{S}\in \mathcal{Z}$
a stratum $\neq \widehat{R}$ such that $G(h,x)$ lies in the
closure of $\widehat{S}$. Now consider sequences of orbits
$\big( G(h_i,x_i) \big)_{i\in \N}$ in $\widehat{R}$ and
$\big( G(k_i,y_i) \big)_{i\in \N}$ in  $\widehat{S}$ such that both
sequences converge to $G(h,x)$. Moreover, assume  that the
sequence of secants $ \overline{\widehat{\Psi} (G(h_i,x_i)) , \widehat{\Psi} (G(k_i,y_i))}$
converges to a line $\widehat{\ell}$, and that the sequence of tangent spaces
$T_{\widehat{\Psi} (G(k_i,y_i))} \widehat{S}$ converges to some
subspace $\widehat{\tau} \subset \R^N$. Using notation from
before, we can choose representatives $(h_i,x_i)$ and
$(k_i,y_i)$  having coordinates in
$ \mathfrak{m} \times (W_K \cup \{ 0 \}) \times V^H \subset N_{(h,x)}$ such that
\begin{equation}
\label{eq:coordinates}
\begin{split}
  \Psi (h_i,x_i) & \, = (0,0, v_i^\prime ) \in \{ 0 \}  \times  \{ 0 \} \times V^H
  \: \text{ and } \\
  \Psi (k_i,y_i) & \, = (0,w_i,v_i) \in \{ 0 \} \times W_K  \times V^H .
\end{split}
\end{equation}

Next observe that by the Tarski--Seidenberg Theorem, the stratum
$\widehat{S}$ is semialgebraic as the image of the
semialgebraic set $(W_K \times V^H) \cap B_{(h,x)}$ under the
Hilbert map $p$. By the same argument, $p(W_K)$ is
semialgebraic, too, and an analytic manifold, since $p(W_K)
\cong \normalizer_H(K)\backslash W_K \cong H \backslash W_{(K)}$.
Moreover, the equality
\[
  \widehat{S} = ( p(W_K) \times V^H) \cap p(B_{(h,x)} )
\]
holds true, where we have canonically identified $V^H$ with its
image under the Hilbert map $p$. By Eq.~\eqref{eq:coordinates},
this implies that
\begin{equation}
\label{eq:tangentlimits}
  \widehat{\tau} =
  \lim_{i\rightarrow \infty} T_{\widehat{\Psi} (G(k_i,y_i))} \widehat{S} =
  \lim_{i\rightarrow \infty} T_{p(w_i)}p(W_K) \times V^H .
\end{equation}
Since $p(W_K)$ is semialgebraic and an analytic manifold,
\cite[Prop.~3, p.~103]{LojESA} by {\L}ojasiewicz entails that
$p(W_K)$ satisfies the Whitney condition (b) over the origin. This
means after possibly passing to subsequences, that $\ell_{W_K}
\subset \tau_{W_K}$, where $\ell_{W_K}$ is the limit line of
the secants $\overline{p(w_i),0}$, and  $\tau_{W_K}$ the limit
of the tangent spaces $T_{p(w_i)}p(W_K)$ for $i\rightarrow
\infty$. By Eqs.~\eqref{eq:coordinates} and
\eqref{eq:tangentlimits} this implies that
\[
  \widehat{\ell} \subset \ell_{W_K}  \times V^H  \subset \tau_{W_K} \times V^H = \widehat{\tau} .
\]
This finishes the proof.
\end{proof}

For a translation groupoid $\sfG = G\ltimes M$, if $S$ is a stratum of $\inertianull{(G\ltimes M)}$,
then by Lemma \ref{lem:DSGTrivConsequences} \eqref{Ite1} and \eqref{Ite3}, the induced stratum of the arrow space is
$G\times S$, and the corresponding stratum of $(\inertia{G})_1 \sttimes(\inertia{G})_1$ is $G\times G\times S$.
Hence, Whitney (b)-regularity of the stratification of $\inertianull{(G\ltimes M)}$ implies Whitney (b)-regularity of the stratifications
of the arrow space and $(\inertia{G})_1 \sttimes(\inertia{G})_1$.

Note that $\inertianull{\sfG}$ is clearly topologically locally trivial because
of its description in slices. Therefore,
by Proposition \ref{prop:StratTranslationGpoid}, the inertia groupoid
$\inertia{\sfG}$ is a differentiable stratified groupoid.
Now, recall that the loop space $\inertianull{\sfG}$ is a
differentiable subspace of the smooth manifold $\sfG_1$ and that the space of arrows
$\inertia{\sfG}_1 = \sfG_1 \fgtimes{s}{t} \inertianull{\sfG}$ is a differentiable
subspace of the smooth manifold $\sfG_1 \fgtimes{s}{t} \sfG_1$. Similarly,
if $\obj{x}, \obj{y} \in \sfG_0$ are in the same orbit, then the slices
$Y_\obj{x}$ and $Y_\obj{y}$ for $\sfG$ can be chosen such that $\sfG_{|Y_\obj{x}}$
and $\sfG_{|Y_\obj{y}}$ are isomorphic by \cite[Lemma 5.1]{PflPosTanGOSPLG}.
This defines a diffeomorphism between the arrow spaces of $\sfG_{|Y_\obj{x}}$ and
$\sfG_{|Y_\obj{y}}$ (which are both smooth manifolds) whose restriction defines
an isomorphism between $\inertia{\sfG_{|Y_\obj{x}}}$ and $\inertia{\sfG_{|Y_\obj{y}}}$.
It follows that the inertia groupoid $\inertia{G}$ satisfies conditions
(LT\ref{it:ExistenceBisections}) to (LT\ref{it:StratifiedSlice})
in Definitions \ref{def:LocTransDiffGroupoid} and \ref{def:LocTransStrat},
hence is sliceable.
Moreover, by Proposition \ref{prop:loccontractibilityinertiaspaces}, the inertia
groupoid $\inertia{G}$ satisfies the local contractibility condition of
Definition \ref{def:local-contractibility}.

It is straightforward to verify that a weak equivalence $f = (f_0, f_1) \co\sfG\to\sfH$ of proper Lie groupoids
induces a weak equivalence $\inertia{\sfG}\to\inertia{\sfH}$ given by the restriction
of $f_1$ to the loop spaces. In particular, because the stratification of $\inertianull{\sfG}$
is defined in terms of slices for $\sfG$, and the representation of the isotropy group on a
slice is Morita invariant, the stratification
of $\inertianull{\sfG}$ is obviously the pullback of the stratification of
$\inertianull{\sfH}$ via $f_1$. Moreover, as the stratification of the inertia space
$|\inertia{\sfG}|$ can be defined locally in terms of the actions of isotropy
groups on slices, it is as well Morita invariant. This means that the isomorphism between
$|\inertia{\sfG}|$ and $|\inertia{\sfH}|$ from Proposition \ref{prop:MoritaHomeo}
is an isomorphism of differentiable stratified spaces.
We summarize these observations in the following.

\begin{theorem}
\label{thrm:InertiaDeRham}
Let $\sfG$ be a proper Lie groupoid. Then the inertia groupoid $\inertia{\sfG}$
is a proper reduced structurally (b)-regular
differentiable stratified groupoid.
Moreover the inertia groupoid $\inertia{\sfG}$ is sliceable
and satisfies the local contractibility condition.
Finally, the inertia space $|\inertia{\sfG}|$ inherits from $\inertianull{\sfG}$
via the canonical projection $\pi:\inertianull{\sfG} \to
|\inertia{\sfG}|$
a Whitney (b)-regular stratification.
\end{theorem}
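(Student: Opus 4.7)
The proof is essentially an assembly of results established earlier in Section \ref{sec:InertiaGpoid}, so my plan proceeds in four coordinated steps rather than a single linear argument.

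First, I would verify that Equation \eqref{eq:MLocalStratGpoidDef} yields a well-defined stratification of $\inertianull{\sfG}$. For each $\arr{g} \in \inertianull{\sfG}$ with $s(\arr{g}) = t(\arr{g}) = \obj{x}$, the slice theorem for proper Lie groupoids identifies $\sfG_{|Y_{\obj{x}}}$ with $\sfG_{\obj{x}} \ltimes Y_{\obj{x}}$, so the definition reduces locally to the compact Lie group setting covered by Theorem \ref{thrm:InertiaStratGMnfld}. Independence of the local germs under change of representative in an orbit is precisely Proposition \ref{prop:BasePointInvar}, and independence from the choice of slice $V_{(h,\obj{x})}$ is Lemma \ref{lem:LocalStratSliceIndependent}. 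Since each stratum in Equation \eqref{eq:MLocalStratGpoidDef} is defined as a $\sfG$-saturation, the local stratifications on overlapping slice charts agree automatically, and Whitney's condition B for the resulting global stratification is Proposition \ref{prop:LocalStratWhitneyB}.

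Next, to conclude that $\inertia{\sfG}$ is a differentiable stratified groupoid, I would apply Proposition \ref{prop:StratTranslationGpoid} to the conjugation action of $\sfG$ on $\inertianull{\sfG}$. Its hypotheses require topological local triviality of the object spaces and that $\sfG$-orbits are contained in strata of $\inertianull{\sfG}$. The spaces $\sfG_0$ and $\sfG_1$ are smooth manifolds, and topological local triviality of $\inertianull{\sfG}$ follows from its Whitney B property via \cite[Cor.~3.9.3]{PflaumBook}. The inclusion of orbits in strata is built into the saturation appearing in Equation \eqref{eq:MLocalStratGpoidDef}. Properness of $\inertia{\sfG}$ reduces to that of $\sfG$ because $\inertianull{\sfG}$ is closed in $\sfG_1$.

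The locally translation property then uses the slice theorem once more: around $\arr{g}$ the groupoid $\sfG$ takes the product form $(O_\obj{x} \times O_\obj{x}) \times (\sfG_\obj{x} \ltimes Y_\obj{x})$, transferring directly to $\inertia{\sfG}$ and yielding (LT\ref{it:ExistenceSlice})–(LT\ref{it:ExistenceBisections}). For the stratified slice axiom (LT\ref{it:StratifiedSlice}), the local orbit Cartan type stratum through $\arr{g} = (h,\obj{x})$ has, by Equation \eqref{eq:MLocalStratDef}, the explicit form $V_{(h,\obj{x})}^H \cap \bigl(\cartan_{(h,\obj{x})}^{\ast} \times Y_\obj{x}^{G_\obj{x}}\bigr)$, which splits as the product of a $\sfG_\obj{x}$-invariant subspace with the stratum through the base point, as required. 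The local contractibility hypothesis is precisely Proposition \ref{prop:loccontractibilityinertiaspaces}. Finally, the Whitney B stratification of $|\inertia{\sfG}|$ is the content of the second half of Proposition \ref{prop:LocalStratWhitneyB}, whose Hilbert-map description of the orbit space together with {\L}ojasiewicz's theorem reduces the question to Whitney B of a semialgebraic set.

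The main obstacle in this overall programme has really been absorbed into the preceding sections: the delicate points were verifying that the orbit Cartan type local stratifications on distinct slice charts coincide and satisfy Whitney B. Once Propositions \ref{prop:BasePointInvar}, \ref{prop:LocalStratWhitneyB}, and \ref{prop:loccontractibilityinertiaspaces} are in hand, Theorem \ref{thrm:InertiaDeRham} itself is essentially a careful collation of the earlier statements.
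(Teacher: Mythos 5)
Your proposal is correct and follows essentially the same route as the paper: both assemble Theorem \ref{thrm:InertiaStratGMnfld}, Proposition \ref{prop:BasePointInvar}, Lemma \ref{lem:LocalStratSliceIndependent}, and Proposition \ref{prop:LocalStratWhitneyB} to obtain the well-defined Whitney B stratification, then invoke Proposition \ref{prop:StratTranslationGpoid} (via topological local triviality of Whitney stratified spaces), the slice theorem for the locally translation property, and Proposition \ref{prop:loccontractibilityinertiaspaces} for local contractibility. The only minor addition you make beyond the paper's exposition is spelling out explicitly why properness of $\inertia{\sfG}$ follows from closedness of $\inertianull{\sfG}$ in $\sfG_1$, which is a harmless and correct elaboration.
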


\appendix

\section{Differentiable stratified spaces}
\label{app:DiffStratSpaces}

In this appendix we describe the category
of differentiable stratified spaces used throughout this paper. Our notion
of differentiable spaces is that of \cite{NGonzalezSanchoBook} to which we refer the reader
for more details. For the definition of stratified
spaces we follow Mather \cite{MatherStratMap} and \cite[Chap.~1]{PflaumBook}, except
that we relax the assumption that the spaces under consideration are Hausdorff and
only require that they are locally Hausdorff. Hence, a stratified space with smooth
structure as defined in \cite[Chap.~1]{PflaumBook} or a differentiable stratified
space as defined in \cite{FarsiPflaumSeaton} corresponds to a Hausdorff
differentiable stratified space as defined here.
Note that in addition to \cite{PflaumBook,FarsiPflaumSeaton} various other concepts of
structure sheaves respectively structure algebras of smooth functions over
stratified spaces have been introduced in the literature. See for example
the work by Kreck \cite{KreDAT} on stratifolds,
by Lusala--\'{S}niatycki \cite[Sec.~4]{LusalaSnia} on
stratified subcartesian spaces, by Watts
\cite{WattsThesis,WattsOrbifold} on differential spaces, and finally by
Somberg--V\^an L\^e--Vanzura \cite{VanLeSombergVanzura,VanLeSombergVanzuraConical}
on smooth structures on locally conic stratified spaces.

\subsection{Differentiable spaces}
\label{ap:DiffSpaces}
\begin{definition}[{\cite[Chap.~3]{NGonzalezSanchoBook}}]
\label{def:DiffSpace}
Let $(X, \mathcal{O})$ be a locally $\R$-ringed space which we always assume to
be commutative.
One says that $(X, \mathcal{O})$ is an \emph{affine differentiable space}, if
there is a closed ideal $\mathfrak{a} \subset\mathcal{C}^\infty(\R^n)$ such that
$(X, \mathcal{O})$ is isomorphic as a ringed space to the real spectrum of
$\mathcal{C}^\infty(\R^n)/\mathfrak{a}$ equipped with its structure sheaf, which
associates to each open set its localization over that set. Here, we consider the
unique topology with respect to which $\mathcal{C}^\infty(\R^n)$ is a Frech\'{e}t algebra.
A locally $\R$-ringed space $(X, \mathcal{O})$ is a \emph{differentiable space} if,
for each $x \in X$, there is an open neighborhood $U$ of $X$ such that the restriction
$(U, \mathcal{O}|_U)$ is an affine differentiable space. A differentiable space
is \emph{reduced} if for each open subset $U$ of $X$ the map
$\mathcal{O}(U) \to \mathcal{C}(U)$ defined by the evaluation map is injective.

A \emph{morphism of differentiable spaces}
$(f,\varphi) \co (X, \mathcal{O}_X) \to (Y, \mathcal{O}_Y)$ consists of a continuous map
$f\co X \to Y$ and a morphism
 $\varphi \co\mathcal{O}_Y\to f_\ast \mathcal{O}_X$ of sheaves
of $\R$-algebras such that for each $x\in X$ the induced morphism
on the stalks $\varphi_x \co\mathcal{O}_{Y,f(x)}\to\mathcal{O}_{X,x}$ is local,
i.e.~maps the maximal ideal $\mathfrak{m}_{f(x)} \subset \mathcal{O}_{Y,f(x)}$
to the maximal ideal $\mathfrak{m}_x \subset \mathcal{O}_{X,x}$.
\end{definition}

Note that if $ (X, \calC^\infty_X) $ and $(Y, \mathcal{C}^\infty_Y)$ are reduced,
a morphism of differentiable spaces
$(f,\varphi)\co (X, \calC^\infty_X) \to (Y, \calC^\infty_Y) $
is fully determined by the map $f \co X \to Y$. The sheaf morphism
$\varphi$ is given in this case over each open $V \subset Y$ by the pullback
map $f^*: \calC^\infty_Y (V) \to  \calC^\infty_X(f^{-1} (V))$, $g \mapsto g\circ f_{|V}$.
We therefore sometimes call a morphism between reduced differentiable spaces
a \emph{smooth map}, and just denote it by the underlying map $f$.

By \cite[Thm.~3.23]{NGonzalezSanchoBook}, a differentiable space $(X,\calO)$ is reduced if
and only if each $x \in X$ is contained in an open neighborhood $V$ isomorphic as a
differentiable space to a locally closed subset of the affine space $\R^n$
with structure sheaf given by restrictions of smooth functions from $\R^n$.
We refer to such a $V$ as an \emph{affine neighborhood of} $x$,
and call an embedding $\iota : V \hookrightarrow \R^n$ such that
$(\iota,\iota^*) : (V,\calO_{|V}) \rightarrow (\iota (V), \calC^\infty_{|\iota (V)})$ is an
isomorphism of locally ringed spaces a \emph{singular chart} (of \emph{rank} $n$) for $X$.

We often denote the structure sheaf of a reduced differentiable space $X$ by $\calC^\infty_X$
or shortly by $\calC^\infty$, if no confusion can arise.
By a smooth submanifold of a differentiable space $X$, we mean a differentiable subspace
whose differentiable structure is that of a smooth manifold in the usual sense.

\subsection{Stratified spaces}
\label{ap:StratSpaces}
Let $X$ be a paracompact separable locally Hausdorff topological space.
A \emph{decomposition} $\mathcal{Z}$ of $X$ is a locally finite partition of
$X$ into locally closed subspaces such that each $S\in\mathcal{Z}$ is a
countable union of smooth (not necessarily Hausdorff) manifolds
such that the following condition of frontier is satisfied:
\begin{itemize}
\item[(CF)] If $R\cap\overline{S}\neq\emptyset$ for $R, S\in\mathcal{Z}$,
          then $R\subset\overline{S}$.
\end{itemize}
If $R\subset\overline{S}$, one writes $R \leq S$ and says that $R$ is incident to $S$.
The incidence relation is an order relation on $\mathcal{Z}$. The elements of a decomposition
$\mathcal{Z}$ are called its \emph{pieces}.

In the following we provide a generalization of the definition of a stratification by
Mather \cite{MatherStratMap}  to the case of a locally
Hausdorff space; cf.~also \cite[Sec.~1.2]{PflaumBook}.

\begin{definition}
\label{def:Stratification}
Let $X$ be a paracompact separable and locally Hausdorff topological space. A \emph{stratification} of $X$ is an
assignment to each $x \in X$ of a germ $\mathcal{S}_x$ of subsets of $X$ at $x$ such that
for each $x \in X$ there is a Hausdorff neighborhood $U$ of $x$ in $X$ and a decomposition
$\mathcal{Z}$ of $U$ with the property that for each $y \in U$ the germ $\mathcal{S}_y$
is equal to the germ at $y$ of the piece of $\mathcal{Z}$ containing $y$. The set $X$
along with the stratification $\mathcal{S}$ is called a \emph{stratified space}.
If, moreover, $X$ is a differentiable space, and for each $x \in X$, the germ
$\mathcal{S}_x$ is that of a smooth submanifold of $X$ as defined in the previous section, then we say $X$ is a
\emph{differentiable stratified space}.

A continuous function $f\co (X, \mathcal{S})\to (Y,\mathcal{R})$ is a
\emph{morphism of stratified spaces} if, for each $x \in X$ with $f(x) = y$,
there are open Hausdorff neighborhoods $U$ of $x$ and $V$ of $y$ with $U \subset f^{-1}(V)$
and decompositions of $U$ and $V$ inducing their respective stratifications
such that for every $z \in U$ contained in the piece $S$ of $U$, there is an open neighborhood
$O$ of $z$ in $U$ such that $f_{|S\cap O}$ maps into the piece of $V$ containing $f(z)$.

If $(X, \mathcal{O}_X, \mathcal{S})$ and $(Y, \mathcal{O}_Y, \mathcal{R})$ are
differentiable stratified spaces, a function $f\co X \to Y$ is a \emph{differentiable
stratified morphism} if it is simultaneously a morphism of differentiable spaces
and a morphism of stratified spaces.
\end{definition}

Note that the definition of a differentiable
stratified space coincides, for a Hausdorff space $X$, with that of a stratified space with
$\mathcal{C}^\infty$ structure defined in \cite[Section 1.3]{PflaumBook};
see also \cite[Section 2]{FarsiPflaumSeaton}.

If $(X, \mathcal{Z})$ is a decomposed space, then the decomposition induces a stratification
by assigning to $x \in X$ the germ at $x$ of the piece containing $x$; two decompositions of
$X$ are \emph{equivalent} if they induce the same stratification.
The \emph{depth} of $x \in X$ with respect to the decomposition $\mathcal{Z}$ is the maximum
$k$ such that $x \in S_0 < S_1 < \cdots < S_k$ for $S_i \in \mathcal{Z}$.

Now, let $(X, \mathcal{S})$ be a stratified space and let $x \in X$. The proof of
\cite[Lemma 2.1]{MatherStratMap} (see also \cite[Lem.~1.5.2]{PflaumBook}) is local,
hence can be executed on a Hausdorff neighborhood of $x$. It therefore extends to our
case and demonstrates that the depth of $x$ coincides for any decomposition of a Hausdorff
neighborhood of $x$ inducing $\mathcal{S}$. Hence, we may define the \emph{depth of $x$
with respect to $\mathcal{S}$} to be the depth with respect to any such decomposition.
In the same way, the proofs of \cite[Lem.~2.2]{MatherStratMap} and
\cite[Prop.~1.2.7]{PflaumBook}) extend to the situation of locally Hausdorff stratified space.
So $X$ admits a decomposition $\mathcal{Z}$ that induces $\mathcal{S}$ and is maximal in the
sense that for every Hausdorff open subset $U$ of $X$, the restriction of $\mathcal{Z}$ to $U$
is coarser than any decomposition of $U$ that induces $\mathcal{S}$. We will often refer to
$\mathcal{Z}$ simply as the \emph{maximal decomposition of $X$}.
Its pieces are called the \emph{strata} of $X$.
Note that if $X$ is a (Hausdorff) differentiable stratified space, the strata of $\mathcal{Z}$
are obviously (Hausdorff) smooth manifolds.

We recall the following from \cite[Section 1.4.1]{PflaumBook}.

\begin{definition}
\label{def:TopLocTriv}
A stratified space $(X, \mathcal{S})$ is \emph{topologically locally trivial} if for every
$x \in X$ in the stratum $S$ of $X$, there is a neighborhood $U$, a stratified space
$(F, \mathcal{S}^F)$, a point $o \in F$, and an isomorphism of stratified spaces
$h \co U \to (S \cap U) \times F$ such that $h^{-1}(y,o) = y$ for all $y \in S\cap U$,
and such that $\mathcal{S}_o^F$ is the germ of the set $\{ o \}$.
\end{definition}

See \cite[Section 1.4.3]{PflaumBook} for the definitions of Whitney (a)- and Whitney (b)-regularity,
which impose restrictions on how the tangent spaces of strata limit to one another. Note in
particular that Whitney (b)-regularity implies topological local triviality; see
\cite[Corollary 3.9.3]{PflaumBook}.

\subsection{Fibered products}
By \cite[Theorem 7.6]{NGonzalezSanchoBook}, the fibered product of differentiable spaces
has a unique differentiable space structure with respect to which the projection maps
are morphism of differentiable spaces. We now demonstrate that the same holds true for
differentiable stratified spaces. Let $X$, $Y$, and $Z$ be differentiable stratified
spaces with respective stratifications
$\mathcal{S}^X$, $\mathcal{S}^Y$, and $\mathcal{S}^Z$. Suppose
$f\co X\to Z$ and $g\co Y \to Z$ are differentiable stratified mappings.
If $f$ is in addition a stratified submersion, then we define a stratification
of the fibered product $X \fgtimes{f}{g} Y$ as follows. Let
$(x,y) \in X \fgtimes{f}{g} Y$, let $P \subset X$ be a subset whose germ
$[P]_x = \mathcal{S}_x^X$, and let $R \subset Y$ such that
$[R]_y = \mathcal{S}_y^Y$. Then we assign to $(x,y) \in X \fgtimes{f}{g} Y$
the germ $\mathcal{S}_{(x,y)} := [P \fgtimes{f}{g} R]_{(x,y)}$. We refer to
$\mathcal{S}$ as the \emph{induced stratification} of
$X \fgtimes{f}{g} Y$ by the stratifications $\mathcal{S}^X$ and $\mathcal{S}^Y$.

\begin{lemma}
\label{lem:InducedStrat}
Suppose $X$, $Y$, and $Z$ are differentiable stratified spaces
and $f\co X\to Z$ and $g\co Y \to Z$ are differentiable stratified mappings.
If $f$ is in addition a stratified submersion, then the induced stratification
$\mathcal{S}$ is a stratification of $X \fgtimes{f}{g} Y$.
\end{lemma}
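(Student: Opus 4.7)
The plan is to verify the conditions of Definition \ref{def:Stratification} for the proposed assignment $(x,y)\mapsto \mathcal{S}_{(x,y)} := [P \fgtimes{f}{g} R]_{(x,y)}$. First, well-definedness of $\mathcal{S}_{(x,y)}$ is immediate, because if $P,P'\subset X$ both represent $\mathcal{S}_x^X$ at $x$ and $R,R'\subset Y$ both represent $\mathcal{S}_y^Y$ at $y$, then $P\cap U = P'\cap U$ for some open neighborhood $U$ of $x$ and $R\cap V = R'\cap V$ for some open neighborhood $V$ of $y$, so the corresponding fibered products agree on an open neighborhood of $(x,y)$ in $X\times Y$ and their germs at $(x,y)$ coincide. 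It remains to exhibit, for each $(x,y)$, a Hausdorff open neighborhood together with a locally finite decomposition satisfying the condition of frontier that induces $\mathcal{S}$.

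To construct the local decomposition, choose Hausdorff open neighborhoods $U\ni x$ in $X$ and $V\ni y$ in $Y$ together with decompositions $\mathcal{Z}^X$ of $U$ and $\mathcal{Z}^Y$ of $V$ inducing $\mathcal{S}^X$ and $\mathcal{S}^Y$ respectively. Let the pieces of the proposed decomposition $\mathcal{Z}$ of $(U\times V)\cap (X\fgtimes{f}{g} Y)$ be the nonempty sets $(S\fgtimes{f}{g} T)\cap(U\times V)$ for $S\in\mathcal{Z}^X$ and $T\in\mathcal{Z}^Y$. If $(p,q)\in S\fgtimes{f}{g} T$, then since $f$ and $g$ are stratified mappings $f(S)$ and $g(T)$ locally lie in strata of $Z$, and the equality $f(p)=g(q)$ forces the connected component $Z'$ of the stratum of $Z$ at $f(p)$ to contain both images locally. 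The hypothesis that $f$ is a stratified submersion renders $f|_S\co S\to Z'$ a smooth submersion, whence its differential at $p$ is automatically transverse to that of $g|_T$ at $q$, and the fibered product $S\fgtimes{f}{g} T$ is a smooth (possibly disconnected) submanifold of $X\times Y$ of dimension $\dim S+\dim T-\dim Z'$. Local finiteness of $\mathcal{Z}$ is inherited from $\mathcal{Z}^X$ and $\mathcal{Z}^Y$, and each piece is locally closed.

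The condition of frontier is the main obstacle. Suppose $(S_1\fgtimes{f}{g} T_1)\cap\overline{S_2\fgtimes{f}{g} T_2}\neq\emptyset$. Projecting the intersection onto the two factors immediately gives $S_1\cap\overline{S_2}\neq\emptyset$ and $T_1\cap\overline{T_2}\neq\emptyset$, so $S_1\subset\overline{S_2}$ and $T_1\subset\overline{T_2}$ by the condition of frontier in $\mathcal{Z}^X$ and $\mathcal{Z}^Y$. The delicate step is showing that every $(p,q)\in S_1\fgtimes{f}{g} T_1$ lies in $\overline{S_2\fgtimes{f}{g} T_2}$. Choose any sequence $(q_n)\subset T_2$ with $q_n\to q$; then $g(q_n)\to g(q)=f(p)$ with $g(q_n)$ lying in the common stratum $Z'$. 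Because $f|_{S_2}\co S_2\to Z'$ is a submersion, the rank theorem provides, near each point $p'\in S_2$, a smooth local section of $f|_{S_2}$ defined on a neighborhood of $f(p')$ in $Z'$. Picking a sequence $p_n'\in S_2$ with $p_n'\to p$ and applying such sections (after passing to a subsequence of the $q_n$ if necessary so that $g(q_n)$ lies in the section domain at $p_n'$), we obtain a sequence $p_n\in S_2$ with $f(p_n)=g(q_n)$ and with $p_n$ close to $p_n'$. Then $(p_n,q_n)\in S_2\fgtimes{f}{g} T_2$ and $(p_n,q_n)\to(p,q)$, as required.

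Finally, for any $(x',y')\in (U\times V)\cap (X\fgtimes{f}{g} Y)$, the unique piece of $\mathcal{Z}$ containing $(x',y')$ is $S\fgtimes{f}{g} T$ for the strata pieces $S\in\mathcal{Z}^X$ containing $x'$ and $T\in\mathcal{Z}^Y$ containing $y'$; since $\mathcal{Z}^X$ induces $\mathcal{S}^X$ at $x'$ and $\mathcal{Z}^Y$ induces $\mathcal{S}^Y$ at $y'$, the germ of this piece at $(x',y')$ is precisely $\mathcal{S}_{(x',y')}$. Hence $\mathcal{Z}$ induces $\mathcal{S}$ on the chosen Hausdorff neighborhood, completing the verification.
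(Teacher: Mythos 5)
Your proof follows the same route as the paper's: both take the partition of $X \fgtimes{f}{g} Y$ into the sets $P \fgtimes{f}{g} R$ for pieces $P$ of $X$ and $R$ of $Y$, check local closedness and local finiteness, and obtain smoothness of the pieces from the transversality of $f_{|P_0}$ and $g_{|R_0}$ forced by the submersion hypothesis (the paper works directly with the maximal decompositions rather than on Hausdorff neighborhoods, which is immaterial). The one point of genuine divergence is the condition of frontier. There the paper passes from $P\subset\overline{P'}$ and $R\subset\overline{R'}$ to $P\fgtimes{f}{g}R\subset\overline{P'\fgtimes{f}{g}R'}$ with a bare ``hence,'' whereas you correctly flag that one must actually produce approximating pairs satisfying the constraint $f(p_n)=g(q_n)$, and you supply a local-section argument for this. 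That is the right idea and goes beyond what the paper records, but as written it has a wrinkle: the local section of $f_{|S_2}$ at $p_n'$ is defined only on a neighborhood of $f(p_n')$ in $Z'$, while the points $g(q_m)$ accumulate at $f(p)$, which need not belong to that neighborhood; merely passing to a subsequence of the $q_m$ does not place $g(q_m)$ in the section's domain. You need to interleave the two sequences more carefully -- for instance, for each $m$ choose $p_m'\in S_2$ so close to $p$ that $g(q_m)$ lies in the domain of the submersion chart at $p_m'$ and the section's value at $g(q_m)$ remains close to $p$, which requires controlling the size of these charts as $p_m'$ approaches the boundary piece $S_1$. With that adjustment the argument closes; in any case your treatment of this step is more explicit than the paper's own.
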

\begin{proof}
For simplicity, we work with the maximal decompositions $\mathcal{Z}^X$, $\mathcal{Z}^Y$,
and $\mathcal{Z}^Z$ of $X$, $Y$, and $Z$, respectively,
which is clearly sufficient as the definition of the fibered product is local. Then the fact
that $\mathcal{Z}^X$ and $\mathcal{Z}^Y$ are partitions of $X$ and $Y$ immediately implies
that $\mathcal{Z}:=\{ P \fgtimes{f}{g} R \mid P \in\mathcal{Z}^X, R \in\mathcal{Z}^Y \}$ is
a partition of $X \fgtimes{f}{g} Y$. Moreover, the fact that each $P \in\mathcal{Z}^X$ and $R\in\mathcal{Z}^Y$
is locally closed implies that $P \times R$ is locally closed in $X\times Y$ and hence
$P \fgtimes{f}{g} R = (P\times R)\cap (X \fgtimes{f}{g} Y)$ is locally closed
in $X \fgtimes{f}{g} Y$. Given $(x,y)\in X \fgtimes{f}{g} Y$, let $U_x$ and $U_y$
be open neighborhoods of $x$ in $X$ and $y$ in $Y$, respectively, such that each intersects
finitely many elements of $\mathcal{Z}^X$ and $\mathcal{Z}^Y$,
and then $(U_x\times U_y)\cap (X \fgtimes{f}{g} Y)$ is an open neighborhood of $(x,y)$
in $X \fgtimes{f}{g} Y$ that meets finitely many elements of $\mathcal{Z}$.
Therefore, $\mathcal{Z}$ is a locally finite partition of $X \fgtimes{f}{g} Y$ into
locally closed sets.

Now, let $P\in\mathcal{Z}^X$ and $R\in\mathcal{Z}^Y$, and choose connected components
$P_0$ of $P$ and $R_0$ of $R$. Then as $f$ and $g$ are stratified mappings, there is a
piece $S\in\mathcal{Z}^Z$ with $f(P_0), g(R_0) \subset S$; see \cite[1.2.10]{PflaumBook}.
Moreover, as $f$ is a stratified submersion, $f_{|P_0}$ is by definition a submersion.
Then \cite[Prop.~2.5 \& 2.6]{LangDiffManifolds} and the fact that $f_{|P_0}$
is a submersion imply that $f_{|P_0}$ is transversal to $g_{|R_0}$. Hence,
$P_0 \fgtimes{f}{g} R_0$ is a smooth submanifold of $P_0 \times R_0$
and hence of the differentiable space $X \fgtimes{f}{g} Y$. Therefore each connected component
of $P \fgtimes{f}{g} R$ is a smooth manifold.

Finally, suppose
$(P \fgtimes{f}{g} R)\cap\overline{(P^\prime \fgtimes{f}{g} R^\prime)}\neq\emptyset$
for $P, P^\prime\in\mathcal{Z}^X$ and $R, R^\prime\in\mathcal{Z}^Y$.
Choose $(x,y) \in (P \fgtimes{f}{g} R)\cap \overline{(P^\prime \fgtimes{f}{g} R^\prime)}$.
Then for any open neighborhoods $U_x$ and $U_y$ of $x$ and $y$ in $X$ and $Y$, respectively,
$U_x\times Y_y$ intersects $P^\prime \fgtimes{f}{g} R^\prime$. It follows that
$P\cap\overline{P^\prime}, R\cap\overline{R^\prime}\neq \emptyset$ so that
$P\subset\overline{P^\prime}$ and $R\subset\overline{R^\prime}$, which imply
$P \fgtimes{f}{g} R \subset\overline{(P^\prime \fgtimes{f}{g} R^\prime)}$.
That is, $\mathcal{Z}$ satisfies the condition of frontier and hence is a
decomposition of $X \fgtimes{f}{g} Y$.
\end{proof}

\subsection{Tangent space}
\label{SectionTangentSpace}
  Assume that $(X,\calC^\infty)$ is a differentiable space. Then, given a point $x\in X$,
  the maximal ideal $\mfrak_x \subset \calC^\infty_x$ in the stalk at $x$ is finitely generated,
  which implies that the quotient space
  $\mfrak_x/\mfrak_x^2$ is a finite dimensional real vector space. We will call this space
  the \emph{Zariski cotangent space} $\zartan_x^* X$ of $(X,\calC^\infty)$ at $x$,
  and  its dual $(\mfrak_x/\mfrak_x^2)^*$ the \emph{Zariski tangent space} $\zartan_x X$.


\begin{remark}
  There is another notion of a tangent bundle for a differentiable stratified space
  $(X,\calC^\infty)$, namely the \emph{stratified tangent space} $(T^\textup{st}X,\calC^\infty)$.
  If  $(X,\calC^\infty)$ is Whitney (a)-regular, then  $(T^\textup{st}X,\calC^\infty)$ is
  also a differentiable stratified space. Indeed, $T^\textup{st}X \subset TX$
  with equality if and only if $(X,\calC^\infty)$ is a smooth manifold (without boundary).
  See \cite[Sec.~2.1]{PflaumBook} for more details on stratified tangent bundles
  and  \cite[Rem.~2.2.4]{PflaumBook} for  details on the relation between
  $T^\textup{st}X$ and $TX$.
\end{remark}

\subsection{Differential forms}
\label{SectionDiffForms}
Let $(X,\calC^\infty)$ be a reduced differentiable stratified space. Let  $U$ be an affine open subset
of $X$ and $\iota: U \hookrightarrow \R^n$ be a singular chart of $X$. Denote by $\calI_\iota$
the sheaf of smooth functions vanishing on $\overline{\iota(U)}$. Then define
the sheaf $\Omega^k_\iota$ for $k=0$ as $ \iota^{-1} (\calC^\infty / \calI_\iota) \cong \calC^\infty_{|U}$
and for $k \in \N^*$ as the following inverse image sheaf
\[
  \Omega^k_\iota  := \iota^{-1}\big( \Omega^k_{\R^n} / (\calI_\iota \Omega^k_{\R^n}  +
  d \calI_\iota \wedge \Omega^{k-1}_{\R^n}) \big).
\]
Observe that by construction the exterior differential factors
through the $ \Omega^k_\iota$, hence we obtain a differential graded algebra $\big( \Omega^\bullet_\iota , d \big)$.
If $\kappa : V \hookrightarrow \R^m$ is another singular chart of  $X$,
there exists a unique sheaf isomorphism
$\eta_{\iota,\kappa}: \Omega^\bullet_{|\kappa(U\cap V)} \to \Omega^\bullet_{|\iota(U\cap V)}$ extending the
isomorphism of sheaves
$\eta_{\iota,\kappa} : (\calC^\infty / \calI_\kappa)_{|\kappa(U\cap V)} \to
(\calC^\infty / \calI_\iota)_{|\iota(U\cap V)}$.
We conclude that the cocycle condition
\begin{equation}
  \label{eq:CocycleCondition}
  \eta_{\kappa , \iota} =
  \eta_{\kappa , \lambda} \circ   \eta_{\lambda,\iota}
\end{equation}
holds, where we denote by $\lambda : V \hookrightarrow \R^l$ a third singular chart of  $X$.
Hence the sheaves $ \Omega^k_\iota$ glue to a globally defined sheaf $\Omega_X^k$ of
so-called \emph{abstract $k$-forms} on $X$ in such a way that the
gluing maps preserve $d$. So we obtain a sheaf complex
$\big( \Omega^\bullet_X,d\big)$ of differential   graded algebras.
The complex of global sections
$\big( \Omega^\bullet (X),d\big)$ will be called the \emph{Grauert--Grothendieck complex} of $X$.

\begin{remark}
  For $X \subset \C^n$ a complex space, the construction of the complex $\Omega^\bullet (X)$
  within  the analytic category  goes back to Grauert \cite{GraKerDSKR} and Grothendieck \cite{GroRCAV}.
\end{remark}

Let us now describe how one can represent elements of $\Omega^k (X)$.
To this end assume to be given an open covering $\mathcal{U}$ of $X$ by coordinate domains and a family
$(\kappa_U)_{U\in\mathcal{U}}$ of singular charts
$\kappa_U : U \hookrightarrow \widetilde{U} \subset \R^{n_U}$ such that $\widetilde{U}$ is open and
contains  $\kappa_U (U) $ as a relatively closed subset.
An element of $\Omega^k (X)$ can then be represented as a family
$([\omega_U])_{U \in \mathcal{U}}$, where $\omega_U \in \Omega^k(\widetilde{U})$
and where, for any two $U,V \in \mathcal{U}$, one has
\[
  \eta_{\kappa_V,\kappa_U} \big( [\omega_U] \big) = [\omega_V]
\]
on $U\cap V$.
In the above, $ [\omega_U]$ denotes the equivalence class of $\omega_U$ in $\Omega^k_{\kappa_U}$.

The above formula allows us to extend the forms $[\omega_U]$ globally as follows.
Assume that $S$ is a stratum of $X$,
and let $\iota_S : S \hookrightarrow X$ denote the canonical embedding. Given an element
$\omega = ([\omega_U])_{U \in \mathcal{U}} \in \Omega^k (X) $ one observes that for any two
$U, V \in \mathcal{U}$ the forms  $\iota^*_{S\cap U} \kappa_U^* (\omega_U)$
and  $\iota^*_{S\cap V} \kappa_V^* (\omega_V)$ coincide on the overlap $U\cap V$, hence glue together to an abstract
global form on $S$ which we denote by $\iota^*_S \omega \in \Omega^k (S)$.

By construction, each of the sheaves $\Omega^k_X$ carries the structure of a $\calC^\infty$-module in a natural way.
Therefore, we have the following.
\begin{proposition}
\label{prop:GrauertGrothendieck}
  The Grauert--Grothendieck complex $\big( \Omega^\bullet (X),d\big)$ of a differentiable stratified space $(X,\calC^\infty)$
  is a complex of fine sheaves.
\end{proposition}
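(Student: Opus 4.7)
The plan is to exploit the $\calC^\infty_X$-module structure on each $\Omega^k_X$ noted just before the statement, together with the existence of smooth partitions of unity on the underlying differentiable space $X$. Recall that a sheaf of abelian groups on a paracompact space is fine precisely when it admits multiplicative partitions of unity subordinate to every locally finite open cover, and that any sheaf of modules over a soft sheaf of rings is automatically fine. Thus the statement will follow if I verify two things: (i) the structure sheaf $\calC^\infty_X$ of a reduced differentiable space is soft (equivalently, admits smooth partitions of unity), and (ii) the $\calC^\infty_X$-action on $\Omega^k_X$ is compatible with restriction to open subsets, so that multiplication by a partition of unity yields sheaf endomorphisms of $\Omega^k_X$.

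For step (i), I would argue locally. Given $x\in X$, choose an affine neighborhood $U$ with singular chart $\iota\co U\hookrightarrow \R^n$; then $\calC^\infty_X(U) = \calC^\infty(\R^n)/\calI_\iota(\R^n)$, and pullback along $\iota$ of any smooth bump function on $\R^n$ yields a smooth function on $U$ with prescribed support behavior. Since $X$ is paracompact, given a locally finite open cover $(U_\alpha)_\alpha$ of $X$, a standard argument produces $\varphi_\alpha\in \calC^\infty_X(X)$ with $\operatorname{supp}(\varphi_\alpha)\subset U_\alpha$ and $\sum_\alpha \varphi_\alpha = 1$: cover $X$ by singular charts refining $(U_\alpha)_\alpha$, use smooth bump functions on the ambient $\R^{n_\beta}$'s, restrict, and normalize. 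This partition of unity exists in $\calC^\infty_X(X)$ itself, which proves softness of the structure sheaf.

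For step (ii), I would unwind the gluing construction of $\Omega^k_X$. In a singular chart $\iota\co U\hookrightarrow \R^n$, multiplication by a smooth function $f\in \calC^\infty(\R^n)$ preserves both the ideal $\calI_\iota \Omega^k_{\R^n}$ and $d\calI_\iota\wedge \Omega^{k-1}_{\R^n}$, hence descends to an endomorphism of $\Omega^k_\iota$; moreover this descended endomorphism depends only on the class of $f$ in $\calC^\infty/\calI_\iota = \iota_*\calC^\infty_X|_U$. The cocycle condition \eqref{eq:CocycleCondition} implies that these chartwise multiplication maps glue, giving the promised $\calC^\infty_X$-module structure on the global sheaf $\Omega^k_X$. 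In particular, for any $\varphi\in\calC^\infty_X(X)$, multiplication by $\varphi$ is a sheaf morphism $\Omega^k_X\to \Omega^k_X$, and its support (as an endomorphism) is contained in the support of $\varphi$.

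Combining the two steps: given a locally finite open cover of $X$, apply step (i) to obtain $(\varphi_\alpha)_\alpha$ subordinate to the cover, and apply step (ii) to lift this to endomorphisms $(m_{\varphi_\alpha})_\alpha$ of $\Omega^k_X$ whose supports are subordinate to the cover and which sum locally to $\operatorname{id}$. By definition, this exhibits $\Omega^k_X$ as a fine sheaf. Since $d$ commutes with the sheaf maps but is irrelevant to fineness, the entire complex $(\Omega^\bullet_X,d)$ consists of fine sheaves, finishing the proof. The only step requiring genuine care, rather than routine bookkeeping, is verifying that the multiplication by $\calC^\infty_X$ is well defined on the quotient sheaf $\Omega^k_\iota$ and compatible with the cocycle $\eta_{\kappa,\iota}$; all other assertions are standard consequences of paracompactness and the existence of bump functions in Euclidean space.
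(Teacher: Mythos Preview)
Your proposal is correct and takes essentially the same approach as the paper: the paper simply notes that each $\Omega^k_X$ is a $\calC^\infty_X$-module and asserts that this observation entails fineness, while you have spelled out the standard argument (existence of smooth partitions of unity on the reduced differentiable space, and compatibility of the module action with the gluing cocycle) in detail.
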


Finally, we will define the pull-back morphism $f^* : \Omega^k_Y \to \Omega^k_X$ associated to a smooth
map $f:X\to Y$ between reduced differentiable stratified spaces $(X,\calC^\infty_X)$ and $(Y,\calC^\infty_Y)$.
By Proposition \ref{prop:GrauertGrothendieck} and the construction of the Grauert--Grothendieck complex it suffices to consider the
case where $X\subset \R^n$ and $Y\subset \R^m$ are affine.
Choose open neighborhoods $U \subset \R^n$ of $X$ and  $V \subset \R^m$ of $Y$ such that $X$ is closed in $U$ and
$Y$ in $V$. Choose a smooth function $F:U \to V$ such that $F_{|X}=f$. For $\omega \in \Omega^k (V)$ representing an
abstract $k$-form on $Y$ we put
\[
  f^* ([\omega] ) := [F^*\omega] \in \Omega^k(X).
\]
Since $F^*$ maps the vanishing ideal $I_Y \subset \calC^\infty (V)$ to the vanishing ideal $I_X \subset \calC^\infty (U)$
and since $F^*$ commutes with $d$, $F^*$ maps the $I_Y \Omega^k(V) + dI_Y \wedge \Omega^k(V)$ to
$I_X \Omega^k(U) + dI_X \wedge \Omega^k(U)$. Moreover, if $\widetilde{F}:U \to V$ is another smooth function such that
$\widetilde{F}_{|X}=f$, then $F^* g- \widetilde{F}^*g \in I_X $ and
$F^* dg- \widetilde{F}^*dg \in dI_X $ for all $g \in \calC^\infty (V)$, which entails that
$F^* \omega- \widetilde{F}^*\omega \in I_X \Omega^k(U) + dI_X \wedge \Omega^k(U)$. This proves that
$[F^*\omega]$ neither depends on the particular choice of the representative of $[\omega]$ nor on the particular smooth $F$
extending $f$ to an open neighborhood of $X$. Hence  $f^* : \Omega^k(Y) \to \Omega^k (X)$ is well-defined.
Obviously, $d$ commutes with $f^*$, since it commutes with $F^*$.


\bibliographystyle{amsalpha}
\bibliography{FarPflSea}

\end{document}